\theoremstyle{plain}
\newtheorem{thm}{Theorem}[section]
\newtheorem*{thm*}{Theorem}
\newtheorem{prop}[thm]{Proposition}
\newtheorem*{prop*}{Proposition}
\newtheorem{cor}[thm]{Corollary}
\newtheorem*{cor*}{Corollary}
\newtheorem{lem}[thm]{Lemma}
\newtheorem{thmintro}{Theorem}
\newtheorem{corintro}[thmintro]{Corollary}
\theoremstyle{definition}
\newtheorem{defn}[thm]{Definition}
\newtheorem*{defn*}{Definition}
\newtheorem{ex}[thm]{Example}
\newtheorem{rmk}[thm]{Remark}
\newtheorem*{rmk*}{Remarks}
\newtheorem{cstr}[thm]{Construction}
\newtheorem*{conj*}{Conjecture}
\newtheorem*{quest*}{Question}
\newtheoremstyle{blue-environment}{}{}{}{}{\color{blue}\bfseries}{.}{ }{}
\theoremstyle{blue-environment}
\newcommand{\acts}{\curvearrowright}
\newcommand{\ra}{\rightarrow}
\newcommand{\Ra}{\Rightarrow}
\newcommand{\sq}{\subseteq}
\newcommand{\wt}{\widetilde}
\newcommand{\x}{\times}
\newcommand{\id}{\mathrm{id}}
\newcommand{\mc}{\mathcal}
\newcommand{\mf}{\mathfrak}
\newcommand{\mscr}{\mathscr}
\newcommand{\R}{\mathbb{R}}
\newcommand{\Z}{\mathbb{Z}}
\newcommand{\Q}{\mathbb{Q}}
\newcommand{\s}{\sigma}
\newcommand{\Om}{\Omega}
\newcommand{\g}{\gamma}
\newcommand{\G}{\Gamma}
\newcommand{\CAT}{{\rm CAT(0)}}
\newcommand{\Out}{{\rm Out}}
\newcommand{\Aut}{{\rm Aut}}
\newcommand{\X}{\mc{X}}
\DeclareMathOperator{\hull}{Hull} 
\DeclareMathOperator{\diam}{diam} 
\DeclareMathOperator{\lk}{lk}
\DeclareMathOperator{\St}{st}
\DeclareMathOperator{\Fix}{Fix}
\newcommand{\diagentry}[1]{\mathmakebox[1.8em]{#1}}
\newcommand{\xddots}{%
  \raise 4pt \hbox {.}
  \mkern 6mu
  \raise 1pt \hbox {.}
  \mkern 6mu
  \raise -2pt \hbox {.}
}
\numberwithin{equation}{section}
\begin{document}

\title{Coarse cubical rigidity} 

\author[E. Fioravanti]{Elia Fioravanti}
\address{Max Planck Institute for Mathematics}
\email{fioravanti@mpim-bonn.mpg.de} 
\thanks{Fioravanti thanks the Max Planck Institute for Mathematics in Bonn for their hospitality and financial support. He was also partially supported by Emmy Noether grant 515507199 of the Deutsche Forschungsgemeinschaft (DFG)}

\author[I. Levcovitz]{Ivan Levcovitz}
\address{Tufts University}
\email{Ivan.Levcovitz@gmail.com} 

\author[M. Sageev]{Michah Sageev}
\address{Technion}
\email{sageevm@technion.ac.il}
\thanks{Sageev was supported by ISF grant no.\ 660/20}

\begin{abstract}
We show that for many right-angled Artin and Coxeter groups, all cocompact cubulations coarsely look the same: they induce the same coarse median structure on the group. These are the first examples of non-hyperbolic groups with this property.

For all graph products of finite groups and for Coxeter groups with no irreducible affine parabolic subgroups of rank $\geq 3$, we show that all automorphism preserve the coarse median structure induced, respectively, by the Davis complex and the Niblo--Reeves cubulation. As a consequence, automorphisms of these groups have nice fixed subgroups and satisfy Nielsen realisation.
\end{abstract}

\maketitle

\addtocontents{toc}{\protect\setcounter{tocdepth}{1}}
\section{Introduction}

Cubulating a group $G$ --- constructing a proper $G$--action on a $\CAT$ cube complex --- is a particularly effective strategy to study many algebraic properties of $G$. For instance, the existence of a cocompact cubulation implies that $G$ is biautomatic \cite{NR98}, has finite asymptotic dimension \cite{Wright-asdim} and satisfies the Tits alternative \cite{SW05}.

More recently, cubulations have also proven particularly fruitful in the study of group automorphisms. For a right-angled Artin group $A_{\G}$, a certain space of particularly nice cubulations of $A_{\G}$ provides a remarkably simple (rational) classifying space for $\Out(A_{\G})$ \cite{CCV,CSV,BCV}, extending the classical construction of Outer Space for $\Out(F_n)$ \cite{CV}. In addition, for a general cocompactly cubulated group $G$, automorphisms that ``coarsely preserve'' a given cubulation of $G$ always enjoy particularly good properties \cite{Fio10a}, for instance their fixed subgroups are finitely generated and undistorted in $G$.

In light of these results, it is tempting to study spaces of cubulations in greater generality. One promising feature is that cubulations are often completely determined by their length function \cite{BF1,BF2}. However,  the space of \emph{all cubulations} of a group can be extremely vast and flexible. For instance, if $S$ is a closed hyperbolic surface, every finite filling collection of simple closed curves on $S$ gives rise to a cocompact cubulation of $\pi_1(S)$ \cite{Sag95}, and there are additional cubulations originating from subsurfaces of $S$. Even right-angled Artin groups always admit many more cubulations than those appearing in the Outer Spaces mentioned above (see e.g.\ Example~\ref{hexagon RAAG ex} below).

This leads to two natural questions, which serve as our main motivation for this work. 
\begin{enumerate}
\setlength\itemsep{.25em}
\item Do all cocompact cubulations of a group $G$ have anything in common? 
\item Is there a ``best'' cubulation for $G$? 
\end{enumerate}

In relation to Question~(1), it is natural to wonder whether all cubulations of $G$ give rise to the same notion of ``convex-cocompactness''. Specifically, we say that a subgroup $H\leq G$ is \emph{convex-cocompact} in a cubulation $G\acts X$ if there exists an $H$--invariant convex subcomplex $C\sq X$ on which $H$ acts cocompactly. 

It is well-known that, when $G$ is word-hyperbolic, all cocompact cubulations of $G$ give rise to the same convex-cocompact subgroups; in fact, convex-cocompact subgroups are precisely quasi-convex ones \cite[Theorem~H]{Haglund-GD}. However, this property can fail rather drastically for non-hyperbolic groups, whose cubulations can induce infinitely many distinct notions of convex-cocompactness.

For instance, let $\Z^2\acts\R^2$ be the standard $\Z^2$--action on the standard square tiling of $\R^2$, with one generator $x$ translating horizontally by $1$ and the other generator $y$ translating vertically by $1$. The only convex-cocompact maximal cyclic subgroups for this action are $\langle x\rangle$ and $\langle y\rangle$. Precomposing the action $\Z^2\acts\R^2$ with an element $A\in{\rm SL}_2(\Z)$ produces another action of $\Z^2$ on the standard tiling of $\R^2$, where the subgroups $\langle A^{-1}x\rangle$ and $\langle A^{-1}y\rangle$ become convex-cocompact. These two actions have the same convex-cocompact subgroups if and only if $A$ is a signed permutation matrix. Thus we get infinitely many types of actions in terms of which subgroups are convex-cocompact.

As a matter of fact, both our motivating questions are best phrased in terms of \emph{coarse median structures}. Rather than the general notion introduced by Bowditch \cite{Bow-cm}, the following very particular instance will suffice for most of our purposes in this paper. Recall that Chepoi--Roller duality states that every $\CAT$ cube complex $X$ is equipped with a \emph{median operator} $m\colon X^3\ra X$ and that, conversely, every discrete median algebra uniquely arises in this way  \cite{Chepoi,Roller}. If $x,y,z\in X$ are vertices, then the median $m(x,y,z)$ is the only vertex of $X$ that lies on a geodesic in the $1$--skeleton of $X$ between any two of the three vertices $x,y,z$.

\begin{defn*}
A \emph{cubical coarse median} on $G$ is a map $\mu_X\colon G^3\ra G$ obtained by pulling back the median operator of a cocompact cubulation $G\acts X$ via an equivariant quasi-isometry $G\ra X$ (see Subsection~\ref{subsect:cms} for details).
Two cocompact cubulations are said to induce the same \emph{coarse median structure} if their corresponding cubical coarse medians on $G$ are at uniformly bounded distance from each other.
\end{defn*}

In other words, two cubulations induce the same coarse median structure on $G$ if they are $G$--equi\-va\-riant\-ly quasi-isometric via a map that coarsely respects medians. Additional motivation for studying these ``coarsely median preserving'' quasi-isometries, even beyond the world of cubulated groups, comes from recent work of Petyt. He showed that mapping class groups are coarsely median preserving quasi-isometric to $\CAT$ cube complexes \cite{Hagen-Petyt,Petyt}, albeit certainly not to any that admit geometric group actions, since mapping class groups are QI--rigid \cite{BKMM,Ham-QI}.

The fundamental connection between cubical coarse medians and convex-cocompactness is provided by the following result, which will be one of our main tools. It can be deduced from the first author's work in \cite{Fio10e} and we explain in detail how in Theorem~\ref{coarse median vs hyperplanes}. 

\begin{thm*}[\cite{Fio10e}]
Let $G\acts X,Y$ be cocompact cubulations. The following are equivalent:
\begin{enumerate}
\item $G\acts X$ and $G\acts Y$ induce the same coarse median structure on $G$;
\item $G\acts X$ and $G\acts Y$ have the same convex-cocompact subgroups.
\end{enumerate}
\end{thm*}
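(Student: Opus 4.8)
The plan is to prove both implications by understanding what the coarse median controls at the level of hyperplanes. Since the coarse median structure only depends on the $G$–equivariant quasi-isometry type of the cubulation (up to bounded error), the natural first step is to reformulate "same coarse median structure" in terms of a combinatorial correspondence between the wallspace structures of $X$ and $Y$. Concretely, I would invoke the machinery of \cite{Fio10e}: a coarsely median preserving quasi-isometry between two cocompact cubulations induces a bijection between their hyperplane stabilisers (or more precisely, matches up the associated halfspace pocsets up to finite Hausdorff error on the group). The key technical input is that for a cocompact cubulation, the coarse median $\mu_X$ remembers, for every triple of group elements, which "side" of each hyperplane-orbit they lie on, and hence two cubulations with the same coarse median have a canonically identified collection of coarse walls. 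This is the content of Theorem~\ref{coarse median vs hyperplanes}, which I am free to use.

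For the implication $(1)\Rightarrow(2)$: suppose $G\acts X$ and $G\acts Y$ induce the same coarse median structure, witnessed by a $G$–equivariant quasi-isometry $f\colon X\to Y$ that coarsely preserves medians, and let $H\leq G$ be convex-cocompact in $X$, with $H$–invariant convex subcomplex $C\sq X$ on which $H$ acts cocompactly. Convexity of $C$ is equivalent to $C$ being median-closed (and gate-convex) in $X$; I would show that $f(C)$ is then coarsely median-closed in $Y$, i.e.\ $m_Y(f(C)^3)$ lies in a bounded neighbourhood of $f(C)$. Since $H$ acts cocompactly on $C$ and $f$ is a $G$–equivariant quasi-isometry, $H$ acts coboundedly on $f(C)$, so the orbit $H\cdot y_0$ for any $y_0\in f(C)$ is coarsely median-closed and $H$–cocompact. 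The remaining step is to promote a coarsely median-closed $H$–cocompact subset of $Y$ to an honest $H$–invariant convex subcomplex: here one takes the combinatorial convex hull in $Y$ of an $H$–orbit; cocompactness of the action on the hull follows because in a cocompact cube complex the convex hull of a quasiconvex (equivalently, coarsely median-closed) subset stays within bounded neighbourhood of it — this uses that $G\acts Y$ is cocompact and a standard hull-stability estimate. Hence $H$ is convex-cocompact in $Y$; the argument is symmetric in $X$ and $Y$.

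For $(2)\Rightarrow(1)$: this is the direction I expect to be the main obstacle, since one must reconstruct the coarse median from the family of convex-cocompact subgroups. The idea is that hyperplane stabilisers, and more usefully stabilisers of convex subcomplexes, are convex-cocompact, and conversely the coarse median of $X$ can be recovered from the collection of all convex-cocompact subgroups together with their "sidedness" data. The cleanest route is again via \cite{Fio10e}: having the same convex-cocompact subgroups forces the coarse walls of $X$ and $Y$ — defined using cosets of convex-cocompact subgroups and the partition they induce on $G$ — to coincide up to finite error, and a coarse median is determined (up to bounded distance) by its coarse walls via the "majority vote" formula, $\mu(g_1,g_2,g_3)$ lies on the same side as at least two of the $g_i$ with respect to every coarse wall. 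Making this precise requires checking that enough convex-cocompact subgroups exist to separate points — equivalently, that $X$ is reconstructible from its convex-cocompact subgroup data — which is where the cocompactness hypothesis and the structure theory of \cite{Fio10e} do the real work. I would therefore organise the proof so that both implications are extracted as consequences of a single statement, proved in Theorem~\ref{coarse median vs hyperplanes}, identifying "coarse median structure" and "pattern of convex-cocompact subgroups" with the same combinatorial object, namely the $G$–pocset of coarse halfspaces.
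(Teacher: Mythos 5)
Your direction $(1)\Rightarrow(2)$ is correct and is essentially the paper's argument: convex-cocompactness of $H$ in $X$ is equivalent to $[\mu_X]$--quasi-convexity of $H$ (Proposition~\ref{prop:cc=qc}), a condition depending only on the coarse median structure, with the promotion back to an honest convex subcomplex handled exactly by the hull-stability estimate you cite (Lemma~\ref{lem:bounded_iteration} together with Remark~\ref{qc hulls rmk}).

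The direction $(2)\Rightarrow(1)$ is where your proposal has a genuine gap. First, you cannot ``freely use'' Theorem~\ref{coarse median vs hyperplanes}: the equivalence $(1)\Leftrightarrow(2)$ \emph{is} part of that theorem, so invoking it here is circular. Second, and more substantively, your reconstruction scheme assumes that the collection of convex-cocompact subgroups determines the coarse walls of $X$ ``up to finite error,'' and this is precisely the step you never justify. A wall of $X$ is a partition of $G$ into two coarse halfspaces; its stabiliser is convex-cocompact, but the abstract subgroup does not remember the partition (nor even the hyperplane: distinct hyperplanes can share a stabiliser, and a convex-cocompact subgroup need not be a hyperplane stabiliser at all). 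So ``same convex-cocompact subgroups'' does not hand you a matching of pocsets on which to run a majority-vote formula, and you give no mechanism for producing one. The paper sidesteps this entirely: from $(2)$ it extracts only the much weaker consequence that every hyperplane-stabiliser of $X$ is convex-cocompact in $Y$ (condition $(3)$ of Theorem~\ref{coarse median vs hyperplanes}), and then invokes \cite[Proposition~7.9]{Fio10e} to produce a nonempty, $G$--invariant, $G$--cofinite median subalgebra $N\sq X^{(0)}\x Y^{(0)}$. Enlarging $N$ so that both coordinate projections are surjective and applying Chepoi--Roller duality yields a third cocompact cubulation $G\acts Z$ of which both $X$ and $Y$ are restriction quotients, whence $(1)$. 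If you want to complete your argument along your own lines, the missing ingredient is exactly this construction of a common median-algebraic refinement; the majority-vote heuristic by itself does not supply it.
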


In view of this result and the above questions, we are interested in the following property.

\begin{defn*}
A group $G$ satisfies \emph{coarse cubical rigidity} if it has a unique cubical coarse median structure. Equivalently, all cocompact cubulations of $G$ induce the same notion of convex-cocompactness.
\end{defn*}

Hyperbolic groups always satisfy this form of rigidity, provided that they are cocompactly cubulated. More generally, an arbitrary hyperbolic group always admits a unique coarse median structure (not necessarily a cubical one), as shown in \cite[Theorem 4.2]{NWZ1}.
By contrast, as exemplified above, for $n\geq 2$ the group $\Z^n$ admits countably many distinct cubical coarse median structures (see Proposition~\ref{cms on Z^n} for a classification), and uncountably many non-cubical ones.

Some exceptional groups satisfy even stronger forms of cubical rigidity. For instance, many Burger--Mozes--Wise groups \cite{Burger-Mozes,Wise-thesis,Caprace-BMW} admit a \emph{unique} cocompact cubulation with no free faces \cite[Proposition~C]{FH}. However, these stronger kinds of rigidity appear to be extremely rare in general, and they are likely to never occur for right-angled Artin groups, see Example~\ref{hexagon RAAG ex}. 

A weaker form of coarse cubical rigidity is the existence of a coarse median structure which is preserved by any automorphism of the ambient group. Thus, while there is not a unique coarse median structure, there is a special one.

\medskip
The main goal of this paper is to develop tools for proving coarse cubical rigidity of groups, and to exhibit many new examples of groups with this property. Right-angled Artin and Coxeter groups provide some natural candidates and they will indeed be our main focus, though we will discuss results for graph products and Coxeter groups generally.

\subsection{Right-angled Artin groups}

Recall that a right-angled Artin group $A_\G$ is obtained from a finite simplicial graph $\G$ via the presentation: 
\[A_\G= \langle \G^{(0)}\mid [v,w]=1 \iff v,w \text{ span an edge in } \G\rangle. \]
We say that $A_{\G}$ is \emph{twistless} if there do not exist any vertices $v,w\in\G$ with $\St(v)\sq\St(w)$; equivalently, $A_{\G}$ does not have any twist automorphisms (see Section 3 for details). As an example, $A_{\G}$ is twistless if the outer automorphism group $\Out(A_{\G})$ happens to be finite. 

In previous work, it was shown that $A_{\G}$ is twistless if and only if all automorphisms of $A_{\G}$ preserve the coarse median structure induced by the Salvetti complex \cite[Proposition~A(3)]{Fio10a}. Thus, twistlessness is a necessary condition for $A_{\G}$ to have a unique cubical coarse median structure. Our first main result is that this condition is also sufficient.  

\begin{thmintro}\label{RAAGs intro}
Let $A_{\G}$ be a right-angled Artin group. Then $A_{\G}$ satisfies coarse cubical rigidity if and only if it is twistless.
\end{thmintro}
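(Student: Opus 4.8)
The "only if" direction is already essentially in hand: by \cite[Proposition~A(3)]{Fio10a}, if $A_{\G}$ is not twistless then it admits a twist automorphism that fails to preserve the coarse median structure of the Salvetti complex, so $A_{\G}$ has at least two distinct cubical coarse median structures. The substantive work is the "if" direction, so I will assume $A_{\G}$ is twistless and show that every cocompact cubulation $A_{\G}\acts X$ induces the same coarse median structure as the standard action on the universal cover of the Salvetti complex $S_{\G}$. By the quoted theorem of Fioravanti (\cite{Fio10e}), it is equivalent to show that $A_{\G}\acts X$ and $A_{\G}\acts S_{\G}$ have the same convex-cocompact subgroups. My plan is to compare both actions against the algebraic structure of $A_{\G}$, using the convex-cocompact subgroups of the Salvetti action as a fixed reference point.

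First I would identify which subgroups are convex-cocompact in the Salvetti cubulation: these include (conjugates of) the parabolic subgroups $A_{\Lambda}$ for induced subgraphs $\Lambda\sq\G$, and more generally the subgroups arising from "standard" convex subcomplexes; I expect the maximal cyclic convex-cocompact subgroups to be exactly the conjugates of cyclic subgroups generated by a single generator (or, more carefully, by elements supported on cliques, which are themselves parabolic). The key idea is then to pin down, purely from the cubical geometry of an \emph{arbitrary} cocompact cubulation $A_{\G}\acts X$, enough of this collection to reconstruct it entirely. Concretely, I would argue: (a) a maximal rank-$n$ free abelian subgroup $\Z^n\leq A_{\G}$ (coming from an $n$-clique in $\G$) acts on $X$ with a convex-cocompact orbit, because such a subgroup acts on the cube complex $X$ and, being $\Z^n$, its essential core is an $n$-dimensional "flat" — here one uses that $\Z^n$ cannot be distorted and that cocompact $\Z^n$-actions on cube complexes are understood; (b) conversely, convex-cocompact subgroups of $A_{\G}\acts X$ must be contained in such abelian subgroups or, more generally, must respect the partition of $A_{\G}$ into parabolic pieces dictated by the graph $\G$.

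The main obstacle — and the place where twistlessness must enter — is step (b): controlling which subgroups \emph{fail} to be convex-cocompact in $X$. The $\Z^2$ example in the introduction shows that without a hypothesis like twistlessness, an abelian subgroup can have many different "coordinate" subgroups that become convex-cocompact, and these correspond precisely to the freedom of applying a matrix in $\mathrm{SL}_2(\Z)$ — which at the level of $A_{\G}$ is exactly a twist automorphism. So I would show that if some cubulation $A_{\G}\acts X$ had a convex-cocompact cyclic subgroup not conjugate into a clique subgroup in the standard way, one could extract from the corresponding hyperplane combinatorics a vertex $v$ and a generator that "sees" $v$ through its star in a way that produces an inclusion $\St(v)\sq\St(w)$, contradicting twistlessness. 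Making this precise will likely go through an analysis of the hyperplane stabilisers of $A_{\G}\acts X$: each hyperplane stabiliser is a codimension-one subgroup, hence (for RAAGs) commensurable to a conjugate of a parabolic $A_{\lk(v)}$-type subgroup, and twistlessness guarantees that the pattern of inclusions among these stabilisers matches the pattern of stars in $\G$, forcing the wallspace structure of $X$ to be "combinatorially equivalent" to that of the Salvetti complex. Once the two wallspace structures are matched up to finite Hausdorff distance equivariantly, the equality of convex-cocompact subgroups (equivalently, of coarse median structures) follows. I expect the proof to reduce, via induction on $|\G^{(0)}|$ and passage to parabolic subgroups $A_{\lk(v)}$ and $A_{\St(v)}$, to the irreducible/clique base cases, where the $\Z^n$ classification of Proposition~\ref{cms on Z^n} does the rest.
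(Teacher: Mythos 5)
Your ``only if'' direction is fine and is exactly the paper's: a twist fails to preserve the Salvetti coarse median structure by \cite[Proposition~A(3)]{Fio10a}, so precomposing the Salvetti action with it gives a second cubical structure. The gap is in the ``if'' direction, and it is twofold. First, your step (b) and the analysis of hyperplane stabilisers of the \emph{arbitrary} cubulation $A_{\G}\acts X$ point in the wrong direction: the criterion of Theorem~\ref{coarse median vs hyperplanes}(3) is deliberately asymmetric, and what one must show is that the hyperplane stabilisers of the \emph{Salvetti} action, namely the conjugates of $A_{\lk(v)}$, are convex-cocompact in $X$. Nothing forces the hyperplane stabilisers of $X$ to be commensurable to parabolics, and your proposed endgame --- matching the wallspace of $X$ with that of the Salvetti complex equivariantly up to finite Hausdorff distance --- is simply too strong: Example~\ref{hexagon RAAG ex} exhibits, for a twistless $A_{\G}$, infinitely many cocompact cubulations with genuinely different hyperplane patterns (extra hyperplanes coming from blow-ups) that nonetheless induce the same coarse median structure; the correct relation is only that both actions are restriction quotients of a common cubulation, as in Theorem~\ref{coarse median vs hyperplanes}(4). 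Likewise your claim that convex-cocompact subgroups of $X$ ``must be contained in abelian or parabolic pieces'' is false as stated and is not needed.

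Second, the place where twistlessness actually does the work is missing. The paper's mechanism is: twistlessness means $\St(v)\sq\St(w)$ never holds for $v\neq w$, so the intersection $\kappa(v)$ of all maximal cliques containing $v$ is $\{v\}$; each maximal clique subgroup is a highest virtually abelian subgroup, hence convex-cocompact in \emph{every} cocompact cubulation by the cubical flat torus theorem (Lemma~\ref{cc properties}(3)), and finite intersections of convex-cocompact subgroups are convex-cocompact (Lemma~\ref{cc properties}(1)), so each $\langle v\rangle$ is convex-cocompact in $X$. One then bootstraps: a suitable power $v^n$ acts non-transversely, so its centraliser $A_{\St(v)}=\langle v\rangle\x A_{\lk(v)}$ is convex-cocompact by Lemma~\ref{cc properties}(4), and since $A_{\lk(v)}$ is generated by the (convex-cocompact) standard generators, Lemma~\ref{lem:cc_products} gives that $A_{\lk(v)}$ itself is convex-cocompact in $X$ (this is Lemma~\ref{lem:cc_vertices}); Theorem~\ref{coarse median vs hyperplanes} then concludes. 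Your sketch names some of these ingredients (clique subgroups, $\St(v)$, $\lk(v)$, the $\Z^n$ classification) but never supplies this chain, and instead routes the role of twistlessness through a vague contradiction argument extracting $\St(v)\sq\St(w)$ from ``hyperplane combinatorics'' of $X$, which is not substantiated. Note also that the paper proves the stronger Theorem~\ref{RAAGs main} (transitivity of the twist group on structures with decomposable flats), of which Theorem~\ref{RAAGs intro} is a special case; your proposal does not recover that generality.
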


We emphasise that Theorem~\ref{RAAGs intro} concerns arbitrary cocompact cubulations of $A_{\G}$, not just the ones that make up the Outer Spaces constructed by Charney, Vogtmann and coauthors \cite{CCV,CSV,BCV}. In general, $A_{\G}$ has many perfectly nice cubulations outside these Outer Spaces, even cubulations with the same dimension as the Salvetti complex and without any free faces. We demonstrate this in the case when $\G$ is a hexagon in Example~\ref{hexagon RAAG ex}.

Theorem~\ref{RAAGs intro} can be rephrased as follows: every cocompact cubulation of $A_{\G}$ can be obtained from the standard action on (the universal cover of) the Salvetti complex by blowing up to hyperplanes finitely many walls associated with convex-cocompact subgroups of $A_{\G}$, and then possibly collapsing some hyperplanes of the Salvetti complex. See Theorem~\ref{coarse median vs hyperplanes}(4) for the equivalence with the above formulation of Theorem~\ref{RAAGs intro}. When $A_{\G}$ is not twistless, more complicated procedures are required to reach arbitrary cocompact cubulations (e.g.\ precomposition with an automorphism of $A_{\G}$), because we need to be able to modify the induced coarse median structure.

We will actually prove a more general version of Theorem~\ref{RAAGs intro} that applies to \emph{all} right-angled Artin groups. This is best described in terms of the product decomposition
\[\Aut(A_{\G})=T(A_{\G})\cdot U(A_{\G}),\]
where $T(A_{\G})$ and $U(A_{\G})$ are, respectively, the \emph{twist subgroup} and the \emph{untwisted subgroup}. The subgroup $U(A_{\G})$ consists of all those automorphisms that fix the coarse median structure induced by the Salvetti complex \cite{Fio10a}. In Theorem~\ref{RAAGs main}, we will show that $T(A_{\G})$ acts transitively and with finite stabilisers on cubical coarse median structures with ``decomposable flats'' (Definition~\ref{decomposable flats defn}).

Theorem~\ref{RAAGs intro} is a consequence of this last result: when $A_{\G}$ is twistless, all cocompact cubulations have decomposable flats and $T(A_{\G})$ is trivial. At the opposite end of the spectrum, not all cubulations of $\Z^n$ have decomposable flats and, in fact, there are infinitely many $\Out(\Z^n)$--orbits of cubical coarse median structures on $\Z^n$.

\subsection{Right-angled Coxeter groups}

Recall that the right-angled Coxeter group $W_{\G}$ is the quotient of $A_{\G}$ by the normal closure of the set of squares of standard generators $\{v^2\mid v\in\G^{(0)}\}$.

On the one hand, right-angled Coxeter groups tend to behave more rigidly than general right-angled Artin groups because all their automorphisms  preserve the coarse median structure induced by the Davis complex \cite[Proposition~A(2)]{Fio10a}. On the other, Coxeter groups are granted some additional flexibility by the fact that their standard generators have finite order. One exotic example to keep in mind is the ``$\tfrac{\pi}{4}$--rotated'' action of the product of infinite dihedrals $D_{\infty}\x D_{\infty}$ on the square tiling of $\R^2$: each factor preserves one of the two lines through the origin forming an angle of $\tfrac{\pi}{4}$ or $\tfrac{3\pi}{4}$ with the coordinate axes, and each reflection axis is parallel to one of these two lines. Note that products of dihedrals can sometimes act in this way even within cubulations of larger, directly irreducible Coxeter groups (Example~\ref{exotic example}).

Our second main result provides two natural conditions that prevent these exotic behaviours from occurring. An action on a cube complex $G\acts X$ is \emph{strongly cellular} if, whenever an element $g\in G$ preserves a cube of $X$, it fixes it pointwise.

\begin{thmintro}\label{RACGs intro}
Let $W_{\G}\acts X$ be a cocompact cubulation of a right-angled Coxeter group. Suppose that it satisfies {\bf at least one} of the following:
\begin{enumerate}
\item the action is strongly cellular;
\item for all $x,y\in\G^{(0)}$, the subgroup $\langle x,y\rangle$ is convex-cocompact in $X$.
\end{enumerate}
Then $W_{\G}\acts X$ induces the same coarse median structure on $W_{\G}$ as the Davis complex.
\end{thmintro}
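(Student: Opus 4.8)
The plan is to apply the criterion of \cite{Fio10e} recorded in Theorem~\ref{coarse median vs hyperplanes}: it suffices to exhibit $X$ as a cubulation obtained from the Davis complex $W_{\G}\acts\Sigma_{\G}$ by blowing up finitely many walls of convex-cocompact subgroups and then collapsing some hyperplanes, equivalently to show that $W_{\G}\acts X$ and $W_{\G}\acts\Sigma_{\G}$ have the same convex-cocompact subgroups. The first step is to reduce both hypotheses to the single statement that \emph{every standard dihedral subgroup $\langle x,y\rangle$, for $x,y\in\G^{(0)}$, is convex-cocompact in $X$}. This is exactly hypothesis~(2) (the cases where $x,y$ are equal or adjacent being trivial, as then $\langle x,y\rangle$ is finite). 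Under hypothesis~(1), I would first observe that strong cellularity forces $\Fix(g)$ to be a convex \emph{subcomplex} of $X$ for every finite-order $g$: if $g$ fixes an interior point of a cube $C$ then $g$ preserves $C$, hence fixes it pointwise. Applying this to the reflections $r$ contained in $\langle x,y\rangle\cong D_\infty$, the convex hull $Q$ of $\bigcup_{r}\Fix(r)$ is a $\langle x,y\rangle$--invariant convex subcomplex of $X$; a $W_{\G}$--equivariant quasi-isometry $\phi\colon X\to\Sigma_{\G}$ sends each $\Fix(r)$ coarsely onto the wall of $r$ (a coarsely $r$--invariant set lies uniformly close to $\Fix_{\Sigma_{\G}}(r)$), so $\phi$ restricts to a $\langle x,y\rangle$--equivariant coarse equivalence from $Q$ onto the parabolic subcomplex $\hull(W_{\{x,y\}})\sq\Sigma_{\G}$, which is $\langle x,y\rangle$--cocompact; hence so is $Q$. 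Strong cellularity is used precisely here, to prevent a reflection from acting as a ``diagonal'' rotation as in the $\tfrac{\pi}{4}$--rotated action of $D_\infty\x D_\infty$.

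Assume now that every standard dihedral is convex-cocompact in $X$. The finite and reducible cases, and the case of a cone vertex (which splits off a central $\Z/2$ direct factor whose coarse reflection wall in $X$ is located as above), are handled separately, so I may take $W_{\G}$ infinite and directly irreducible. Then every wall of $\Sigma_{\G}$ is, up to the $W_{\G}$--action, the wall $\mf w_s$ of a reflection $s\in\G^{(0)}$, and $\mf w_s$ is crossed by the parabolic subcomplex $\hull(W_{\{s,t\}})$ for any $t\in\G^{(0)}$ not adjacent to $s$ (such $t$ exists because $\G$ is not a join). Convex-cocompactness of $\langle s,t\rangle$ in $X$ produces a convex core $C_{s,t}\sq X$ quasi-isometric to $\R$ on which $\langle s,t\rangle$ acts cocompactly; the hyperplanes of $X$ crossing $C_{s,t}$ form finitely many $\langle s,t\rangle$--orbits and, under the quasi-isometry $C_{s,t}\to\hull(W_{\{s,t\}})$, correspond exactly to the Davis walls of $W_{\{s,t\}}$. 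Translating by $W_{\G}$, every Davis wall is thus coarsely realised by a hyperplane of $X$. Conversely --- and this is the crux --- every hyperplane $\mf h$ of $X$ must coarsely coincide with a single Davis wall: the $W_{\G}$--translates of the cores $C_{s,t}$ cover $W_{\G}$, and restricting $\mf h$ to such a translate it cuts that quasi-line in a bounded set and so sits at the position of one Davis wall of the relevant dihedral; a consistency argument across overlapping cores --- using that $W_{\G}$ is connected enough through these cores --- forces the two halfspaces of $\mf h$ to agree coarsely with those of one fixed Davis wall.

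Having matched the hyperplane structures of $X$ and $\Sigma_{\G}$ coarsely and $W_{\G}$--equivariantly --- that is, having exhibited $X$ as a blow-up of $\Sigma_{\G}$ along walls of convex-cocompact subgroups followed by a collapse of hyperplanes --- Theorem~\ref{coarse median vs hyperplanes} shows that $W_{\G}\acts X$ and $W_{\G}\acts\Sigma_{\G}$ induce the same coarse median structure on $W_{\G}$. \textbf{The main obstacle} is the converse inclusion in the second paragraph, namely ruling out ``exotic'' hyperplanes of $X$: this forces one to use the convex-cocompactness of \emph{all} standard dihedrals simultaneously together with the combinatorial geometry of $W_{\G}$, and to assemble the per-dihedral estimates with uniform constants. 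This is exactly the step that eliminates cubulations such as the $\tfrac{\pi}{4}$--rotated action --- which, consistently with the theorem, fails both hypotheses, since there the dihedral $D_\infty$ factors are not convex-cocompact and the reflections flip cubes.
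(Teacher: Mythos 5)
There is a genuine gap, and it lies at the step you yourself flag as the crux. To apply Theorem~\ref{coarse median vs hyperplanes} you do \emph{not} need (and cannot have) a coarse matching of the hyperplanes of $X$ with the Davis walls: condition~(3) of that theorem only asks that the stabilisers of the hyperplanes of \emph{one} of the two cubulations be convex-cocompact in the other, and the tractable direction is to take the Davis complex as the ``known'' side, i.e.\ to prove that the subgroups $W_{\lk(v)}$ (equivalently the reflection centralisers $\langle v\rangle\x W_{\lk(v)}$) are convex-cocompact in $X$. Your intermediate claim that ``every hyperplane $\mf{h}$ of $X$ must coarsely coincide with a single Davis wall'' is false under hypothesis~(2): for a hyperbolic $W_{\G}$ (say $\G$ a $5$--cycle) every cocompact cubulation satisfies~(2), yet one can cubulate using the Davis walls together with an extra orbit of walls dual to some other quasi-convex codimension-one subgroup, producing hyperplanes whose coarse position is not that of any Davis wall (this is exactly the blow-up flexibility the theorem is designed to tolerate). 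The forward matching is equally unjustified: a hyperplane of $X$ crossing the convex core of $\langle s,t\rangle$ at the ``position'' of the wall of $s$ inside that quasi-line gives no control on its global stabiliser, which has no reason to contain $W_{\lk(s)}$ cocompactly. Nothing in your proposal addresses the actual difficulty of Theorem~\ref{RACGs intro}(2), namely promoting convex-cocompactness of the rank-two standard subgroups to convex-cocompactness of the link subgroups $W_{\lk(v)}$; in the paper this consumes Sections~\ref{nqc sect}--\ref{std cms RACG sect} and requires median-cocompactness of centralisers (Proposition~\ref{centralisers are median-cocompact}), the uniformly non-quasi-convex ray theorem (Theorem~\ref{uniformly non-qc ray}), the Morse criterion of Lemma~\ref{Morse path}, the dichotomy of Proposition~\ref{cc or commutes}, Lemma~\ref{lem:cc_products}, and an induction on $\#\G^{(0)}$ (Theorem~\ref{std RACG thm}).

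Your treatment of hypothesis~(1) is also not sound as written, and it is not the paper's route. You attempt to reduce~(1) to~(2) by taking the convex hull $Q$ of $\bigcup_r\Fix_X(r)$ over the reflections $r\in\langle x,y\rangle$ and transporting it through an equivariant quasi-isometry; but convex hulls are not controlled by quasi-isometries, and the cocompactness of $\langle x,y\rangle\acts Q$ is precisely the kind of quasi-convexity statement that cannot be read off from the image of the fixed sets (a priori $Q$ could be far larger than any neighbourhood of $\bigcup_r\Fix_X(r)$). The paper instead proves~(1) directly: strong cellularity makes $\Fix(f)$ a convex subcomplex on which $Z_{W_\G}(f)$ acts cocompactly (Lemma~\ref{universally cc}), the hyperplane stabilisers of the graph-product/Davis complex are commensurable to such centralisers of finite-order elements, and Theorem~\ref{coarse median vs hyperplanes} then applies (Proposition~\ref{centraliser hyp-stab}, Theorem~\ref{sc rigidity 2}); no reduction to~(2) and no analysis of the dihedral subgroups is needed.
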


Item~(1) of Theorem~\ref{RACGs intro} holds more generally for all graph products of finite groups (Theorem~\ref{sc rigidity 2}). These groups also have a preferred cocompact cubulation due to Davis \cite{Davis-buildings}, which is always strongly cellular. We will refer to this cubulation as the \emph{graph-product complex} in this paper, to avoid confusion in the Coxeter case\footnote{Viewing a right-angled Coxeter group $W_{\G}$ as a graph product of order--$2$ groups, its ``graph-product complex'' is the first cubical subdivision of the ``usual Davis complex'' for $W_{\G}$, which is the $\CAT$ cube complex having the standard Cayley graph of $W_{\G}$ as its $1$--skeleton. Importantly, the $W_{\G}$--action on the Davis complex is \emph{not} strongly cellular, while the action on the graph-product complex is. We have good reason to work with both complexes associated with $W_{\G}$ in this paper, depending on the section, so we prefer not to call them both ``the Davis complex''.}. 

Item~(2) of Theorem~\ref{RACGs intro} has nontrivial content only when $x$ and $y$ are not adjacent. Item~(2) is the result requiring the most technical argument of the paper, which will occupy the entirety of Sections~\ref{nqc sect} and~\ref{std cms RACG sect}. Importantly, Item~(2) implies that cubical coarse median structures that are standard on maximal virtually-abelian subgroups of $W_{\G}$ must necessarily coincide with the one induced by the Davis complex. 

As a consequence of Theorem~\ref{RACGs intro}(2) and the cubical flat torus theorem \cite{WW}, we obtain many examples of right-angled Coxeter groups satisfying coarse cubical rigidity. Say that an induced square $\Delta\sq\G$ is \emph{bonded} if there exists another induced square $\Delta'\sq\G$ such that the intersection $\Delta\cap\Delta'$ has exactly $3$ vertices.

\begin{corintro}\label{loose cor intro}
If every induced square in $\G$ is bonded, then $W_{\G}$ satisfies coarse cubical rigidity.
\end{corintro}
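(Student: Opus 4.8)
The plan is to combine the cubical flat torus theorem \cite{WW} with the consequence of Theorem~\ref{RACGs intro}(2) noted above: any cubical coarse median structure which is standard on every maximal virtually-abelian subgroup of $W_\G$ must coincide with the one induced by the Davis complex. So let $W_\G\acts X$ be an arbitrary cocompact cubulation, with induced cubical coarse median $\mu_X$; it suffices to show that $\mu_X$ restricts to the standard coarse median on each maximal virtually-abelian subgroup $A\leq W_\G$. When $A$ is virtually cyclic there is nothing to do, since a virtually-cyclic group carries a unique coarse median structure. Otherwise, after passing to a finite-index subgroup and conjugating, $A=\langle a_1,b_1\rangle\x\cdots\x\langle a_n,b_n\rangle$ with $n\geq 2$, where the pairs $\{a_i,b_i\}$ are non-edges of $\G$ lying in distinct parts of an induced complete multipartite subgraph $\Lambda\sq\G$ (so $a_i,b_i$ are non-adjacent but each adjacent to every $a_j,b_j$ with $j\neq i$). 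The cubical flat torus theorem, applied to the maximal virtually-abelian subgroup $A$, furnishes a convex subcomplex $Y\sq X$ on which $A$ acts properly and cocompactly, and $\mu_X|_A$ is the median of $Y$ transported to $A$ along an orbit map. So the task is to prove that $Y$ carries a \emph{standard} cube structure --- equivalently, that each coordinate direction $\langle a_ib_i\rangle$ is convex-cocompact inside $Y$.

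This is where bondedness enters. Say the index $i$ is \emph{pinned} if, for some $j\neq i$, the induced square $\{a_i,b_i,a_j,b_j\}\sq\Lambda$ admits a bonding partner $\Delta'$ whose three common vertices with it include $a_i$ and $b_i$; thus $\Delta'=\{a_i,b_i,x,f\}$ for some $x\in\{a_j,b_j\}$ and a vertex $f$ with $f\sim a_i$, $f\sim b_i$, $f\not\sim x$ and $f\notin\{a_i,b_i,a_j,b_j\}$. Then $a_ib_i$ is centralised both by $\langle a_j,b_j\rangle\cong D_\infty$ (from the original square) and by $\langle x,f\rangle\cong D_\infty$ (from $\Delta'$), two distinct infinite dihedral subgroups sharing the finite group $\langle x\rangle$; hence $a_ib_i$ is centralised by the special subgroup $W_{\{a_j,b_j,f\}}$, which --- being the right-angled Coxeter group of a graph on three vertices with at most one edge --- contains a non-abelian free group. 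So $\langle a_ib_i\rangle$ sits inside $W_\G$ as the central $\Z$-direct-factor of a subgroup isomorphic to $\Z\times F_2$. The plan is to exploit this --- via the cubical structure of the minset of the loxodromic element $a_ib_i$, and the fact that the relevant centraliser respects the product structure of that minset --- to deduce that the combinatorial axis of $a_ib_i$ splits off $Y$ as a direct factor, so that the $i$-th coordinate direction of $Y$ is an honest combinatorial line, axis-parallel in $Y$.

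It then remains to see that at most one index can fail to be pinned, which is purely combinatorial: if $i$ and $i'$ were both unpinned, then the induced square $S=\{a_i,b_i,a_{i'},b_{i'}\}\sq\Lambda$, which is bonded by hypothesis, would have a partner meeting it in three vertices; but each of the four triples of $S$ contains either $\{a_i,b_i\}$ or $\{a_{i'},b_{i'}\}$, so that partner witnesses that $i$ or $i'$ is pinned, a contradiction. Hence at least $n-1$ of the coordinate directions of $Y$ are axis-parallel; splitting those off as direct factors leaves a complementary convex factor quasi-isometric to $\R$, which forces the last coordinate direction to be axis-parallel as well. Therefore $Y$ is a standard $\Z^n$-flat, $\mu_X|_A$ is the standard coarse median, and --- by the consequence of Theorem~\ref{RACGs intro}(2) recalled above --- $\mu_X$ agrees with the coarse median of the Davis complex. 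As $X$ was arbitrary, $W_\G$ has a unique cubical coarse median structure.

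The step I expect to be the main obstacle is the cubical geometry underlying the second paragraph: turning ``$\langle a_ib_i\rangle$ is a central $\Z$-direct-factor of a $\Z\times F_2$'' into the genuinely cubical conclusion that the combinatorial axis of $a_ib_i$ is a direct factor of the flat $Y$. This requires control over minsets of loxodromic elements of cocompactly cubulated groups (their product structure and the action of the relevant centraliser on the fibre), an argument that $\mathrm{Min}_Y(a_ib_i)$ remains $A$-cocompact and coarsely exhausts $Y$, and care with the fact that $\langle a_ib_i\rangle$ itself need not be convex-cocompact in all of $X$. One should also confirm that the structure theory of maximal virtually-abelian subgroups of $W_\G$ --- including the case where a part of $\Lambda$ has more than two vertices --- matches exactly what the consequence of Theorem~\ref{RACGs intro}(2) needs.
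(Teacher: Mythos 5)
The central step of your argument is missing, and you say so yourself: nothing in the proposal actually shows that a ``pinned'' diagonal direction $\langle a_ib_i\rangle$ is convex-cocompact (that its axis splits off the flat $Y$ as a factor). Knowing that $\langle a_ib_i\rangle$ is the central $\Z$--factor of a copy of $\Z\x F_2$ does not obviously force this, and the paper's toolkit as you deploy it does not deliver it: Lemma~\ref{lem:cc_products} presupposes that the $\Z$--factor is already convex-cocompact, Lemma~\ref{cc properties}(4) needs a non-transversality hypothesis, and centralisers in general are only \emph{median}-cocompact (Proposition~\ref{centralisers are median-cocompact}), which is far weaker than what you need (compare Example~\ref{never-cc centraliser}). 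So the geometric half of your ``pinning'' mechanism is an unproved (and nontrivial) claim, not a routine verification. There is also a secondary gap at the very start: you apply the cubical flat torus theorem to an arbitrary maximal virtually-abelian subgroup $A$, but such subgroups need not be \emph{highest}, and the paper explicitly warns that they may fail to be convex-cocompact in some cocompact cubulations; Lemma~\ref{cc properties}(3) only covers highest subgroups, which is why the argument should be run on maximal hyperoctahedra $W_\Lambda\simeq D_\infty^n$ rather than on maximal virtually-abelian subgroups.

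The paper's proof avoids minsets and centralisers entirely. For a maximal hyperoctahedron $\Lambda\sq\G$ (highest, hence convex-cocompact in any cocompact cubulation $Y$), Proposition~\ref{cms on D^n} says the induced structure on $W_\Lambda$ can only fail to be standard through a ``$\tfrac{\pi}{4}$--rotated'' square $\Delta\sq\Lambda$, in which $W_\Delta$ \emph{is} convex-cocompact in $Y$ but the two diagonal $D_\infty$'s are not. Since $\Delta$ is bonded, Lemma~\ref{lem:loose_equivalence}(2) provides another maximal hyperoctahedron $\Lambda'$ with $W_{\Delta\cap\Lambda'}$ infinite and proper in $W_\Delta$; as $W_{\Lambda'}$ and $W_\Delta$ are both convex-cocompact, so is their intersection (Lemma~\ref{cc properties}(1)), and this intersection is commensurable to one of the diagonals --- a contradiction. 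Standardness on every maximal hyperoctahedron then gives convex-cocompactness of every $\langle x,y\rangle$ with $x,y\in\G^{(0)}$, and Theorem~\ref{std RACG thm} concludes. In other words, the job you wanted the $\Z\x F_2$ centraliser and minset analysis to do is done in the paper by intersecting two convex-cocompact subgroups supplied by the classification of coarse medians on $D_\infty^n$; your combinatorial observation that any three vertices of a square contain a diagonal pair is essentially the implication (4)$\Ra$(2) of Lemma~\ref{lem:loose_equivalence}, but without the classification step and the intersection argument the proof does not go through.
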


Bonded squares are important because they are precisely those squares $\Delta\sq\G$ for which the standard coarse median structure is forced upon the virtually abelian subgroup $W_{\Delta}\leq W_{\G}$ by its intersection pattern with highest abelian subgroups of $W_{\G}$. On the other hand, when $\Delta$ is not bonded, there are no obvious restrictions on the coarse median structure on $W_{\Delta}$. Thus, it would not be unreasonable to expect the reverse arrow in Corollary~\ref{loose cor intro} to also hold true, though we were unable to show this.

We emphasise that there are plenty of right-angled Coxeter groups that do not satisfy coarse cubical rigidity. A one-ended, irreducible example is provided by the left-hand graph in Figure~\ref{intro fig} (see Example~\ref{exotic example}). However, we do suspect that all right-angled Coxeter groups will satisfy a slightly weaker form of rigidity, namely the following.

\begin{conj*}
Let $W_{\G}$ be a right-angled Coxeter group.
\begin{enumerate}
\item[(a)] There are only finitely many cubical coarse median structures on $W_{\G}$.
\item[(b)] Each cubical coarse median structure on $W_{\G}$ is completely determined by its restriction to maximal parabolic virtually-abelian subgroups.
\end{enumerate}
\end{conj*}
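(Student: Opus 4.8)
\medskip

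\noindent\emph{A strategy towards the conjecture.} As this is recorded as a conjecture, we only sketch the line of attack we would follow. Throughout we use the result of \cite{Fio10e} recalled above to pass freely between coarse median structures and their sets of convex-cocompact subgroups, and Theorem~\ref{coarse median vs hyperplanes}(4) to encode a cocompact cubulation by its collection of ``walls'', the cosets of its codimension-one convex-cocompact subgroups, so that two cubulations induce the same coarse median structure exactly when they have the same walls. Let $\mc{D}$ denote the (finite) set of $W_{\G}$--conjugacy classes of maximal parabolic virtually-abelian subgroups: each such subgroup has the form $W_{\Delta}=W_K\times D_{\infty}^{\,k}$ with $W_K$ finite, where $\Delta\sq\G$ is a maximal induced join of a clique with $k$ pairs of non-adjacent vertices, and it contains $\Z^{k}$ with finite index.

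\medskip

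\noindent\emph{Part (b).} The plan is to prove a \emph{relative} version of Theorem~\ref{RACGs intro}(2). Let $W_{\G}\acts X$ and $W_{\G}\acts Y$ be cocompact cubulations whose induced coarse medians agree after restriction to every $W_{\Delta}\in\mc{D}$. By the cubical flat torus theorem \cite{WW}, each finite-index $\Z^{k}\leq W_{\Delta}$ preserves a cocompact flat $F_X\sq X$ and a cocompact flat $F_Y\sq Y$; applying \cite{Fio10e} to the cubulations $W_{\Delta}\acts F_X$ and $W_{\Delta}\acts F_Y$ of $W_{\Delta}$, the hypothesis says exactly that these flats carry matching ``coordinate directions'', and hence that the traces of the walls of $X$ and of $Y$ on these flats are in canonical bijection. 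The technical core of Sections~\ref{nqc sect} and~\ref{std cms RACG sect} is a mechanism that propagates the \emph{standardness} of a coarse median along chains of parabolic flats and the convex-cocompact subgroups that link them; here one would instead propagate the \emph{agreement} of $X$ and $Y$ along the same chains. Since $\G$ is connected and, as already used for bonded squares in Corollary~\ref{loose cor intro}, the coarse median structure is governed by its intersection pattern with the highest abelian subgroups, matching the wall-bijections across flats that share a codimension-one sub-flat should force the full wall sets of $X$ and $Y$ to coincide, that is $\mu_X=\mu_Y$. This proves (b).

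\medskip

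\noindent\emph{Part (a).} Granting (b), a cubical coarse median structure is determined by the tuple of its restrictions to the members of $\mc{D}$; since $\mc{D}$ is finite, it is enough to show that for each fixed $W_{\Delta}$ the set $\{\,\mu_X|_{W_{\Delta}}:W_{\G}\acts X\text{ a cocompact cubulation}\,\}$ is finite. This is the crux, and it genuinely uses the ambient action, the analogous assertion being false for $\Z^{k}$ in isolation (Proposition~\ref{cms on Z^n}). Two sources of rigidity should combine to bound this set. First, the mechanism behind Corollary~\ref{loose cor intro} forces the standard structure on the ``bonded part'' of $W_{\Delta}$, via the way $F_X$ must fit together with the flats of neighbouring members of $\mc{D}$. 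Second, torsion is rigid: each reflection in a $D_{\infty}$--factor of $W_{\Delta}$ acts on $F_X$ as a genuine cubical reflection, so its axis lies in one of the finitely many reflection directions of the product cube-structure on $F_X$, which confines the remaining ``exotic rotation'' of $F_X$ --- of the kind exhibited by the $\tfrac{\pi}{4}$--rotated $D_{\infty}\x D_{\infty}$ --- to finitely many possibilities. Together these yield finitely many tuples, hence finitely many structures.

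\medskip

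\noindent\emph{The main obstacle.} The serious difficulty lies in the relativisation in Part (b): the proof of Theorem~\ref{RACGs intro}(2) is already the most delicate argument in the paper, and it is far from clear that ``agreement of two cubulations along every parabolic flat'' propagates as robustly as ``standardness'' does, since a priori $X$ and $Y$ might differ on a convex-cocompact subgroup that no single parabolic flat detects. A secondary obstacle is the finiteness step in Part (a): one must exclude an infinite family of pairwise-incompatible rotations of a non-bonded flat coexisting inside a single cocompact cubulation of $W_{\G}$, a rigidity statement with no counterpart in the purely abelian setting and so requiring genuinely new input. We expect both steps to be manageable when $\G$ has few non-bonded squares, but the uniform argument the conjecture calls for is exactly what remains out of reach.
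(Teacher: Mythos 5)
This statement is stated in the paper as an open conjecture: the paper does not prove it, and neither do you. Your proposal is explicitly a strategy sketch, and the whole of part (b) — the ``relative'' version of Theorem~\ref{RACGs intro}(2) that would propagate agreement of two cubulations along parabolic flats — is left as an acknowledged hope rather than an argument. The paper itself records, in the remark following Theorem~\ref{std RACG thm}, the concrete obstruction that your sketch does not engage with: the only available tool is Theorem~\ref{coarse median vs hyperplanes}, which requires the hyperplane-stabilisers of one cubulation to be convex-cocompact in the other, and for an \emph{exotic} cubulation these stabilisers are not centralisers, so Proposition~\ref{centralisers are median-cocompact} gives no median-cocompactness and the machinery of Theorem~\ref{uniformly non-qc ray} cannot even be started. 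Saying that ``agreement should propagate along chains of parabolic flats the way standardness does'' ignores exactly this point, since the propagation in Sections~\ref{nqc sect}--\ref{std cms RACG sect} is powered by median-cocompactness of centralisers, which is special to the standard structure's hyperplane-stabilisers.

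The one thing the paper does prove about the conjecture is the implication (b)$\Rightarrow$(a) (Remark~\ref{conj rmk}), and here your part (a) both misjudges the difficulty and takes a needlessly heavy route. Granting (b), finiteness is immediate: there are finitely many conjugacy classes of maximal parabolic virtually-abelian subgroups $P_i$, each is $[\mu]$--quasi-convex for every cubical $[\mu]$ (Lemma~\ref{cc properties}(3) and Proposition~\ref{prop:cc=qc}), so each inherits a cubical coarse median structure, and each $P_i$ is commensurable to $D_{\infty}^{m_i}$, whose set of cubical coarse median structures is finite by Proposition~\ref{cms on D^n} — independently of how the flat sits in the ambient cubulation. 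So the ``crux'' you describe, excluding infinitely many rotated flats inside a single cubulation, does not arise: your false analogy with $\Z^k$ overlooks that the relevant subgroups carry reflections, and the torsion rigidity you gesture at is precisely Proposition~\ref{cms on D^n}, already established. No bonded-square analysis or ambient input is needed for (a) given (b); the genuinely open content of the conjecture is (b) alone, and there your proposal supplies no new mechanism.
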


In view of Theorem~\ref{RACGs intro}(2), Item~(b) of the Conjecture should look rather plausible. We show in Remark~\ref{conj rmk} that (b)$\Ra$(a).

\begin{figure}
    \centering
    \includegraphics[scale=.35]{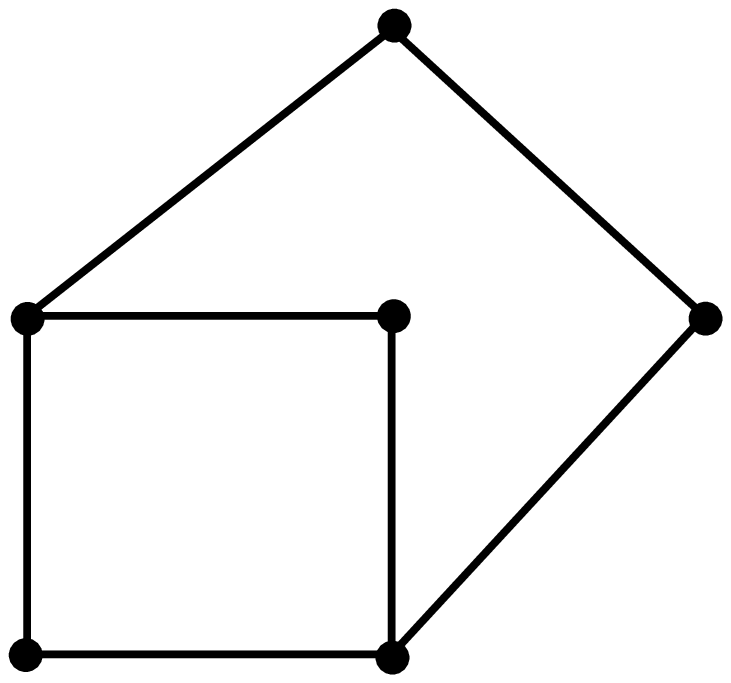}
    \put(-102,32){$\Lambda_1=$}
    \put(180,32){$=\Lambda_2$}
    \hspace{2.5cm}
    \includegraphics[scale=.35]{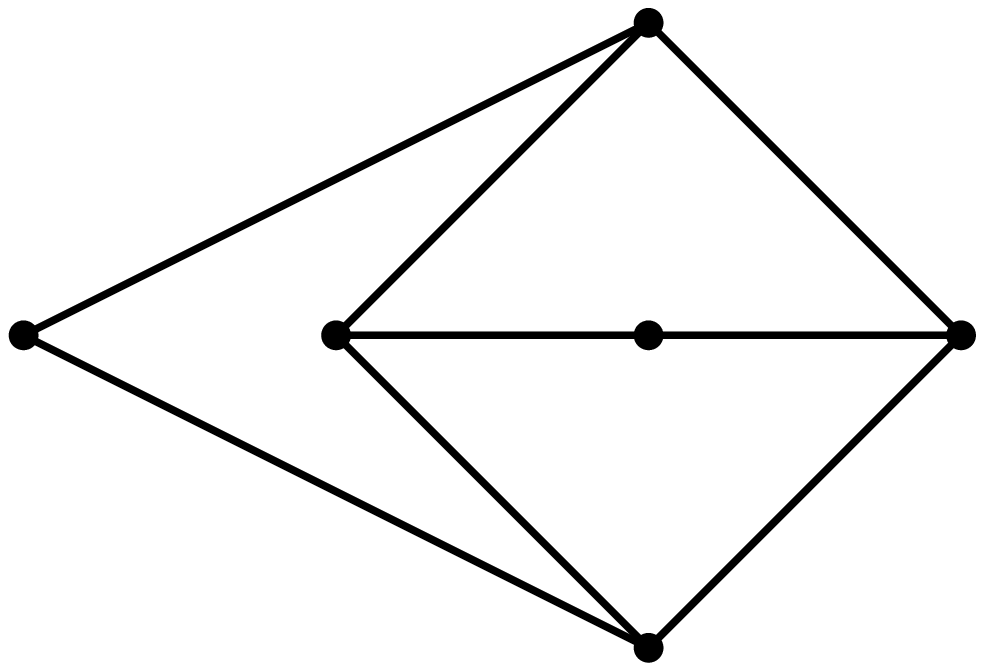}
    \caption{The right-angled Coxeter group $W_{\Lambda_2}$ satisfies coarse cubical rigidity, while $W_{\Lambda_1}$ does not. Every square in $\Lambda_2$ is bonded, while the square in $\Lambda_1$ is ``loose''.}
    \label{intro fig}
\end{figure}

\subsection{Consequences for automorphisms}

For a cocompactly cubulated group $G$, the outer automorphism group $\Out(G)$ naturally permutes cubical coarse median structures on $G$.

Automorphisms that preserve some cubical coarse median structure on $G$ enjoy exceptionally nice properties. For instance, their fixed subgroups are finitely generated, undistorted and cocompactly cubulated \cite[Theorem~B]{Fio10a}, properties that can drastically fail for general automorphisms. In addition, a cubical version of Nielsen realisation works in this context \cite[Corollary~H]{Fio10a}.

All automorphisms of right-angled Coxeter groups preserve a cubical coarse median structure (the one of the Davis complex), while this can fail for right-angled Artin groups. Thus, it is natural to wonder if this weaker form of cubical rigidity extends to two larger classes of groups: graph products of finite groups and general Coxeter groups. 

We show that this is indeed the case for these graph products and for those Coxeter groups $W$ whose Niblo--Reeves cubulation is cocompact \cite{Niblo-Reeves}. By \cite{Williams,Caprace-Muhlherr05}, the latter happens exactly when $W$ does not have any irreducible affine parabolic subgroups of rank $\geq 3$. Note that, when the Niblo--Reeves cubulation is not cocompact, it is not even known whether $W$ admits a coarse median structure (except in a few cases that turn out to be cocompactly cubulated for other reasons\footnote{For instance, the $(2,4,4)$ Coxeter group is cocompactly cubulated, as it coincides with the automorphism group of the standard square tiling of $\R^2$. However, its Niblo--Reeves cubulation is $4$--dimensional and it is not cocompact.}).

\begin{thmintro}\label{autom intro}
\begin{enumerate}
\item[]
\item Let $G$ be a graph product of finite groups. The coarse median structure on $G$ induced by the graph-product complex is fixed by $\Out(G)$.
\item Let $W$ be a Coxeter group with cocompact Niblo--Reeves cubulation. The coarse median structure on $W$ induced by the Niblo--Reeves cubulation is fixed by $\Out(W)$.
\end{enumerate}
\end{thmintro}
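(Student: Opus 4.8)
\medskip

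The plan is to deduce both statements from the main rigidity results already established, together with the observation that preserving a coarse median structure is an automorphism-invariant notion once we know that \emph{all} cocompact cubulations induce the same one --- or, more precisely, once we know that $\Out(G)$ acts trivially on the relevant set of cubical coarse medians. Recall that $\Out(G)$ acts on the set of cubical coarse median structures on $G$ by precomposition: if $\mu_X$ is induced by a cubulation $G\acts X$ via an equivariant quasi-isometry $f\colon G\ra X$, and $\phi\in\Aut(G)$, then $\phi^*\mu_X$ is the coarse median induced by the twisted cubulation $G\acts_\phi X$, i.e.\ the same complex $X$ with the action precomposed by $\phi$. So the content of the theorem is that $\mu_X$ and $\phi^*\mu_X$ are at bounded distance, for $X$ the graph-product complex (resp.\ Niblo--Reeves cubulation) and $\phi$ arbitrary.

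\medskip

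For Item~(1), I would argue as follows. Let $G$ be a graph product of finite groups, $X$ its graph-product complex, and $\phi\in\Aut(G)$. The twisted action $G\acts_\phi X$ is again a cocompact cubulation of $G$, and moreover it is again strongly cellular: precomposing with an automorphism does not change which elements stabilise which cubes, nor whether such stabilisations are by pointwise fixing, because $\phi$ is a bijection of $G$ and the cube-stabiliser structure only depends on $G\acts X$ through the subgroups involved --- more carefully, $g$ fixes a cube $c$ pointwise in $G\acts_\phi X$ iff $\phi(g)$ fixes $c$ pointwise in $G\acts X$, so strong cellularity of $G\acts X$ transfers verbatim. Now invoke the graph-product analogue of Theorem~\ref{RACGs intro}(1) promised in the excerpt as Theorem~\ref{sc rigidity 2}: any strongly cellular cocompact cubulation of $G$ induces the same coarse median structure as the graph-product complex. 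Applying this to both $G\acts X$ and $G\acts_\phi X$ gives $\mu_X$ and $\phi^*\mu_X$ at bounded distance, which is exactly the assertion that $\Out(G)$ fixes $\mu_X$.

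\medskip

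For Item~(2), the strategy is to reduce to the right-angled case via parabolic subgroups, since the Niblo--Reeves cubulation of a general Coxeter group $W$ is built out of the hyperplane/wall combinatorics of $W$ in a way that is compatible with passing to parabolic subgroups, and since by \cite{Williams,Caprace-Muhlherr05} cocompactness of the Niblo--Reeves cubulation is precisely the absence of irreducible affine parabolics of rank $\geq 3$. First, I would recall (or establish) that for any $\phi\in\Aut(W)$ the twisted action $W\acts_\phi X_{NR}$ is equivariantly isomorphic to the Niblo--Reeves cubulation of $W$ as abstractly presented by the Coxeter system pulled back under $\phi$; since Niblo--Reeves is canonically associated to the wall space of $W$ and automorphisms of $W$ act on that wall space (this uses that reflections are algebraically detectable, or at least that the wall structure relevant to Niblo--Reeves is preserved), it suffices to compare the coarse medians of the Niblo--Reeves cubulations coming from two different Coxeter generating sets. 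The key combinatorial input would be that two cocompact Niblo--Reeves cubulations of $W$ restrict, on each maximal virtually-abelian (equivalently, affine or spherical$\times$affine) parabolic, to the standard structure --- here one uses that such a parabolic $W_T$ has rank $\leq 2$ in the affine directions by the cocompactness hypothesis, so its Niblo--Reeves cubulation is one- or two-dimensional and forced; combined with a ``local-to-global'' principle for coarse medians on Coxeter groups (analogous to the reduction in Sections~\ref{nqc sect}--\ref{std cms RACG sect}, or a direct hierarchy/factor-system argument using that $W$ is hierarchically hyperbolic with the Niblo--Reeves structure), this pins down the coarse median globally.

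\medskip

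I expect the main obstacle to be Item~(2), specifically the ``local-to-global'' step: showing that agreement of two cubical coarse medians on all maximal virtually-abelian parabolics of $W$ forces global agreement. In the right-angled case this is exactly the hard Theorem~\ref{RACGs intro}(2), occupying two full sections, and its proof surely exploits the combinatorial simplicity of right-angled Coxeter groups (reflections in single generators, cube complexes with explicit hyperplane descriptions). For general $W$ one either needs to reprove that argument in the Niblo--Reeves setting --- which seems substantial --- or to find a softer route, e.g.\ using that any two coarse median structures on a given HHG that agree on a ``dense enough'' family of standard subgroups must agree, via a result of the Niblo--Wright--Zhang type \cite{NWZ1} upgraded from hyperbolic to hierarchically hyperbolic. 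A secondary, more technical obstacle is verifying that automorphisms of $W$ genuinely act on the Niblo--Reeves wall space in an action-compatible way (so that the twisted cubulation really is ``another Niblo--Reeves cubulation''); this should follow from the fact that reflections form a conjugacy-invariant-ish subset detectable from the group structure, but it requires care for reducible or small-rank pieces, and one may need to treat spherical and affine factors separately.
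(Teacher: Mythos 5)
Your argument for Item~(1) is correct and is exactly the paper's: the twisted action $G\acts_\varphi X$ on the graph-product complex is again a strongly cellular cocompact cubulation, so Theorem~\ref{sc rigidity 2} applies and $[\mu_X]$ is fixed.

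For Item~(2) there is a genuine gap. Your plan is to view the twisted cubulation as the Niblo--Reeves cubulation of another Coxeter generating set and then run a ``local-to-global'' argument: pin down the coarse median on maximal virtually-abelian parabolics and upgrade this to global agreement, either by redoing the (long) right-angled argument of Sections~\ref{nqc sect}--\ref{std cms RACG sect} in the Niblo--Reeves setting or by an unproven HHG-type rigidity statement. You yourself flag this step as the main obstacle, and indeed nothing in your proposal establishes it; neither is the claim that the restriction to each maximal virtually-abelian parabolic is ``forced'' justified. Moreover, your secondary worry is real but misdiagnosed: automorphisms of $W$ need \emph{not} send reflections to reflections, so one cannot argue that $\Aut(W)$ acts on the Niblo--Reeves wall space; what survives is only that the image of a reflection is an involution, and this is precisely the point the argument must address.

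The paper's proof is much shorter and avoids any local-to-global step. By Theorem~\ref{coarse median vs hyperplanes}(3), to compare $\X$ with the twisted cubulation $\X^{\varphi}$ it suffices to show that every hyperplane-stabiliser of $\X^{\varphi}$ is convex-cocompact in $\X$. By Lemma~\ref{NR wall-stabilisers} these stabilisers are, up to conjugacy, the subgroups $\varphi^{-1}(Z_W(r))=Z_W(\varphi^{-1}(r))$ with $r$ a reflection, i.e.\ centralisers of arbitrary \emph{involutions}. Richardson's classification (Lemma~\ref{involutions}) writes any involution as the longest element $w_P$ of a finite parabolic $P$, whence $Z_W(P)\leq Z_W(w_P)\leq N_W(P)$ with all inclusions of finite index; since $Z_W(P)$ is a finite intersection of reflection-centralisers, i.e.\ of hyperplane-stabilisers of $\X$, it is convex-cocompact by Lemma~\ref{cc properties}(1), and commensurability invariance (Lemma~\ref{cc properties}(2)) finishes. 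So the missing ideas in your proposal are the hyperplane-stabiliser criterion of Theorem~\ref{coarse median vs hyperplanes} applied directly to the twisted action, together with Richardson's theorem to handle centralisers of involutions that are not reflections; with these, no analogue of Theorem~\ref{RACGs intro}(2) for general Coxeter groups is needed.
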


It is worth remarking that, if $W$ is not right-angled, the Niblo--Reeves cubulation is never strongly cellular, even after subdivisions. So the techniques used for Theorem~\ref{RACGs intro}(1) cannot be applied here.
We also mention that, when $W$ is \emph{$2$--spherical} (i.e.\ when there are no $\infty$ labels in its Coxeter graph), the group $\Out(W)$ is finite by \cite{FHM,Caprace-Muhlherr07}. However, this does not directly imply that these automorphisms fix a coarse median structure on $W$ without going through Theorem~\ref{autom intro}.

\subsection{A word on proofs}

Here we briefly sketch the proofs of Theorems~\ref{RAAGs intro} and~\ref{RACGs intro}.

Let $G_{\G}$ be a right-angled Artin/Coxeter group, let $G_{\G}\acts\X_{\G}$ be the standard action on the (universal cover of the) Salvetti/Davis complex, and let $G_{\G}\acts Y$ be an arbitrary cocompact cubulation. Theorem~\ref{RAAGs intro} and Theorem~\ref{RACGs intro}(1) can be quickly deduced from the following three ingredients:
\begin{enumerate}
\setlength\itemsep{.25em}
\item[(a)] To prove that $Y$ and $\X_{\G}$ induce the same coarse median structure, it suffices to show that hyperplane-stabilisers of $G\acts\X_{\G}$ are convex-cocompact in $Y$ (Theorem~\ref{coarse median vs hyperplanes}).
\item[(b)] Hyperplane-stabilisers of $G\acts\X_{\G}$ are centralisers of finite-order elements in the Coxeter case, and direct factors of centralisers of infinite-order elements in the Artin case.
\item[(c)] Centralisers of (sufficiently high powers of) convex-cocompact, infinite-order elements are always convex-cocompact (Lemma~\ref{cc properties}(4)). Centralisers of finite-order elements are convex-cocompact in all cocompact cubulations that are also strongly cellular (Lemma~\ref{universally cc}).
\end{enumerate}

The proof of Theorem~\ref{RACGs intro}(2) is significantly more involved. It relies on the following key facts about a general group $G$, though in a less obvious way.
\begin{enumerate}
\setlength\itemsep{.25em}
\item[(d)] Centralisers of finitely generated subgroups of $G$ are \emph{``median-cocompact''} in all cocompact cubulations of $G$ (Proposition~\ref{centralisers are median-cocompact}). That is, they act cofinitely on a median subalgebra of the cube complex. 
\item[(e)] If $G\acts X$ is a cocompact cubulation and $H<G$ is a median-cocompact subgroup that is not convex-cocompact, then there exists a combinatorial ray $r\sq X$ that stays uniformly close to an $H$--orbit $\mc{O}$, but such that convex hulls of all long subsegments of $r$ go very far from $\mc{O}$, and do so uniformly in their length (Theorem~\ref{uniformly non-qc ray}).
\item[(f)] Let $W_{\Delta}\leq W_{\G}$ be right-angled Coxeter groups such that $W_{\Delta}$ is irreducible and has finite centraliser in $W_{\G}$. If a quasi-geodesic $\alpha\sq W_{\Delta}$ spends uniformly bounded time near cosets of proper parabolic subgroups of $W_{\Delta}$, then $\alpha$ is Morse in $W_{\G}$ (Lemma~\ref{Morse path}).
\end{enumerate}

It is worth remarking that, using Chepoi--Roller duality, Fact~(d) has the following general consequence, which seems new and interesting. Rather surprisingly, we are not aware of any proofs of this result that do not rely on any abstract median-algebra or coarse-median techniques.

\begin{corintro}\label{cubulated centralisers intro}
Let $G$ be a cocompactly cubulated group. 
\begin{enumerate}
\item For every finitely generated subgroup $H\leq G$, the centraliser $Z_G(H)$ is cocompactly cubulated.
\item If $G=G_1\x G_2$ and $G_2$ has finite centre, then $G_1$ is cocompactly cubulated.
\end{enumerate}
\end{corintro}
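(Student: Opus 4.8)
The plan is to deduce Corollary~\ref{cubulated centralisers intro} directly from Fact~(d) (Proposition~\ref{centralisers are median-cocompact}) combined with Chepoi--Roller duality. For part~(1), let $G \acts X$ be a cocompact cubulation and $H \leq G$ a finitely generated subgroup. By Proposition~\ref{centralisers are median-cocompact}, the centraliser $Z_G(H)$ acts cofinitely on some median subalgebra $M \sq X$. The key observation is that a median subalgebra of a $\CAT$ cube complex is itself a discrete median algebra, so by the Chepoi--Roller correspondence it is the vertex set of a (uniquely determined) $\CAT$ cube complex $X_M$; moreover the $Z_G(H)$--action on $M$ extends to a cellular action on $X_M$. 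Since $Z_G(H)$ acts cofinitely on $M = X_M^{(0)}$, this action is cocompact, and it is proper because $Z_G(H) \leq G$ and $G \acts X$ is proper (orbit points are separated in $X$, hence in $M$ with the induced metric, which is quasi-isometric to the intrinsic $\ell^1$-metric on $X_M$ by a standard argument on median subalgebras). This exhibits $Z_G(H)$ as cocompactly cubulated via $Z_G(H) \acts X_M$.

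For part~(2), suppose $G = G_1 \x G_2$ with $G_2$ having finite centre. Apply part~(1) with $H = G_2$: then $Z_G(G_2) = Z_{G_1}(G_2) \x Z_{G_2}(G_2) = G_1 \x Z(G_2)$, and since $Z(G_2)$ is finite this group is commensurable to $G_1$ (indeed $G_1$ is a finite-index subgroup, hence a cocompact lattice in the cube complex produced for $Z_G(G_2)$, or one simply restricts the action). Being cocompactly cubulated passes to finite-index subgroups, so $G_1$ is cocompactly cubulated.

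The main technical point to verify carefully — and the step I expect to be the real obstacle — is that a median subalgebra $M$ of a $\CAT$ cube complex $X$, when regarded as a discrete median algebra and recubulated via Chepoi--Roller, carries an action of $Z_G(H)$ that is both \emph{proper} and \emph{cocompact} in the cube-complex sense. Cocompactness is essentially immediate from the cofiniteness in Proposition~\ref{centralisers are median-cocompact} (finitely many orbits of vertices, hence of cubes). Properness requires knowing that the intrinsic metric on $X_M$ is quasi-isometric to the restriction of the $X$-metric to $M$; this should follow from the fact that walls of $X_M$ correspond to traces on $M$ of hyperplanes of $X$ (a wall of a median subalgebra is cut out by a halfspace of the ambient algebra), so combinatorial distance in $X_M$ agrees up to bounded error with the number of $X$-hyperplanes separating two points of $M$, which is the $X$-metric. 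I would either cite the relevant structure theory of median subalgebras (e.g.\ from the first author's earlier work referenced as \cite{Fio10e}) or spell out this wall-correspondence in a short lemma. One should also record that the action is by isometries and without inversions after passing to the first cubical subdivision if necessary, so that the quotient is genuinely a cube complex.

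Finally, it is worth flagging in the write-up — as the paper itself does — that the surprising feature of this argument is its reliance on coarse-median/median-algebra machinery rather than on direct cubical constructions; any self-contained proof would essentially have to reprove Proposition~\ref{centralisers are median-cocompact}, so I would present Corollary~\ref{cubulated centralisers intro} as a clean corollary and keep the proof to roughly the length of the two paragraphs above, deferring the median-subalgebra facts to the earlier sections where Proposition~\ref{centralisers are median-cocompact} is established.
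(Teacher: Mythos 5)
Your proposal follows essentially the same route as the paper: part~(1) is exactly Proposition~\ref{centralisers are median-cocompact} plus Chepoi--Roller duality applied to the cofinite median subalgebra, and part~(2) is the same specialisation to $H=G_2$ with $G_1$ of finite index in $Z_G(G_2)$. The only caveat is that your sketched properness argument (walls of $X_M$ being traces of hyperplanes of $X$, with distances agreeing up to bounded error) is literally valid only for \emph{edge-connected} subalgebras --- see Remark~\ref{edge-connected rmk}, where ${\rm res}_M$ is injective and $d_M=d$ exactly in that case --- so one should first thicken $M$ to an edge-connected subalgebra as in Remark~\ref{edge-connected rmk}(1), or simply invoke \cite[Lemma~4.12]{Fio10a} as the paper implicitly does.
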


Special cases of this result already appear in the literature. Item~(1) was shown by Haettel in \cite[Theorem~2.1]{Haettel} when $H$ is a sufficiently deep finite-index subgroup of an abelian subgroup of $G$, and by Genevois in \cite[Theorem~5.1]{Genevois-flattorus} when $H\cong\Z$. A special case of Item~(2) under stronger assumptions on $G_2$ follows from work of Kropholler--O'Donnell \cite[Theorem~A]{Kropholler-ODonnell}.

Regarding Item~(1), we emphasise that it is possible that no cubulation of $Z_G(H)$ can be realised as a \emph{convex} subcomplex of any cubulation of $G$. We give one such example in Example~\ref{never-cc centraliser}. Also note that Item~(2) in the corollary can fail if $G_2$ has \emph{infinite} centre: if $W$ is the $(3,3,3)$ Coxeter group, then the group $W\x\Z$ is cocompactly cubulated (Example~\ref{333xZ example}), while $W$ is not \cite{Hagen-cryst}.

\subsection{Structure of the paper}

Section~\ref{prelim sect} collects various basic facts on convex-cocompactness, median-cocompactness and (cubical) coarse median structures. Theorem~\ref{coarse median vs hyperplanes} is the most important result of the section, as it will be our main tool in all proofs of coarse cubical rigidity. We prove Corollary~\ref{cubulated centralisers intro} in Subsection~\ref{subsect:cms}.

Section~\ref{easy sect} is concerned with the simplest cases of coarse cubical rigidity. We prove Theorem~\ref{RACGs intro}(1) for general graph products of finite groups in Subsection~\ref{graph prod subsect}; Theorem~\ref{autom intro}(1) immediately follows from that.
In Subsection~\ref{vab subsect}, we study cubical coarse median structures on virtually abelian groups, also proving that Item~(b) in the Conjecture implies Item~(a). Finally, we prove Theorem~\ref{RAAGs intro} in Subsection~\ref{RAAGs subsect}.

Sections~\ref{nqc sect} and~\ref{std cms RACG sect} are devoted to the proof of Theorem~\ref{RACGs intro}(2). Section~\ref{nqc sect} is only concerned with median subalgebras of cube complexes, its main goal being Theorem~\ref{uniformly non-qc ray}, showing the existence of the ``uniformly non-quasi-convex'' rays mentioned in Item~(e) of the above sketch. Then Section~\ref{std cms RACG sect} applies this result to prove Theorem~\ref{RACGs intro}(2) and deduce Corollary~\ref{loose cor intro}.

Finally, Section~\ref{Coxeter sect} contains the proof of Theorem~\ref{autom intro}(2). Appendix~\ref{appendix} proves two basic lemmas about subsets of metric spaces with cocompact stabilisers.

\subsection{Acknowledgements}

We thank Jason Behrstock and Mark Hagen for interesting comments, Thomas Haettel for mentioning \cite{Haettel,Genevois-flattorus} in relation to Corollary~E in June 2023, and George Shaji for catching a couple of errors in an earlier version of this preprint. We are also grateful to the anonymous referees for their many helpful suggestions, particularly to one of them for proposing the terminology ``bonded square''.

\tableofcontents

%
%

\addtocontents{toc}{\protect\setcounter{tocdepth}{2}}
\section{Preliminaries}\label{prelim sect}

\subsection{Cube complexes}

We will assume a certain familiarity with $\CAT$ cube complexes and group actions on them. For an introduction to the topic, the reader can consult \cite{Roller,CS,Wise-riches,Sageev-notes}. In this subsection, we only fix terminology and notation.

A \emph{cubulation} of a group $G$ is a proper action on a $\CAT$ cube complex. We say that $G$ is \emph{cocompactly cubulated} if it admits a cocompact cubulation. All actions on cube complexes are assumed to be cellular (i.e.\ by cubical automorphisms).

An action $G\acts X$ on a cube complex is \emph{strongly cellular} if the set of fixed points of each element of $G$ is a subcomplex of $X$ (this definition is equivalent to the one given in the Introduction). The action is \emph{non-transverse} if there do not exist a hyperplane $\mf{w}$ and element $g\in G$ such that $\mf{w}$ and $g\mf{w}$ are transverse; when $G$ acts cocompactly, this is equivalent to saying that $X$ $\ell_1$--embeds equivariantly in a finite product of trees. Non-transverse actions are strongly cellular, but the converse does not hold; for instance, since every free action is strongly cellular, one can consider the universal cover of any non-positively curved cube complex with a self-intersecting hyperplane, together with the action by deck transformations.

Every $\CAT$ cube complex admits two natural metrics. The \emph{$\ell_2$--metric} $d_2$ (usually known as the $\CAT$ metric) and the \emph{$\ell_1$--metric} $d_1$, namely the induced path metric obtained by equipping every cube with its $\ell_1$--metric. The restriction of $d_1$ to the $1$--skeleton is also known as the \emph{combinatorial metric}. Whenever we speak of ``geodesics'' without any prefixes, we will always refer to $\ell_1$--geodesics contained in the $1$--skeleton. Note however that, on a few occasions, it will be useful to consider $\ell_1$--geodesics outside the $1$--skeleton. For instance, we will need the following observation in Lemma~\ref{cms on vab}, though this is not involved in the proof of any of the main results of the paper. 

\begin{lem}\label{l_2 geodesics are l_1}
Let $X$ be a $\CAT$ cube complex. Every $\ell_2$--geodesic is an $\ell_1$--geodesic.
\end{lem}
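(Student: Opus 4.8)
The plan is to exploit the fact that a $\CAT$ cube complex, equipped with its $\ell_1$--metric, is a median space, and that $\ell_2$--geodesics in a $\CAT(0)$ space are unique and vary nicely. Concretely, let $\gamma$ be an $\ell_2$--geodesic from $p$ to $q$, and let $x$ be any point on $\gamma$. It suffices to show $d_1(p,q) = d_1(p,x) + d_1(x,q)$, i.e.\ that $x$ lies ``$\ell_1$--between'' $p$ and $q$; concatenating over a partition then gives that $\gamma$ realises the $\ell_1$--distance. The natural invariant to use is the set of hyperplanes: for points $a,b$ of $X$, $d_1(a,b)$ equals the number (with multiplicity/measure, but here just the finite count since we work inside finitely many cubes) of hyperplanes separating $a$ from $b$, and $x$ is $\ell_1$--between $p$ and $q$ precisely when every hyperplane separating $p$ from $x$ also separates $p$ from $q$, and likewise no hyperplane separates $p,q$ from $x$ on both sides in a wasteful way — equivalently, the hyperplane sets satisfy $\mathcal{H}(p|x) \sqcup \mathcal{H}(x|q) = \mathcal{H}(p|q)$.

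The key geometric step is therefore: \textbf{if a hyperplane $\mathfrak{w}$ separates $p$ from $q$, and $\gamma$ is the $\ell_2$--geodesic from $p$ to $q$, then $\gamma$ crosses $\mathfrak{w}$ exactly once (transversally, through its carrier).} This is where the $\CAT(0)$ geometry does the work. A hyperplane $\mathfrak{w}$ together with its two halfspaces is a convex partition of $X$ in the $\ell_2$--metric (the halfspaces $\mathfrak{w}^\pm$, or rather their closures, and the carrier $N(\mathfrak{w}) \cong \mathfrak{w} \times [-\tfrac12,\tfrac12]$ are all $\ell_2$--convex). Since $p$ and $q$ lie in opposite open halfspaces, the unique $\ell_2$--geodesic between them must pass through the closed carrier; and because each closed halfspace is convex, the portion of $\gamma$ in $\overline{\mathfrak{w}^+}$ is a subsegment and likewise for $\overline{\mathfrak{w}^-}$, so $\gamma$ meets the ``wall'' (the midplane) in a single point. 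Hence each hyperplane separating $p$ from $q$ is crossed exactly once by $\gamma$, and — crucially — $x$ lies on one definite side of it: either between $p$ and the crossing, or between the crossing and $q$. This partitions $\mathcal{H}(p|q)$ into those hyperplanes separating $p$ from $x$ and those separating $x$ from $q$. Conversely any hyperplane separating $p$ from $x$ must separate $p$ from $q$ (if it didn't, $x$ and $q$ would be on the same side, forcing $\gamma$ to re-cross it, contradicting the single-crossing property applied to the subgeodesic from $x$ to $q$). The same for $\mathcal{H}(x|q) \subseteq \mathcal{H}(p|q)$. These two subsets are disjoint (no hyperplane separates both $p$ from $x$ and $x$ from $q$ while also... — again single crossing), so we get exactly $d_1(p,x) + d_1(x,q) = d_1(p,q)$.

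I would then close the argument by a standard approximation/limiting remark to be careful about: the above description of $d_1$ as ``number of separating hyperplanes'' is cleanest for vertices, but an $\ell_2$--geodesic generically passes through interiors of cubes, so one wants the version of the Chepoi--Roller picture valid for all points of $X$ (the $\ell_1$--metric on $X$ is the wall metric $d_1(a,b) = \sum_{\mathfrak{w}} |\mathfrak{w}^+(a) - \mathfrak{w}^+(b)|$ measured via the coordinates in each cube), which is standard and which I would cite from \cite{CS} or \cite{Roller}. Alternatively, one can reduce to a finite subcomplex: $\gamma$ is compact, hence meets only finitely many cubes, so it lies in a finite convex subcomplex and only finitely many hyperplanes are relevant; then the identity $d_1(p,q) = d_1(p,x)+d_1(x,q)$ for all $x\in\gamma$ shows $\gamma$ has $\ell_1$--length equal to $d_1(p,q)$, i.e.\ $\gamma$ is an $\ell_1$--geodesic.

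The main obstacle is purely bookkeeping: making the ``separating hyperplane'' formula for $d_1$ precise for arbitrary (non-vertex) points, and verifying the single-crossing claim rigorously using $\ell_2$--convexity of halfspaces and carriers. Neither is deep — it is exactly the content of Chepoi--Roller duality plus the fact that halfspaces are $\CAT(0)$--convex — so the proof should be short once those tools are invoked. I expect the write-up to be three or four lines citing convexity of halfspaces, the uniqueness of $\CAT(0)$ geodesics, and the hyperplane description of $d_1$.
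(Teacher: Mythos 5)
Your proposal is correct and is essentially the paper's argument: the paper notes that $(X,d_1)$ is a finite-rank median space whose walls are the parallel copies $H\times\{t\}$ of hyperplanes inside their carriers, observes these are $\ell_2$--convex, and concludes an $\ell_2$--geodesic can cross each at most once — exactly your single-crossing step combined with the measured-wall description of $d_1$ that you flag as the bookkeeping point. The only difference is presentational: you spell out the additivity $d_1(p,x)+d_1(x,q)=d_1(p,q)$ along the geodesic, while the paper compresses this into the statement that a non-$\ell_1$--geodesic must cross some median wall twice.
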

\begin{proof}
Recall that $(X,d_1)$ is a finite-rank median space. Let $\alpha\sq X$ be an $\ell_2$--geodesic. If $\alpha$ were not an $\ell_1$--geodesic, it would cross a median wall $\mf{w}$ at least twice. Every median wall has the form $H\x\{t\}$, where $H$ is a hyperplane with carrier $H\x[-\tfrac{1}{2},\tfrac{1}{2}]$ and $t\in[-\tfrac{1}{2},\tfrac{1}{2}]$.
In particular, $\mf{w}$ is $\ell_2$--convex, so it cannot be crossed twice by $\alpha$.
\end{proof}

Every $\CAT$ cube complex admits a median operator $m\colon X^3\ra X$. The median $m(x,y,z)$ is the only point lying on an $\ell_1$--geodesic between any two of the three points $x,y,z$. With the exception of the proof of Lemma~\ref{cms on vab}, we will only need the restriction of $m$ to the $0$--skeleton of $X$. If $x,y,z$ are vertices, then $m(x,y,z)$ is the only vertex with the property that no hyperplane of $X$ separates $m(x,y,z)$ from two of the vertices $x,y,z$. 

A subset $A\sq X$ is a \emph{median subalgebra} if $m(A,A,A)\sq A$. A full subcomplex $C\sq X$ is \emph{convex} if its $0$--skeleton satisfies the stronger property $m(C^{(0)},C^{(0)},X^{(0)})\sq C^{(0)}$. This notion is equivalent to convexity with respect to either the $\ell_1$-- or the $\ell_2$--metric \cite{Haglund-ss}. If $A\sq X^{(0)}$, we speak of the \emph{convex hull} of $A$, referring to the vertex set of the smallest convex subcomplex of $X$ containing $A$. We also speak of the median subalgebra \emph{generated} by $A$, referring to the smallest median subalgebra of $X$ containing $A$. We will need the following lemma stating that convex hulls and generated subalgebras can be constructed by taking medians a bounded number of times.

\begin{lem}\label{lem:bounded_iteration}
    Let $X$ be a CAT(0) cube complex of dimension $\delta$ and let $A\sq X^{(0)}$ be a subset.
    \begin{enumerate}
        \item The convex hull of $A$ is equal to the set $\mc{J}^{\delta}(A)$, where the operators $\mc{J}^i(\cdot)$ are defined inductively by $\mc{J}^0(A)=A$ and $\mc{J}^{i+1}(A)=m(\mc{J}^i(A),\mc{J}^i(A),X^{(0)})$.
        \item The median subalgebra generated by $A$ is equal to $\mc{M}^{2\delta}(A)$, where the operators $\mc{M}^i(\cdot)$ are defined inductively by $\mc{M}^0(A)=A$ and $\mc{M}^{i+1}(A)=m(\mc{M}^i(A),\mc{M}^i(A),\mc{M}^i(A))$.
    \end{enumerate}
\end{lem}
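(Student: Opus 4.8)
The plan is to prove both statements by the same strategy: show that the relevant iteration stabilises, and bound the number of steps needed for stabilisation by a dimension-counting argument based on hyperplanes. For Item~(1), first I would observe that $\mc{J}^i(A)\sq\mc{J}^{i+1}(A)$ for all $i$ (since $x=m(x,x,z)$), that each $\mc{J}^i(A)$ is contained in the convex hull $\hull(A)$ (the convex hull is closed under $m(\cdot,\cdot,X^{(0)})$ by definition), and that once $\mc{J}^{i}(A)=\mc{J}^{i+1}(A)$ the set is convex and hence equals $\hull(A)$. So it suffices to show the chain stabilises after at most $\delta$ steps. The key point is that if $\mc{J}^i(A)\neq\mc{J}^{i+1}(A)$, then a new vertex $v\in\mc{J}^{i+1}(A)\setminus\mc{J}^i(A)$ arises as $v=m(a,b,x)$ for $a,b\in\mc{J}^i(A)$ and $x\in X^{(0)}$. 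I would compare $v$ with its "gate'' $g$ (the median projection) onto the convex hull of the single newly-added median constructed at the previous step, or more cleanly, track the interval structure: each application of $\mc{J}$ can only add vertices ``one hyperplane-layer deeper'', and since any two vertices of $\hull(A)$ are separated by at most... — the cleanest version is to note that for $v\in\hull(A)$, writing $v$ via a minimal expression in medians of elements of $A$, the ``depth'' needed is at most $\delta$ because in a $\delta$-dimensional cube complex any vertex of $\hull(A)$ lies within the convex hull of $\delta+1$ (or suitably few) points of $A$ by Helly-type / Carathéodory-type results for median algebras. I expect this Carathéodory-style bound — that every point of $\hull(A)$ lies in the hull of at most $\delta+1$ points of $A$ — is the real content, and it is standard (it appears in work of Bowditch and of Chepoi) but would need to be either cited or reproved; this is the main obstacle, since getting the constant exactly $\delta$ (rather than something like $\delta+1$ or $2^\delta$) requires care about how hulls of small point sets nest.

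For Item~(2), I would reduce to Item~(1). The median subalgebra generated by $A$, call it $\langle A\rangle$, clearly contains every $\mc{M}^i(A)$ and the chain $\mc{M}^i(A)\sq\mc{M}^{i+1}(A)$ is increasing and stabilises to $\langle A\rangle$ once $\mc{M}^i(A)=\mc{M}^{i+1}(A)$. To bound the number of steps, the idea is that $\langle A\rangle$ is itself a median algebra of rank at most $\delta$, that it is ``built up'' by alternately taking convex hulls (of previously-constructed finite pieces) and then finitely many extra median operations; more precisely, one shows $\mc{M}^{2i}(A)\supseteq \mc{J}^i(A')$ for an appropriate comparison, so that $2\delta$ applications of $\mc{M}$ dominate $\delta$ applications of $\mc{J}$, which already produce the convex hull of $A$ — and iterating once more closes things off. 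Concretely I would argue: $\mc{M}^{2}(A)$ contains $m(a,b,c)$ for all triples, and two applications suffice to move from a set to the ``convex hull of its pairwise-and-triplewise medians'', so by induction $2\delta$ applications reach the convex hull of the median subalgebra generated at a coarser scale, at which point stability is forced by the rank bound exactly as in Item~(1). The bookkeeping of which set dominates which after $k$ steps is routine but fiddly; the conceptual input is again the rank-$\delta$ Carathéodory/Helly bound.

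In summary, the skeleton of the write-up is: (i) monotonicity of both chains and the observation that stabilisation yields the desired object; (ii) a Carathéodory-type lemma that every point of $\hull(A)$ lies in the convex hull of at most $\delta+1$ points of $A$ (cite Bowditch/Chepoi, or give the short hyperplane-separation proof), giving the $\delta$-step bound for $\mc{J}$; (iii) a comparison showing two $\mc{M}$-steps accomplish at least one $\mc{J}$-step-plus-closure, yielding the $2\delta$ bound for $\mc{M}$. I would expect step~(ii) to be where essentially all the work — and all the risk of an off-by-one in the constant — lies.
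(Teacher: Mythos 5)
The paper does not actually prove this lemma: part~(1) is quoted from Bowditch (\cite[Lemma~5.5]{Bow-cm}) and part~(2) from \cite[Proposition~8.2.4]{Bow-book}, so the benchmark is whether your independent sketch stands on its own, and it has genuine gaps. For part~(1), the soft steps are fine (monotonicity, $\mc{J}^i(A)\sq\hull_X(A)$, and convexity once the chain stabilises), but the entire content is the bound $\delta$ on the number of iterations, and you delegate it to a ``Carath\'eodory-type lemma'' (every vertex of $\hull_X(A)$ lies in the hull of at most $\delta+1$ points of $A$) that you neither prove nor can accurately cite: what is in Bowditch is the iteration statement itself, i.e.\ exactly the claim to be proved, not this Carath\'eodory bound, and the bound with constant $\delta+1$ is not formally a consequence of anything easier (from the lemma itself one only extracts $2^{\delta}$ points, since each join step doubles the number of generators involved). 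Even granting it, you would still need the bridge ``the hull of $k$ points is contained in $\mc{J}^{k-1}$'' (via the standard fact that the join of a point with a convex set is convex), which you only gesture at. So part~(1) reduces the statement to an unestablished statement of comparable depth, with the key step missing.

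For part~(2) the proposed comparison is wrong as stated. Every $\mc{M}^i(A)$ is contained in the median subalgebra generated by $A$, which is in general far smaller than $\hull_X(A)$: for $X=\R$ with its standard cubulation and $A=\{0,n\}$, one has $\mc{M}^i(A)=\{0,n\}$ for all $i$ while $\hull_X(A)=\{0,\dots,n\}$. Hence ``$2\delta$ applications of $\mc{M}$ dominate $\delta$ applications of $\mc{J}$, which already produce the convex hull of $A$'' cannot be correct, and no unspecified choice of $A'$ rescues the inclusion $\mc{M}^{2i}(A)\supseteq\mc{J}^i(A')$. The correct route is intrinsic: the generated subalgebra $\langle A\rangle$ equals the convex hull of $A$ computed \emph{inside} the abstract median algebra $\langle A\rangle$ (of rank $\leq\delta$), so part~(1) applied intrinsically expresses $\langle A\rangle$ as a $\delta$-fold join with third points ranging over $\langle A\rangle$ itself --- but those third points are precisely what is being constructed, and converting this circular description into the self-contained iteration $\mc{M}$ with the bound $2\delta$ is the actual content of part~(2) (this is what Bowditch's Proposition~8.2.4 accomplishes). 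That conversion, which you dismiss as ``routine but fiddly bookkeeping,'' is the missing idea; as things stand the cleanest correct write-up is the paper's own, namely a citation of the two results of Bowditch.
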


Part~(1) of the lemma is \cite[Lemma~5.5]{Bow-cm}. The bound mentioned in part~(2) was obtained in Bowditch's book \cite[Proposition~8.2.4]{Bow-book}; also see \cite[Proposition~4.1]{Bow-hulls} or \cite[Proposition~4.2]{Fio10a} for previous results of the same type.

\subsection{Convex-cocompactness}\label{subsec:cc}

Let $X$ be a $\CAT$ cube complex.

\begin{defn}
Let $H\acts X$ be an action. We say that $H$ is \emph{convex-cocompact in $X$} if there exists an $H$--invariant convex subcomplex $C\sq X$ that is acted upon cocompactly by $H$.
\end{defn}

We define the \emph{rank} of a finitely generated, virtually abelian group as the rank of its finite-index free abelian subgroups. If $G$ is a group, we say that a finitely generated, virtually abelian subgroup $A\leq G$ is \emph{highest} if it is not virtually contained in a free abelian subgroup of strictly higher rank. 

The following lemma collects the main properties of convex-cocompactness from \cite{WW}, \cite{Fio10a} and \cite{Fio10e}.

\begin{lem}\label{cc properties}
Let $G\acts X$ be a cocompact cubulation.
\begin{enumerate}
\item Finite intersections of convex-cocompact subgroups of $G$ are convex-cocompact in $X$. 
\item Convex-cocompactness in $X$ is a commensurability invariant for subgroups of $G$.
\item All highest virtually abelian subgroups of $G$ are convex-cocompact in $X$.
\item If $H\leq G$ is convex-cocompact in $X$ and the action $H\acts X$ is non-transverse, then the normaliser $N_G(H)$ is convex-cocompact in $X$.
\end{enumerate}
\end{lem}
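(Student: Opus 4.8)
The statement to prove is Lemma~\ref{cc properties}, which collects four properties of convex-cocompactness. Here is how I would organize a proof.

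\medskip

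\textbf{Overall strategy.} Each of the four items is essentially a citation-plus-translation from the references \cite{WW,Fio10a,Fio10e}, so the proof will be short, but I want to make sure the translation is honest. The key technical tool throughout is that a convex subcomplex $C \sq X$ on which $H$ acts cocompactly is itself a $\CAT$ cube complex, and that $H$ acting convex-cocompactly in $X$ is equivalent to $H$ being finitely generated together with the orbit map $H \to X$ being a quasi-isometric embedding whose image has convex hull acted on cocompactly — this is the ``quasiconvexity'' characterization. Several items reduce to statements about hyperplanes: $H$ is convex-cocompact iff the set of hyperplanes crossing $C$ is ``$H$--finite'' in the appropriate sense and $C$ is the intersection of halfspaces. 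I would state this hyperplane characterization first (citing \cite{Fio10e} or \cite{Haglund-GD}) and then handle each item.

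\medskip

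\textbf{Items (1) and (2).} For (1), given convex-cocompact subgroups $H_1, H_2$ with invariant convex subcomplexes $C_1, C_2$ acted on cocompactly, the intersection $C_1 \cap C_2$ is again convex (intersections of convex subcomplexes are convex), is $(H_1 \cap H_2)$--invariant, and — this is the one point needing care — is acted on cocompactly by $H_1 \cap H_2$; I would cite the relevant lemma in \cite{WW} (product-of-convex-subcomplexes / fellow-travelling of convex sets with cobounded stabilizers, which is exactly the kind of statement relegated to their appendix) or, alternatively, deduce it from Appendix~\ref{appendix} of the present paper, which proves ``two basic lemmas about subsets of metric spaces with cocompact stabilisers''. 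One has to also rule out $C_1 \cap C_2 = \emptyset$, but finite intersection means $H_1 \cap H_2$ has finite index in both, hence in $H_1$, so one can pass to the minimal $H_1$--invariant subcomplex inside $C_1$ — anyway this is bookkeeping. Item~(2) follows from (1) together with the observation that a finite-index subgroup of a convex-cocompact subgroup is convex-cocompact (act on the same $C$), and a finite overgroup: if $H' \leq H$ has finite index and $H'$ is convex-cocompact via $C'$, then $H \cdot C'$ is a finite union of convex subcomplexes, whose convex hull is $H$--invariant and $H$--cocompact. Again this is the content of \cite[Corollary~2.29]{WW} or similar; I would just cite it.

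\medskip

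\textbf{Items (3) and (4) — the substantive ones.} Item~(3): a highest virtually abelian subgroup $A \leq G$ is convex-cocompact in any cocompact cubulation. This is the cubical flat torus theorem of Woodhouse--Wise \cite{WW} (building on \cite{Hagen-Susse}, \cite{Haettel}): a maximal-rank abelian subgroup stabilizes a convex subcomplex isometric to a product-of-lines-style complex, acted on cocompactly. I would phrase (3) as a direct consequence of the precise statement in \cite{WW}, being careful that ``highest'' matches their hypothesis — a rank-$n$ virtually abelian subgroup not virtually contained in a higher-rank one has its flat ``top-dimensional'' in the relevant sense, so the WW flat is cocompact. Item~(4) is the delicate one and I expect it to be \textbf{the main obstacle}: the hypothesis that $H \acts X$ is non-transverse is essential, and the argument is from \cite{Fio10e}. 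The idea: since $H$ is convex-cocompact and acts non-transversely, the hyperplanes crossing the minimal $H$--invariant convex subcomplex $C$ come in $H$--finitely many orbits and, by non-transversality, $N_G(H)$ permutes these hyperplanes, hence preserves (up to finite Hausdorff distance) the convex core $C$; one then needs that $N_G(H)$ acts cocompactly on the convex hull of $H \cdot x$ for a basepoint $x$, which uses that $N_G(H)/H$ is ``finitely generated / acts cocompactly'' — this last point is where one genuinely invokes the structure theory (the Flat Torus Theorem-type splitting, or \cite[Lemma~...]{Fio10e}), and it can fail without non-transversality. I would present (4) by reducing to the statement that the ``essential core'' of $H$ in $X$ has a well-defined $N_G(H)$--cocompact neighbourhood, then cite \cite{Fio10e} for that core being preserved, and cite Appendix~\ref{appendix} for upgrading ``$N_G(H)$ coarsely preserves a cobounded-stabilizer set'' to ``$N_G(H)$ acts cocompactly on a convex subcomplex''. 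The honest thing is that (3) and (4) are not reproved here; the lemma is a convenience repackaging, and the proof should say so while pinning down exactly which external statements are used.
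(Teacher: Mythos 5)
Your overall reading is accurate: the paper's proof of this lemma is pure citation. Part~(1) is \cite[Lemma~2.7]{Fio10e}, part~(2) is immediate from Proposition~\ref{prop:cc=qc} (since $[\mu_X]$--quasi-convexity, i.e.\ $\mu(H,H,G)$ being at finite Hausdorff distance from $H$, visibly only depends on $H$ up to commensurability), part~(3) is the cubical flat torus theorem \cite[Theorem~3.6]{WW}, and part~(4) is \cite[Corollary~2.12(3)]{Fio10e}. Your treatment of (3) and (4) amounts to the same citations, which is fine; your direct arguments for (1) and (2) --- intersecting convex cores and invoking Lemma~\ref{cocompact intersections}, respectively taking the hull of $H\cdot C'$ and controlling it via the $1$--Lipschitz median and Remark~\ref{qc hulls rmk} --- are genuine alternatives to the citations and are more self-contained, at the cost of some cubical bookkeeping that the paper's one-line appeal to Proposition~\ref{prop:cc=qc} avoids entirely in the case of (2).

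There is, however, one step in your item~(1) that fails as written. You dispose of the case $C_1\cap C_2=\emptyset$ by asserting that ``finite intersection means $H_1\cap H_2$ has finite index in both''. This misreads the statement: \emph{finite intersections} means intersections of finitely many convex-cocompact subgroups, and $H_1\cap H_2$ can have infinite index in both $H_1$ and $H_2$ (it can even be trivial --- think of two hyperplane stabilisers). So your reduction to the nonempty case collapses. The disjointness issue is real but is fixed differently: choose the convex cores to share a point, for instance $C_i:=\hull_X(H_i\cdot x)$ for a common vertex $x$. Since $H_i$ is convex-cocompact, Proposition~\ref{prop:cc=qc} gives $m(H_i x,H_i x,X)\sq\mc{N}_R(H_i x)$ (using cocompactness of $G\acts X$ to pass from $G\cdot x$ to all of $X$), so Remark~\ref{qc hulls rmk} puts $\hull_X(H_i\cdot x)$ in a bounded neighbourhood of the orbit, and local finiteness of $X$ makes the $H_i$--action on it cocompact. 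Now $x\in C_1\cap C_2$, and your appeal to Lemma~\ref{cocompact intersections} goes through; iterating handles intersections of finitely many subgroups. (Alternatively, one can simply cite \cite[Lemma~2.7]{Fio10e}, as the paper does.)
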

\begin{proof}
Part~(1) is \cite[Lemma~2.7]{Fio10e}. Part~(2) is immediate from the equivalence between convex-cocompactness and coarse median quasi-convexity (Proposition~\ref{prop:cc=qc} below). Part~(3) follows from the cubical flat torus theorem \cite[Theorem~3.6]{WW}. Part~(4) is \cite[Corollary~2.12(3)]{Fio10e}.
\end{proof}

We caution the reader that the non-transversality assumption really is necessary for Lemma~\ref{cc properties}(4) to hold; see \cite[Example~2.11]{Fio10e}.

\begin{rmk}\label{rmk:cc_products}
Consider a product of $\CAT$ cube complexes $X_1\x X_2$ and an element $g\in\Aut(X_1\x X_2)$ preserving each factor of the splitting. Then $\langle g\rangle$ is convex-cocompact in $X_1\x X_2$ if and only if it is elliptic in one of $X_1,X_2$ and convex-cocompact in the other. 

Indeed, every $\langle g\rangle$--invariant convex subcomplex $C\sq X_1\x X_2$ splits as a product $C_1\x C_2$ of $\langle g\rangle$--invariant convex subcomplexes $C_i\sq X_i$. If $g$ were loxodromic in both $X_i$, then both $C_i$ would contain bi-infinite geodesics, hence $C$ would contain a copy of $\R^2$, barring cocompactness.
\end{rmk}

The following technical lemma is an important ingredient in the proof of Theorems~\ref{RAAGs intro} and~\ref{RACGs intro}(2).

\begin{lem}\label{lem:cc_products}
Consider a cocompact cubulation $G\acts X$ satisfying all of the following conditions:
\begin{itemize}
\item $G=H\x K$ and $H\simeq\Z$;
\item $H$ is convex-cocompact in $X$;
\item $K$ is generated by $k_1,\dots,k_n$, where each $\langle k_i\rangle$ is convex-cocompact in $X$.
\end{itemize}
Then $K$ is convex-cocompact in $X$.
\end{lem}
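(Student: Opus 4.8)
The plan is to exploit the product structure $G=H\x K$ together with the fact that $H\simeq\Z$ is convex-cocompact. First I would pass to the \emph{essential core} of the $H$--invariant convex subcomplex on which $H$ acts cocompactly, or rather work directly with the (combinatorial) axis of a generator of $H$: since $H\simeq\Z$ is convex-cocompact, a generator $h$ acts as a loxodromic isometry, and its combinatorial axis $A_h$ is a convex subcomplex quasi-isometric to $\R$ on which $H$ acts cocompactly. The key structural input is that $H$ is central in $G$, so $K$ preserves $A_h$ up to finite Hausdorff distance; more precisely, I would like to find a $G$--invariant convex subcomplex of $X$ that splits as a product with one factor ``absorbing'' the $H$--direction. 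The cleanest route is via the results on products of cube complexes: by the cubical flat torus theorem and standard convexity arguments (as encapsulated in Lemma~\ref{cc properties} and Remark~\ref{rmk:cc_products}), because $H$ is convex-cocompact and central, there is a $G$--invariant convex subcomplex $Y\sq X$ which $G$ acts on cocompactly along the $H$--direction and which splits $G$--equivariantly as $Y\cong \R\x Y'$ (coarsely), with $H$ acting loxodromically on the first factor and $K$ acting on $Y'$.

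Granting such a splitting $Y\cong L\x Y'$ with $L$ a quasi-line carrying the $H$--action and $K$ acting on $Y'$ (and elliptically on $L$, since $H$ is central and $K\cap H$ is trivial so the $K$--translation lengths along $L$ vanish after passing to the essential part), I would then argue as follows. Each generator $k_i$ of $K$ is convex-cocompact in $X$; I want to deduce it is convex-cocompact in $Y'$. Here I use Remark~\ref{rmk:cc_products}: viewing $k_i$ as an automorphism of $L\x Y'$ preserving both factors, convex-cocompactness of $\langle k_i\rangle$ in $L\x Y'$ together with the fact that $k_i$ is elliptic on $L$ forces $\langle k_i\rangle$ to be convex-cocompact in $Y'$. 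Now $K=\langle k_1,\dots,k_n\rangle$ acts on $Y'$ with each $\langle k_i\rangle$ convex-cocompact; but $K$ acts on $Y'$ \emph{cocompactly} (since $G$ acts cocompactly on $Y$ along the $H$-direction and $H$ acts cocompactly on $L$, Appendix~\ref{appendix} type arguments give that $K$ acts cocompactly on $Y'$). A cocompact action on a CAT(0) cube complex is automatically convex-cocompact (take $Y'$ itself, or its essential core, as the invariant convex subcomplex). Hence $K$ is convex-cocompact in $Y'$, and since $Y'$ sits inside $Y\sq X$ as a convex subcomplex (a factor of a convex subcomplex is convex), $K$ is convex-cocompact in $X$.

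Let me reconsider the first step, since that is where the main obstacle lies: producing the $G$--equivariant product splitting $Y\cong L\x Y'$ of a convex subcomplex, with the centrality of $H$ doing the work. The subtlety is that a priori $K$ need not preserve the axis $A_h$ of a fixed generator, only the union of parallel axes, and the ``parallel set'' of a loxodromic isometry in a CAT(0) cube complex need not split as a metric product on the nose (it does in the CAT(0) metric, by Bridson--Haefliger, but we want combinatorial convexity). The fix is standard in this circle of ideas: after replacing $h$ by a suitable power, the combinatorial parallel set $P_h$ of its axis is a convex subcomplex of $X$ that splits as a product $L\x Y'$ where $L$ contains the axis; this is part of the cubical flat torus / product-of-trees machinery, and $P_h$ is $G$--invariant precisely because $H=\langle h\rangle$ is central, so every $g\in G$ conjugates the axis of $h$ to an axis of $ghg^{-1}=h$, i.e.\ to a parallel copy. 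I expect this to be the only genuinely delicate point; everything downstream (eliptic-on-one-factor, convex-cocompact-on-the-other, cocompact-implies-convex-cocompact, factors of convex subcomplexes are convex) is routine given the preliminaries already assembled, in particular Remark~\ref{rmk:cc_products} and Lemma~\ref{cc properties}. An alternative, perhaps cleaner, way to organize the argument avoiding explicit parallel sets: apply Lemma~\ref{cc properties}(4) (normaliser of a convex-cocompact subgroup with non-transverse action) — but $H$ need not act non-transversely, so one would first have to reduce to that case or use the coarse-median formulation directly, which may actually be the path of least resistance: phrase the whole statement coarse-medianly via Proposition~\ref{prop:cc=qc}, where the product structure is transparent and the axis issues disappear.
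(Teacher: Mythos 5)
Your overall architecture matches the paper's: isolate a quasi-line factor $L$ ($=X_1$ in the paper, obtained there via the Caprace--Sageev essential/half-essential/trivial decomposition rather than parallel sets), show $K$ acts elliptically on it, deduce that $K$ acts cocompactly on the complementary factor, and embed that factor convexly. But the crucial step is exactly the one you assert rather than prove: that each $k_i$ (and then all of $K$) is elliptic on $L$. Your stated justification --- ``since $H$ is central and $K\cap H$ is trivial, the $K$--translation lengths along $L$ vanish'' --- is false as an implication. Centrality of $H$ and triviality of $H\cap K$ do not constrain translation lengths on $L$ at all: for any epimorphism $\alpha\colon K\to\Z$, the complement $K'=\{(\alpha(k),k)\}$ satisfies $G=H\x K'$, $H$ central, $H\cap K'=1$, yet its generators translate along $L$; this is precisely the family of examples the paper records right after the lemma to explain why the hypothesis that the $\langle k_i\rangle$ are convex-cocompact is indispensable. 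In your write-up that hypothesis is only invoked later (to get convex-cocompactness in $Y'$), not where it is actually needed. The correct argument (the paper's Claim~3) is: by Remark~\ref{rmk:cc_products}, $k_i$ is elliptic in one factor and convex-cocompact in the other; if an infinite-order $k_i$ were elliptic in $Y'$, then $H\x\langle k_i\rangle\simeq\Z^2$ would have bounded orbits in $Y'$ and hence, by properness of $G\acts X$, would act properly on the quasi-line $L$, which is impossible; so $k_i$ is elliptic in $L$.

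A second, smaller gap: ellipticity of each generator on $L$ does not by itself give ellipticity of the group $K$ on $L$ (a group generated by elliptics on a quasi-line can contain loxodromics, e.g.\ infinite dihedral behaviour), and without $K$-boundedness on $L$ your final step fails, since a fibre $\{x\}\x Y'$ is convex but only $K$-invariant if $K$ fixes (or at least has bounded orbit near) $x$. The paper closes this by first producing a $G$-invariant geodesic line $\alpha\sq L$ (Claim~2, using that the union of $H$-axes splits as $\R\x Z$ with $Z$ bounded) and then observing that an elliptic isometry of $L$ preserving $\alpha$ and commuting with the translation $H$ must fix $\alpha$ pointwise; hence $K$ fixes $\alpha$ pointwise and $X_0$ embeds $K$-equivariantly and convexly. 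Your proposal needs both of these ingredients supplied; as written, the reasoning offered for the central ellipticity claim would equally ``prove'' the lemma for the complements $K'$ above, where the conclusion is false.
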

\begin{proof}
In the Caprace--Sageev terminology \cite[Section~3.3]{CS}, every hyperplane of $X$ is either $H$--essential, $H$--half-essential, or $H$--trivial. Of the two halfspaces of $X$ associated with an $H$--half-essential hyperplane, one is $H$--deep and the other is $H$--shallow. 

Since $H$ is normal in $G$, the set of $H$--deep halfspaces bounded by $H$--half-essential hyperplanes is $G$--invariant. The intersection $Y$ of all these halfspaces is nonempty, as it contains any axis for $H\simeq\Z$. Thus, $Y\sq X$ is a $G$--invariant convex subcomplex on which $G$ acts properly and cocompactly. Replacing $X$ with $Y$, we can safely assume that there are no $H$--half-essential hyperplanes.

Now, observing that every $H$--essential hyperplane is transverse to every $H$--trivial hyperplane, we obtain a splitting of $X$ as a product of cube complexes $X_1\x X_0$ \cite[Lem\-ma~2.5]{CS}. Every hyperplane of $X_1$ is $H$--essential, while every hyperplane of $X_0$ is $H$--trivial. Since $G$ normalises $H$, it takes $H$--essential hyperplanes to $H$--essential hyperplanes. It follows that the $G$--action respects the splitting $X=X_1\x X_0$. 

We only need three observations to conclude the proof.

\smallskip
{\bf Claim~1:} \emph{$X_1$ is a quasi-line.}

\smallskip
Since all hyperplanes of $X_1$ are $H$--essential, every $H$--invariant convex subcomplex of $X$ is a union of fibres $X_1\x\{\ast\}$. Thus, since $H$ is convex-cocompact in $X$, it must act cocompactly on $X_1$. Since all hyperplanes of $X_0$ are $H$--trivial, the action $H\acts X_0$ has a global fixed point (e.g.\ by \cite[Proposition~B.8]{CFI}), hence $H$ must act properly on $X_1$. In conclusion, the action $H\acts X_1$ is proper and cocompact, proving that $X_1$ is a quasi-line.

\smallskip
{\bf Claim~2:} \emph{there exists a $G$--invariant $\ell_2$--geodesic line $\alpha\sq X_1$.}

\smallskip
Indeed, the union of all $H$--axes in $X_1$ (for the $\ell_2$--metric) splits as a product of $\CAT$ spaces $\R\x Z$ (e.g.\ by \cite[Theorem~II.7.1]{BH}). This subset, as well as its splitting, is $G$--invariant. Since $X_1$ is a quasi-line, the $\CAT$ space $Z$ is bounded, hence the induced $G$--action on $Z$ has a global fixed point $z$. The fibre $\R\x\{z\}$ is the required axis $\alpha$.

\smallskip
{\bf Claim~3:} \emph{each generator $k_i\in K$ is elliptic in $X_1$.}

\smallskip
This is clear if $k_i$ has finite order; suppose instead that $k_i$ has infinite order. Note that $k_i$ must be elliptic in either $X_0$ or $X_1$ by Remark~\ref{rmk:cc_products}, since $\langle k_i\rangle$ is convex-cocompact in $X$ by hypothesis. However, no infinite-order $k_i$ can be elliptic in $X_0$. Otherwise, since $H$ is elliptic in $X_0$, the subgroup $H\x\langle k_i\rangle$ would also be elliptic in $X_0$. This would force $H\x\langle k_i\rangle$ to act properly on $X_1$, which cannot happen, since $H\x\langle k_i\rangle\simeq\Z^2$ while $X_1$ is a quasi-line. 

\smallskip
We now finish the proof using the above claims. Since each $k_i$ commutes with $H$, preserves the $H$--axis $\alpha$, and is elliptic in $X_1$, we conclude that each $k_i$ fixes $\alpha$ pointwise. This implies that the entire $K$ is elliptic in $X_1$. Since $G=H\x K$ acts cocompactly on $X$, we deduce that $K$ must act cocompactly on $X_0$. Since $X_0$ embeds $K$--equivariantly in $X$ as a convex subcomplex, this shows that $K$ is convex-cocompact in $X$.
\end{proof}

We emphasise that it really is important in Lemma~\ref{lem:cc_products} that $K$ be generated by convex-cocompact elements. Indeed, any epimorphism $\alpha\colon K\ra\Z$ gives an automorphism of $G=H\x K$ fixing $H\cong\Z$ pointwise and taking $K$ to the subgroup $K'=\{(\alpha(k),k)\mid k\in K\}$. So $G$ can have many product splittings $G=H\x K'$ where $K'$ is a subgroup isomorphic to $K$, though only one of these subgroups will be convex-cocompact in $X$.

The following lemma is needed in the proof of Theorem~\ref{RACGs intro}(2).

\begin{lem}\label{cc products}
Let $X$ be a finite-dimensional CAT(0) cube complex. Consider an action $G\acts X$ where $G=H\x K$ is finitely generated. If $H$ and $K$ are convex-cocompact in $X$, then so is $G$.
\end{lem}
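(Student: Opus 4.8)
The plan is to mimic the structure of the proof of Lemma~\ref{lem:cc_products}, but without assuming $H \simeq \Z$ or that $K$ is generated by convex-cocompact elements: here we already know $K$ itself is convex-cocompact, which is stronger. First I would pass to a $G$--invariant convex subcomplex on which $G$ acts cocompactly, so that we may assume $G\acts X$ is cocompact. The idea is to reduce to a product splitting of $X$ adapted to $H$. Since $H$ is normal in $G$, the set of $H$--essential hyperplanes is $G$--invariant, and likewise the set of $H$--deep halfspaces bounded by $H$--half-essential hyperplanes is $G$--invariant; intersecting all the latter halfspaces produces a $G$--invariant convex subcomplex containing any $H$--invariant cocompact convex subcomplex, so after replacing $X$ by it we may assume there are no $H$--half-essential hyperplanes. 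Then every $H$--essential hyperplane is transverse to every $H$--trivial hyperplane, giving a product splitting $X = X_1 \x X_0$ by \cite[Lemma~2.5]{CS}, which is $G$--invariant because $G$ normalises $H$; all hyperplanes of $X_1$ are $H$--essential and all those of $X_0$ are $H$--trivial.

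Next I would identify how $H$ and $K$ sit in this splitting. As in Claim~1 of Lemma~\ref{lem:cc_products}, $H$ acts cocompactly on $X_1$ (every $H$--invariant convex subcomplex is a union of $X_1$--fibres, and $H$ is convex-cocompact) and acts on $X_0$ with a global fixed point (all hyperplanes of $X_0$ are $H$--trivial, by \cite[Proposition~B.8]{CFI}); hence $H$ acts properly and cocompactly on $X_1$. For $K$: since $K$ commutes with $H$, it preserves the (unique, since $H$ acts geometrically on it) splitting and in particular preserves each factor — more precisely, because the factor $X_1$ carries a geometric $H$--action and $K$ centralises $H$, the $K$--action must also respect $X = X_1 \x X_0$. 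Now consider the convex-cocompact subgroup $K$ with $K$--invariant cocompact convex subcomplex $C \sq X$; it splits as $C_1 \x C_0$ with $C_i \sq X_i$ convex and $K$--invariant, and $K$ acts cocompactly on $C_1 \x C_0$. The claim is that $K$ acts cocompactly on $X_0$. Indeed, $C_1 \sq X_1$ is a $K$--invariant convex subcomplex, and $X_1$ also carries the commuting geometric $H$--action; since $H\x K$ acts cocompactly on $X$ and hence on $X_1\x X_0$, while $H$ is already cocompact on $X_1$, the subgroup $K$ must act cocompactly on $X_0$. Because $X_0$ embeds $K$--equivariantly in $X$ as a convex subcomplex, $K$ is convex-cocompact in $X_0$; in fact it acts geometrically on $X_0$.

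Finally, assemble the pieces: $H$ acts geometrically on $X_1$ and trivially (up to a global fixed point) on $X_0$, while $K$ acts geometrically on $X_0$ and — I claim — elliptically on $X_1$. For the last point, pick an $H$--axis-type structure on $X_1$: since $K$ centralises $H$ and $H$ acts geometrically on $X_1$, any $k\in K$ permutes the minimal set of $H$ in $X_1$ commuting with the $H$--action, and properness of $G=H\x K \acts X$ forces $K$ to act with bounded orbits on $X_1$, hence elliptically (a finite-dimensional CAT(0) cube complex with a bounded-orbit action has a fixed cube, so after subdividing once, a fixed point). Then $G = H \x K$ acts on $X_1 \x X_0$ with $H$ geometric on $X_1$ and fixing a point $x_0 \in X_0$, and $K$ geometric on $X_0$ and fixing a point $x_1 \in X_1$; the convex subcomplex $\{x_1\}\x X_0 \,\cup\, X_1 \x \{x_0\}$ is not quite $G$--invariant, so instead take the $G$--invariant convex subcomplex $\hull(Hx_1 \x \{x_0\}) \cup (\{x_1\}\x X_0)$ — more cleanly: $Y := (\text{min-set of } H \text{ in } X_1) \x (\text{min-set of } K \text{ in } X_0)$ is $G$--invariant, convex, and $G$ acts on it cocompactly, so $G$ is convex-cocompact in $X$.

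The main obstacle I expect is the bookkeeping around ellipticity and the product structure: verifying carefully that the $K$--action genuinely respects the splitting $X = X_1\x X_0$ (this uses the uniqueness of the factor on which $H$ acts geometrically, via the Caprace--Sageev machinery), and then producing a single $G$--invariant convex cocompact subcomplex rather than separate $H$-- and $K$--invariant ones. An alternative, possibly cleaner, route avoiding some of this is to invoke the coarse-median characterisation: $H$ and $K$ convex-cocompact means each is coarse-median quasi-convex in $X$ (Proposition~\ref{prop:cc=qc}), and one could try to show directly that the product $H\x K$ is coarse-median quasi-convex using a coarse median retraction onto $H$ composed with one onto $K$; but the cubical product-splitting argument above is more self-contained and I would pursue it first.
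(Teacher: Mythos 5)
Your proposal has a genuine gap, and it is located at the very first step. Lemma~\ref{cc products} does not assume that $G$ acts properly or cocompactly on $X$ --- and in the place it is actually used (the proof of Theorem~\ref{std RACG thm}), $G=W_{\Delta_1}\x W_{\Delta_2}$ is a subgroup of $W_{\G}$ acting properly but \emph{not} cocompactly on $X$. Your opening reduction, ``pass to a $G$--invariant convex subcomplex on which $G$ acts cocompactly'', is precisely the conclusion of the lemma, so the argument is circular; and the subsequent appeals to ``properness of $G\acts X$'' (to get $H$ acting properly on $X_1$, and to force $K$ to have bounded orbits on $X_1$) invoke a hypothesis that is not available. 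This is exactly why the paper's proof does not try to split $X$ itself via the Caprace--Sageev essentiality machinery of \cite{CS}, but instead works with the reduced cores $\overline{\mc{C}}(H,X)=C_1\x C_0$ and then $\overline{\mc{C}}(K,C_0)=C_{01}\x C_{00}$ from \cite{Fio10b}, which exist for arbitrary actions and come with a $G$--invariant splitting.

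Even if one grants properness and cocompactness of $G\acts X$ (the special case the paper says could be handled via \cite{CS}), two of your key claims are unsubstantiated. First, the ellipticity of $K$ on $X_1$: you assert that ``properness forces $K$ to act with bounded orbits on $X_1$'', but no argument is given, and the point is genuinely delicate --- in the generality of the statement it is false, as the remark following the paper's proof shows with the example $\rho(h,1)=\rho(1,h)$, where the action of $K$ on the first factor is essential rather than elliptic. The paper's proof deliberately avoids needing any such symmetry: it only establishes that $K$ acts cocompactly on $C_{01}\sq C_0$ and accepts the resulting asymmetry between $H$ and $K$. Second, your final assembly (and the intermediate claim that $K$ acts cocompactly on $X_0$) uses that an $H$--saturation $H\cdot B$ of a compact set is compact; knowing only that $H$ has a global fixed point in $X_0$ gives boundedness, which yields compactness only when $X$ is locally finite. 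In the general setting the paper needs, and gets from \cite{Fio10b}, the stronger statement that $H\acts C_0$ factors through a \emph{finite group} --- it explicitly flags this as the key difference from \cite{CS} --- and this is what makes $A\x(H\cdot B)$ a compact fundamental domain. So while your outline is in the spirit of Lemma~\ref{lem:cc_products}, it does not prove the lemma as stated, and even in the proper cocompact case the ellipticity step would need a real argument rather than an assertion.
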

\begin{proof}
We will only use this result in the case when $X$ is locally finite and $G$ acts properly, where one could actually give a more direct proof using \cite{CS} instead of \cite{Fio10b}. Nevertheless, we choose to prove the general statement, as this will likely be useful elsewhere.

Up to subdividing $X$, we can assume that $G$ acts without hyperplane-inversions. By Theorem~3.16 and Remark~3.17 in \cite{Fio10b}, the reduced core $\overline{\mc{C}}(H,X)$ is a nonempty, $G$--invariant, convex subcomplex of $X$. In addition, there is a $G$--invariant splitting $\overline{\mc{C}}(H,X)=C_1\x C_0$, where every hyperplane of $C_1$ is skewered by an element of $H$, while the action $H\acts C_0$ factors through a finite group (see Lemma~3.22, Corollary~4.6 and Remark~3.9 in \cite{Fio10b}). The fact that $H\acts C_0$ factors through a finite group, rather than simply being elliptic, is the key difference from \cite{CS}.

If $H$ is convex-cocompact in $X$, the action $H\acts C_1$ is cocompact. Indeed, if $Y\sq X$ is an $H$--cocompact, convex subcomplex, we can project $Y$ first to $\overline{\mc{C}}(H,X)$ and then to $C_1$. The result is an $H$--cocompact, convex subcomplex $Y'\sq C_1$, which must be the entire $C_1$, since every hyperplane of $C_1$ is $H$--skewered. Thus, $H\acts C_1$ is cocompact.

Since the splitting $C_1\x C_0$ is $G$--invariant, we have an action $G\acts C_0$. With another application of the above results from \cite{Fio10b}, we obtain a $G$--invariant splitting $\overline{\mc{C}}(K,C_0)=C_{01}\x C_{00}$. Here $C_{01}\sq C_0$ is a $G$--invariant, convex subcomplex all whose hyperplanes are skewered by elements of $K$. Again, if $K$ is convex-cocompact in $X$, the action $K\acts C_{01}$ is cocompact.

In conclusion, we have found a $G$--invariant, convex subcomplex $C_1\x C_{01}\sq X$ such that the actions $H\acts C_1$ and $K\acts C_{01}$ are cocompact, and the action $H\acts C_{01}$ factors through a finite group. 

Now, let $A\sq C_1$ and $B\sq C_{01}$ be compact fundamental domains for the actions of $H$ and $K$, respectively. Denote by $H\cdot B$ the union of all $H$--translates of $B$; this is again a compact set, since the action $H\acts C_{01}$ factors through a finite group. Finally, the set $A\x (H\cdot B)$ is a compact fundamental domain for the action $G\acts C_1\x C_{01}$, showing that $G$ is convex-cocompact in $X$.
\end{proof}

\begin{rmk}
    Although we will not be needing this, it is interesting to note that the proof of Lemma~\ref{cc products} is actually showing a bit more. The group $G$ acts cocompactly on a convex subcomplex of $X$ with a $G$--invariant product splitting $C=C_1\x C_2$ such that the actions $H\acts C_1$ and $K\acts C_2$ are cocompact and essential, while the action $H\acts C_2$ factors through a finite group. Without stronger assumptions we cannot say more, for instance the action $K\acts C_1$ needs not even be elliptic (the lack of symmetry between $H$ and $K$ is due to the fact that the splitting of $C$ is not canonical). Indeed, we might well have $H\cong K$ and the $G$--action might be given by a homomorphism $\rho\colon H\x H\ra\Aut(X)$ with $\rho(h,1)=\rho(1,h)$ for all $h\in H$, in which case $K\acts C_1$ is essential and $C_2$ is a single point (not having any hyperplanes, $C_2$ is nevertheless $K$--essential as well).
\end{rmk}

\subsection{Median-cocompactness}\label{median cocpt subsect}

Let $X$ be a $\CAT$ cube complex. 

In some cases, one can prove convex-cocompactness of a subgroup $H\leq\Aut(X)$ by first establishing a significantly weaker property --- \emph{median-cocompactness} --- and then promoting this to genuine convex-cocompactness.

\begin{defn}
Let $H\acts X$ be an action. We say that $H$ is \emph{median-cocompact in $X$} if there exists an $H$--invariant median subalgebra $M\sq X^{(0)}$ that is acted upon cofinitely by $H$.
\end{defn}

As an example, if $X$ is the standard square tiling of $\R^2$, the diagonal $\{(n,n)\mid n\in\Z\}$ is a median subalgebra, although the only convex subcomplex containing it is the whole $X$. Thus, considering the standard action $\Z^2\acts\R^2$, the diagonal subgroup $\langle(1,1)\rangle\simeq\Z$ is median-cocompact, but not convex-cocompact.

If $G\acts X$ is a cocompact cubulation and $H\leq G$ is a median-cocompact subgroup, then $H$ is necessarily finitely generated, undistorted and cocompactly cubulated \cite[Lemma~4.12]{Fio10a}. However, the cubulation of $H$ is ``abstractly'' provided by Chepoi--Roller duality and it cannot be realised as a convex subcomplex of $X$ in general.

A fundamental observation is that median-cocompactness always comes for free for \emph{centralisers}, while this is not true of convex-cocompactness without strong additional assumptions (compare Lemma~\ref{cc properties}(4) above and Lemma~\ref{universally cc} below).

\begin{prop}\label{centralisers are median-cocompact}
Let $G\acts X$ be a cocompact cubulation. For every finitely generated subgroup $H\leq G$, the centraliser $Z_G(H)$ is median-cocompact in $X$.
\end{prop}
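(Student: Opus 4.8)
The plan is to produce an explicit $Z_G(H)$--invariant median subalgebra of $X^{(0)}$ on which $Z_G(H)$ acts cofinitely, built by saturating an orbit under the median operator. Write $Z = Z_G(H)$, fix a basepoint $x_0 \in X^{(0)}$ and a finite generating set $S$ of $H$. The rough idea is that the set $\{h x_0 \mid h\in H\}$ is a quasi-isometrically embedded copy of $H$ (since $H$ is undistorted? — no, we may not assume that; instead we only use that $H$ is finitely generated, so this set at least coarsely sees all of $H$), but more importantly the $Z$--orbit of $x_0$ together with a bounded neighbourhood of it, when we apply $\mc{M}^{2\delta}(\cdot)$ from Lemma~\ref{lem:bounded_iteration}(2), gives a median subalgebra; the real task is to show $Z$ acts cofinitely on it.

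\textbf{Key steps.} First I would set $A_0 = Z\cdot x_0$ and let $M = \mc{M}^{2\delta}(A_0)$ be the median subalgebra it generates; by Lemma~\ref{lem:bounded_iteration}(2) this is obtained by taking medians of triples from $A_0$ a bounded ($2\delta$) number of times. Since $A_0$ is $Z$--invariant and $Z$ acts by cubical (hence median) automorphisms, $M$ is $Z$--invariant. It remains to show $Z$ acts \emph{cofinitely} on $M$, i.e.\ that $M$ is contained in a bounded neighbourhood of the orbit $Z\cdot x_0$ (in the $\ell_1$--metric), since then cocompactness of $G\acts X$ together with properness of $Z\acts X$ forces finitely many $Z$--orbits of vertices in any such neighbourhood. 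So the heart of the matter is: \emph{every iterated median of points of $Z\cdot x_0$ lies within bounded distance of $Z\cdot x_0$.} Because iterating is bounded in depth, it suffices to show this for a single median: there is a constant $R = R(X, H, x_0)$ such that for all $z_1, z_2, z_3 \in Z$, the median $m(z_1 x_0, z_2 x_0, z_3 x_0)$ lies within distance $R$ of $Z\cdot x_0$. (One then feeds the bounded set $N_R(Z x_0)\cap X^{(0)}$, which is still $Z$--coarsely-dense, back through the argument $2\delta$ times, enlarging $R$ by a controlled factor each time.)

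\textbf{The main obstacle} is precisely this single-median statement, and here is where the centraliser hypothesis must enter — it is false for arbitrary orbits (the diagonal-$\Z^2$ example in the text shows medians of a non-central cyclic orbit wander off). The plan to handle it: given $z_1, z_2, z_3 \in Z$, consider the median $p = m(z_1 x_0, z_2 x_0, z_3 x_0)$. I want to find some $z \in Z$ with $d_1(p, z x_0) \le R$. Apply $z_1^{-1}$: then $z_1^{-1} p = m(x_0, z_1^{-1}z_2 x_0, z_1^{-1}z_3 x_0)$, so without loss of generality $z_1 = 1$ and $p = m(x_0, z_2 x_0, z_3 x_0)$, which automatically satisfies $d_1(p, x_0) \le d_1(x_0, z_2 x_0)$ — but that bound is \emph{not} uniform. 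The right move is to use that $H$ fixes $x_0$ coarsely: for every $h \in H$, the point $h x_0$ lies within $D := \max_{s\in S} d_1(x_0, s x_0)$-ish bounded distance... no — rather, one uses that $Z$ \emph{commutes} with $H$, so $z_i$ conjugates the (coarse) $H$--orbit of $x_0$ to the $H$--orbit of $z_i x_0$, and the $H$--orbit of $x_0$ is a set $\mc{O}$ with $\diam$ possibly infinite but with the crucial property that $h\,\mc{O} = \mc{O}$ coarsely for all $h \in H$. The genuinely correct and standard approach — and the one I would carry out — is: equip $X$ with its coarse median structure, recall (this is \cite[Lemma~4.12]{Fio10a} territory, but more elementarily a Helly/gate argument) that $m(x_0, z_2 x_0, z_3 x_0)$ is coarsely the gate of $x_0$ onto the "interval" $I(z_2 x_0, z_3 x_0)$; then show this gate is coarsely $H$--invariant because $x_0$ is coarsely $H$--fixed up to the bounded error $D$ \emph{and} $I(z_2 x_0, z_3 x_0)$ is coarsely $H$--invariant (as $h z_i x_0 = z_i h x_0$ stays within $D$ of $z_i x_0$, using $h \in H$ and $z_i \in Z = Z_G(H)$). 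A coarsely $H$--invariant point of a cocompactly cubulated $G$--space lies within bounded distance of any $H$--orbit that is itself within bounded distance of a point coarsely fixed by $H$ — in particular within bounded distance of $Z x_0$ once we observe $x_0$ itself is such a point and $Z$ acts on the set of such points. This last deduction is where I'd need to be careful: the cleanest formulation is that $\{x \in X^{(0)} : d_1(hx, x) \le D \ \forall h \in H\}$, the "coarse fixed set of $H$", is $Z$--invariant, contains $x_0$, and has the property that $m$ of three of its points is again (coarsely) in an enlargement of it — then $M$ lands in a bounded enlargement of the coarse fixed set, on which $Z$ acts cocompactly by \cite{Fio10a} (or directly: properness of $Z$ plus cocompactness of $G$). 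I expect the fiddly part to be tracking the constants through the $2\delta$ iterations and justifying "coarse median = gate" at the level of vertices without invoking heavier machinery than Lemma~\ref{lem:bounded_iteration} and the definitions; the conceptual content, however, is entirely the observation that centrality makes intervals and gates coarsely $H$--equivariant.
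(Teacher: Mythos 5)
Your outline is essentially the paper's own argument transported into orbit geometry: the paper proves the statement by showing $Z=Z_G(H)$ is quasi-submedian for the coarse median on $G$ (Lemma~\ref{Z in general cms}), using exactly your two ingredients --- equivariance/Lipschitzness of the median plus centrality to control how $H$ moves a median of $Z$--points, and the bounded-iteration Lemma~\ref{lem:bounded_iteration} via Proposition~\ref{prop:cc=qc} to pass from a single median to the generated subalgebra. Your displacement estimate is fine (and can be done without gates: $d(s\,m(x_0,z_2x_0,z_3x_0),\,m(x_0,z_2x_0,z_3x_0))\le 3\,d(sx_0,x_0)$ by equivariance and $1$--Lipschitzness, for $s$ in the finite generating set $S$ of $H$).

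However, there is a genuine gap at the step you yourself flag. First, your ``coarse fixed set'' must be defined with the quantifier over the finite generating set $S$, not over all of $H$: for infinite $H$ acting properly, $\{x: d_1(hx,x)\le D\ \forall h\in H\}$ is typically empty --- your own computation only gives displacement linear in $|h|$, never a uniform bound. Second, and more seriously, the claim that $Z$ acts cocompactly on this set (equivalently, that every vertex moved by $\le D'$ by each $s\in S$ lies within bounded distance of $Z\cdot x_0$) is precisely the non-trivial point, and neither ``properness of $Z$ plus cocompactness of $G$'' nor a blanket citation proves it. What is needed --- and what the paper's proof actually does --- is a coset argument: if $x$ satisfies $d_1(s_ix,x)\le D'$ for each generator $s_i$, pick $g\in G$ with $d_1(x,gx_0)$ bounded by the cocompactness constant; then each $g^{-1}s_ig$ displaces $x_0$ boundedly, hence lies in a fixed finite subset of $G$ by properness, hence $g$ lies in a finite union of right cosets of $Z_G(s_i)$; since an intersection of right cosets of subgroups is a right coset of their intersection, intersecting over the finitely many generators puts $g$ in a finite union of right cosets of $\bigcap_i Z_G(s_i)=Z_G(H)$, i.e.\ within bounded distance of $Z$. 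This finite-union-of-right-cosets trick is the punchline of Lemma~\ref{Z in general cms} (there phrased as $x^{-1}hx$ having bounded word length, so $x\in R(h)$, and $R(h_1)\cap\dots\cap R(h_n)$ being a finite union of right cosets of $Z$), and without it your argument does not close; with it, the rest of your plan (bounded iteration, then cofiniteness from local finiteness of $X$) goes through.
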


The proof of this result is not hard, but it is best phrased in terms of coarse median structures, so we postpone it until the next subsection (Lemma~\ref{Z in general cms}). Corollary~\ref{cubulated centralisers intro} will quickly follow from Proposition~\ref{centralisers are median-cocompact}, as we also explain in the next subsection.

\subsection{Cubical coarse median structures}\label{subsect:cms}

Let $G$ be a finitely generated group equipped with a word metric. The specific choice of word metric is inconsequential in the following discussion.

A \emph{coarse median} is a map $\mu\colon G^3\ra G$ for which there exists a constant $C\geq 0$ such that, for all $a,b,c,x\in G$, the following hold:
\begin{enumerate}
\item $\mu(a,a,b)=a$ and $\mu(a,b,c)=\mu(b,c,a)=\mu(b,a,c)$;
\item $\mu(\mu(a,x,b),x,c)$ and $\mu(a,x,\mu(b,x,c))$ are at distance $\leq C$;
\item $d(\mu(a,b,c),\mu(x,b,c))\leq Cd(a,x)+C$;
\item $x\mu(a,b,c)$ and $\mu(xa,xb,xc)$ are at distance $\leq C$.
\end{enumerate}
Coarse medians were introduced by Bowditch \cite{Bow-cm}, while the above formulation is due to Niblo--Wright--Zhang \cite{NWZ1}. Condition~(4) is sometimes omitted, but it is important for our purposes.

Two coarse medians $\mu,\nu$ are \emph{at bounded distance} from each other if the points $\mu(a,b,c)$ and $\nu(a,b,c)$ are at uniformly bounded distance for $a,b,c\in G$. A \emph{coarse median structure} on $G$ is an equivalence class $[\mu]$ of coarse medians pairwise at bounded distance.

Every cocompact cubulation $G\acts X$ induces a specific coarse median structure $[\mu]$ on $G$. This consists of the maps $\mu\colon G^3\ra G$ for which there exists a constant $C$ such that:
\[ d(\mu(g_1,g_2,g_3)v,m(g_1v,g_2v,g_3v))\leq C,\ \ \forall g_1,g_2,g_3\in G,\ \ \forall v\in X^{(0)}.\]
That such maps $\mu$ exist is a straightforward consequence of the Milnor--Schwarz lemma. Coarse median structures arising this way are much better-behaved than arbitrary ones, so they deserve a special name. 

\begin{defn}
A coarse median structure on $G$ is \emph{cubical} if induced by a cocompact cubulation. Let $\mc{CM}(G)$ be the set of coarse median structures on $G$, and $\mc{CM}_{\square}(G)$ the subset of cubical ones.
\end{defn}

If $[\mu]\in\mc{CM}(G)$ and $\varphi\in\Aut(G)$, then the map 
\[(a,b,c)\mapsto \varphi(\mu(\varphi^{-1}(a),\varphi^{-1}(b),\varphi^{-1}(c)))\] 
is also a coarse median. If $\varphi$ is an inner automorphism, then this procedure does not alter the coarse median structure $[\mu]$. For this to hold, it is important that we included condition~(4) in the above definition of coarse median, and also that we identify coarse medians at bounded distance from each other (indeed, conjugation by an element $g\in G$ displaces the actual coarse median $\mu$ by a linear function of the word length of $g$). In conclusion, the group $\Aut(G)$ acts on the set of coarse medians on $G$, and the subgroup of inner automorphisms fixes pointwise the quotient of coarse median structures $\mc{CM}(G)$. Thus, we obtain a natural $\Out(G)$--action on $\mc{CM}(G)$ and $\mc{CM}_{\square}(G)$. 

\begin{ex}
\begin{enumerate}
\item[]
\item If $G$ is word-hyperbolic, $\mc{CM}(G)$ is a singleton by \cite[Theorem~4.2]{NWZ1}. In particular, all cocompact cubulations of $G$ induce the same (cubical) coarse median structure.
\item Instead, $\mc{CM}_{\square}(\Z^2)$ is countably infinite (see Proposition~\ref{cms on Z^n}) and $\mc{CM}(\Z^2)$ is uncountable.
\end{enumerate}
\end{ex}

A fundamental fact is that the information of which subgroups of $G$ are convex-cocompact or median-cocompact in a given cubulation is entirely encoded in the induced coarse median structure.

\begin{defn}\label{qc defn}
Consider $[\mu]\in\mc{CM}(G)$ and a subgroup $H\leq G$.
\begin{enumerate}
\item $H$ is \emph{$[\mu]$--quasi-convex} if $\mu(H,H,G)$ is at finite Hausdorff distance from $H$.
\item $H$ is \emph{$[\mu]$--quasi-submedian} if $\mu(H,H,H)$ is at finite Hausdorff distance from $H$.
\end{enumerate}
\end{defn}

When the coarse median structure is understood, we will sometimes simply speak of \emph{quasi-convex} or \emph{quasi-submedian} subgroups. We emphasise that there is no connection between quasi-submedian subgroups and the ``quasi-median graphs'' studied e.g.\ in \cite{Bandelt-Mulder-Wilkeit,CCHO,Genevois-PhD}.

\begin{prop}\label{prop:cc=qc}
Let $G\acts X$ be a cocompact cubulation, inducing $[\mu_X]\in\mc{CM}_{\square}(G)$. Then: 
\begin{enumerate}
\item $H\leq G$ is convex-cocompact in $X$ if and only if it is $[\mu_X]$--quasi-convex;
\item $H\leq G$ is median-cocompact in $X$ if and only if it is $[\mu_X]$--quasi-submedian.
\end{enumerate}
\end{prop}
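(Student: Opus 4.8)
The statement asserts that convex-cocompactness (resp.\ median-cocompactness) of a subgroup $H\leq G$ in a cubulation $G\acts X$ is detected by a purely coarse-median condition on the induced structure $[\mu_X]$. The natural strategy is to fix an equivariant quasi-isometry $q\colon G\ra X$ realising $[\mu_X]$ (so that $q(\mu_X(g_1,g_2,g_3))$ is uniformly close to $m(q(g_1),q(g_2),q(g_3))$, and conversely every vertex of $X$ is within bounded distance of some $q(g)$), and transport every statement about subsets of $X$ to the corresponding statement about subsets of $G$ along $q$. Since $q$ is a quasi-isometry, Hausdorff-distance conditions and finiteness conditions are preserved in both directions, so the whole proof is a matter of matching up the relevant closure operations.

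\textbf{Step 1: the convex-cocompact case, forward direction.} Suppose $H$ is convex-cocompact in $X$, witnessed by an $H$--invariant convex subcomplex $C\sq X$ with $H\acts C$ cocompact. By Milnor--Schwarz, $q^{-1}(C^{(0)})$ is at finite Hausdorff distance from $H\cdot g_0$ for any basepoint $g_0$, hence from $H$. Convexity of $C$ says $m(C^{(0)},C^{(0)},X^{(0)})\sq C^{(0)}$; pulling back along $q$ and using the defining property of $\mu_X$ gives $\mu_X(q^{-1}C,q^{-1}C,G)$ within bounded distance of $q^{-1}C$, hence within finite Hausdorff distance of $H$. Thus $H$ is $[\mu_X]$--quasi-convex. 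For the median-cocompact forward direction one argues identically, replacing ``convex subcomplex'' by ``median subalgebra'' ($m(M,M,M)\sq M$) and ``cocompact'' by ``cofinite''.

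\textbf{Step 2: the converse directions.} Suppose $H$ is $[\mu_X]$--quasi-convex, i.e.\ $\mu_X(H,H,G)$ is at finite Hausdorff distance $R$ from $H$. Let $A=q(H\cdot g_0)\sq X^{(0)}$; then $A$ is $H$--invariant, $H$--cocompact, and (translating the hypothesis along $q$) $m(A,A,X^{(0)})$ lies in the $R'$--neighbourhood of $A$ for a uniform $R'$. Iterating this $\delta=\dim X$ times and invoking Lemma~\ref{lem:bounded_iteration}(1), the convex hull $\mathcal{J}^{\delta}(A)$ of $A$ lies in a uniform neighbourhood of $A$, hence is at finite Hausdorff distance from $A$ and is $H$--invariant and $H$--cocompact. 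The convex subcomplex spanned by $\mathcal{J}^{\delta}(A)$ is then the required $C$, so $H$ is convex-cocompact in $X$. The median-cocompact converse is the same argument using Lemma~\ref{lem:bounded_iteration}(2) in place of part~(1): the median subalgebra generated by $A$ equals $\mathcal{M}^{2\delta}(A)$, which by the quasi-submedian hypothesis stays in a uniform neighbourhood of $A$, giving an $H$--invariant, $H$--cofinite median subalgebra.

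\textbf{Main obstacle.} The only genuinely delicate point is the uniform control in Step~2: a priori the hypothesis only gives that \emph{one} application of $m(\cdot,\cdot,X^{(0)})$ (resp.\ $m(\cdot,\cdot,\cdot)$) pushes $A$ into a bounded neighbourhood of itself, and one must check that finitely many iterations do not let the bound escape to infinity. This is exactly what Lemma~\ref{lem:bounded_iteration} buys us: because the hull (resp.\ generated subalgebra) is reached after a \emph{fixed} number $\delta$ (resp.\ $2\delta$) of steps depending only on $\dim X$, each step enlarges the neighbourhood by a uniform additive constant coming from the coarse-median constant $C$ of $\mu_X$, and the total remains finite. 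Everything else is bookkeeping with the quasi-isometry $q$ and is routine.
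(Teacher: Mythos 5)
Your proposal is correct and follows essentially the same route as the paper, which likewise reduces both statements to Lemma~\ref{lem:bounded_iteration} (hulls and generated subalgebras are reached after a bounded number of median operations, so a one-step Hausdorff bound propagates) and defers the quasi-isometry bookkeeping to the cited references. Your only imprecision is cosmetic: each iteration enlarges the neighbourhood roughly multiplicatively (via $1$--Lipschitzness of $m$) rather than by a purely additive constant, but since the number of iterations is bounded by $\dim X$ the conclusion is unaffected.
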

\begin{proof}
Both parts follow from Lemma~\ref{lem:bounded_iteration}, which guarantees that both the convex hull of an $H$--orbit in $X$ and the median subalgebra generated by it can be constructed by taking medians a bounded number of times. More details on the argument can be found e.g.\ in \cite[Lemma~3.2]{Fio10a} or in the proof of \cite[Theorem~4.10]{Fio10a}. (Note that, in \cite{Fio10a}, quasi-submedian subgroups are referred to as ``approximate median subalgebras''.)
\end{proof}

In view of Proposition~\ref{prop:cc=qc}, the fact that centralisers of finitely generated subgroups are median-cocompact in all cocompact cubulations (Proposition~\ref{centralisers are median-cocompact}) becomes an immediate consequence of the following observation about general coarse median structures.

\begin{lem}\label{Z in general cms}
If $[\mu]\in\mc{CM}(G)$ and $H\leq G$ is finitely generated, then $Z_G(H)$ is $[\mu]$--quasi-submedian.
\end{lem}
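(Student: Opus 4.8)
\textbf{Proof proposal for Lemma~\ref{Z in general cms}.}

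The plan is to exploit the triangle-inequality-like estimate built into coarse medians, together with the quasi-invariance of $\mu$ under left multiplication (condition (4)), to show that for $z\in Z_G(H)$ the point $\mu(z,a,b)$ with $a,b\in Z_G(H)$ stays within a bounded distance of $Z_G(H)$ itself. First I would fix a finite generating set $h_1,\dots,h_k$ of $H$ and let $L=\max_i d(1,h_i)$. Let $C$ be the constant for $\mu$. The key observation is that $Z_G(H)$ is \emph{coarsely $H$--invariant} in the sense that left-translating by any $h\in H$ moves a point $z\in Z_G(H)$ to $hz=zh$, which is at distance $d(1,h)$ from $z$; so the map $x\mapsto hx$ preserves $Z_G(H)$ on the nose, but we will instead use the conjugation action. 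More precisely, for $z\in Z_G(H)$ we have $h_i z h_i^{-1}=z$, and I want to test membership in $Z_G(H)$ by measuring how far $\mu(a,b,c)$ moves under conjugation by the $h_i$.

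The heart of the argument: suppose $a,b,c\in Z_G(H)$. For each generator $h=h_i$, conjugation by $h$ is the composition of left multiplication by $h$ and right multiplication by $h^{-1}$. Left multiplication is coarsely $\mu$--equivariant by condition (4): $h\mu(a,b,c)$ is within $C$ of $\mu(ha,hb,hc)$. Right multiplication by $h^{-1}$ is a $(1,2d(1,h))$--quasi-isometry of $G$ that need not be $\mu$--equivariant a priori; however — and this is the point — $\mu$ restricted to any orbit is coarsely determined, and in fact one shows directly from conditions (1)--(3) that precomposition/postcomposition with a bounded perturbation of the identity moves $\mu$ by a bounded amount. Concretely, since $ha h^{-1}=a$, $hbh^{-1}=b$, $hch^{-1}=c$ (as $a,b,c$ centralise $H$), we get $ha=ah$, so $\mu(ha,hb,hc)=\mu(ah,bh,ch)$, and each of $ah,bh,ch$ is within $L$ of $a,b,c$ respectively; applying condition (3) three times, $\mu(ah,bh,ch)$ is within $3CL+3C$ of $\mu(a,b,c)$. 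Combining, $d(h\mu(a,b,c),\mu(a,b,c))\leq C+3CL+3C=:D$, a bound independent of $a,b,c$ and of which generator $h_i$ we chose. Hence $\mu(a,b,c)$ is moved a bounded distance by every generator of $H$, so it is moved a distance $\leq |h|\cdot D'$ by a general element — but to conclude membership in (a bounded neighbourhood of) $Z_G(H)$ I instead argue: the set $\{g\in G: d(h_i g, g h_i)\leq D \text{ for all } i\}$ is exactly the $D$--coarse centraliser, and a standard fact (e.g.\ using that $G$ is finitely generated, or invoking that coarse stabilisers of finite sets sit in bounded neighbourhoods of genuine stabilisers — which here needs a short argument) shows this set lies in a bounded neighbourhood of $Z_G(H)$. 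Therefore $\mu(Z_G(H),Z_G(H),Z_G(H))$ is within bounded Hausdorff distance of $Z_G(H)$, which is precisely the assertion that $Z_G(H)$ is $[\mu]$--quasi-submedian.

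The main obstacle I anticipate is the last step — passing from ``$\mu(a,b,c)$ is $D$--coarsely centralised by the generators of $H$'' to ``$\mu(a,b,c)$ lies in a bounded neighbourhood of the actual centraliser $Z_G(H)$''. In general the $D$--coarse centraliser of a finitely generated subgroup of an arbitrary finitely generated group need not be Hausdorff-close to the genuine centraliser (think of groups with badly distorted subgroups). So either the lemma implicitly uses extra structure, or — more likely — the intended argument avoids this pitfall by working the other way around: one shows directly that $\mu(a,b,c)$ itself \emph{commutes with $H$ up to bounded error in a way that can be promoted}, or one simply redefines the conclusion to be about the coarse centraliser, which for the applications (Proposition~\ref{centralisers are median-cocompact} via Proposition~\ref{prop:cc=qc}) is all that is needed since median-cocompactness of $Z_G(H)$ only requires an $H$--invariant median subalgebra acted on cofinitely, and $Z_G(H)$ itself has finite index in its coarse centraliser whenever the latter is a group. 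I would therefore present the argument as: the coarse median $\mu$, restricted to triples from $Z_G(H)$, lands within bounded distance of the set of points $D$--coarsely fixed by conjugation by $H$; that set contains $Z_G(H)$ and is at bounded distance from it; hence quasi-submedianity follows. The bounded-distance claim between $Z_G(H)$ and its coarse centraliser is the one genuinely technical point, and I would isolate it as a separate elementary lemma (it holds, for instance, because $H$ finitely generated forces the coarse centraliser to coincide with $\bigcap_i \{g : [g,h_i] \text{ bounded}\}$, and in a finitely generated group the commutator map is a quasimorphism-free but Lipschitz obstruction that one controls by passing to $Z_G(\langle h_i\rangle^{\pm})$ directly).
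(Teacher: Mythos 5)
Your core computation is exactly the paper's: for $a,b,c\in Z:=Z_G(H)$ and a generator $h$ of $H$, conditions (4) and (3), together with $ha=ah$ etc., give $d\big(h\mu(a,b,c),\mu(a,b,c)\big)\leq 4C+3C|h|$, so $x:=\mu(a,b,c)$ satisfies $|x^{-1}hx|\leq D$ for a constant $D$ independent of $a,b,c$. The genuine gap is the final step, which you flag but do not close, and your worry that the coarse centraliser of a finitely generated subgroup ``need not be Hausdorff-close to the genuine centraliser'' is unfounded in this setting. The paper closes the step with an elementary right-coset argument: if $x^{-1}hx=y^{-1}hy$ then $xy^{-1}\in Z_G(h)$, so for each fixed $k$ with $|k|\leq D$ the set $\{x\mid x^{-1}hx=k\}$ is either empty or a single right coset of $Z_G(h)$; since there are only finitely many such $k$, the set $R(h)=\{x\mid |x^{-1}hx|\leq D\}$ is a \emph{finite} union of right cosets of $Z_G(h)$. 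Intersecting over a finite generating set $h_1,\dots,h_n$ of $H$, an element $x$ of $\bigcap_i R(h_i)$ determines the tuple $(x^{-1}h_1x,\dots,x^{-1}h_nx)$, which ranges over a finite set, and two elements producing the same tuple differ by an element of $\bigcap_i Z_G(h_i)=Z$; hence $\bigcap_i R(h_i)$ is a finite union of right cosets $Zg_1\cup\dots\cup Zg_m$, and each $Zg_j$ lies within Hausdorff distance $|g_j|$ of $Z$. Therefore $\mu(Z,Z,Z)\sq\bigcap_i R(h_i)$ lies in a bounded neighbourhood of $Z$, and since it contains $Z$, the two sets are at finite Hausdorff distance, which is exactly quasi-submedianity. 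The resulting bound is finite but neither uniform nor effective --- and the definition requires no more; distortion of $H$ in $G$ is irrelevant because one only conjugates the finitely many generators and only needs finiteness of the Hausdorff distance.

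For the same reason, your proposed fallbacks are unnecessary and, in one case, insufficient: weakening the conclusion to a statement about the ``coarse centraliser'' would not prove the lemma as stated, and the application to Proposition~\ref{centralisers are median-cocompact} does need $Z_G(H)$ itself (via Proposition~\ref{prop:cc=qc}) rather than some coarse enlargement whose group structure is unclear. The sketched justification at the end of your proposal (``Lipschitz obstruction \dots passing to $Z_G(\langle h_i\rangle^{\pm})$'') does not amount to an argument; replace it with the right-coset counting above and your proof becomes complete and essentially identical to the paper's.
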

\begin{proof}
For simplicity, if $x,y\in G$ are at distance $\leq D$ in the chosen word metric, we write $x\approx_D y$. We also write $|x|$ for the word length of $x$. Finally, let $C$ be the constant for which $\mu$ satisfies the four conditions in the definition of coarse medians at the beginning of this subsection.

Set $Z:=Z_G(H)$. Consider a point $x\in\mu(Z,Z,Z)$, say $x=\mu(z_1,z_2,z_3)$ with $z_1,z_2,z_3\in Z$. If $h\in H$, using Conditions~(4) and~(3) in this order, we obtain:
\[ hx=h\mu(z_1,z_2,z_3)\approx_C\mu(hz_1,hz_2,hz_3)=\mu(z_1h,z_2h,z_3h)\approx_{3C(1+|h|)}\mu(z_1,z_2,z_3)=x.\]
It follows that $x^{-1}hx$ lies in the finite subset of $G$ with word length at most $4C+3C|h|$. Hence $x$ lies in a finite union of right cosets of $Z_G(h)$, call it $R(h)$.

Since $x$ was arbitrary, we have $\mu(Z,Z,Z)\sq R(h)$ for every $h\in H$. If $h_1,\dots,h_n$ are a finite generating set for $H$, the intersection $R(h_1)\cap\dots\cap R(h_n)$ is a finite union of right cosets of $Z_G(h_1)\cap\dots\cap Z_G(h_n)=Z$, and it contains $\mu(Z,Z,Z)$. This shows that $\mu(Z,Z,Z)$ is contained in a bounded neighbourhood of $Z$. Since $\mu(Z,Z,Z)$ trivially contains $Z$, it is at finite Hausdorff distance from $Z$, proving the lemma.
\end{proof}

Now that the proof of Proposition~\ref{centralisers are median-cocompact} is completed, we take the chance to quickly deduce Corollary~\ref{cubulated centralisers intro} from it.

\begin{proof}[Proof of Corollary~\ref{cubulated centralisers intro}]
Let $G\acts X$ be a cocompact cubulation. If $H\leq G$ is finitely generated, Proposition~\ref{centralisers are median-cocompact} guarantees that $Z_G(H)$ acts cofinitely on a median subalgebra $M\sq X^{(0)}$. By Chepoi--Roller duality, there exists an action on a $\CAT$ cube complex $Z_G(H)\acts Y$ such that $M$ and $Y^{(0)}$ are equivariantly isomorphic as median algebras. It follows that the action $Z_G(H)\acts Y$ is a cocompact cubulation. 
This proves part~(1) of the corollary.

If $G=G_1\x G_2$ and $G_2$ has finite centre, then $G_1$ is a finite-index subgroup of the centraliser $Z_G(G_2)$, which is cocompactly cubulated by part~(1). Hence $G_1$ is also cocompactly cubulated, proving part~(2).
\end{proof}

The proof of Corollary~\ref{cubulated centralisers intro} also shows that, if a product $G_1\x G_2$ is cocompactly cubulated, then $G_1\x\Z^n$ is cocompactly cubulated for some $n\geq 1$. In general, however, this does not imply that $G_1$ is itself cocompactly cubulated, as the next example demonstrates.

\begin{ex}\label{333xZ example}
Consider the $(3,3,3)$ Coxeter group $W$, which is not cocompactly cubulated by \cite{Hagen-cryst}. Let us see that, instead, $W\x\Z$ acts properly and cocompactly on the cubical tiling of $\R^3$.

Consider the elements $\rho,\s\in{\rm O}(3,\Z)$ and the translations $T_0,T_1$ defined as follows:
\begin{align*}
\rho(x,y,z)&=(x,z,y), & T_0(x,y,z)&=(x+1,y+1,z+1), \\
\s(x,y,z)&=(y,z,x), & T_1(x,y,z)&=(x-2,y+1,z+1).
\end{align*}
All four elements preserve the standard cubical tiling of $\R^3$, and $T_0$ commutes with $\rho,\s,T_1$. The plane $P=\{(x,y,z) \mid x+y+z=0\}$ is orthogonal to the axes of $T_0$ and it is preserved by the reflection $\rho$ and the rotation $\s$. In particular,  $\s|_P$ is an order--$3$ rotation around the origin and $\rho|_P$ is a reflection in a line through the origin that is also an axis for $T_1$. This shows that $\langle\rho,\s\rangle$ is an order--$6$ dihedral group, and the reflection axes of the restrictions to $P$ of $\rho$, $\s\rho\s^{-1}$, $(T_1\s^2)\rho(T_1\s^2)^{-1}$ intersect forming an equilateral triangle.

In conclusion, $\langle \rho,\s\rho\s^{-1},(T_1\s^2)\rho(T_1\s^2)^{-1}\rangle$ is a copy of $W$ acting properly and cocompactly on the plane $P$. This group commutes with $T_1$, which translates perpendicularly to $P$, providing the required geometric action $W\x\Z\acts\R^3$. 

We emphasise that, although this example is probably well-known, the action $W\acts\R^3$ considered here is \emph{not} the Niblo--Reeves cubulation of $W$. The latter is also an action of $W$ on the cubical tiling of $\R^3$ preserving the same exact plane $P$, but the two actions differ in the following key way.
\begin{itemize}
    \item In the current example, each order--$2$ element of $W$ acts as a reflection in a plane orthogonal to $P$. In particular, the $W$--action preserves each of the two connected components of $\R^3-P$.
    \item In the Niblo--Reeves cubulation, each order--$2$ element of $W$ acts as a rotation by $\pi$ around a line contained in $P$. Each of these elements swaps the two connected components of $\R^3-P$.  
\end{itemize}
As a consequence, the Niblo--Reeves cubulation $W\acts\R^3$ \emph{cannot} be extended to an action $W\x\Z\acts\R^3$: in order to commute with all order--$2$ elements of $W$, the $\Z$--factor would have to translate in a direction that is parallel to all their fixed lines, but no such direction exists.
\end{ex}

We now turn to an important tool to check that two cubulations of a group $G$ induce the same coarse median structure, namely Theorem~\ref{coarse median vs hyperplanes} below. Before stating the theorem, we need to recall a classical construction. 

If $G\acts X$ is an action on a $\CAT$ cube complex and $\mc{U}\sq\mscr{W}(X)$ is a $G$--invariant set of hyperplanes, we can collapse the hyperplanes in $\mc{U}$ and obtain a new action on a $\CAT$ cube complex $G\acts X(\mc{U})$, along with a $G$--equivariant, surjective, cubical map $\pi_{\mc{U}}\colon X\ra X(\mc{U})$. The action $G\acts X(\mc{U})$ is known as a \emph{restriction quotient} of $G\acts X$, see \cite[Section~2.3]{CS} for details. Note that restriction quotients of cocompact actions will remain cocompact, but restriction quotients of proper actions may well stop being proper.

A map $f\colon M\ra N$ between median algebras (in our case, subalgebras of cube complexes) is a \emph{median morphism} if, for all $x,y,z\in M$, we have $f(m(x,y,z))=m(f(x),f(y),f(z))$. Consider two actions $G\acts X$ and $G\acts Y$ on $\CAT$ cube complexes. As observed in \cite[Proposition~2.20]{Fio10a} (crucially exploiting \cite[Theorem~4.4]{Huang-Kleiner}), a $G$--equivariant surjective cubical map $f\colon X\ra Y$ corresponds to a restriction quotient if and only if $f$ is a median morphism on the $0$--skeleton. In particular, if a cocompact cubulation is a restriction quotient of another cocompact cubulation, then the two induced coarse median structures on $G$ will coincide.

The next result will be our main tool to show that two cubulations of $G$ induce the same coarse median structure. It reduces the problem to checking that finitely many subgroups are convex-cocompact. Note that Item~(3) in Theorem~\ref{coarse median vs hyperplanes} is deliberately asymmetric in $X$ and $Y$.

\begin{thm}\label{coarse median vs hyperplanes}
Let $G\acts X$ and $G\acts Y$ be cocompact cubulations. The following are equivalent:
\begin{enumerate}
\item $G\acts X$ and $G\acts Y$ induce the same coarse median structure on $G$;
\item $G\acts X$ and $G\acts Y$ have the same convex-cocompact subgroups;
\item every $G$--stabiliser of a hyperplane of $X$ is convex-cocompact in $Y$;
\item there exists a third cocompact cubulation $G\acts Z$ of which both $G\acts X$ and $G\acts Y$ are restriction quotients.
\end{enumerate}
\end{thm}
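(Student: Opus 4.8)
The cycle of implications I would set up is $(4)\Rightarrow(1)\Rightarrow(2)\Rightarrow(3)\Rightarrow(4)$, with the first three arrows being quick and the last being the real content.

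\emph{The easy arrows.} For $(4)\Rightarrow(1)$: if $G\acts X$ and $G\acts Y$ are both restriction quotients of $G\acts Z$, then the collapsing maps $Z\to X$ and $Z\to Y$ are $G$--equivariant surjective median morphisms on $0$--skeleta (by the characterisation recalled from \cite[Proposition~2.20]{Fio10a}), and since $Z$ is cocompact so are $X$ and $Y$; pulling back the median of $Z$ along an equivariant quasi-isometry $G\to Z$ and postcomposing with the (coarsely well-defined, coarsely median-preserving) collapsing maps shows $X$ and $Y$ induce the same cubical coarse median on $G$ as $Z$ does, hence the same as each other. For $(1)\Rightarrow(2)$: by Proposition~\ref{prop:cc=qc}, convex-cocompactness in $X$ (resp.\ $Y$) is equivalent to $[\mu_X]$--quasi-convexity (resp.\ $[\mu_Y]$--quasi-convexity), and quasi-convexity of a subgroup depends only on the coarse median structure; so if $[\mu_X]=[\mu_Y]$ the two cubulations have the same convex-cocompact subgroups. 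For $(2)\Rightarrow(3)$: every $G$--stabiliser of a hyperplane of $X$ is convex-cocompact in $X$ — the carrier of a hyperplane is a convex subcomplex, preserved by the stabiliser, and the stabiliser acts cocompactly on it because $G\acts X$ is cocompact — so by $(2)$ it is convex-cocompact in $Y$.

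\emph{The hard arrow $(3)\Rightarrow(4)$.} Here is the construction I would use. Let $\mscr{W}(X)$ be the hyperplanes of $X$. By $(3)$, for each hyperplane $\mf{w}$ of $X$ its stabiliser $G_{\mf{w}}$ is convex-cocompact in $Y$; a convex-cocompact subgroup determines a pair of ``halfspaces'' of $Y$ at infinity (more concretely, one can separate $Y$ using the wall of $Y$ dual to a suitable convex subcomplex, or use \cite{Fio10e}) — in any case, associated to each hyperplane of $X$ one obtains a finite $G_{\mf{w}}$--orbit of hyperplanes of $Y$, or more cleanly a wall in a new space. The clean way to package this: let $\mscr{W}(Z)$ be the disjoint union of $\mscr{W}(Y)$ together with, for each hyperplane $\mf{w}$ of $X$, the ``wall'' of $X$ it defines, transported via a coarsely-median-preserving equivariant quasi-isometry into the abstract wallspace structure that $Y$ carries. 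One then checks this collection of walls on (the $0$--skeleton of) $Y$, or rather on $G$, satisfies the finite-interval condition of Sageev's construction, using that each added wall comes from a convex-cocompact subgroup (so is ``as separating as'' an honest hyperplane and interacts with the hyperplanes of $Y$ boundedly). The resulting dual cube complex $Z$ carries a $G$--action which is cocompact (finitely many $G$--orbits of walls, each crossing the others boundedly often — this is where Lemma~\ref{cc properties}(1) on finite intersections of convex-cocompact subgroups, together with cocompactness of $Y$, is used) and proper (properness is inherited because the original $\mscr{W}(Y)$ already separates points properly). Finally, collapsing the walls of $Z$ coming from $X$ recovers $Y$ as a restriction quotient, and collapsing the walls coming from $\mscr{W}(Y)$ recovers $X$ — for the latter one must argue that the remaining walls of $Z$ reconstruct exactly $X$, i.e.\ that the wallspace $(G,\mscr{W}(X))$ has dual cube complex (equivariantly isomorphic to) $X$, which is the standard fact that a cocompact cubulation is recovered from its own hyperplanes.

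\emph{Main obstacle.} The delicate point is establishing that $\mscr{W}(Z)$ is a genuine wallspace with a \emph{cocompact} dual — i.e.\ that the walls transported from $X$ into $Y$'s structure cross the hyperplanes of $Y$, and each other, only a uniformly bounded number of times. This is precisely where convex-cocompactness (not mere finite generation) of the hyperplane-stabilisers is essential: a convex-cocompact subgroup $H\leq G$ has the property that its ``coarse wall'' in $Y$ has bounded interaction with any other coarse wall coming from a convex-cocompact subgroup, because the relevant coarse intersection is governed by $H\cap H'$, again convex-cocompact by Lemma~\ref{cc properties}(1), hence of ``lower complexity''; iterating and using the dimension bound of Lemma~\ref{lem:bounded_iteration} (or finiteness of the dimension of $Y$) gives the uniform bound. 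I expect the technical heart of the argument, which the authors presumably carry out by invoking the machinery of \cite{Fio10e}, to be exactly this cocompactness verification; everything else is bookkeeping with restriction quotients and Proposition~\ref{prop:cc=qc}.
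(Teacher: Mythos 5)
Your three easy arrows $(4)\Rightarrow(1)\Rightarrow(2)\Rightarrow(3)$ match the paper and are fine. The problem is $(3)\Rightarrow(4)$, where your sketch has a genuine gap at exactly the point you flag as the "main obstacle". The uniform local-finiteness/cocompactness of the dual of your enlarged wallspace is not something that follows from Lemma~\ref{cc properties}(1) plus a dimension bound by an induction on "complexity": convex-cocompactness of $H\cap H'$ tells you nothing directly about how the coarse wall attached to $H$ interacts with the hyperplanes of $Y$ or with the other added walls, and "crossing boundedly often" is not even the relevant condition (two walls cross or they do not; what must be controlled is the absence of unboundedly many pairwise-crossing configurations and, more seriously, cocompactness of the dual, which does not follow from finite dimension plus finitely many wall orbits). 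This verification is precisely the deep input that the paper does not reprove but imports wholesale: \cite[Proposition~7.9]{Fio10e} produces, under hypothesis~(3), a nonempty $G$--invariant, $G$--cofinite median subalgebra $N\sq X^{(0)}\x Y^{(0)}$, which is the rigorous substitute for your heuristic. Without that (or an actual proof of it), your argument does not close.

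There is a second, more concrete defect in your endgame. You build $Z$ from walls restricted to a $G$--orbit (equivalently to $G$ itself) and then assert that collapsing the $Y$--walls "recovers $X$". But distinct hyperplanes of $X$ can induce the \emph{same} partition of an orbit (already for $\Z$ acting on $\R$ by even translations, two adjacent hyperplanes cut the orbit identically), so the natural map out of your $Z$ need not be surjective onto $X$, and $X$ need not be a restriction quotient of it. The paper's proof handles exactly this: the subalgebra $N$ is enlarged to the subalgebra $M$ generated by $G\cdot(K_X\x K_Y)$, where $K_X,K_Y$ are compact sets with $G\cdot K_X=X$ and $G\cdot K_Y=Y$; Lemma~\ref{lem:bounded_iteration} keeps $M$ at finite Hausdorff distance from $N$ (hence $G$--cofinite), Chepoi--Roller duality turns $M$ into the cocompact cubulation $Z$, and the coordinate projections are then genuinely surjective median morphisms, hence restriction quotients by \cite[Proposition~2.20]{Fio10a}. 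Your proposal omits this surjectivity step entirely, and it also does not explain how a convex-cocompact subgroup of $Y$ "determines a pair of halfspaces of $Y$ at infinity" (it does not, in any canonical way; the correct object is simply the wall of $X$ pulled back to $G$). So the overall plan is morally in the right direction, but as written the two key steps — cofiniteness/cocompactness of the new median structure, and surjectivity of the collapsing maps — are missing.
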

\begin{proof}
The implication (1)$\Ra$(2) follows from Proposition~\ref{prop:cc=qc}, while the implication (2)$\Ra$(3) is obvious. The implication (4)$\Ra$(1) is also clear, since then $Z$ and $X$ induce the same coarse median structure on $G$, and so do $Z$ and $Y$.

We are left to prove that (3)$\Ra$(4), which is the main content of the theorem. Suppose that all stabilisers of hyperplanes of $X$ are convex-cocompact in $Y$. By \cite[Proposition~7.9]{Fio10e}, there exists a nonempty, $G$--invariant, $G$--cofinite median subalgebra $N\sq X^{(0)}\x Y^{(0)}$.

Let $K_X\sq X$ and $K_Y\sq Y$ be finite subcomplexes with $G\cdot K_X=X$ and $G\cdot K_Y=Y$. Consider the diagonal action $G\acts X\x Y$ and the finite subcomplex $K:=K_X\x K_Y\sq X\x Y$. 

Let us show that there exists a $G$--invariant, $G$--cofinite median subalgebra $M\sq X\x Y$ such that $K\sq M$. To begin with, since $K$ is compact, the set $G\cdot K$ is at finite Hausdorff distance from $N$. Since the median operator $m$ is $1$--Lipschitz, it follows that the set of medians $m(G\cdot K,G\cdot K,G\cdot K)$ is at finite Hausdorff distance from $m(N,N,N)=N$. Since the median subalgebra $M$ generated by $G\cdot K$ is obtained by taking medians a bounded number of times (Lemma~\ref{lem:bounded_iteration}), this implies that $M$ is at finite Hausdorff distance from $N$ too. It is clear that $M$ is $G$--invariant and contains $K$. Finally, since $X\x Y$ is locally finite and $N$ is $G$--cofinite, $M$ is also $G$--cofinite.

Now, by Chepoi--Roller duality, the median algebra $M\cap (X^{(0)}\x Y^{(0)})$ is the $0$--skeleton of a $\CAT$ cube complex $Z$ equipped with a proper and cocompact action $G\acts Z$. The restriction to $M$ of the coordinate projection $X\x Y\ra X$ gives a $G$--equivariant cubical map $\pi_X\colon Z\ra X$ that is a median morphism on the $0$--skeleton; in addition, since $K\sq M$ and $G\cdot K_X=X$, the map $\pi_X$ is surjective. As mentioned before the statement of the theorem, this implies that $\pi_X$ corresponds to a restriction quotient. The same argument shows that $G\acts Y$ is a restriction quotient of $G\acts Z$, concluding the proof. 
\end{proof}

\section{First forms of coarse cubical rigidity}\label{easy sect}

In this section, we prove Theorem~\ref{RAAGs intro} (Theorem~\ref{RAAGs main}) and a general version of Theorem~\ref{RACGs intro}(1) for all graph products of finite groups (Theorem~\ref{sc rigidity 2}). Theorem~\ref{autom intro}(1) immediately follows from the latter. In addition, in Subsection~\ref{vab subsect} we study cubical coarse medians on virtually abelian groups.

\subsection{Strongly cellular cubulations}

In this subsection, we obtain a criterion guaranteeing that all strongly cellular, cocompact cubulations of certain groups induce the same coarse median structure (Proposition~\ref{centraliser hyp-stab}). 

We first need the following simple lemma. This should be compared to Proposition~\ref{centralisers are median-cocompact} stating that centralisers of finite sets are median-cocompact in all cocompact cubulations. Here we exploit stronger assumptions on the centraliser and the action to obtain the stronger conclusion of convex-cocompactness.

\begin{lem}\label{universally cc}
Let $G$ be a group and let $Z\leq G$ be the centraliser of a finite set of finite-order elements. Then $Z$ is convex-cocompact in every strongly cellular, cocompact cubulation of $G$.
\end{lem}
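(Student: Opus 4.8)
The plan is to let $G \acts X$ be a strongly cellular, cocompact cubulation, write $Z = Z_G(g_1, \dots, g_n)$ where each $g_i$ has finite order, and produce a $Z$--invariant convex subcomplex on which $Z$ acts cocompactly. The natural candidate is the fixed-point set $\Fix(g_i) \sq X$: since the action is strongly cellular, each $\Fix(g_i)$ is a nonempty (by finite-order plus $\CAT$ fixed-point theory) subcomplex of $X$, and in fact it is convex — this is a standard fact for fixed sets of finite-order cellular isometries of $\CAT$ cube complexes, where one uses that a finite group acting cellularly has a fixed vertex (after passing to the first cubical subdivision, or directly via the $\ell_2$--centre of a bounded orbit) and that the fixed set is closed under taking medians. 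Set $C = \Fix(g_1) \cap \dots \cap \Fix(g_n)$, which is a nonempty (again by the $\CAT$ fixed-point argument applied to the finite group generated by the $g_i$, noting they all lie in a common compact... — more carefully, one should be mildly careful here, see below) intersection of convex subcomplexes, hence convex, and it is clearly $Z$--invariant since $Z$ centralises each $g_i$ and therefore preserves each $\Fix(g_i)$.

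The heart of the matter is then cocompactness of $Z \acts C$. First I would argue that $C$ meets only finitely many $G$--orbits of cells: since $G \acts X$ is cocompact, it suffices to bound how the stabiliser $G_\sigma$ of a cell $\sigma \sq C$ relates to $Z$. If $\sigma \sq C$, then $g_1, \dots, g_n$ all fix $\sigma$ pointwise (by definition of $C$ and strong cellularity), so $G_\sigma$ contains $g_1, \dots, g_n$; but more importantly every element of $G_\sigma$ commutes with... no — that is false in general. The correct route is: the setwise stabiliser $\mathrm{Stab}_G(\sigma)$ contains the $g_i$ as elements that fix $\sigma$ pointwise, and I want to show $Z$ acts cocompactly on $C$. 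The cleaner approach is to invoke Proposition~\ref{centralisers are median-cocompact}: $Z$ is already known to be median-cocompact in $X$, so by Proposition~\ref{prop:cc=qc}(2) it is $[\mu_X]$--quasi-submedian, and it remains only to upgrade quasi-submedian to quasi-convex. For that, one shows that $C$ (which is convex) is at finite Hausdorff distance from a $Z$--orbit: given $v \in C^{(0)}$, consider its $Z$--orbit; since $C$ is convex and $Z$--invariant it contains the convex hull of this orbit, and conversely any $w \in C$ is fixed by all $g_i$, so $w$ and $Zv$ lie in a common coset structure controlled by the finitely many double cosets — here I would mimic the counting argument in the proof of Lemma~\ref{Z in general cms}, but now inside $C$ rather than all of $X$: for $w \in C^{(0)}$, each $g_i$ fixes both $w$ and $v$, hence $g_i$ fixes a geodesic-ish neighbourhood structure, and one deduces $w$ lies in a bounded neighbourhood of $Zv$.

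Let me restructure the cocompactness step more honestly, since that is where the real work lies. I would show directly: $C$ lies in a bounded neighbourhood of $Z\cdot v$ for any fixed $v \in C^{(0)}$. Take $w \in C^{(0)}$; by cocompactness of $G \acts X$ there is $h \in G$ with $hv$ within a universal constant $D$ of $w$, and moreover (by Milnor--Schwarz) we may bound $|h|$ in terms of $d(v,w)$ only up to ambiguity by the stabiliser of $v$. The point is to replace $h$ by an element of $Z$ at bounded cost: since $w \in C$, each $g_i$ fixes $w$; and $g_i$ fixes $v$; so $h^{-1} g_i h$ fixes $h^{-1}w$, which is within $D$ of $v$, so $h^{-1}g_i h \in \mathrm{Stab}_G(B(v,D))$, a finite set (as $X$ is locally finite and the action proper). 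Thus $h$ conjugates the finite set $\{g_1, \dots, g_n\}$ into a fixed finite set $F$; there are finitely many such conjugating elements modulo $Z$ on the left (the elements conjugating $\{g_i\}$ to a fixed tuple in $F$ form a single coset of $Z$, and there are finitely many target tuples), so $h \in \bigcup_{j} Z c_j$ for a finite collection $c_1, \dots, c_m$ independent of $w$. Hence $w$ is within $D + \max_j d(v, c_j v)$ of $Z v$, which is the desired uniform bound. Combined with $C$ being a convex, $Z$--invariant subcomplex, this yields that $Z \acts C$ is cocompact, so $Z$ is convex-cocompact in $X$.

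The main obstacle I anticipate is the interplay in the last paragraph between the proper action giving finiteness of $\mathrm{Stab}_G(B(v,D))$ and the bookkeeping of how many $Z$--cosets of conjugating elements arise — this needs $X$ locally finite, which does hold for cocompact cubulations, so it should go through; and secondly making sure $C$ is genuinely nonempty and convex, for which the essential input is that a finite subgroup acting cellularly (equivalently, strongly cellularly) on a $\CAT$ cube complex has nonempty convex fixed-point set, the standard consequence of the Bruhat--Tits fixed-point theorem together with strong cellularity forcing the fixed set to be a subcomplex closed under medians.
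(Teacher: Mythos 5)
There is a genuine gap at the very first step: your candidate convex subcomplex $C=\Fix(g_1)\cap\dots\cap\Fix(g_n)$ can be empty. You justify nonemptiness by ``the $\CAT$ fixed-point argument applied to the finite group generated by the $g_i$'', but the subgroup $\langle g_1,\dots,g_n\rangle$ need not be finite even though each $g_i$ has finite order, so Bruhat--Tits does not apply and there need not be any common fixed point. Concretely, take two non-commuting reflections $a,b$ in a right-angled Coxeter group acting on its graph-product complex (or, to see that $Z$ can be infinite, take $G=D_\infty\x\Z$ with the product cubulation and $g_1=a$, $g_2=b$ the two reflections of the $D_\infty$--factor): the action is strongly cellular, $\Fix(a)\cap\Fix(b)=\emptyset$ because $\langle a,b\rangle\simeq D_\infty$ acts with no global fixed point, yet $Z=Z_G(\{a,b\})\simeq\Z$ is infinite and the lemma still has content. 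Your entire cocompactness argument (which in itself is sound and essentially the same coset-counting as in the paper) is then applied to an empty set and proves nothing in exactly the cases the paper needs, since Proposition~\ref{centraliser hyp-stab} and Lemma~\ref{cc properties}(1) are invoked for centralisers of several reflections. Your side remark about upgrading median-cocompactness (Proposition~\ref{centralisers are median-cocompact}) to convex-cocompactness is also not a repair: that implication is false in general (the diagonal in $\Z^2$, or Example~\ref{never-cc centraliser}), and you never supply the upgrade.

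The fix, which is how the paper argues, is to treat one finite-order element at a time: for a single $f$ of finite order, $\Fix(f)$ is nonempty, $\ell_2$--convex, and a subcomplex by strong cellularity, and the coset-counting you sketched (if $gx\in\Fix(f)$ for $x\in\Fix(f)$, then $g^{-1}fg$ lies in the finite stabiliser of $x$, so such $g$ fill finitely many right cosets of $Z_G(f)$) shows $Z_G(f)$ acts cocompactly on $\Fix(f)$, hence is convex-cocompact. Then $Z=\bigcap_i Z_G(g_i)$ is convex-cocompact by Lemma~\ref{cc properties}(1) (finite intersections of convex-cocompact subgroups are convex-cocompact), which replaces the intersection of fixed sets by an intersection at the level of subgroups and thereby avoids the emptiness problem entirely.
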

\begin{proof}
Let $G\acts Y$ be a strongly cellular, cocompact cubulation of $G$. Let $f\in G$ have finite order.

The subset $\Fix(f)\sq Y$ is nonempty and, since the $\ell_2$--metric on $Y$ is uniquely geodesic, it is $\ell_2$--convex. By definition of strongly cellular action, the subset $\Fix(f)$ is also a subcomplex. We conclude that $\Fix(f)$ is a convex subcomplex (recall that, for subcomplexes, $\ell_2$--convexity is equivalent to $\ell_1$--convexity \cite{Haglund-ss}).

Observe that the action $Z_G(f)\acts\Fix(f)$ is cocompact. In order to see this, consider a point $x\in\Fix(f)$ and let $G_x\leq G$ be the finite subgroup fixing $x$. If $gx\in\Fix(f)$ for some $g\in G$, then $g^{-1}fg\in G_x$. The set $\{g\in G\mid g^{-1}fg\in G_x\}$ is a finite union of right cosets of $Z_G(f)$. Hence $Z_G(f)$ acts cofinitely on every intersection between $\Fix(f)$ and a $G$--orbit. Since there are only finitely many $G$--orbits of vertices in $Y$, it follows that $Z_G(f)$ acts cocompactly on $\Fix(f)$.

In conclusion, for every finite-order element $f\in G$, the centraliser $Z_G(f)$ is convex-cocompact in $Y$. By Lemma~\ref{cc properties}, finite intersections of these subgroups are also convex-cocompact.
\end{proof}

Combining the previous lemma with Theorem~\ref{coarse median vs hyperplanes}, we immediately obtain the following criterion. We emphasise that the action $G\acts X$ in Proposition~\ref{centraliser hyp-stab} is \emph{not} required to be itself strongly cellular.

\begin{prop}\label{centraliser hyp-stab}
Let $G$ be a group with a cocompact cubulation $G\acts X$ where every hyper\-plane-stabiliser is commensurable to the centraliser of a finite set of finite-order elements of $G$. Then all strongly cellular, cocompact cubulations of $G$ induce the same coarse median structure as $G\acts X$. 
\end{prop}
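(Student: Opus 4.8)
The plan is to apply Theorem~\ref{coarse median vs hyperplanes}. Let $G\acts Y$ be an arbitrary strongly cellular, cocompact cubulation of $G$, and let $G\acts X$ be the fixed cubulation from the hypothesis. By the equivalence (3)$\Leftrightarrow$(1) in Theorem~\ref{coarse median vs hyperplanes}, to show that $G\acts X$ and $G\acts Y$ induce the same coarse median structure, it suffices to verify that every $G$--stabiliser of a hyperplane of $X$ is convex-cocompact in $Y$.

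Now fix a hyperplane $\mf{w}$ of $X$ and let $H=\mathrm{Stab}_G(\mf{w})$. By hypothesis, $H$ is commensurable to $Z$, the centraliser of some finite set $\{f_1,\dots,f_k\}$ of finite-order elements of $G$. By Lemma~\ref{universally cc}, $Z$ is convex-cocompact in $Y$ (here is precisely where we use that $G\acts Y$ is strongly cellular, even though $G\acts X$ need not be). By Lemma~\ref{cc properties}(2), convex-cocompactness in $Y$ is a commensurability invariant, so $H$ is also convex-cocompact in $Y$. Since $\mf{w}$ was arbitrary, condition~(3) of Theorem~\ref{coarse median vs hyperplanes} holds, and therefore $G\acts X$ and $G\acts Y$ induce the same coarse median structure on $G$, as desired.

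There is no real obstacle here: the proposition is essentially a formal combination of three previously established facts (Theorem~\ref{coarse median vs hyperplanes}, Lemma~\ref{universally cc}, and Lemma~\ref{cc properties}(2)). The only point requiring a moment's care is to invoke the commensurability invariance of convex-cocompactness (Lemma~\ref{cc properties}(2)) to pass from $Z$ to the possibly-different hyperplane-stabiliser $H$, rather than assuming $H$ equals $Z$ on the nose; and to note explicitly that the strong cellularity hypothesis is needed on $Y$ (the target of the comparison) but not on $X$, which matches the asymmetry already built into Theorem~\ref{coarse median vs hyperplanes}(3).
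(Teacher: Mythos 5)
Your proof is correct and follows exactly the paper's argument: verify condition~(3) of Theorem~\ref{coarse median vs hyperplanes} by combining Lemma~\ref{universally cc} (applied to the strongly cellular cubulation $Y$) with the commensurability invariance of convex-cocompactness from Lemma~\ref{cc properties}(2). Your closing remark about the asymmetry — strong cellularity needed only for $Y$, not $X$ — matches the paper's own emphasis.
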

\begin{proof}
Let $G\acts X$ be a cocompact cubulation where hyperplane-stabilisers are commensurable to centralisers of finite sets of finite-order elements. Let $G\acts Y$ be any strongly cellular, cocompact cubulation. Since convex-cocompactness is a commensurability invariant (Lemma~\ref{cc properties}), Lemma~\ref{universally cc} implies that all stabilisers of hyperplanes of $X$ are convex-cocompact in $Y$. Now, Theorem~\ref{coarse median vs hyperplanes} implies that $X$ and $Y$ induce the same coarse median structure on $G$.
\end{proof}

\subsection{Graph products of finite groups}\label{graph prod subsect}

In this subsection, we use Proposition~\ref{centraliser hyp-stab} to deduce that all strongly cellular, cocompact cubulations of graph products of finite groups induce the same coarse median structure (Theorem~\ref{sc rigidity 2}). In particular, this will prove Theorem~\ref{RACGs intro}(1) and Theorem~\ref{autom intro}(1).

Every graph product of finite groups admits a particularly nice cocompact cubulation: its \emph{graph-product complex}. This was shown in \cite{Davis-buildings} (also see \cite{Ruane-Witzel} and \cite[Theorem~2.27]{Genevois-Martin}), but we briefly recall here the construction.\footnote{For graph products of order--$2$ groups (i.e.\ right-angled Coxeter groups), the complex that we are about to describe is the first cubical subdivision of what is usually known as the Davis complex.}

Let $G$ be the graph product determined by the data $(\G,\{F_v\}_{v\in\G})$. Here $\G$ is a finite simplicial graph and $\{F_v\}_{v\in\G}$ is a collection of finite groups indexed by the vertices of $\G$. The group $G$ is the quotient of the free product of the $F_v$ by the normal subgroup generated by the commutator sets $[F_w,F_{w'}]$ such that $[w,w']$ is an edge of $\G$. If $\Lambda\sq\G$ is a subgraph, we denote by $G_{\Lambda}$ the subgroup of $G$ generated by the union of the $F_v$ with $v\in\Lambda$. If $\Lambda$ is the empty-set, then we let $G_{\Lambda}$ denote the trivial subgroup of $G$.

The $0$--skeleton of the graph-product complex $\mc{D}$ is identified with the collection of cosets $gG_c$, where $g\in G$ and $c\sq\G$ is a clique (possibly empty). We add edges $[gG_c,gG_{c'}]$ when $c\sq c'$ and $\#c=\#c'-1$. Fixing $g\in G$, if $c\sq c'$ are cliques with $\#c=\#c'-k$, then the subgraph of $\mc{D}$ spanned by the vertices $gG_{c''}$ with $c\sq c''\sq c'$ is isomorphic to the $1$--skeleton of a $k$--cube. We complete the construction of $\mc{D}$ by glueing a cube of the appropriate dimension to each such subgraph, with obvious identifications ensuring that each cube is uniquely determined by its vertex set. 

Note that $G$ permutes left cosets by left multiplication. This gives a $G$--action on the $0$--skeleton of $\mc{D}$, which naturally extends to a cellular action on the whole $\mc{D}$. 

\begin{prop}\label{Davis complex}
Let $G$ be a graph product of finite groups.
\begin{enumerate}
    \item The graph-product complex $\mc{D}$ is a $\CAT$ cube complex.
    \item The action $G\acts\mc{D}$ is proper, cocompact and strongly cellular.
    \item Hyperplane-stabilisers are precisely conjugates of the subgroups $G_{\lk(v)}$ for $v\in\G^{(0)}$.
\end{enumerate}
\end{prop}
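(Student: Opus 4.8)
\textbf{Proof proposal for Proposition~\ref{Davis complex}.}

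The plan is to verify the three claims in turn, the heart of the matter being the CAT(0) property of part~(1). For part~(1), I would first check that $\mc{D}$ is simply connected. The group $G$ has a presentation complex that deformation retracts onto a suitable subcomplex, but a cleaner route is to realise $\mc{D}$ as the Davis--Moussong-style geometric realisation of the poset of cosets $gG_c$ with $c$ a clique, and to use the standard fact (\cite{Davis-buildings}, see also \cite{Ruane-Witzel}, \cite[Theorem~2.27]{Genevois-Martin}) that this realisation is simply connected because every local cycle bounds, using the normal-form theorem for graph products to kill loops. Then I would verify Gromov's link condition: a vertex $gG_c$ of $\mc{D}$ has a link whose simplices correspond to pairs of cliques $(c',c'')$ with $c'\sq c\sq c''$ (the ``down'' and ``up'' directions), and one checks this link is a flag simplicial complex. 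The up-directions at $gG_c$ are governed by vertices of $\G$ adjacent to all of $c$ together with the finite groups $F_v$ at those vertices, and the down-directions by the elements of $c$; flagness reduces to the combinatorial fact that a set of vertices of $\G$ pairwise spanning edges spans a clique, together with the observation that within a single $F_v$ every pair of distinct nontrivial directions is already ``filled''. This is where most of the care goes, but it is the standard computation and I would cite \cite{Davis-buildings} for the bulk of it rather than redo it.

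For part~(2), properness and cocompactness are straightforward: $G$ permutes the cosets $gG_c$ with finite stabilisers (the stabiliser of $gG_c$ is the finite group $gG_cg^{-1}$, finite since $G_c$ is a graph product of finitely many finite groups over a clique, hence finite), and there are finitely many $G$--orbits of cubes, one for each clique $c\sq\G$ up to the action, giving cocompactness. For strong cellularity, suppose $g\in G$ stabilises a cube of $\mc{D}$, i.e.\ $g$ permutes the vertex set $\{hG_{c''}: c\sq c''\sq c'\}$ of a cube spanned between $hG_c$ and $hG_{c'}$. Then $g$ in particular fixes setwise the bottom vertex $hG_c$ (it is the unique minimum of the face poset, characterised by $\#c''$), so $h^{-1}gh\in G_c$; writing $g=h k h^{-1}$ with $k\in G_c$, one checks that $k$ acts on the sub-cube $[G_c,G_{c'}]$, which is a product $\prod_{v\in c'\setminus c}$ of intervals, coordinatewise through the finite group $G_{c'}$, and that any element of $G_c$ permuting these vertices must fix each coordinate (the $F_v$-coordinate of the vertex $G_{c''}$ is determined by whether $v\in c''$, not by a nontrivial $F_v$-element, since these are cosets of standard subgroups), hence fixes the cube pointwise. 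So $g$ fixes the original cube pointwise.

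For part~(3), I would argue as follows. A hyperplane of $\mc{D}$ is dual to a $G$--orbit of edges, and edges come in $\G^{(0)}$--many orbits: the edge $[G_c, G_{c\cup\{v\}}]$ for $v\notin c$, $c\cup\{v\}$ a clique. Inside a single cube spanned between $hG_c$ and $hG_{c'}$, the edges dual to a fixed coordinate $v\in c'\setminus c$ all get identified to one hyperplane; tracking this identification across cubes, one finds that the hyperplane dual to the standard edge $[G_\emptyset, G_{\{v\}}]$ (i.e.\ $[G_{\{v\}}^{0}, \ldots]$, the edge at the base vertex $G_\emptyset$ in the $F_v$-direction) separates $\mc{D}$ into pieces indexed by the coset space $G/G_{\St(v)}$, and its stabiliser is exactly $G_{\St(v)}$. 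Actually the cleaner statement is the one in the proposition: the stabiliser is $G_{\lk(v)}$, because the edge $[G_\emptyset, G_{\{v\}}]$ is stabilised by $F_v \cdot G_{\lk(v)} = G_{\St(v)}$ but that subgroup acts on the midcube/hyperplane with the $F_v$-part acting as hyperplane-inversions (swapping the two sides) in the unsubdivided picture, whereas in $\mc{D}$ (which is already the subdivision) the genuine hyperplane is the one whose pointwise stabiliser is $G_{\lk(v)}$. I would make this precise by identifying the carrier of the hyperplane dual to $[G_\emptyset,G_{\{v\}}]$ with $G_{\lk(v)} \times [0,1]$-type structure and reading off the stabiliser, then conjugating to get all hyperplanes. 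The main obstacle throughout is bookkeeping the coset/clique combinatorics cleanly enough that the link-condition verification in part~(1) and the hyperplane-stabiliser identification in part~(3) are transparent rather than buried in case analysis; I would lean on the cited references (\cite{Davis-buildings,Ruane-Witzel,Genevois-Martin}) for the CAT(0) statement and keep the original argument for strong cellularity and the stabiliser computation, which are the parts genuinely needed later.
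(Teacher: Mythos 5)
Your treatment of parts (1) and (2) is fine: the paper itself leaves these to the reader, your citations cover (1), and your strong-cellularity argument for (2) is correct --- in fact it simplifies, since left multiplication sends a coset $hG_{c''}$ to a coset of the \emph{same} subgroup $G_{c''}$, so the action preserves the type of every vertex; an element preserving a cube must therefore fix each of its vertices (each type occurs once in a cube), and once you know $h^{-1}gh\in G_c$ it automatically lies in every $G_{c''}$ with $c''\supseteq c$, so it fixes all vertices of the cube.

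Part (3), however --- the only part the paper actually proves --- has a genuine gap, and the heuristics you offer in place of the missing computation are wrong. First, the edge $[G_\emptyset,G_{\{v\}}]$ is \emph{not} stabilised by $G_{\St(v)}$: the action is type-preserving and free on vertices of type $\emptyset$ (these are just group elements), so the stabiliser of that edge is trivial; even $G_{\lk(v)}$ does not stabilise the edge, only its dual hyperplane. Second, a nontrivial element of $F_v$ does not act on that hyperplane as an ``inversion'' --- it does not preserve the hyperplane at all. Around the vertex $F_v$ there are $\#F_v$ edges $[h,F_v]$, $h\in F_v$, each dual to a \emph{different} hyperplane, and $F_v$ permutes these; the ``unsubdivided picture'' you invoke only exists when every $\#F_v=2$ (the right-angled Coxeter case), so it cannot carry the general argument. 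Third, $G_{\lk(v)}$ is the \emph{setwise} stabiliser of the hyperplane, not its pointwise stabiliser (it acts on the hyperplane cocompactly but nontrivially). The step you defer with ``I would make this precise by identifying the carrier\dots'' is precisely the content of the paper's proof: one determines exactly which edges are dual to the hyperplane of $e=[gG_c,gG_cF_v]$ by checking which edges are opposite to $e$ in squares --- they are either $[gG_cF_w,gG_cF_vF_w]$ with $c\cup\{v,w\}$ a clique, or $[ghG_{c-\{w\}},ghG_{c-\{w\}}F_v]$ with $w\in c$, $h\in F_w$ --- and iterating shows the dual edges are exactly the $[g'G_{c'},g'G_{c'}F_v]$ with $c'\cup\{v\}$ a clique and $g^{-1}g'\in G_{\lk(v)}$. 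The stabiliser $gG_{\lk(v)}g^{-1}$ then falls out (and, in particular, one sees that $F_v$ genuinely moves the hyperplane); without this computation your identification of the stabiliser is only asserted. A minor further slip: $G$--orbits of edges are indexed by pairs $(c,v)$, not by $\G^{(0)}$; it is the hyperplane orbits that are indexed by $\G^{(0)}$, and a single hyperplane separates $\mc{D}$ into two halfspaces, not into pieces indexed by $G/G_{\St(v)}$.
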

\begin{proof}
Checking (1) and (2) is straightforward and we leave it to the reader. We prove (3).

Let $\mf{w}\sq\mc{D}$ be the hyperplane dual to the edge $e=[gG_c,gG_cF_v]$, where $g\in G$ is an element and $c\cup\{v\}$ is a clique. Suppose $f\sq\mc{D}$ is an edge opposite to $e$ in a square containing $e$. Then $f$ takes one of the following two forms:
\begin{itemize}
\item $f=[gG_cF_w,gG_cF_vF_w]$, where $c\cup\{v,w\}$ is a clique of $\G$;
\item $f=[ghG_{c-\{w\}},ghG_{c-\{w\}}F_v]$, where $w\in c$ and $h\in F_w$.
\end{itemize}
From this, we deduce that the edges of $\mc{D}$ dual to $\mf{w}$ are precisely those of the form:
\[ [g'G_{c'},g'G_{c'}F_v], \]
where $c'\sqcup\{v\}$ is a clique in $\G$ and the element $g^{-1}g'$ is a product of elements of those $F_w$ for which $[v,w]$ is an edge of $\G$. 

Now, if $h\in G$, we have $h\mf{w}=\mf{w}$ if and only if the edge $[hgG_c,hgG_cF_v]$ is of the above form. This happens exactly when $g^{-1}hg$ lies in the subgroup $G_{\lk(v)}$, as required.
\end{proof}

\begin{rmk}\label{commensurable to centraliser}
For every vertex $v\in\G$ and every $g\in F_v-\{1\}$, we have $G_{\lk(v)}\leq Z_G(g)\leq G_{\St(v)}$ (see e.g.\ \cite[Theorem~32]{Barkauskas}). Since $G_{\St(v)}=F_v\x G_{\lk(v)}$, this shows that $G_{\lk(v)}$ is always commensurable to the centraliser of a finite-order element.
\end{rmk}

\begin{thm}\label{sc rigidity 2}
Let $G$ be a graph product of finite groups. All strongly cellular, cocompact cubulations of $G$ induce the same coarse median structure on $G$.
\end{thm}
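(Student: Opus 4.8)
The plan is to invoke Proposition~\ref{centraliser hyp-stab} directly. That proposition says: if $G$ admits \emph{some} cocompact cubulation in which every hyperplane-stabiliser is commensurable to the centraliser of a finite set of finite-order elements, then all strongly cellular, cocompact cubulations of $G$ induce the same coarse median structure as that distinguished cubulation. So the entire task reduces to exhibiting one such cubulation for an arbitrary graph product of finite groups.

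The natural candidate is the graph-product complex $\mc{D}$ from Proposition~\ref{Davis complex}. By Proposition~\ref{Davis complex}(2), the action $G\acts\mc{D}$ is proper and cocompact, hence a cocompact cubulation. By Proposition~\ref{Davis complex}(3), the hyperplane-stabilisers are exactly the conjugates of the subgroups $G_{\lk(v)}$ for $v\in\G^{(0)}$. Finally, Remark~\ref{commensurable to centraliser} records that for any nontrivial $g\in F_v$ one has $G_{\lk(v)}\leq Z_G(g)\leq G_{\St(v)}=F_v\x G_{\lk(v)}$, so $G_{\lk(v)}$ has finite index in $Z_G(g)$, i.e.\ it is commensurable to the centraliser of the finite-order element $g$. (A conjugate of $G_{\lk(v)}$ is then commensurable to a conjugate of $Z_G(g)$, which is the centraliser of a conjugate of $g$, again a finite-order element; so the commensurability hypothesis holds for every hyperplane-stabiliser.)

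Thus every hyperplane-stabiliser of $G\acts\mc{D}$ satisfies the hypothesis of Proposition~\ref{centraliser hyp-stab} --- indeed with a single finite-order element rather than a finite set --- and the proposition immediately yields that all strongly cellular, cocompact cubulations of $G$ induce the same coarse median structure as $\mc{D}$, hence the same as one another. There is essentially no obstacle here: all the work has already been done in Proposition~\ref{Davis complex}, Remark~\ref{commensurable to centraliser}, Lemma~\ref{universally cc} and Theorem~\ref{coarse median vs hyperplanes}, which together feed into Proposition~\ref{centraliser hyp-stab}. The only mild point to check is that ``commensurable'' is used in the symmetric sense so that $G_{\lk(v)}$ being finite-index in $Z_G(g)$ is enough --- but that is exactly how Lemma~\ref{cc properties}(2) and Proposition~\ref{centraliser hyp-stab} are phrased. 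The proof is therefore a one-line application once these ingredients are assembled.

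\begin{proof}
By Proposition~\ref{Davis complex}, the graph-product complex $\mc{D}$ gives a cocompact cubulation $G\acts\mc{D}$ whose hyperplane-stabilisers are the conjugates of the subgroups $G_{\lk(v)}$ with $v\in\G^{(0)}$. By Remark~\ref{commensurable to centraliser}, each $G_{\lk(v)}$ is commensurable to $Z_G(g)$ for any nontrivial $g\in F_v$, hence to the centraliser of a finite-order element; conjugating, every hyperplane-stabiliser of $G\acts\mc{D}$ is commensurable to the centraliser of a (single) finite-order element of $G$. Proposition~\ref{centraliser hyp-stab} now applies and shows that all strongly cellular, cocompact cubulations of $G$ induce the same coarse median structure as $G\acts\mc{D}$, and in particular the same coarse median structure as one another.
\end{proof}
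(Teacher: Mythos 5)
Your proof is correct and follows exactly the paper's own argument: apply Proposition~\ref{centraliser hyp-stab} to the graph-product complex, with Proposition~\ref{Davis complex}(3) and Remark~\ref{commensurable to centraliser} supplying the fact that hyperplane-stabilisers are commensurable to centralisers of finite-order elements. The only difference is that you spell out the conjugation step, which the paper leaves implicit.
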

\begin{proof}
It suffices to apply Proposition~\ref{centraliser hyp-stab} to the graph-product complex of $G$. Proposition~\ref{Davis complex}(3) and Remark~\ref{commensurable to centraliser} ensure that hyperplane-stabilisers have the required form.
\end{proof}

Theorem~\ref{autom intro}(1) immediately follows from Theorem~\ref{sc rigidity 2} and the fact that the action on the graph-product complex is strongly cellular.

\subsection{Virtually abelian groups}\label{vab subsect}

In this subsection, we completely classify cubical coarse median structures on free abelian groups $\Z^n$ (Proposition~\ref{cms on Z^n}) and on products of infinite dihedrals $D_{\infty}^n$ (Proposition~\ref{cms on D^n}). While $\Z^n$ has many cubical coarse medians --- infinitely many orbits up to the natural $\Aut(\Z^n)$--action --- there are only finitely many cubical coarse median structures on $D_{\infty}^n$. 

The first result is used in the next subsection to study cubical coarse medians on right-angled Artin groups and prove Theorem~\ref{RAAGs main}, which implies Theorem~\ref{RAAGs intro}. The second result is needed to deduce Corollary~\ref{loose cor intro} from Theorem~\ref{RACGs intro} and it also shows that, in the Conjecture from the Introduction, Item~(b) implies Item~(a) (Remark~\ref{conj rmk}).

\begin{lem}\label{cms on vab}
Let $A$ be a virtually abelian group. Every cubical coarse median structure on $A$ is induced by a proper cocompact $A$--action on the standard cubical tiling of $\R^n$, for some $n\geq 0$.
\end{lem}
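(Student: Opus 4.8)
The plan is to start from an arbitrary cocompact cubulation $A \acts Y$ and show that, after passing to a finite-index abelian subgroup and collapsing hyperplanes, one can exhibit $Y$ as an intermediate cube complex sitting between a ``flat'' cubulation (the tiling of $\R^n$) and $Y$ itself — in a way that induces the same coarse median structure. The main tool is Theorem~\ref{coarse median vs hyperplanes}: it suffices to produce a proper cocompact action $A \acts \R^n$ on the standard cubical tiling such that every hyperplane-stabiliser of $A \acts Y$ is convex-cocompact for the tiling action, and conversely (or, more efficiently, to build a common restriction quotient). Since $A$ is virtually abelian, every hyperplane-stabiliser is itself virtually abelian, of rank $\leq n-1$ where $n$ is the rank of $A$, so the combinatorial content will really be about how codimension-one virtually abelian subgroups of $\Z^n$ look.

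\textbf{Key steps.} First I would reduce to the case $A \simeq \Z^n$: pass to a finite-index free abelian subgroup $A_0 \leq A$; by Lemma~\ref{cc properties}(2), convex-cocompactness is a commensurability invariant, so $A$ and $A_0$ have the same convex-cocompact subgroups in $Y$, and it is enough to find the right tiling action of $A_0$ and then note that the finite extension $A$ still acts on (a subdivision of) the tiling — one must check the extension acts cellularly, which follows because $\mathrm{Aut}$ of the tiling of $\R^n$ is the full affine signed-permutation group and any virtually abelian $A$ embeds there, or alternatively one quotes the cubical flat torus theorem to get the $A$-action directly. Second, using the Caprace--Sageev machinery (as in the proof of Lemma~\ref{lem:cc_products}), I would pass to the essential core of $A_0 \acts Y$: after discarding the non-essential directions one gets a product splitting and, because $A_0 \simeq \Z^n$ acts properly and cocompactly and essentially, the essential core is a product of $n$ quasi-lines, each acted on by a rank-one factor — equivalently $Y$ is quasi-isometric to $\R^n$ with $A_0$ acting as a lattice of translations. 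Third, and this is where the real work is, I would show that the set of $A_0$-essential hyperplanes of $Y$ falls into finitely many parallelism classes, each class being swept out by translates of a single hyperplane, and that the stabiliser of any such hyperplane is a corank-one subgroup $L \leq A_0$; collapsing all hyperplanes not parallel to a fixed ``coordinate'' family, or rather choosing $n$ of these parallelism classes that span, one collapses $Y$ down onto the tiling of $\R^n$ determined by those $n$ families. Since collapsing hyperplanes is a restriction quotient, it preserves the coarse median structure; and one checks the collapsed complex is genuinely $\R^n$ (proper cocompact $\Z^n$-action on a complex whose hyperplanes come in exactly $n$ transverse families of parallel hyperplanes forces the cubical tiling). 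Finally, one verifies that the remaining (collapsed) hyperplane stabilisers of $Y$ are convex-cocompact for this tiling action — they are corank-one subgroups of $\Z^n$, which are automatically highest virtually abelian subgroups of the tiling, hence convex-cocompact by Lemma~\ref{cc properties}(3) — so Theorem~\ref{coarse median vs hyperplanes} gives that $Y$ and the tiling induce the same coarse median structure on $A_0$, hence on $A$.

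\textbf{Main obstacle.} The delicate point is the third step: controlling the hyperplanes of an arbitrary cocompact cubulation of $\Z^n$ and showing they organise into finitely many parallelism classes whose stabilisers are exactly corank-one subgroups, so that one can collapse down to the standard tiling. One must be careful that ``parallelism'' of hyperplanes in a general $\CAT$ cube complex is not transitive, so the argument has to be run in the essential core where the $\Z^n$-action is a lattice of translations of $\R^n$ (via the flat torus theorem / the $\ell_1$-to-$\ell_2$ comparison of Lemma~\ref{l_2 geodesics are l_1}), identify each hyperplane's ``direction'' as a rational hyperplane of $\R^n$, and argue that cocompactness forces only finitely many directions to appear and each stabiliser to be the full integral sublattice in that direction. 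Once that structural picture is in place, everything else is either a restriction-quotient observation or a direct appeal to Lemma~\ref{cc properties} and Theorem~\ref{coarse median vs hyperplanes}. One should also keep in mind the possibility that $n = 0$ (finite $A$), which is the trivial base case and should be disposed of at the outset.
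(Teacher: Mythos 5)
Your route is genuinely different from the paper's, and as written it has two concrete gaps. First, the reduction to $A_0\cong\Z^n$ and the return trip to $A$ is exactly the hard point, and your justification for it fails: it is not true that every virtually abelian group embeds into the automorphism group $\Z^n\rtimes{\rm O}(n,\Z)$ of the standard tiling (this is precisely Hagen's obstruction for crystallographic groups), and even when $A$ does act on the tiling you must produce an action compatible with the \emph{given} coarse median structure, i.e.\ one for which the finite quotient $A/A_0$ preserves your chosen coordinate directions. This matters for the paper's later use of the lemma: Proposition~\ref{cms on D^n} applies it to $D_{\infty}^n$, where the torsion part is the whole point. The paper sidesteps the reduction entirely by running the flat torus theorems for $A$ itself: the cubical flat torus theorem gives an $A$--invariant convex product of quasi-lines, the $\CAT$ flat torus theorem gives an $A$--invariant $\ell_2$--flat $F$ inside it, and Lemma~\ref{l_2 geodesics are l_1} shows each factor projection of $F$ is an $\ell_1$--geodesic, so $F$ is an $A$--invariant median subalgebra which, with the $\ell_1$--metric, is already $\R^n$ with an $A$--invariant box tiling. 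No essential-core, parallelism, or collapsing analysis is needed, and no restriction quotient of $Y$ is ever formed.

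Second, your final verification is wrong as stated: corank-one subgroups of $\Z^n$ are \emph{not} highest virtually abelian subgroups (they are contained in $\Z^n$, of strictly higher rank), and they are not automatically convex-cocompact in the standard tiling --- the diagonal $\langle(1,1)\rangle\leq\Z^2$ is the paper's own counterexample. So Lemma~\ref{cc properties}(3) cannot be invoked; you would instead have to use that the tiling you constructed has its coordinate directions equal to the kernels of the $n$ factor actions, so that the hyperplane-stabilisers of $Y$ are commensurable to coordinate sublattices, and then appeal to Lemma~\ref{cc properties}(1)--(2) before citing Theorem~\ref{coarse median vs hyperplanes}. Relatedly, your third step conflates ``directions'' with parallelism classes: within a single direction (a single quasi-line factor of the essential core) the essential hyperplanes need not be pairwise disjoint (a diagonal strip in the square tiling of $\R^2$ is a quasi-line with transverse essential hyperplanes), so ``choosing $n$ parallelism classes that span'' must be replaced by choosing, in each factor, an invariant pairwise-disjoint cofinite subfamily, and one must then check that the resulting restriction quotient is still a \emph{proper} action before concluding it induces the same coarse median structure. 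These repairs are possible, but they amount to redoing by hand what the paper's two-paragraph argument gets from the invariant flat.
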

\begin{proof}
Let $A\acts X$ be a proper cocompact action on a $\CAT$ cube complex. Up to passing to an invariant convex subcomplex (which does not alter the induced coarse median structure), the cubical flat torus theorem \cite[Theorem~3.6]{WW} allows us to assume that $X$ is a product of quasi-lines $L_1\x\dots\x L_n$. 

By the $\CAT$ flat torus theorem \cite[Corollary~II.7.2]{BH}, there exists an $A$--invariant $\ell_2$--convex subspace $F\sq X$ that is $\ell_2$--isometric to a Euclidean flat. The projection of $A$ to each factor $L_i$ is an $\ell_2$--geodesic line $\alpha_i$. Each $\alpha_i$ is also an $\ell_1$--geodesic by Lemma~\ref{l_2 geodesics are l_1}, hence it is a median subalgebra of $L_i$. It follows that $F=\alpha_1\x\dots\x\alpha_n$ is an $A$--invariant median subalgebra of $X$. 

In conclusion, the actions $A\acts X$ and $A\acts F$ induce the same coarse median structure on $A$. In addition, equipped with the restriction of the $\ell_1$--metric on $X$, the flat $F$ is isometric to $\R^n$ with the $\ell_1$--metric. Finally, since $A$ acts on $F$ discretely and permuting the orthogonal directions $\alpha_1,\dots,\alpha_n$, it preserves a tiling of $\R^n$ by cuboids. Scaling to $1$ all edge-lengths of these cuboids, we obtain the standard cubical tiling of $\R^n$ without altering the coarse median structure.
\end{proof}

As we are about to show, cubical coarse medians on $\Z^n$ are parametrised by the following objects.

\begin{defn}
A \emph{virtual basis} of $\Z^n$ is a set $\{C_1,\dots,C_n\}$, where each $C_i\leq\Z^n$ is a maximal cyclic subgroup and $\langle C_1\cup\dots\cup C_n\rangle$ has finite index in $\Z^n$. We denote by $\mc{VB}(\Z^n)$ the set of virtual bases. 
\end{defn}

An example of a virtual basis that is not a basis is given by $C_1=\langle(1,1)\rangle$, $C_2=\langle(1,-1)\rangle$ in $\Z^2$.

\begin{rmk}\label{rmk:VB}
If $A\in {\rm GL}_n(\Q)$ is a matrix with integer entries and $\{C_1,\dots,C_n\}$ is a virtual basis of $\Z^n$, we obtain a new virtual basis $\{C_1',\dots,C_n'\}$ by taking $C_i'$ to be the maximal cyclic subgroup of $\Z^n$ containing $A(C_i)$.
Multiplying $A$ by a constant does not alter the $C_i'$, so we obtain a natural transitive action:
\[ {\rm PGL}_n(\Q)\acts\mc{VB}(\Z^n).\]

As another way of making sense of this action, note that $\mc{VB}(\Z^n)$ can be equivalently defined as the collection of all cardinality--$n$ general-position subsets of $\Q\mathbb{P}^{n-1}$. The action ${\rm PGL}_n(\Q)\acts\mc{VB}(\Z^n)$ is then directly induced by the standard action ${\rm PGL}_n(\Q)\acts\Q\mathbb{P}^{n-1}$ by projective automorphisms.

The stabiliser of the standard basis of $\Z^n$ is the subgroup $N\leq{\rm PGL}_n(\Q)$ generated by diagonal matrices and permutation matrices (i.e.\ the normaliser of the diagonal subgroup). This shows that we can naturally identify:
\[\mc{VB}(\Z^n)\cong {\rm PGL}_n(\Q)/N\cong {\rm GL}_n(\Q)/\overline N,\]
where $\overline N\leq{\rm GL}_n(\Q)$ is again the normaliser of the diagonal subgroup.
\end{rmk}

\begin{prop}\label{cms on Z^n}
The sets $\mc{CM}_{\square}(\Z^n)$ and $\mc{VB}(\Z^n)$ are in 1-to-1 correspondence, equivariantly with respect to the action $\Out(\Z^n)\acts\mc{CM}_{\square}(\Z^n)$ and the left-multiplication action ${\rm GL}_n(\Z)\acts{\rm GL}_n(\Q)/\overline N$.

The correspondence pairs each coarse median structure $[\mu]$ with the set $\{C_1,\dots,C_n\}$ of maximal, cyclic, $[\mu]$--quasi-convex subgroups of $\Z^n$.
\end{prop}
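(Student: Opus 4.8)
The plan is to show that the map sending a cubical coarse median structure $[\mu]$ to the collection of its maximal, cyclic, $[\mu]$--quasi-convex subgroups is well-defined, injective, surjective onto $\mc{VB}(\Z^n)$, and equivariant. I would organise this around Lemma~\ref{cms on vab}, which lets me replace an arbitrary cocompact cubulation of $\Z^n$ by the standard cubical tiling of $\R^n$ precomposed with some action, i.e.\ by a homomorphism $\rho\colon\Z^n\ra\isom(\R^n,\ell_1)$ whose image is a cocompact lattice permuting the coordinate axes. Such a $\rho$ is, up to finite index and up to scaling, determined by an injective linear map $L=L_\rho\in{\rm GL}_n(\Q)$ sending the standard basis to the $n$ translation directions; the ambiguity of reordering/rescaling axes is exactly the normaliser $\overline N$ of the diagonal subgroup, which is why $\mc{VB}(\Z^n)\cong{\rm GL}_n(\Q)/\overline N$ parametrises these actions (Remark~\ref{rmk:VB}).

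\textbf{Key steps.} First I would record which cyclic subgroups are convex-cocompact for the model action $\rho$: in the $\ell_1$ tiling $\R^n$, the maximal cyclic subgroups that are convex-cocompact are precisely those whose translation direction is parallel to one of the $n$ coordinate axes, hence precisely $C_i:=$ the maximal cyclic subgroup containing $\rho^{-1}$ of the $i$-th axis translation, equivalently the maximal cyclic subgroup of $\Z^n$ containing $L^{-1}(e_i)$. One direction is clear since a coordinate line is a convex subcomplex; for the converse, if a line is convex-cocompact then its direction vector must be such that the smallest convex subcomplex containing it (equal to the whole product of those quasi-line factors $L_j$ on which the element acts loxodromically, by Remark~\ref{rmk:cc_products}) is $1$--dimensional, forcing it to be a single axis. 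This shows the collection $\{C_1,\dots,C_n\}$ is a virtual basis (they generate a finite-index subgroup because $L$ is invertible), so the map $[\mu]\mapsto\{C_1,\dots,C_n\}$ lands in $\mc{VB}(\Z^n)$; by Proposition~\ref{prop:cc=qc} this collection is intrinsic to $[\mu]$, so the map is well-defined. Surjectivity: given a virtual basis $\{C_1,\dots,C_n\}$, pick generators $c_i$ of $C_i$, let $L$ be the matrix with columns $c_i$ (invertible over $\Q$ since the $C_i$ generate a finite-index subgroup), and build the action $\Z^n\ra\isom(\R^n,\ell_1)$ translating along the $i$-th axis by the $i$-th coordinate in the $L$-basis; the associated coarse median structure has exactly $\{C_1,\dots,C_n\}$ as its maximal cyclic quasi-convex subgroups by the previous step.

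\textbf{The main obstacle --- injectivity.} The substantive point is that the virtual basis determines $[\mu]$. Suppose two model actions $\rho,\rho'$, with data $L,L'$, yield the same collection $\{C_1,\dots,C_n\}$ of quasi-convex maximal cyclic subgroups. Then $L'$ and $L$ send the standard basis to the same $n$ lines of $\R\mathbb{P}^{n-1}$ up to permutation, so $L'=L\cdot P\cdot D$ for a permutation matrix $P$ and a diagonal matrix $D$ over $\Q$; that is, $L'\overline N=L\overline N$. It remains to check that two model actions whose data differ by an element of $\overline N$ induce the same coarse median structure on $\Z^n$: a permutation of axes is just a cubical isomorphism of $\R^n$, and rescaling the translation length along an axis changes the action only by precomposing on that factor $L_j\simeq\R$ with a map at bounded distance from the identity (both translations generating finite-index subgroups of $\isom\R$, hence equivariantly quasi-isometric via a median morphism). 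Chaining through the product, $\rho$ and $\rho'$ are $\Z^n$--equivariantly quasi-isometric via a coarsely median-preserving map, so $[\mu_\rho]=[\mu_{\rho'}]$ by Theorem~\ref{coarse median vs hyperplanes}. Finally, equivariance is a bookkeeping check: for $\varphi\in{\rm GL}_n(\Z)=\Out(\Z^n)$, precomposing a cubulation by $\varphi$ replaces each quasi-convex $C_i$ by $\varphi^{-1}(C_i)$, matching the left-multiplication action on ${\rm GL}_n(\Q)/\overline N$. I expect the injectivity argument, specifically the reduction to ``data differing by $\overline N$ gives the same coarse median,'' to require the most care, since it is where one genuinely uses that $\ell_1$-isometries of $\R^n$ permuting axes are the only flexibility.
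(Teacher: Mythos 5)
Your proposal follows essentially the same route as the paper's proof: reduce via Lemma~\ref{cms on vab} to actions on the standard cubical tiling of $\R^n$, pass (up to finite index) to translations encoded by a rational matrix modulo the normaliser $\overline N$, and identify the maximal cyclic $[\mu]$--quasi-convex subgroups with the coordinate-axis directions to get the bijection with $\mc{VB}(\Z^n)$. The paper leaves injectivity and surjectivity as "straightforward," and your fleshing-out of those steps (the $\overline N$--ambiguity argument and the refinement/extension for surjectivity) is consistent with what the authors intend.
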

\begin{proof}
By Lemma~\ref{cms on vab}, every cubical coarse median on $\Z^n$ is induced by a $\Z^n$--action on the standard cubical tiling of $\R^n$, which we denote by $S$. We have $\Aut(S)=T\rtimes {\rm O}(n,\Z)$, where $T\simeq\Z^n$ is the translation subgroup, and ${\rm O}(n,\Z)$ is the group of signed permutation matrices. 

Up to passing to a finite-index subgroup of $\Z^n$ (for instance, the intersection of all subgroups of index $\leq\#{\rm O}(n,\Z)$), we can assume that all our actions $\Z^n\acts S$ factor through $T$. 

We equip $T$ with its standard basis $t_1,\dots,t_n$ (where each $t_i$ translates in only one coordinate direction), as well as the coordinate-wise median operator $m$ associated with this basis. Note that $[m]$ is precisely the coarse median structure on $T$ induced by the action $T\acts S$.

The above discussion shows that every cubical coarse median structure $[\mu]$ on $\Z^n$ can be obtained by pulling back $[m]$ via an embedding $\iota\colon\Z^n\hookrightarrow T$. To each such structure $[\mu]$, we associate the virtual basis $\{\iota^{-1}(\langle t_1\rangle),\dots,\iota^{-1}(\langle t_n\rangle)\}\in\mc{VB}(\Z^n)$, which is precisely the set of maximal, cyclic, $[\mu]$--quasi-convex subgroups of $\Z^n$.

It is straightforward to see that $[\mu]$ is completely determined by this element of $\mc{VB}(\Z^n)$. Finally, every element of $\mc{VB}(\Z^n)$ arises in this way, as every action by translations $H\acts\R^n$ of a finite-index subgroup $H\leq\Z^n$ can be extended to a $\Z^n$--action by translations, which will preserve a tiling of $\R^n$ by sufficiently fine cubes. This concludes the proof.
\end{proof}

Having classified cubical coarse median structures on $\Z^n$, we move on to the case of $D_{\infty}^n$. 

\begin{rmk}\label{cms on D^2}
There are two obvious cubical coarse median structures on $D_{\infty}^2$. The first (or ``standard'') structure is induced by the action on the Davis complex: it is the action on the standard square tiling of $\R^2$ with each reflection axis parallel either to the $x$--axis or to the $y$--axis. The second (or ``$\tfrac{\pi}{4}$--rotated'') structure is induced by the action on the standard tiling of $\R^2$ with all reflection axes meeting the $x$-- and $y$--axes at an angle of $\tfrac{\pi}{4}$ or $\tfrac{3\pi}{4}$. This is just the first action rotated by $\tfrac{\pi}{4}$ and, unlike it, it is not strongly cellular. Indeed, recall from Theorem~\ref{sc rigidity 2} that all strongly cellular cubulations induce the standard coarse median structure.
\end{rmk}

The next result shows that there are no other cubical coarse median structures on $D_{\infty}^2$. Similarly, all cubical coarse medians on $D_{\infty}^n$ are simply obtained by choosing the ``$\tfrac{\pi}{4}$--rotated'' structure on some $D_{\infty}^2$--factors and the ``standard'' one on the rest.

\begin{prop}\label{cms on D^n}
The set $\mc{CM}_{\square}(D_{\infty}^n)$ is finite. More precisely, for every $[\mu]\in\mc{CM}_{\square}(D_{\infty}^n)$, there is a product splitting $D_{\infty}^n=A_1\x\dots\x A_k\x B_1\x\dots\x B_{n-2k}$ for some $0\leq k\leq\tfrac{n}{2}$, where all $A_i$ and $B_j$ are $[\mu]$--quasi-convex, we have $A_i\simeq D_{\infty}^2$ and $B_j\simeq D_{\infty}$, and the restriction of $[\mu]$ to each $A_i$ is the $\tfrac{\pi}{4}$--rotated structure.
\end{prop}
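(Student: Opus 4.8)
�The plan is to combine Lemma~\ref{cms on vab} with the structure theory of cubulations of products of quasi-lines, exactly as in the proof of Proposition~\ref{cms on Z^n}, and then to analyse how the infinite dihedral factors can act on the resulting flat. First I would invoke Lemma~\ref{cms on vab} to reduce to a proper cocompact action $D_\infty^n\acts S$, where $S$ is the standard cubical tiling of $\R^m$ for some $m$. Since $D_\infty^n$ is virtually $\Z^n$ and acts properly cocompactly, we must have $m=n$. Passing to the finite-index translation subgroup of $\Aut(S)$ intersected with $D_\infty^n$, and using that a highest virtually abelian subgroup is convex-cocompact (Lemma~\ref{cc properties}(3)) together with the cubical flat torus theorem, I would identify an $\ell_2$-flat $F\cong\R^n$ on which the whole group acts, and on which the $\ell_1$-metric is the standard one, as in Lemma~\ref{cms on vab}.

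The heart of the matter is then a linear-algebra/reflection-group classification of how $D_\infty^n$ can act on $\R^n$ preserving the standard cubical tiling. Each of the $n$ standard reflections $r_i$ of $D_\infty^n$ acts on the flat $F\cong\R^n$ as an affine reflection whose linear part is an element of $O(n,\Z)$ (a signed permutation matrix) of order $2$ with a codimension-one fixed subspace; equivalently, the reflection hyperplane of $r_i$ in $\R^n$ must be parallel to a coordinate-direction wall of the tiling or to a ``diagonal'' wall. The commutation relations $[r_i,r_j]=1$ for $i\neq j$ force the corresponding reflection hyperplanes to be mutually orthogonal (two commuting affine reflections in Euclidean space have orthogonal mirrors). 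So we obtain $n$ mutually orthogonal hyperplanes $H_1,\dots,H_n$, each of which is parallel to a wall of the standard cubical tiling, and the group they generate is $D_\infty^n$ acting properly cocompactly. I would then argue combinatorially: the set of directions (in $\mathbb{RP}^{n-1}$) normal to walls of the standard cubical tiling that can appear as a normal to one of the $H_i$ is constrained, because the reflection in $H_i$ must send the tiling to itself. An order-$2$ signed permutation matrix with a single $(-1)$-eigendirection $u$ reflects the tiling onto itself iff $u$ is, up to permuting and signing coordinates, one of the vectors $e_i$ (a coordinate direction) or $\tfrac{1}{\sqrt2}(e_i\pm e_j)$ (a diagonal in a coordinate $2$-plane). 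Combined with the orthogonality of the $H_i$, the only way to choose $n$ pairwise-orthogonal such directions is to partition $\{1,\dots,n\}$ into some singletons and some pairs, using a coordinate direction $e_i$ for each singleton and an orthogonal pair $\tfrac{1}{\sqrt2}(e_i+e_j),\tfrac{1}{\sqrt2}(e_i-e_j)$ for each pair. This yields precisely the claimed product splitting $D_\infty^n=A_1\x\dots\x A_k\x B_1\x\dots\x B_{n-2k}$, with each $A_i\cong D_\infty^2$ acting in the $\tfrac{\pi}{4}$-rotated fashion on its coordinate $2$-plane and each $B_j\cong D_\infty$ acting standardly; each factor preserves a subflat and hence is convex-cocompact, so $[\mu]$-quasi-convex by Proposition~\ref{prop:cc=qc}. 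Finiteness of $\mc{CM}_\square(D_\infty^n)$ then follows, since there are only finitely many such partitions (and one checks that, up to the $\Out(D_\infty^n)$-action, the structure depends only on $k$, or one simply bounds $\#\mc{CM}_\square(D_\infty^n)$ by the number of partitions into singletons and pairs).

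The main obstacle I anticipate is the bookkeeping in the linear-algebra step: one has to be careful that ``preserves the cubical tiling'' is used correctly — an affine reflection preserving the tiling has a mirror that need not pass through a vertex, but whose \emph{direction} is constrained as above — and one must rule out more exotic order-$2$ linear maps (e.g.\ those permuting three or more coordinates nontrivially) by noting that an order-$2$ signed permutation matrix is a product of disjoint transpositions-with-signs and sign changes, so its $(-1)$-eigenspace, if one-dimensional, is spanned by one of the listed vectors. A secondary point requiring care is checking that after extracting the flat $F$, the induced metric really is the \emph{standard} $\ell_1$-metric on $\R^n$ (not merely an $\ell_1$-metric for some cuboid tiling) after the rescaling in Lemma~\ref{cms on vab}, and that the properness and cocompactness of the $D_\infty^n$-action survive the reductions; these are routine given the cited results but should be stated explicitly.
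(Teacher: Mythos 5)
Your overall strategy---reduce via Lemma~\ref{cms on vab} to a proper cocompact action on the standard cubical tiling of $\R^n$ and then classify the possible configurations of the generating reflections---is viable, but there is a genuine gap exactly where the geometry starts. You assert that each chosen generator $r_i$ acts on $\R^n$ as an affine reflection in a \emph{hyperplane}, i.e.\ that the linear part of its image is an order--$2$ signed permutation matrix whose $(-1)$--eigenspace is one-dimensional. Nothing in your argument rules out, say, a generator whose linear part contains a $-{\rm Id}$ block on a coordinate $2$--plane (a rotation by $\pi$), in which case there is no mirror hyperplane and the orthogonal-mirror bookkeeping that drives the rest of the proof never gets started. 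Your own caveat only classifies the possible $(-1)$--eigendirections \emph{assuming} the eigenspace is one-dimensional; that assumption is the missing point. It is true and fixable: $r_i$ commutes with the rank-$(n-1)$ free abelian subgroup coming from the other factors, a finite-index subgroup of which acts by genuine translations along a rank-$(n-1)$ lattice $L\subset\R^n$; conjugation-invariance of these translations forces the linear part of $r_i$ to fix the span of $L$ pointwise, so its fixed subspace has codimension at most one, and faithfulness of the action ($D_{\infty}^n$ has no nontrivial finite normal subgroups) together with the order--$2$ condition then makes $r_i$ an affine hyperplane reflection. A smaller point in the same vein: commuting affine hyperplane reflections have mirrors that are orthogonal \emph{or equal}, and you should observe that equal mirrors would force the two generators to act identically, again contradicting faithfulness.

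For comparison, the paper's proof sidesteps the per-generator analysis entirely: it takes $R=\langle r_1,\dots,r_n\rangle\simeq(\Z/2\Z)^n$, notes that its image in ${\rm O}(n,\Z)$ is still $(\Z/2\Z)^n$ (the translation part of $\Aut(S)$ being torsion-free), and simultaneously block-diagonalises this elementary abelian $2$--group; the resulting block structure (some $2\x 2$ blocks and some $\pm 1$ entries) directly yields the splitting into $D_{\infty}^2$-- and $D_{\infty}$--factors and the finiteness statement, with the coarse median structure then recovered from Proposition~\ref{cms on Z^n} applied to a finite-index $\Z^n$. Once you patch the codimension-one step (and spell out that the second reflection of each $D_{\infty}$--factor has mirror forced parallel to that of $r_i$, so each whole factor translates in the corresponding direction), your more geometric route does reach the same conclusion.
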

\begin{proof}
Let again $S$ denote the standard cubical tiling of $\R^n$. By Lemma~\ref{cms on vab}, every cubical coarse median on $D_{\infty}^n$ arises from a proper cocompact action $D_{\infty}^n\acts S$. The latter corresponds to a homomorphism $\rho\colon D_{\infty}^n\ra \Aut(S)=T\rtimes {\rm O}(n,\Z)$ with finite-index image, where $T\simeq\Z^n$.

Note that $\rho$ must be faithful. Indeed, the kernel of $\rho$ is necessarily finite, and $D_{\infty}$ (and hence $D_{\infty}^n$) does not have any nontrivial finite normal subgroups.

Now, choose a reflection $r_i$ in each factor of $D_{\infty}^n$ and consider the subgroup $R:=\langle r_1,\dots,r_n\rangle\simeq(\Z/2\Z)^n$. Let $\pi\colon\Aut(S)\ra{\rm O}(n,\Z)$ be the projection with $\ker\pi=T$. Since $T$ is torsion-free and $\rho$ is injective, the homomorphism $\pi\rho$ must be injective on $R$. Thus, $\pi\rho(R)$ is isomorphic to $(\Z/2\Z)^n$ and it is a subgroup of ${\rm O}(n,\Z)$, the group of signed permutation matrices.

Every element of $\pi\rho(R)$ has order $2$, hence it is ${\rm O}(n,\Z)$--conjugate to a block-diagonal matrix, with each block chosen from:
\begin{align*}
&
\begin{pmatrix}
\pm 1
\end{pmatrix},
 &
\pm \begin{pmatrix}
 0 & 1 \\
 1 & 0
\end{pmatrix}.
\end{align*}
Since $\pi\rho(R)$ is abelian, its elements can be simultaneously block-diagonalised. Since $\pi\rho(R)\simeq(\Z/2\Z)^n$, it follows that $\pi\rho(R)$ is ${\rm O}(n,\Z)$--conjugate to a subgroup of the form:
\[ 
\begin{psmallmatrix}
    \diagentry{R_1}\\
    &\diagentry{\xddots} \\
    &&\diagentry{R_k} \\
    &&&\diagentry{\pm1}\\
    &&&&\diagentry{\xddots}\\
    &&&&&\diagentry{\pm 1}
\end{psmallmatrix},
\]
where $0\leq k\leq n/2$ and each $R_{\ell}\simeq(\Z/2\Z)^2$ is the group of $2\x2$ matrices $\{\pm\begin{psmallmatrix} 1 & 0 \\ 0 & 1 \end{psmallmatrix},\pm\begin{psmallmatrix} 0 & 1 \\ 1 & 0 \end{psmallmatrix} \}$.

Finally, the coarse median structure that $\rho$ induces on $D_{\infty}^n$ is completely determined by the datum of which pairs of indices $1\leq i,j\leq n$ are such that $\langle r_i,r_j\rangle$ corresponds to one of the $2\x 2$ blocks $R_{\ell}$ in the above decomposition. Indeed, this datum determines which infinite cyclic subgroups of $D_{\infty}^n$ translate in exactly one coordinate direction in $S\simeq\R^n$ and are thus convex-cocompact. And the latter determines the coarse median structure by applying Proposition~\ref{cms on Z^n} to a free abelian finite-index subgroup of $D_{\infty}^n$.
\end{proof}

\begin{rmk}\label{conj rmk}
Item~(b) implies Item~(a) in the Conjecture from the Introduction.

Indeed, if $W_{\G}$ is a right-angled Coxeter group, $W_{\G}$ has only finitely many maximal virtually-abelian parabolic subgroups $P_1,\dots,P_k$ up to conjugacy. Each of them has a finite-index subgroup $P_i'\simeq D_{\infty}^{m_i}$ for some $m_i\geq 0$. By Lemma~\ref{cc properties}(3) and Proposition~\ref{prop:cc=qc}, all $P_i$ and $P_i'$ are $[\mu]$--quasi-convex with respect to each $[\mu]\in\mc{CM}_{\square}(W_{\G})$, so they inherit cubical coarse median structures from $[\mu]$. By Proposition~\ref{cms on D^n}, there are only finitely many possible restrictions of $[\mu]$ to the $P_i$. Thus, if these restrictions completely determine $[\mu]$, then the set $\mc{CM}_{\square}(W_{\G})$ is finite.
\end{rmk}

\begin{rmk}
    The techniques used in this subsection can be similarly used to completely describe the set of cubical coarse medians $\mc{CM}_{\square}(A)$ for any virtually abelian group $A$. 
    
    Indeed, passing to a normal free abelian subgroup, we can always write
    \[ 1\longrightarrow \Z^n \longrightarrow A\longrightarrow F\longrightarrow 1 , \]
    for some $n\geq 1$ and some finite group $F$. Conjugation gives a monodromy homomorphism $\rho\colon F\ra\Out(\Z^n)={\rm GL}_n(\Z)$. In particular, the subgroup $\rho(F)\leq{\rm GL}_n(\Z)$ acts on $\mc{CM}_{\square}(\Z^n)$, which is identified with $\mc{VB}(\Z^n)\cong {\rm GL}_n(\Q)/\overline N$ (recalling Proposition~\ref{cms on Z^n} and Remark~\ref{rmk:VB}).

    Now, the space $\mc{CM}_{\square}(A)$ is naturally identified with the set of $\rho(F)$--fixed points in $\mc{CM}_{\square}(\Z^n)$. In one direction, every cocompact cubulation of $A$ determines such a fixed point by restriction to the finite-index subgroup $\Z^n$. Conversely, if a cubical coarse median structure on a normal finite-index subgroup is preserved by the conjugacy action of the whole group, then it comes from a cocompact cubulation of the whole group; this was shown in \cite[Corollary~G]{Fio10a} using very similar ideas to those in the proof of Theorem~\ref{coarse median vs hyperplanes} above.

    Thus, the determination of $\mc{CM}_{\square}(A)$ boils down to describing the set of fixed points of a finite subgroup of ${\rm GL}_n(\Z)$ acting on the coset space ${\rm GL}_n(\Q)/\overline N$, where $\overline N=D\rtimes {\rm O}(n,\Z)$ is the normaliser of the subgroup $D$ of diagonal matrices, and ${\rm O}(n,\Z)$ is the group of signed permutation matrices.

    As a special case of this, we see that $A$ is cocompactly cubulated if and only if $\rho(F)$ fixes at least one point in ${\rm GL}_n(\Q)/\overline N$. Equivalently, $\rho(F)\leq{\rm GL}_n(\Z)$ can be conjugated into $\overline N$ by an element of ${\rm GL}_n(\Q)$, in which case $\rho(F)$ can even be conjugated into ${\rm O}(n,\Z)$. This recovers Hagen's characterisation of cocompactly cubulated crystallographic groups from \cite{Hagen-cryst}.
\end{rmk}

The following example, mentioned in the Introduction, shows that Proposition~\ref{centralisers are median-cocompact} cannot be improved. In general, we cannot expect centralisers to be convex-cocompact even if we have the freedom to choose the cubulation.

\begin{ex}\label{never-cc centraliser}
Consider $G=\Z^n\rtimes\mf{S}_n$, where the symmetric group $\mf{S}_n$ acts by permuting the standard basis of $\Z^n$. For $n=3$ or $n\geq 5$, the centraliser $Z_G(\mf{S}_n)$ is not convex-cocompact in any cocompact cubulation of $G$.

Indeed, $Z_G(\mf{S}_n)$ is the infinite cyclic subgroup generated by the element $v:=(1,1,\dots,1)$. If $\langle v\rangle$ were convex-cocompact in a cocompact cubulation of $G$, then, by Proposition~\ref{cms on Z^n}, there would exist a virtual basis $\langle v\rangle,C_1,\dots,C_{n-1}$ of $\Z^n$ that is permuted by $\mf{S}_n$. In particular, the subgroups $C_1,\dots,C_{n-1}$ would be permuted by $\mf{S}_n$, since $v$ is fixed. However, for every maximal infinite cyclic subgroup $C<\Z^n$ with $C\neq\langle v\rangle$, the orbit $\mf{S}_n\cdot C$ contains at least $n$ distinct subgroups. 

Let us prove this last statement. Suppose that $C$ is generated by an element $(x_1,\dots,x_n)$. If the absolute values $|x_i|$ are not all equal, then, without loss of generality, there exist an integer $a>0$ and an index $1\leq k<n$ such that $|x_i|=a$ for $1\leq i\leq k$ and $|x_i|<a$ for $k<i\leq n$. Since $\mf{S}_n$ acts $k$--transitively on $\{1,\dots,n\}$, it follows that the orbit $\mf{S}_n\cdot C$ contains at least $\binom{n}{k}\geq n$ distinct subgroups. If instead all the $|x_i|$ are equal, we can assume that there exists $1\leq k<n$ such that $x_i=1$ for $1\leq i\leq k$ and $x_i=-1$ for $k<i\leq n$. In this case, the orbit $\mf{S}_n\cdot C$ has cardinality $\binom{n}{k}$ if $n\neq 2k$ and $\tfrac{1}{2}\cdot\binom{n}{k}$ if $n=2k$. When $n\geq 5$, even this last quantity is $\geq n$, completing the proof.
\end{ex}

\subsection{Right-angled Artin groups}\label{RAAGs subsect}

Let $A_{\G}$ be a right-angled Artin group. In this subsection, we study the set $\mc{CM}_{\square}(A_{\G})$. The main result is Theorem~\ref{RAAGs main}, which implies Theorem~\ref{RAAGs intro} from the Introduction, but also concerns right-angled Artin groups that are not twistless.

Let $A_{\G}\acts\X_{\G}$ be the standard action on the universal cover of the Salvetti complex. Let $[\mu_{\G}]\in\mc{CM}_{\square}(A_{\G})$ be the induced coarse median structure on $A_{\G}$. We refer to $[\mu_{\G}]$ as the \emph{standard} coarse median structure on $A_{\G}$.

We will obtain Theorem~\ref{RAAGs main} by adapting the proof of Proposition~\ref{centraliser hyp-stab}. The main differences are that: (1) we cannot exploit torsion, and (2) stabilisers of hyperplanes of the Salvetti complex are not commensurable to centralisers. On the other hand, no analogue of the `strongly cellular' assumption will be required. The following notion will play an important role.

\begin{defn}\label{decomposable flats defn}
A cocompact cubulation $A_{\G}\acts X$ has \emph{decomposable flats} if every maximal abelian subgroup $A\leq A_{\G}$ admits a basis $a_1,\dots,a_k$ with each $\langle a_i\rangle$ convex-cocompact in $X$.
\end{defn}

An example of a cubulation that fails to have this property is provided by the $\Z^2$--action on the standard tiling of $\R^2$ where the standard generators of $\Z^2$ translate by $(1,1)$ and $(1,-1)$ respectively.

\begin{rmk}\label{decomposable flats rmk}
Here is some motivation for considering the above condition.
\begin{enumerate}
\setlength\itemsep{.25em}
    \item The standard action $A_{\G}\acts\X_{\G}$ on the universal cover of the Salvetti complex has decomposable flats, since each $v\in\G$ generates a convex-cocompact subgroup.
    \item Whether or not a given cocompact cubulation $A_{\G}\acts X$ has decomposable flats only depends on the induced coarse median structure on $A_{\G}$, since this is true of convex-cocompactness.
    \item The action $\Out(A_{\G})\acts\mc{CM}_{\square}(A_{\G})$ preserves the property of having decomposable flats. 
    
    Indeed, let $A_{\G}\acts X$ be a cocompact cubulation with decomposable flats. Consider $\varphi\in\Aut(A_{\G})$ and denote by $A_{\G}\acts X^{\varphi}$ the action on $X$ precomposed with $\varphi$. If $A\leq A_{\G}$ is a maximal abelian subgroup, then so is $\varphi(A)$, which then admits a basis $a_i$ of convex-cocompact elements for the action $A_{\G}\acts X$. It follows that the elements $\varphi^{-1}(a_i)$ form a basis of $A$ and are convex-cocompact for $A_{\G}\acts X^{\varphi}$. Thus, $A_{\G}\acts X^{\varphi}$ has decomposable flats.
    
    \item For a general cocompact cubulation $G\acts X$ of a general group and a convex-cocompact abelian subgroup $A\leq G$, it is only possible to find a basis $a_1,\dots,a_k$ of a \emph{finite-index} subgroup of $A$ such that each $\langle a_i\rangle$ is convex-cocompact in $X$. Compare Proposition~\ref{cms on Z^n}.
\end{enumerate}
\end{rmk}

\begin{lem}\label{lem:cc_basis}
Let $A_{\G}\acts X$ be a cocompact cubulation with decomposable flats. Let $A\leq A_{\G}$ be a (not necessarily maximal) abelian subgroup that is both convex-cocompact in $X$ and closed under taking roots. Then $A$ admits a basis $a_1,\dots,a_k$ with each $\langle a_i\rangle$ convex-cocompact in $X$.
\end{lem}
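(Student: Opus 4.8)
The plan is to bootstrap from the maximal abelian subgroup containing $A$. Let $A'\leq A_{\G}$ be the maximal abelian subgroup containing $A$; by assumption $A_{\G}\acts X$ has decomposable flats, so $A'$ has a basis $b_1,\dots,b_m$ with each $\langle b_i\rangle$ convex-cocompact in $X$. Since $A$ is closed under taking roots, it is a \emph{pure} subgroup of $A'\cong\Z^m$, hence a direct summand: there is a basis $b_1',\dots,b_m'$ of $A'$ such that $b_1',\dots,b_k'$ is a basis of $A$. The difficulty is that this new basis need not consist of convex-cocompact elements, so the hard part is to reconcile "being a basis of $A$" with "being convex-cocompact". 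The key tool will be the classification of cubical coarse median structures on $\Z^m$ from Proposition~\ref{cms on Z^n}.

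First I would restrict the coarse median structure $[\mu_X]$ to $A'\cong\Z^m$. By Proposition~\ref{cms on Z^n}, the set of maximal cyclic $[\mu_X]$--quasi-convex subgroups of $A'$ is a virtual basis $\{C_1,\dots,C_m\}$ of $A'$, and (using Proposition~\ref{prop:cc=qc}) these $C_j$ are exactly the maximal cyclic subgroups of $A'$ that are convex-cocompact in $X$. In particular, since each $\langle b_i\rangle$ is convex-cocompact, the maximal cyclic subgroup of $A'$ containing $b_i$ is one of the $C_j$; as $b_1,\dots,b_m$ is a basis, this forces $\{C_1,\dots,C_m\}$ to actually \emph{be} a basis of $A'$ (a virtual basis that refines to a genuine basis is a genuine basis, up to reindexing $C_j = \langle b_j\rangle$-as-maximal $=\langle b_j\rangle$). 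So after replacing $b_i$ by a generator of $C_i$ if necessary, $b_1,\dots,b_m$ is a genuine basis of $A'$ all of whose members generate convex-cocompact subgroups, and these are precisely the convex-cocompact maximal cyclic subgroups of $A'$.

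Next I would show that $A$ is spanned by a subset of this distinguished basis. Since $A$ is convex-cocompact in $X$, it is $[\mu_X]$--quasi-convex in $A_{\G}$, hence its intersection with $A'$ (which is $A$ itself) is $[\mu_X]$--quasi-convex in $A'$ by restriction. Now I invoke the second half of the Proposition~\ref{cms on Z^n} analysis applied \emph{inside} $A'$: a quasi-convex subgroup of $\Z^m$, with respect to the coarse median structure whose distinguished virtual basis is $\{\langle b_1\rangle,\dots,\langle b_m\rangle\}$, must be a "coordinate subgroup" up to finite index — more precisely, using the description of $[\mu_X]|_{A'}$ as pulled back from the standard structure on $T\cong\Z^m$ via the inclusion $\iota\colon A'\hookrightarrow T$ (where $\iota(b_i)$ spans the $i$-th coordinate line), a subgroup $H\leq A'$ is $[\mu_X]$--quasi-convex iff $\iota(H)$ is at finite Hausdorff distance from its "span" in the $\ell_1$-sense, which forces $\iota(H)$ to be commensurable to a coordinate subalgebra $\mathrm{span}(\iota(b_i):i\in I)$ for some $I\subseteq\{1,\dots,m\}$. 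Pulling back, $A$ is commensurable to $\langle b_i : i\in I\rangle$. Finally, since $A$ is closed under taking roots in $A_{\G}$ (and in particular in $A'$), and $\langle b_i : i\in I\rangle$ is already root-closed in $A'$, commensurable root-closed subgroups of $\Z^m$ are equal, so $A=\langle b_i : i\in I\rangle$. Setting $a_j := b_{i_j}$ for $I=\{i_1,\dots,i_k\}$ gives the desired basis of $A$ with each $\langle a_j\rangle$ convex-cocompact in $X$.

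The main obstacle I anticipate is making precise the step "$[\mu_X]$--quasi-convex subgroups of $\Z^m$ are exactly the subgroups commensurable to coordinate subgroups of the distinguished basis." This is essentially the content of Lemma~\ref{cms on vab} combined with a direct computation inside the standard cubical tiling $S$ of $\R^m$: a subgroup whose orbit is at finite Hausdorff distance from its convex hull in $\ell_1$ must have that hull be a coordinate subcomplex, because the $\ell_1$-convex hull of a sublattice of $\Z^m$ that is not contained in a coordinate subspace spans a strictly larger-dimensional flat than the sublattice's rank. One should be able to extract this cleanly from the proof of Proposition~\ref{cms on Z^n} (the embedding $\iota$ and the coordinate-wise median $m$), perhaps recording it as a short auxiliary observation; alternatively it follows from Lemma~\ref{cc properties}(3) (highest virtually abelian subgroups are always convex-cocompact) together with the product structure of $S$. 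Everything else — purity implying direct summand, root-closed commensurable subgroups of $\Z^m$ being equal — is elementary linear algebra over $\Z$.
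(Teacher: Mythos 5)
Your proposal is correct, and its skeleton is the same as the paper's: pass to the maximal abelian subgroup $A'$, use decomposable flats to get a convex-cocompact basis of $A'$, show that $A$ must be spanned by a sub-collection of that basis, and invoke root-closure to upgrade ``contains nontrivial powers of'' to ``contains''. The only real divergence is how the key step is justified. The paper argues element-by-element: if $x_{i_1}^{a_1}\cdots x_{i_s}^{a_s}\in A$ with all $a_j\neq 0$, then convex-cocompactness of $A$ forces $A$ to contain nontrivial powers of each $x_{i_j}$, citing the argument for \cite[Lemma~3.16]{Fio10a} for this fact. You instead work at the level of subgroups, restricting the coarse median structure to $A'$ and using Proposition~\ref{cms on Z^n} together with the observation that convex hulls in a product of lines are products of intervals, so a convex-cocompact sublattice of $\Z^m$ must be commensurable to a coordinate sublattice. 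These are two justifications of the same underlying fact; yours has the advantage of being self-contained within the paper (no appeal to the external lemma) and of making explicit the geometric reason (hulls in products split as products), at the cost of a longer detour through the classification of cubical coarse median structures on $\Z^m$ and a couple of routine verifications you correctly flag (that quasi-convexity of $A$ in $A_{\G}$ restricts to quasi-convexity in $A'$, and that commensurable root-closed subgroups of $\Z^m$ coincide).
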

\begin{proof}
Let $A'\leq A_{\G}$ be a maximal abelian subgroup containing $A$ and let $x_1,\dots,x_n$ be a basis of $A'$ such that each $\langle x_i\rangle$ is convex-cocompact. If an element $x_{i_1}^{a_1}\cdot\ldots\cdot x_{i_s}^{a_s}$ with $a_1,\dots,a_s\neq 0$ lies in $A$, then the fact that $A$ is convex-cocompact implies that $A$ must contain nontrivial powers of $x_{i_1},\dots,x_{i_s}$ (for instance, by the argument for \cite[Lemma~3.16]{Fio10a}). Since $A$ is closed under taking roots, it must then contain $x_{i_1},\dots,x_{i_s}$ themselves.

This shows that $A$ contains all $x_i$ required to write any of its elements. They provide the required basis of $A$.
\end{proof}

The automorphism group $\Aut(A_{\G})$ is generated by elementary automorphisms described by Laurence and Servatius \cite{Laurence,Servatius}. In the terminology of \cite[Section~2.2]{CSV}, these are known as \emph{graph automorphisms}, \emph{inversions}, \emph{partial conjugations} and \emph{transvections}; the latter are in turn divided into \emph{folds} and \emph{twists}. The only automorphisms that will be important for us are twists.

\begin{defn}\label{defn:twists}
If $v,w\in\G$ are distinct vertices with $\St(v)\sq\St(w)$, there is a well-defined automorphism of $A_{\G}$ that fixes all generators in $\G-\{v\}$ and maps $v\mapsto vw$. We denote this automorphism by $\tau_{v,w}$ and refer to it as a \emph{twist}. The \emph{twist subgroup} $T(A_{\G})\leq\Aut(A_{\G})$ is the subgroup generated by all twists.

We say that $A_{\G}$ is \emph{twistless} if the twist subgroup $T(A_{\G})$ is trivial. Equivalently, we never have $\St(v)\sq\St(w)$ for distinct vertices $v,w\in\G$.
\end{defn}

The right-angled Artin group $A_{\G}$ is twistless precisely when the entire group $\Out(A_{\G})$ fixes the standard coarse median structure $[\mu_{\G}]$ \cite[Proposition~A]{Fio10a}. This is always the case when $\Out(A_{\G})$ is finite.

We are now ready to state the main result of this subsection.

\begin{thm}\label{RAAGs main}
Let $A_{\G}$ be a right-angled Artin group.
\begin{enumerate}
\item The twist group $T(A_{\G})$ acts transitively and with finite stabilisers on the subset of $\mc{CM}_{\square}(A_{\G})$ corresponding to cubulations with decomposable flats.
\item If $A_{\G}$ is twistless, all cocompact cubulations of $A_{\G}$ have decomposable flats.
\end{enumerate}
\end{thm}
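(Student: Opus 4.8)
\emph{Part (2).} The heart is a combinatorial observation: if $A_{\G}$ is twistless, then for each vertex $v$ the only vertex contained in \emph{every} maximal clique through $v$ is $v$ itself. Indeed, if $w\neq v$ lay in all maximal cliques containing $v$, then picking any maximal clique through $v$ shows $v\in\lk(w)$, and given $z\in\lk(v)$ one extends $\{v,z\}$ to a maximal clique, which also contains $w$, so $z\in\St(w)$; hence $\St(v)\sq\St(w)$, contradicting twistlessness. Using the standard fact that $A_{c_1}\cap A_{c_2}=A_{c_1\cap c_2}$ for cliques $c_1,c_2$, this gives $\langle v\rangle=\bigcap_{c\ni v}A_c$, a finite intersection of highest abelian subgroups. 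By Lemma~\ref{cc properties}(3) each $A_c$ is convex-cocompact in any cocompact cubulation, and by Lemma~\ref{cc properties}(1) so is $\langle v\rangle$. Since maximal abelian subgroups of $A_{\G}$ are exactly conjugates of the $A_c$ with $c$ a maximal clique, each of them admits the convex-cocompact basis given by (a conjugate of) the standard generators of $c$; hence every cocompact cubulation of $A_{\G}$ has decomposable flats.

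\emph{Part (1), reduction.} By Theorem~\ref{coarse median vs hyperplanes}, two cocompact cubulations of $A_{\G}$ induce the same coarse median structure as soon as the hyperplane-stabilisers of one are convex-cocompact in the other; for the Salvetti complex these stabilisers are the conjugates of the subgroups $A_{\lk(v)}$. Thus it suffices to prove the following: \textbf{(★)} every cocompact cubulation $A_{\G}\acts X$ with decomposable flats has a coarse median structure of the form $\psi\cdot[\mu_{\G}]$ for some $\psi\in\Aut(A_{\G})$. Granting (★), transitivity follows: writing $\psi=t\cdot u$ with $t\in T(A_{\G})$, $u\in U(A_{\G})$ (using $\Aut(A_{\G})=T(A_{\G})\cdot U(A_{\G})$) and recalling that $U(A_{\G})$ fixes $[\mu_{\G}]$, one gets $\psi\cdot[\mu_{\G}]=t\cdot[\mu_{\G}]$, while by Remark~\ref{decomposable flats rmk}(2)--(3) the $T(A_{\G})$--orbit of $[\mu_{\G}]$ is exactly the set of structures with decomposable flats; given two of them, $t\cdot[\mu_{\G}]$ and $t'\cdot[\mu_{\G}]$, the element $t't^{-1}\in T(A_{\G})$ carries one to the other. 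Finiteness of stabilisers then reduces to $\mathrm{Stab}_{T(A_{\G})}([\mu_{\G}])=T(A_{\G})\cap U(A_{\G})$ being finite (using that $U(A_{\G})=\mathrm{Stab}_{\Aut(A_{\G})}([\mu_{\G}])$ by \cite{Fio10a}), which follows from the structure of the decomposition $\Aut(A_{\G})=T(A_{\G})\cdot U(A_{\G})$ in \cite{CSV}.

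\emph{Part (1), proof of (★).} For a vertex $v$ put $c'_v:=\{w\mid\St(v)\sq\St(w)\}$; as in Part~(2) this is a clique, $D_v:=A_{c'_v}\cong\Z^{|c'_v|}$ equals $\bigcap_{c\ni v}A_c$, hence is convex-cocompact in $X$, and it is closed under roots, being a parabolic subgroup. By Lemma~\ref{lem:cc_basis} $D_v$ has a basis of convex-cocompact cyclic subgroups; comparing with Proposition~\ref{cms on Z^n} applied to $D_v$ (whose induced cubulation is the restriction of that of $X$), this basis is forced to coincide with the canonical set $\mc{B}(D_v)$ of \emph{all} convex-cocompact maximal cyclic subgroups of $D_v$. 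These families are compatibly nested: if $\St(v)\sq\St(w)$ then $D_w\leq D_v$ is a coordinate, hence root-closed, subgroup, whence $\mc{B}(D_w)\sq\mc{B}(D_v)$. One now processes the vertices in batches of non-increasing $|\St(\cdot)|$; when a $\preceq$--class $[v]$ is reached, the convex-cocompact generators $b_w$ previously assigned to the vertices of $c'_v\setminus[v]$ (all in strictly higher classes) already realise part of $\mc{B}(D_v)$, and one extends them to a full convex-cocompact basis of $D_v$, labelling the new basis vectors $b_{v'}$ by the vertices $v'\in[v]$. Since within a single class the available twists realise all of $\mathrm{SL}$ on the corresponding flat and the remaining ambiguity (inversions, permutations of a class) is absorbed into $U(A_{\G})$, the assignment $v\mapsto b_v$ differs from the standard generating set by a product of twists times an element of $U(A_{\G})$, respects the defining relations, and so extends to an automorphism $\psi^{-1}$ with each $\psi^{-1}(A_{\lk(v)})=\langle b_w\mid w\in\lk(v)\rangle$ convex-cocompact in $X$, proving (★).

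\emph{Main obstacle.} Everything except the construction in (★) is a direct application of Theorem~\ref{coarse median vs hyperplanes}, Lemma~\ref{cc properties}, Lemma~\ref{lem:cc_basis} and Proposition~\ref{cms on Z^n}. The delicate point is the inductive compatibility: one must check that the already-chosen generators $b_w$ for the higher vertices of $c'_v$ span a root-closed subgroup of $D_v$ (so that they extend to a convex-cocompact basis) and that the resulting twisted generating set genuinely defines an automorphism lying in $T(A_{\G})\cdot U(A_{\G})$ rather than merely a bijection of the vertex set. This is where decomposable flats is really used, through the forced equality of the convex-cocompact basis of each $D_v$ with its canonical family $\mc{B}(D_v)$ and the nesting $\mc{B}(D_w)\sq\mc{B}(D_v)$.
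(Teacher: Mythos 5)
Your overall strategy for Part~(1) -- choose convex-cocompact basis elements $b_v$ of the parabolic flats $A_{\kappa(v)}$ and assemble them into a single automorphism $\psi$, then apply Theorem~\ref{coarse median vs hyperplanes} to the $\psi$--twisted Salvetti complex -- is close in spirit to the paper, but it has a genuine gap at exactly the point where the paper does real work. To invoke Theorem~\ref{coarse median vs hyperplanes} you must show that the twisted hyperplane-stabilisers $\psi^{-1}(A_{\lk(v)})=\langle b_w\mid w\in\lk(v)\rangle$ are convex-cocompact in $X$, and you simply assert this. Knowing that each cyclic subgroup $\langle b_w\rangle$ is convex-cocompact does \emph{not} imply that the group they generate is: this is precisely the content of the paper's Lemma~\ref{lem:cc_vertices}, whose proof needs the non-transversality of a power of the generator, Lemma~\ref{cc properties}(4) applied to the centraliser $A_{\St(v)}$, and then the technical product Lemma~\ref{lem:cc_products} to peel off the $\Z$--factor. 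Nothing in your argument substitutes for these steps, so ($\star$) is not proved. A second unresolved point, which you yourself flag as the ``delicate point'' but do not close, is that the assignment $v\mapsto b_v$ really is an automorphism lying in $T(A_{\G})\cdot U(A_{\G})$: you must rule out collisions $\langle b_w\rangle=\langle b_{w'}\rangle$ for incomparable $w,w'\in\kappa(v)\setminus[v]$ (your batch construction only controls choices relative to $\kappa(w)\setminus[w]$), and you must check that the base changes of each $D_v$ you perform are realised by twists of the whole group $A_{\G}$ -- the paper does this explicitly in Lemma~\ref{phi_v} by using only twists $\tau_{a,b}$ with $a\in\overline\kappa(v)$, $b\in\kappa(v)$, which extend because all vertices of $\overline\kappa(v)$ share a star. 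The appeal to ``all of $\mathrm{SL}$ on the flat'' plus ``absorbed into $U(A_{\G})$'' is not an argument; note also that your reduction consumes the decomposition $\Aut(A_{\G})=T(A_{\G})\cdot U(A_{\G})$ as an external input (and the correct source is \cite[Corollary~7.12]{BCV}, not \cite{CSV}), whereas the paper's inductive scheme (Lemmas~\ref{phi_v}, \ref{lem:T_transitive}, \ref{lem:cc_vertices}) avoids it and in fact derives it.

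Part~(2) also contains an error: maximal abelian subgroups of $A_{\G}$ are \emph{not} exactly the conjugates of the $A_c$ with $c$ a maximal clique. Already in $F_2$ the subgroup $\langle ab\rangle$ is maximal abelian and not conjugate into $\langle a\rangle$ or $\langle b\rangle$; in the hexagon RAAG, $\langle x_1x_3\rangle\times\langle x_2\rangle$ is maximal abelian with a non-parabolic factor. By Servatius' Centraliser Theorem, maximal abelian subgroups are conjugates of groups of the form $\langle h_1\rangle\times\dots\times\langle h_r\rangle\times A_c$ with the $h_i$ pure factors, so verifying decomposable flats requires convex-cocompactness of these non-standard cyclic factors as well, not just of the standard generators. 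Your computation that twistlessness forces $\kappa(v)=\{v\}$ and hence each $\langle v\rangle$ convex-cocompact is correct and matches the paper, but the passage from there to ``every cocompact cubulation has decomposable flats'' needs an additional argument (for instance, first deducing via Lemma~\ref{lem:cc_vertices} that the structure is the standard one and then dealing with the pure-factor directions), which your proposal does not supply.
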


\begin{rmk}
Let $U(A_{\G})\leq \Aut(A_{\G})$ be the subgroup generated by all Laurence--Servatius generators except for twists, i.e.\ graph automorphisms, inversions, folds and partial conjugations (including inner automorphisms). This is known as the \emph{untwisted subgroup} \cite{CSV}. As shown in \cite{Fio10a}, $U(A_{\G})$ is precisely the stabiliser of the standard coarse median structure on $A_{\G}$. 

It is clear that $U(A_{\G})$ and $T(A_{\G})$ generate $\Aut(A_{\G})$, but Theorem~\ref{RAAGs main}(1) implies that there is even a product decomposition $\Aut(A_{\G})=T(A_{\G})\cdot U(A_{\G})$ and that the intersection $T(A_{\G})\cap U(A_{\G})$ is finite. Neither $U(A_{\G})$ nor $T(A_{\G})$ is normal in general, so this is not (virtually) a semi-direct product. The existence of this splitting also follows from \cite[Corollary~7.12]{BCV}.
\end{rmk}

We begin the proof of Theorem~\ref{RAAGs main} with a few elementary observations on the twist group.

\begin{rmk}\label{rmk:Aut(Z^n)}
We have $\Aut(\Z^n)=\langle T(\Z^n), {\rm O}(n,\Z)\rangle$, where $T(\Z^n)$ is the twist subgroup, while ${\rm O}(n,\Z)\simeq(\Z/2\Z)^n\rtimes\mf{S}_n$ is the group of signed permutation matrices (here $\mf{S}_n$ denotes the symmetric group). The finite subgroup ${\rm O}(n,\Z)$ permutes the elements of the standard basis of $\Z^n$, possibly inverting some of them. Since ${\rm O}(n,\Z)$ normalises $T(\Z^n)$, we have $\Aut(\Z^n)=T(\Z^n)\cdot {\rm O}(n,\Z)$. The intersection $T(\Z^n)\cap {\rm O}(n,\Z)$ is nontrivial for $n\geq 2$.
\end{rmk}

For every vertex $v\in\G$, denote by $\kappa(v)\sq\G$ the intersection of all maximal cliques of $\G$ that contain $v$. Equivalently, it is easy to see that we have:
\begin{equation}\label{eq:k(v)}
    \kappa(v) = \{ w\in\G^{(0)} \mid \St(v)\sq\St(w) \} .
\end{equation}  

The action $T(A_{\G})\acts A_{\G}$ can be completely described in terms of the behaviour on the free abelian subgroups $A_{\kappa(v)}\leq A_{\G}$. Indeed, by Equation~(\ref{eq:k(v)}) and the definition of twists, we have $\varphi(A_{\kappa(v)})=A_{\kappa(v)}$ for every $v\in\G$ and every $\varphi\in T(A_{\G})$. Clearly, $\varphi$ is the trivial automorphism if and only if it has trivial restriction to all the free abelian subgroups $A_{\kappa(v)}$, so $T(A_{\G})$ embeds in the finite product $\prod_vT(A_{\kappa(v)})$. It follows that the $T(A_{\G})$--stabiliser of the standard coarse median structure on $A_{\G}$ is finite, since this is true in the free abelian case (Remark~\ref{rmk:Aut(Z^n)}).

We now prove a few lemmas from which Theorem~\ref{RAAGs main} can be quickly deduced.

\begin{lem}\label{phi_v}
Let $A_{\G}\acts X$ be a cocompact cubulation with decomposable flats. Let $v\in\G$ be a vertex such that the subgroup $\langle v\rangle$ is not convex-cocompact in $A_{\G}\acts X$ and such that the star $\St(v)\sq\G$ is maximal among stars of such vertices. Then there exists an automorphism $\varphi_v\in T(A_{\G})$ such that:
\begin{itemize}
\item $\langle\varphi_v(v)\rangle$ is convex-cocompact in $A_{\G}\acts X$;
\item $\varphi_v(w)=w$ for all $w\in\G$ such that $\langle w\rangle$ is convex-cocompact in $A_{\G}\acts X$.
\end{itemize}
\end{lem}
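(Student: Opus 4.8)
The plan is to produce $\varphi_v$ as a product of twists of the form $\tau_{v,w}$, so that $\varphi_v(v)=vw_1\cdots w_s$ for a carefully chosen set $\{w_1,\dots,w_s\}\sq\kappa(v)$, while fixing all other standard generators. Since $\varphi_v$ must fix every generator $w$ with $\langle w\rangle$ convex-cocompact in $X$, the second bullet is automatic provided we only use twists $\tau_{v,u}$ with $u\in\kappa(v)$ and $\langle u\rangle$ \emph{not} convex-cocompact; indeed these twists fix $\G-\{v\}$ pointwise, and by maximality of $\St(v)$ among stars of non-convex-cocompact vertices (together with $\St(v)\sq\St(u)$ forcing $\St(u)=\St(v)$ for $u\in\kappa(v)$), any such $u$ has exactly the same star as $v$, so $\tau_{u,v}$ is also a legitimate twist and the whole set $\{w\in\kappa(v):\langle w\rangle\text{ not cc}\}\cup\{v\}$ spans a clique on which these twists act as in $\Aut(\Z^{s+1})$.

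The real work is the first bullet. I would argue as follows. Consider the free abelian subgroup $A=A_{\kappa(v)}$. It is closed under roots (being a parabolic), and by Lemma~\ref{cc properties}(3) its highest sub-abelian pieces are convex-cocompact; more to the point, $A$ itself is convex-cocompact in $X$ because it is a maximal abelian subgroup, or at least virtually contained in one, and the decomposable-flats hypothesis gives a basis of $A'$ (a maximal abelian containing $A$) of convex-cocompact elements, whence by Lemma~\ref{lem:cc_basis} a basis $a_1,\dots,a_k$ of $A$ with each $\langle a_i\rangle$ convex-cocompact. Now the standard basis of $A_{\kappa(v)}$ consists of the generators in $\kappa(v)$; partition it into those $u$ with $\langle u\rangle$ convex-cocompact and those without. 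By the maximality of $\St(v)$ and Equation~(\ref{eq:k(v)}), the non-convex-cocompact ones all have the same star as $v$, hence form (together with $v$) a sub-clique, and we are reduced to a purely $\Z^m$-level statement: given the lattice $\Z^m$ with its two bases — the ``standard'' clique basis and the ``convex-cocompact'' basis $a_i$ — and given that $v$ is a standard basis vector that is not among the $a_i$'s directions, find a twist $\varphi_v\in T(\Z^m)$, fixing every standard basis vector whose line is convex-cocompact, that carries $v$ onto a convex-cocompact line. This is exactly the content of Proposition~\ref{cms on Z^n} / Remark~\ref{rmk:Aut(Z^n)}: the convex-cocompact lines form a virtual basis, and the twist group acts on virtual bases so as to move the ``bad'' standard vector onto a line of the target virtual basis while fixing the ``good'' standard vectors, because those good vectors are already common to both virtual bases.

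The main obstacle I anticipate is bookkeeping the interaction between the ambient cubulation $X$ and the subgroup $A_{\kappa(v)}$: one must check (i) that ``$\langle v\rangle$ convex-cocompact in $X$'' depends only on the restriction of the coarse median structure to $A_{\kappa(v)}$ — this follows since convex-cocompactness of $\langle v\rangle$ is detected inside any convex-cocompact subgroup containing it, by Lemma~\ref{cc properties} and Proposition~\ref{prop:cc=qc} — and (ii) that applying a twist $\varphi_v\in T(A_{\G})$ and then asking about convex-cocompactness of $\langle\varphi_v(v)\rangle$ in $X$ is the same as asking about convex-cocompactness of $\langle v\rangle$ in the precomposed action $X^{\varphi_v^{-1}}$, which by Remark~\ref{decomposable flats rmk}(3) still has decomposable flats and whose restriction to $A_{\kappa(v)}$ realises the desired virtual basis. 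Once these reductions are in place, the $\Z^m$ computation is routine linear algebra over $\Z$ using that a twist is a single elementary (shear) matrix and that any virtual basis containing a prescribed sub-collection of standard axes can be reached from the standard basis by shears supported on the complementary coordinates. I would close by noting that $\varphi_v$ lies in $T(A_{\G})$ by construction, since it is a composition of twists $\tau_{v,w}$.
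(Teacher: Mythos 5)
Your overall architecture matches the paper's proof (reduce to $A_{\kappa(v)}$, extract a convex-cocompact basis from decomposable flats via Lemma~\ref{lem:cc_basis}, solve a twist-group problem in a free abelian group, and extend the twists to $A_{\G}$ using the star equalities forced by maximality of $\St(v)$), but the constraints you impose on the twists are wrong, and this is a genuine gap. You propose to build $\varphi_v$ only from twists $\tau_{v,u}$, and moreover only with $\langle u\rangle$ \emph{not} convex-cocompact, claiming the second bullet forces this. In fact the second bullet only constrains which generators get \emph{modified}: any twist $\tau_{a,b}$ whose first entry $a$ is $v$ or a non-convex-cocompact vertex of $\kappa(v)$ fixes every convex-cocompact generator, no matter what $b\in\kappa(v)$ is. Meanwhile, your restrictions can make the first bullet unreachable. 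For the multiplier restriction: let $\G$ be a single edge $\{v,w\}$ and let $\Z^2$ act on the standard tiling of $\R^2$ by translations with $v\mapsto(1,1)$, $w\mapsto(0,1)$; this has decomposable flats, $\langle w\rangle$ is convex-cocompact and $\langle v\rangle$ is not, and any $\varphi_v$ fixing $w$ with $\langle\varphi_v(v)\rangle$ convex-cocompact must multiply $v$ by a power of the \emph{convex-cocompact} generator $w$ (e.g.\ $v\mapsto vw^{-1}$). For the restriction to twists of the form $\tau_{v,\cdot}$ (restated in your closing sentence): take instead $v\mapsto(2,1)$, $w\mapsto(3,2)$, again a proper cocompact action by translations on the standard tiling; the convex-cocompact directions are $\langle v^2w^{-1}\rangle$ and $\langle v^3w^{-2}\rangle$, which form a basis, so decomposable flats holds and neither generator is convex-cocompact. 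Here every generator of a convex-cocompact cyclic subgroup has $v$--exponent of absolute value at least $2$, so no automorphism of the form $v\mapsto vw^{n}$, $w\mapsto w$ can work: one must twist $w$ as well. This is exactly why the paper takes $\psi$ to be a product of twists $\tau_{a,b}$ with $a$ ranging over $v$ and the non-convex-cocompact vertices of $\kappa(v)$ and with $b\in\kappa(v)$ \emph{arbitrary}; these still extend to $A_{\G}$ because $\St(a)=\St(v)\sq\St(b)$, and they still fix every convex-cocompact generator.

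Two further points. First, your justification that $A_{\kappa(v)}$ is convex-cocompact (``maximal abelian, or at least virtually contained in one'') is not a proof: containment in a maximal abelian subgroup gives nothing (the diagonal of $\Z^2$ in the standard cubulation is not convex-cocompact), and convex-cocompactness of $A_{\kappa(v)}$ is a hypothesis of Lemma~\ref{lem:cc_basis}. The correct argument is that $A_{\kappa(v)}$ is the intersection of the subgroups $A_c$ over the maximal cliques $c$ containing $v$, each convex-cocompact by Lemma~\ref{cc properties}(3), and then one applies Lemma~\ref{cc properties}(1). Second, your $\Z^m$--level statement (fix the convex-cocompact standard generators, carry $v$ onto a convex-cocompact line) is the right reduction, but it is not ``exactly the content'' of Proposition~\ref{cms on Z^n} or Remark~\ref{rmk:Aut(Z^n)}: after Lemma~\ref{lem:cc_basis} upgrades the virtual basis of convex-cocompact lines to an honest basis containing the convex-cocompact standard generators (up to permuting and inverting), one still has to check that the resulting change of basis is realised by twists whose first index stays in $\{v\}\cup\{\text{non-convex-cocompact vertices of }\kappa(v)\}$ while the second index is unrestricted --- which is precisely the step your restricted twist set cannot perform.
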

\begin{proof}
If $c\sq\G$ is a maximal clique, then $A_c$ is convex-cocompact in $X$ by Lemma~\ref{cc properties}(3). Since $\kappa(v)$ is the intersection of all maximal cliques containing $v$, it follows from Lemma~\ref{cc properties}(1) that $A_{\kappa(v)}$ is convex-cocompact in $X$. 

Write $\kappa(v)=\{v\}\sqcup\{w_1,\dots,w_k\}$ and choose $0\leq s\leq k$ so that $\langle w_{s+1}\rangle,\dots,\langle w_k\rangle$ are convex-cocompact in $X$, while $\langle w_1\rangle,\dots,\langle w_s\rangle$ are not. Also set $\overline\kappa(v):=\{v\}\sqcup\{w_1,\dots,w_s\}$ and note that, by maximality of $\St(v)$ and Equation~(\ref{eq:k(v)}), all vertices in $\overline\kappa(v)$ have the same star. 

Since the action $A_{\G}\acts X$ has decomposable flats, Lemma~\ref{lem:cc_basis} guarantees that there exists a basis $x_0,\dots, x_k$ of $A_{\kappa(v)}$ 
such that all $\langle x_i\rangle$ are convex-cocompact in $X$. Up to permuting and inverting the $x_i$, we can assume that $x_i=w_i$ for $s+1\leq i\leq k$, since $A_{\kappa(v)}\simeq\Z^{k+1}$ has exactly $k+1$ convex-cocompact directions by Proposition~\ref{cms on Z^n}. 

Let $\psi\in\Aut(A_{\kappa(v)})$ map the basis $\{v,w_1,\dots,w_k\}$ to the basis $\{x_0,\dots,x_k\}$ while fixing the elements $w_{s+1},\dots,w_k$. Recalling that the twist subgroup of a free abelian group acts transitively on bases (Remark~\ref{rmk:Aut(Z^n)}), we can take $\psi\in T(A_{\kappa(v)})$. Moreover, $\psi$ is a product of twists $\tau_{a,b}$ with $a\in\overline\kappa(v)$ and $b\in\kappa(v)$, in the notation of Definition~\ref{defn:twists}. Since all elements of $\overline\kappa(v)$ have the same star in $\G$, which is contained in the star of each element of $\kappa(v)$, each of the twists $\tau_{a,b}$ extends to a twist of the whole $A_{\G}$.

In conclusion, the product of these twists of $A_{\G}$ is an automorphism $\varphi_v\in T(A_{\G})$ fixing $\G-\{v,w_1,\dots,w_s\}$ and mapping $v$ to the convex-cocompact element $\psi(v)=x_0$. This is the required automorphism, concluding the proof.
\end{proof}

\begin{lem}\label{lem:T_transitive}
Let $A_{\G}\acts X$ be a cocompact cubulation with decomposable flats. Then there exists $\varphi\in T(A_{\G})$ such that each subgroup $\langle\varphi(v)\rangle$ with $v\in\G$ is convex-cocompact in $A_{\G}\acts X$.
\end{lem}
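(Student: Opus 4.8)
Lemma~\ref{lem:T_transitive} asserts that, given a cocompact cubulation $A_\G \acts X$ with decomposable flats, a single twist automorphism $\varphi \in T(A_\G)$ can be found making all standard generators $v \in \G$ act as convex-cocompact cyclic subgroups. The plan is to build $\varphi$ as a finite composition of the automorphisms $\varphi_v$ produced by Lemma~\ref{phi_v}, proceeding by induction on the number of generators $v$ for which $\langle v \rangle$ is not yet convex-cocompact, and organising the induction according to the partial order on stars $\St(v) \sq \G$.

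First I would set up the induction. Let $X_0 := X$ and at each stage let $\mathcal{B}(X_i) := \{v \in \G^{(0)} : \langle v\rangle \text{ is convex-cocompact in } A_\G \acts X_i\}$ be the set of ``good'' generators, where $X_i$ denotes $X$ with the action precomposed by the automorphism accumulated so far. If $\mathcal{B}(X_i) = \G^{(0)}$ we are done; otherwise pick a vertex $v$ with $\langle v\rangle$ not convex-cocompact whose star $\St(v)$ is \emph{maximal} among such vertices. Recall from Remark~\ref{decomposable flats rmk}(3) that precomposing with an automorphism preserves the property of having decomposable flats, so Lemma~\ref{phi_v} applies to $A_\G \acts X_i$ and yields an automorphism $\varphi_v \in T(A_\G)$ with $\langle \varphi_v(v)\rangle$ convex-cocompact in $A_\G \acts X_i$ and $\varphi_v(w) = w$ for every $w$ with $\langle w\rangle$ convex-cocompact in $X_i$. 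Set $X_{i+1} := X_i$ with the action precomposed by $\varphi_v$; then the accumulated automorphism is updated by left-composing with $\varphi_v$.

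The key step is the termination/monotonicity argument: I claim $\mathcal{B}(X_i) \subsetneq \mathcal{B}(X_{i+1})$, strictly. That $\langle w \rangle$ convex-cocompact in $X_i$ implies the same in $X_{i+1}$ is immediate, since $\varphi_v$ fixes such $w$ and convex-cocompactness of $\langle \varphi_v(w)\rangle = \langle w\rangle$ in $X_{i+1}$ is by definition convex-cocompactness of $\langle \varphi_v(\varphi_v(w))\rangle$... wait — one must be careful with the direction of precomposition here. The cleaner bookkeeping: $\langle w\rangle$ is convex-cocompact in $A_\G \acts X_{i+1}$ iff $\langle \varphi_v(w)\rangle$ is convex-cocompact in $A_\G \acts X_i$; since $\varphi_v(w) = w$ for $w \in \mathcal{B}(X_i)$, we get $\mathcal{B}(X_i) \subseteq \mathcal{B}(X_{i+1})$, and the inclusion is strict because $v \in \mathcal{B}(X_{i+1})$ (as $\langle\varphi_v(v)\rangle$ is convex-cocompact in $X_i$) while $v \notin \mathcal{B}(X_i)$. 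Since $\G^{(0)}$ is finite, the process terminates after at most $\#\G^{(0)}$ steps with $\mathcal{B} = \G^{(0)}$. Composing all the $\varphi_v$ produced gives the desired $\varphi \in T(A_\G)$, as $T(A_\G)$ is a subgroup.

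The main subtlety — and the step I would be most careful about — is precisely the convention for how ``precompose with $\varphi_v$'' interacts with convex-cocompactness across successive stages, ensuring that the set of good generators genuinely grows and that the automorphism $\varphi_v$ supplied by Lemma~\ref{phi_v} fixes \emph{all} generators that are good at stage $i$ (not just those good at stage $0$). This is exactly what the second bullet of Lemma~\ref{phi_v} guarantees, applied to the cubulation $A_\G \acts X_i$ rather than the original one, which is legitimate because having decomposable flats is stable under the $\Out(A_\G)$-action (Remark~\ref{decomposable flats rmk}(3)). No new geometric input beyond Lemma~\ref{phi_v} is needed; the argument is purely a finite induction driven by monotonicity of $\mathcal{B}$.
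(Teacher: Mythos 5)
Your proposal is correct and follows essentially the same route as the paper: an induction on the number of non-convex-cocompact generators, applying Lemma~\ref{phi_v} at a vertex of maximal star, using Remark~\ref{decomposable flats rmk}(3) to keep decomposable flats after precomposition, and composing the resulting twists. Your explicit bookkeeping that $\langle w\rangle$ is convex-cocompact in the precomposed action iff $\langle\varphi_v(w)\rangle$ is convex-cocompact in the original one is exactly the (implicit) step the paper uses to say the number of bad generators drops.
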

\begin{proof}
We prove the lemma by induction on the number $N\geq 0$ of standard generators that fail to be convex-cocompact in $A_{\G}\acts X$. The base step $N=0$ is trivial, taking $\varphi=\id_{A_{\G}}$.

For the inductive step, suppose that the lemma has been proved for cubulations in which at most $N-1$ standard generators are not convex-cocompact. Pick a generator $v\in\G$ such that $\langle v\rangle$ is not convex-cocompact in $A_{\G}\acts X$ and such that $\St(v)$ is maximal among stars of vertices of $\G$ with this property. Let $A_{\G}\acts Y$ denote the action $A_{\G}\acts X$ precomposed with the automorphism $\varphi_v\in T(A_{\G})$ provided by Lemma~\ref{phi_v}.

By Remark~\ref{decomposable flats rmk}(3), $A_{\G}\acts Y$ is again a cocompact cubulation with decomposable flats. In addition, by the choice of $\varphi_v$, there are at most $N-1$ standard generators that are not convex-cocompact in $A_{\G}\acts Y$. By the inductive hypothesis, there exists an automorphism $\psi\in T(A_{\G})$ such that each element $\psi(x)$ with $x\in\G$ is convex-cocompact in $A_{\G}\acts Y$. In other words, the elements $\varphi_v\psi(x)$ are all convex-cocompact in $A_{\G}\acts X$. We conclude by setting $\varphi:=\varphi_v\psi$.
\end{proof}

\begin{lem}\label{lem:cc_vertices}
Let $A_{\G}\acts X$ be a cocompact cubulation. If $\langle v\rangle$ is convex-cocompact in $X$ for every $v\in\G^{(0)}$, then the induced coarse median structure on $A_{\G}$ is the standard one.
\end{lem}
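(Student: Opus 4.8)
The plan is to reduce everything to Theorem~\ref{coarse median vs hyperplanes}: letting $A_{\G}\acts\X_{\G}$ be the standard action and $[\mu_{\G}]$ the standard coarse median structure, it suffices to verify condition~(3) of that theorem, i.e.\ that every $A_{\G}$--stabiliser of a hyperplane of $\X_{\G}$ is convex-cocompact in $X$. Recall that $\X_{\G}$ has exactly one $A_{\G}$--orbit of hyperplanes for each $v\in\G^{(0)}$, and that the stabiliser of the hyperplane $\mf{h}_v$ dual to the $v$--labelled edge $e$ at the basepoint is the special subgroup $A_{\lk(v)}$. (Briefly: in the square spanned by a clique $\{v,w\}$ the edge opposite to $e$ is $w\cdot e$, while consecutive $v$--edges along a $\langle v\rangle$--axis are dual to \emph{distinct} hyperplanes; iterating, the edges dual to $\mf{h}_v$ are exactly the translates $A_{\lk(v)}\cdot e$, with no inversions possible, so $\mathrm{Stab}_{A_{\G}}(\mf{h}_v)=A_{\lk(v)}$.) Since a conjugate of a convex-cocompact subgroup is convex-cocompact, the whole problem reduces to showing that $A_{\lk(v)}$ is convex-cocompact in $X$ for each $v\in\G^{(0)}$.

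Fix $v$ and recall the classical fact that $Z_{A_{\G}}(v)=A_{\St(v)}=\langle v\rangle\x A_{\lk(v)}$. The first step is to upgrade the hypothesis ``$\langle v\rangle$ is convex-cocompact in $X$'' to ``$A_{\St(v)}$ is convex-cocompact in $X$''. Here I would follow ingredient~(c) of the proof sketch in the Introduction: $\langle v^n\rangle$ is convex-cocompact in $X$ for all $n\geq 1$ (commensurable to $\langle v\rangle$, by Lemma~\ref{cc properties}(2)), and for $n$ sufficiently large the action $\langle v^n\rangle\acts X$ is non-transverse; since $A_{\G}$ is torsion-free, has unique roots, and $v$ is not conjugate to $v^{-1}$, one gets $N_{A_{\G}}(\langle v^n\rangle)=Z_{A_{\G}}(v^n)=Z_{A_{\G}}(v)=A_{\St(v)}$, so Lemma~\ref{cc properties}(4) applies and yields convex-cocompactness of $A_{\St(v)}$. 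I expect the claim that $\langle v^n\rangle$ eventually acts non-transversely to be the only genuinely delicate point: it is a standard ``combinatorial axis'' property of convex-cocompact loxodromic elements, but it needs care, and I would cite it from \cite{Fio10e} rather than reprove it.

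Once $A_{\St(v)}$ is convex-cocompact in $X$, choose an $A_{\St(v)}$--invariant convex subcomplex $C\sq X$ on which $A_{\St(v)}$ acts cocompactly (and properly, being a restriction of the $A_{\G}$--action). Since $C$ is convex in $X$, a subgroup of $A_{\St(v)}$ is convex-cocompact in $C$ if and only if it is convex-cocompact in $X$ (gate-project an invariant convex subcomplex of $X$ into $C$, and conversely a convex subcomplex of $C$ is convex in $X$). Now apply Lemma~\ref{lem:cc_products} to the cocompact cubulation $A_{\St(v)}\acts C$ with $H=\langle v\rangle\cong\Z$ and $K=A_{\lk(v)}$: the factor $H$ is convex-cocompact in $C$, and $K$ is generated by the finitely many vertices $w\in\lk(v)$, each spanning a subgroup $\langle w\rangle$ convex-cocompact in $X$, hence in $C$. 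The lemma then gives that $A_{\lk(v)}$ is convex-cocompact in $C$, hence in $X$.

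Ranging over all $v\in\G^{(0)}$, this shows that every hyperplane-stabiliser of $A_{\G}\acts\X_{\G}$ is convex-cocompact in $X$, so Theorem~\ref{coarse median vs hyperplanes} gives that $X$ and $\X_{\G}$ induce the same coarse median structure on $A_{\G}$, which is the standard one by definition. In summary, the argument is a reduction via Theorem~\ref{coarse median vs hyperplanes} to two structural inputs --- hyperplane-stabilisers of the Salvetti complex are the subgroups $A_{\lk(v)}$, and each $A_{\lk(v)}$ is a direct factor of the centraliser $A_{\St(v)}$ --- together with the convex-cocompactness machinery of Subsection~\ref{subsec:cc}: Lemma~\ref{cc properties}(4) to capture $A_{\St(v)}$, and Lemma~\ref{lem:cc_products} to strip off its $\Z$--factor.
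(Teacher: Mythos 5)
Your proposal is correct and follows essentially the same route as the paper: reduce via Theorem~\ref{coarse median vs hyperplanes} to showing each $A_{\lk(v)}$ is convex-cocompact in $X$, obtain convex-cocompactness of $A_{\St(v)}=Z_{A_\G}(v)$ from a non-transversely acting power $v^n$ together with Lemma~\ref{cc properties}(4), and then strip off the $\langle v\rangle$--factor with Lemma~\ref{lem:cc_products}. The only differences are cosmetic: you make explicit the passage to a convex core for $A_{\St(v)}$ (which the paper leaves implicit when applying Lemma~\ref{lem:cc_products}) and argue $N_{A_\G}(\langle v^n\rangle)=Z_{A_\G}(v^n)$ directly rather than via commensurability, both of which are fine.
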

\begin{proof}
Hyperplane-stabilisers of the standard $A_{\G}$--action on the universal cover of the Salvetti complex are of the form $A_{\lk(v)}$ with $v\in\G^{(0)}$, so our goal is to show that these subgroups are all convex-cocompact for $A_{\G}\acts X$. Then we can apply Theorem~\ref{coarse median vs hyperplanes}.

First, we show that all centralisers $A_{\St(v)}=Z_{A_{\G}}(v)$ are convex-cocompact in $X$. In order to see this, note that $v^n$ must act non-transversely on $X$ for some $n\geq 1$ (e.g.\ by \cite[Pro\-po\-si\-tion~2.7(5)]{BF1}). The subgroup $\langle v\rangle$ is convex-cocompact by hypothesis, so Lemma~\ref{cc properties}(4) implies that $Z_{A_{\G}}(v^n)$ is convex-cocompact in $X$ (since it is commensurable to $N_{A_{\G}}(v^n)$). Finally, since we are in a right-angled Artin group, we have $Z_{A_{\G}}(v^n)=Z_{A_{\G}}(v)$.

Now, we know that both $\langle v\rangle$ and $A_{\St(v)}=\langle v\rangle\x A_{\lk(v)}$ are convex-cocompact in $X$. In addition, $A_{\lk(v)}$ is generated by elements $w\in\G$, which all generate convex-cocompact subgroups by hypothesis. Lemma~\ref{lem:cc_products} thus implies that $A_{\lk(v)}$ is convex-cocompact in $X$, as required.
\end{proof}

\begin{proof}[Proof of Theorem~\ref{RAAGs main}]
The combination of Lemmas~\ref{lem:T_transitive} and~\ref{lem:cc_vertices} shows that $T(A_{\G})$ acts transitively on the set of coarse median structures induced by cocompact cubulations with decomposable flats. Proving that $T(A_{\G})$ acts with finite stabilisers then amounts to proving that the $T(A_{\G})$--stabiliser of the \emph{standard} structure is finite. As observed above, this follows from the fact that $T(A_{\G})$ embeds in the finite product of the twist groups of the convex-cocompact free abelian groups $A_{\kappa(v)}$, each of which has only a finite subgroup stabilising the standard coarse median structure on $A_{\kappa(v)}$. This proves part~(1).

Regarding part~(2), note that, when $A_{\G}$ is twistless, all cliques $\kappa(v)\sq\G$ are singletons, because of the description of $\kappa(v)$ given in Equation~(\ref{eq:k(v)}). We have already observed that Lemma~\ref{cc properties} implies that $A_{\kappa(v)}$ is convex-cocompact in all cocompact cubulations of $A_{\G}$. Thus, each $\langle v\rangle$ is convex-cocompact in all cocompact cubulations of $A_{\G}$, which implies that all these cubulations have decomposable flats.
\end{proof}

Theorem~\ref{RAAGs intro} might (wrongly) lead us to believe that, under the right assumptions, right-angled Artin groups have only finitely many distinct cocompact cubulations. A natural guess would be that this holds when $\Out(A_{\G})$ is finite, if we restrict to cubulations with no free faces. After all, it was shown in \cite[Proposition~C]{FH} that many Burger--Mozes--Wise groups have a \emph{unique} cubulation with no free faces (while they have many essential ones).

The next example shows that such guesses are incorrect and right-angled Artin groups are way too flexible for this kind of result to hold. 

\begin{figure}
\centering
  \includegraphics[width=\textwidth]{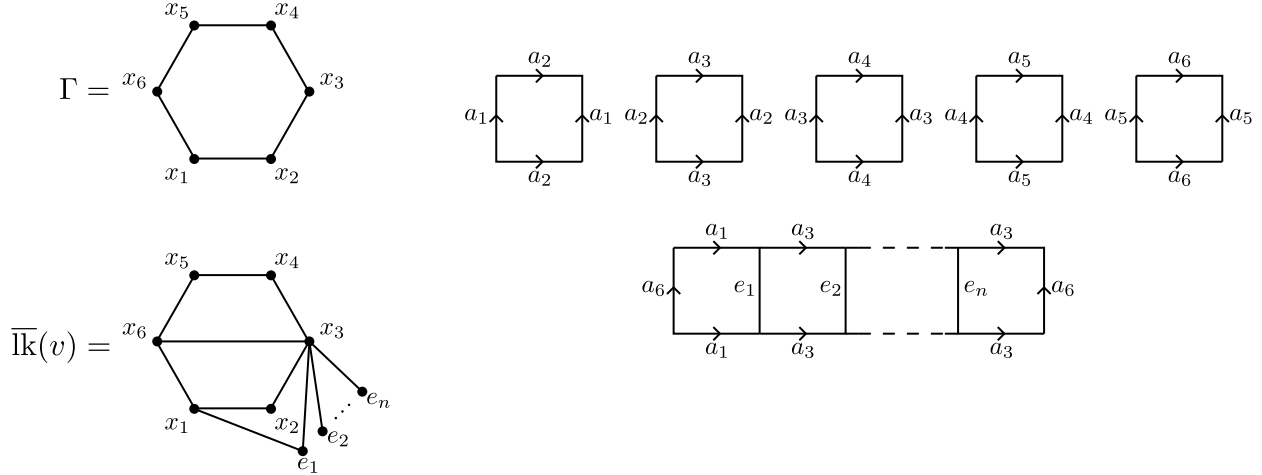}
  \caption{Top left: the hexagon graph $\G$. Right: the $n+6$ squares making up the square complex $C_n$. Bottom left: the reduced link of the single vertex of $C_n$. For clarity, the vertex of $\overline{\lk}(v)$ labelled $e_1$ has degree $2$, while the vertices labelled $e_2,\dots,e_n$ all have degree $1$.} 
   \label{hexagon}
\end{figure}

\begin{ex}\label{hexagon RAAG ex}
Let $\G$ be a hexagon, as in Figure~\ref{hexagon}. The group $\Out(A_{\G})$ is finite and $A_{\G}$ satisfies coarse cubical rigidity by Theorem~\ref{RAAGs intro}. Nevertheless, $A_{\G}$ has infinitely many $2$--dimensional cocompact cubulations with no free faces, as we are about to show.

The rough idea is that there should exist cocompact cubulations of $A_{\G}\acts X$ such that the action of each free group $\langle x_{i-1},x_{i+1}\rangle$ on its own essential core in $X$ is an arbitrary tree in the Outer Space of $\langle x_{i-1},x_{i+1}\rangle$ (modulo some compatibility conditions). We only prove a special case of this, where four of these trees are standard and the remaining two have a simple form.

For each $n\geq 0$, let $C_n$ be the finite square complex described on the right-hand side of Figure~\ref{hexagon}. Note that $C_n$ has a single vertex (call it $v$) and its $1$--skeleton is a rose with $n+6$ petals, which we name $a_1,\dots,a_6$ and $e_1,\dots, e_n$. It is clear that $C_n$ is $2$--dimensional and does not have any free faces. In addition, $C_0$ is simply the Salvetti complex of $A_{\G}$. 

Each complex $C_n$ is non-positively curved. In order to see this, it suffices to check that the link of the only vertex $v$ does not contain any $3$--cycles. Note that $\lk(v)$ contains a pair of non-adjacent vertices for each edge of $C_n$. The graph $\overline{\lk}(v)$ obtained from $\lk(v)$ by collapsing these pairs of vertices (and identifying any resulting pairs of edges with the same endpoints) is pictured in Figure~\ref{hexagon}, below on the left. Any $3$--cycle in $\lk(v)$ would give rise to a $3$--cycle in $\overline{\lk}(v)$, but the latter does not contain any, proving that $C_n$ is non-positively curved.

The description of $C_n$ given in Figure~\ref{hexagon} yields the following presentation for its fundamental group:
\begin{align*}
\pi_1(C_n,v)=\langle a_1,\dots,a_6 \mid [a_1,a_2],\ [a_2,a_3],\ [a_3,a_4],\ [a_4,a_5],\ [a_5,a_6],\ [a_6,a_1a_3^n]\rangle.
\end{align*}
This can be rewritten as follows, replacing $a_1$ with $\overline a_1:=a_1a_3^n$:
\begin{align*}
\pi_1(C_n,v)&=\langle \overline a_1,a_2\dots,a_6 \mid [\overline a_1a_3^{-n},a_2],\ [a_2,a_3],\ [a_3,a_4],\ [a_4,a_5],\ [a_5,a_6],\ [a_6,\overline a_1]\rangle \\
&=\langle \overline a_1,a_2\dots,a_6 \mid [\overline a_1,a_2],\ [a_2,a_3],\ [a_3,a_4],\ [a_4,a_5],\ [a_5,a_6],\ [a_6,\overline a_1]\rangle,
\end{align*}
where we have used the fact that $[a_2,a_3]$ is one of the relators. This shows that there exists an isomorphism $\varphi_n\colon A_{\G}\ra\pi_1(C_n,v)$ with $\varphi(x_1)=a_1a_3^n$ and $\varphi(x_i)=a_i$ for $2\leq i\leq 6$.

The deck-transformation actions $A_{\G}\acts \wt C_n$ given by the isomorphisms $\varphi_n$ are the required 2--dimensional cocompact cubulations of $A_{\G}$ with no free faces.

In order to check that these cubulations are truly pairwise distinct, we consider the essential core of the free group $\langle x_1,x_3\rangle$. First, notice that the loops $a_1,a_3$ form a \emph{convex} rose in $C_n$, which lifts to a convex tree in $\wt C_n$. Thus, the action of the free group $\langle a_1,a_3\rangle$ on its essential core in $\wt C_n$ coincides with the action of $\langle a_1,a_3\rangle$ on its standard Cayley graph (corresponding to the generating set $\{a_1,a_3\}$). Now, since $\varphi_n(x_1)=a_1a_3^n$ and $\varphi_n(x_3)=a_3$, the action of the free group $\langle x_1,x_3\rangle$ on its essential core in $\wt C_n$ is rather the point of Outer Space obtained by twisting the standard Cayley graph (corresponding to the generating set $\{x_1,x_3\}$) by the outer automorphism $(x_1,x_3)\mapsto (x_1x_3^n,x_3)$. This completes our example.

One could also wonder about the essential cores of the other free groups $\langle x_{i-1},x_{i+1}\rangle$ in $\wt C_n$. For $\langle x_2,x_4\rangle,\langle x_3,x_5\rangle,\langle x_4,x_6\rangle,\langle x_6,x_2\rangle$, the essential core is just the standard Cayley graph, whereas for $\langle x_5,x_1\rangle$ it is the subdivision of the standard Cayley graph where one of the two orbits of edges gets subdivided into $n+1$ orbits of edges. To see the latter, note that $a_1,a_3,a_5$ also form a convex rose in $C_n$. Thus, the essential core of $\langle a_1,a_3,a_5\rangle$ in $\wt C_n$ is a copy of the standard Cayley graph of $F_3$, and, in there, the minimal subtree for $\langle a_5,a_1a_3^n\rangle$ gives the essential core of $\langle x_5,x_1\rangle$.

Finally, we emphasise that the construction described in this example applies more generally to the case when $\G$ is a cycle of length at least $6$.
\end{ex}

\section{Uniformly non-quasi-convex rays}\label{nqc sect}

Having shown Theorem~\ref{RAAGs intro} and Theorem~\ref{RACGs intro}(1), we now embark in the proof of Theorem~\ref{RACGs intro}(2), which will occupy the current section and the next. This section is devoted to proving the following result, which is the most important ingredient in the proof of Theorem~\ref{RACGs intro}(2).

Recall that quasi-convexity was introduced in Definition~\ref{qc defn}. If $A\sq X$ is a subset of a $\CAT$ cube complex, we denote by $\hull_X(A)$ the smallest convex subcomplex of $X$ containing $A$.

\begin{thm}\label{uniformly non-qc ray}
Let $G\acts X$ be a cocompact cubulation. Let $M\sq X^{(0)}$ be a median subalgebra that is preserved and acted upon cofinitely by a subgroup $H\leq G$. If $M$ is not quasi-convex, then there exists a (combinatorial) ray $r\colon [0,+\infty)\ra X$ such that:
\begin{enumerate}
\item $r$ stays at bounded distance from $M$;
\item there exists a constant $K\geq 0$ such that, for all $t\geq s\geq 0$, the set $\hull_X(r|_{[s,t]})$ contains points at distance $\geq\lfloor\tfrac{t-s}{K}\rfloor$ from $M$.
\end{enumerate}
\end{thm}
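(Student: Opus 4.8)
The plan is to extract the ray $r$ from a failure of quasi-convexity by a limiting argument, exploiting cocompactness to make the "badness" uniform. First I would reformulate non-quasi-convexity: since $H$ acts cofinitely on the median subalgebra $M$ and $G\acts X$ is cocompact, saying $M$ is not quasi-convex means that $\hull_X(M)$ is at infinite Hausdorff distance from $M$ (equivalently $M$ is not $[\mu_X]$--quasi-convex, by Proposition~\ref{prop:cc=qc} and Lemma~\ref{lem:bounded_iteration}). By Lemma~\ref{lem:bounded_iteration}(1), $\hull_X(M)=\mc{J}^{\delta}(M)$ where $\delta=\dim X$, so there must exist some fixed $i\leq\delta$ and a sequence of triples of points of $M$ (plus a vertex of $X$) whose iterated medians lie arbitrarily far from $M$. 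Passing to the first level $i$ at which distances become unbounded, I get points $a_n,b_n\in\mc{J}^{i-1}(M)$, each within a \emph{bounded} distance $R$ of $M$, together with $x_n\in X^{(0)}$, such that $m(a_n,b_n,x_n)$ is at distance $\to\infty$ from $M$. Replacing $x_n$ by the gate of $x_n$ on $\hull_X(\{a_n,b_n\})$ and then by a suitable vertex on an $\ell_1$--geodesic $[a_n,b_n]$, I can arrange that the far points are actually \emph{vertices on a geodesic segment $\sigma_n$ between two points near $M$}, and that the midpoint-type vertex $z_n$ of $\sigma_n$ realises distance $D_n:=d(z_n,M)\to\infty$ from $M$.

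Next I would run a compactness/pigeonhole argument to build the ray. Consider the segments $\sigma_n$ with $z_n$ far from $M$; translate by $H$ so that $z_n$ lies in a fixed compact fundamental domain $\mathcal{K}$ for $H\acts X$ (possible since $z_n$ is within bounded distance of $M$, which is $H$--cofinite). Since $X$ is locally finite, after passing to a subsequence the segments $\sigma_n$ converge (on larger and larger initial portions, in both directions from $z_n$) to a bi-infinite combinatorial geodesic line $\ell$ through a vertex $z_\infty\in\mathcal{K}$; this $\ell$ stays within bounded distance of $M$ because every $\sigma_n$ does and local finiteness lets us transfer this in the limit. Now the key point: $z_\infty$ is at distance $\geq D$ from $M$ along the "width" direction for every $D$ in the sense that initial subsegments of $\ell$ of length $\asymp D$ have convex hull reaching distance $\geq cD$ from $M$ — because this was true for $\sigma_n$ and convex hulls of converging finite segments converge (again using that $\dim X=\delta$ is finite, so $\hull$ is computed in $\delta$ median-steps and the median operator is $1$--Lipschitz and "continuous" on the locally finite complex). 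The desired ray $r$ is then (a reparametrisation of) either half of $\ell$; one chooses the half along which the hull-distance to $M$ grows, and takes $K$ depending only on $\delta$, on the constant $R$ above, and on the diameter of $\mathcal{K}$.

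To get the \emph{uniform linear} lower bound in item~(2) rather than just an unbounded one for a single pair $(s,t)$, I would iterate or bootstrap: apply the $H$--action to slide any subsegment $r|_{[s,t]}$ of length $\ell$ so that it overlaps a large portion of one of the $\sigma_n$'s with $n$ chosen so that $D_n\gtrsim\ell/K$; then $\hull_X(r|_{[s,t]})$ contains a translate of the far vertex $z_n$ of $\sigma_n$, giving a point at distance $\geq\lfloor(t-s)/K\rfloor$ from $M$. Making this precise requires knowing that the family $\{\sigma_n\}$ is, after translation, essentially a single coherent picture — i.e.\ that the "defect" of $M$ really does grow (at least) linearly along $\ell$, which follows from subadditivity/monotonicity of $t\mapsto d(\hull_X(r|_{[0,t]}),M)$ together with the fact that it is unbounded: an unbounded monotone function on $\N$ that arises from an $H$--cofinite, locally finite picture must grow linearly, because there are only finitely many $H$--orbits of "configurations" and a bounded-gap argument forces a definite increment over a bounded number of steps. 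I expect the \textbf{main obstacle} to be exactly this last point — promoting "the hull eventually escapes $M$" into a genuinely \emph{uniform, linear} escape rate with a single constant $K$ valid for all $s\leq t$. The compactness argument naturally produces one escaping direction; turning that into the stated uniform statement is where the cocompactness of $G\acts X$ (hence $H$--cofiniteness of $M$ and finiteness of $\dim X$) has to be used carefully, via Lemma~\ref{lem:bounded_iteration} to control hulls by a bounded number of median operations and via a finiteness-of-configurations pigeonhole to convert "unbounded" to "linear".
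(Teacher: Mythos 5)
There is a genuine gap, and it sits exactly where you predicted. Two points are fatal to the proposed route. First, the limiting argument does not produce a ray satisfying condition~(1). The segments $\sigma_n$ you construct are geodesics of $X$ between points near $M$ which themselves pass through vertices $z_n$ with $d(z_n,M)\to\infty$; so it is false that "every $\sigma_n$ stays within bounded distance of $M$", and a limit of the $\sigma_n$ has no reason to stay near $M$. Moreover, the translation step is not available: $H$ acts cofinitely on $M$ but not cocompactly on $X$, so there is no compact fundamental domain for $H\acts X$ into which to move the far points $z_n$; and translating by $G$ instead destroys all relation to $M$, since $M$ is only $H$--invariant. The paper avoids this entirely by producing, for each pair of disjoint halfspaces of $M$ separated by a large grid, a geodesic $\alpha$ whose vertices all lie in (an edge-connected thickening of) $M$ itself --- it is a geodesic of $X$ contained in $\square(M')$ joining carriers of two walls of $M$ --- so that it is the \emph{hull} of $\alpha$, not $\alpha$, that escapes; condition~(1) is then automatic, and the limit along basepoints in a finite subset of $M'$ is unproblematic because property~(2) is already established for each $\alpha_n$ before passing to the limit.

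Second, the uniformity in~(2) is the real content of the theorem and your mechanism for it does not work. Condition~(2) is a statement about \emph{all} subsegments $[s,t]$, not just initial ones, so monotonicity of $t\mapsto\sup\{d(p,M): p\in\hull_X(r|_{[0,t]})\}$ is of no direct use; an unbounded monotone function need not grow linearly; and the "finitely many $H$--orbits of configurations" pigeonhole is not a proof, since the relevant configuration of a long subsegment relative to the whole subalgebra $M$ is not a finite amount of data, nor does the $H$--action slide an arbitrary subsegment of $r$ onto a portion of some $\sigma_n$. The paper's uniform linear escape comes from a completely different, per-geodesic combinatorial argument: the geodesic $\alpha$ is chosen minimal for an order $\prec$ that pushes it as close to one halfspace $\mf{k}$ as possible; a cocompactness lemma about carriers (using that there are finitely many $H$--orbits of walls of $M$) bounds by a constant $K$ the lengths of the maximal subsegments crossing only halfspaces transverse to $\mf{k}$, respectively only halfspaces disjoint from $\mf{k}$; minimality forces these two types to alternate with a disjointness relation in $M$ between consecutive types; and a Dilworth-type counting lemma then extracts, inside every subsegment of length $\gtrsim Kn\dim X$, nested chains forming an $n\times n$ grid of $X$--transverse halfspaces avoiding $M$, so Helly's lemma yields a point of the hull at distance $>n$ from $M$. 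Reproducing the theorem would require an argument of this kind (or a genuinely new idea) rather than compactness plus pigeonhole.
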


This result should look rather plausible, but its proof will require significant technical work. To motivate the reader through it and emphasise that things are more delicate than they may seem, we recall the following classical example. It shows that Theorem~\ref{uniformly non-qc ray} badly fails if $M$ is replaced with just a connected subcomplex of $X$ (or its vertex set); in general, lack of quasi-convexity is not witnessed by a ray, let alone by a ``uniformly non-quasi-convex'' one. Thus, the assumption that $M$ be a median subalgebra is key and we will have to significantly rely on it.

\begin{ex}\label{ex:mapping_torus}
 Let $G=\pi_1(S)\rtimes\Z$ be the fundamental group of a fibred closed hyperbolic $3$--manifold. Let $G\acts X$ be a cocompact cubulation, whose existence is provided by \cite{Kahn-Markovic,Bergeron-Wise}. Let $H=\pi_1(S)$ be the fibre subgroup and define $M\sq X$ as a thickening of an $H$--orbit, so that $M$ is a connected subcomplex on which $H$ acts cocompactly; this is possible since $H$ is finitely generated. However, $M$ is not a median subalgebra of $X$, as $H$ is distorted in $G$.

 Since $X$ is hyperbolic, coarse-median quasi-convexity and hyperbolic quasi-convexity are the same notion. In particular, $M$ is not quasi-convex in $X$, as the subgroup $H$ is not quasi-convex in $G$. At the same time, we certainly cannot find any non-quasi-convex rays near $M$: since $X$ is hyperbolic, every ray in $X$ is quasi-convex.
\end{ex}

\subsection{Terminology, notation and conventions}

Let $X$ be a finite-dimensional $\CAT$ cube complex and $M\sq X^{(0)}$ a median subalgebra. 

All distances, neighbourhoods and geodesics should always be understood to correspond to the combinatorial metric on the $0$--skeleton of $X$. The notation $d(\cdot,\cdot)$ will always refer to this metric. We write $\mc{N}_R(A)$ for the (closed) $R$--neighbourhood in $X^{(0)}$ of a subset $A\sq X^{(0)}$.

Equipped with the restriction of the median operator of $X$, the subalgebra $M$ is a \emph{median algebra} (see \cite{Roller,Bow-cm} for some background) and we will make use of the corresponding terminology. Specifically, a subset $A\sq M$ is said to be \emph{convex} (in $M$) if $m(A,A,M)\sq A$. A subset $\mf{h}\sq M$ is a \emph{halfspace} if both $\mf{h}$ and $\mf{h}^*:=M-\mf{h}$ are convex and nonempty. A \emph{wall} of $M$ is an unordered pair $\{\mf{h},\mf{h}^*\}$, where $\mf{h}\sq M$ is a halfspace. See Remark~\ref{edge-connected rmk}(2) below for the relation between halfspaces of $M$ and $X$.

We denote by $\mscr{H}(M)$ and $\mscr{W}(M)$ the sets of halfspaces and walls of $M$, respectively. Similarly, $\mscr{H}(X)$ and $\mscr{W}(X)$ are the sets of halfspaces and hyperplanes of the cube complex $X$, or, equivalently, the halfspaces and walls of the median subalgebra $X^{(0)}$. If $A,B\sq X$ are subsets, we write:
\[\mscr{H}(A|B):=\{\mf{h}\in\mscr{H}(X) \mid A\sq\mf{h}^*,\ B\sq\mf{h}\},\]
and denote by $\mscr{W}(A|B)\sq\mscr{W}(X)$ the set of hyperplanes bounding these halfspaces. Note that $\mf{h}\in\mscr{H}(A|\mf{h})$ and $\mf{h}^*\in\mscr{H}(\mf{h}|A)$ for every subset $A\sq\mf{h}^*$.

Halfspaces $\mf{h},\mf{k}\in\mscr{H}(M)$ are \emph{transverse} if all four intersections $\mf{h}\cap\mf{k}$, $\mf{h}^*\cap\mf{k}$, $\mf{h}\cap\mf{k}^*$, $\mf{h}^*\cap\mf{k}^*$ are nonempty. In this case, we also say that the corresponding walls are \emph{transverse}. Two sets $\mc{A},\mc{B}$ of halfspaces/walls are \emph{transverse} if every element of $\mc{A}$ is transverse to every element of $\mc{B}$. If $\mf{h}\sq\mf{k}$ or $\mf{k}\sq\mf{h}$, we say that $\mf{h}$ and $\mf{k}$ are \emph{nested}.

By Chepoi--Roller duality, the median algebra $M$ is canonically isomorphic to the $0$--skeleton of a $\CAT$ cube complex $\square(M)$, equipped with its natural median operator. The sets $\mscr{W}(\square(M))$ and $\mscr{H}(\square(M))$ are naturally identified with $\mscr{W}(M)$ and $\mscr{H}(M)$. In general, $\square(M)$ is not a subcomplex of $X$, let alone a convex one\footnote{For instance, any subset of $\Z$ is a median subalgebra, if we view $\Z$ as the vertex set of the standard cubical structure on the real line.}. Note that $M$ inherits its own intrinsic metric from $\square(M)$, but this will not cause any ambiguity: in our cases of interest, the metric inherited from $\square(M)$ coincides with the restriction of the combinatorial metric of $X$ (see Remark~\ref{edge-connected rmk}(3)).

In fact, we will mostly deal with median algebras satisfying the following additional property (studied e.g.\ in \cite[Subsection~4.4.1]{Fio10a}).

\begin{defn}
A subset $A\sq X^{(0)}$ is \emph{edge-connected} if, for all $x,y\in A$, there exists a sequence of points $x_1,\dots,x_n\in A$ such that $x_1=x$, $x_n=y$ and, for all $i$, the points $x_i$ and $x_{i+1}$ are joined by an edge of $X$.
\end{defn}

\begin{rmk}\label{edge-connected rmk}
The following three observations explain our interest in edge-connected subalgebras.
\begin{enumerate}
\setlength\itemsep{.25em}
\item Suppose that $H\leq\Aut(X)$ is a finitely generated subgroup that preserves $M$ and acts cofinitely on it. Suppose further that $X$ is locally finite. Then $M$ is contained in an \emph{edge-connected}, $H$--invariant, $H$--cofinite subalgebra $M'\sq X^{(0)}$. 

Indeed, since $H$ is finitely generated, its orbits in $X$ are coarsely connected. Thus, since $M$ is $H$--cofinite, there exists $R\geq 0$ such that $\mc{N}_R(M)$ is an edge-connected subset of $X$. By \cite[Lemma~4.21]{Fio10a}, the subalgebra $M'$ generated by this subset is also edge-connected. Finally, $M'$ is at finite Hausdorff distance from $M$ since it is constructed by taking medians a bounded number of times (Lemma~\ref{lem:bounded_iteration}), hence it is $H$--cofinite because $X$ is locally finite.

\item Let $\mscr{H}_M(X)\sq\mscr{H}(X)$ be the subset of halfspaces $\mf{h}$ such that both $\mf{h}\cap M$ and $\mf{h}^*\cap M$ are nonempty. We have a map:
\begin{align*}
{\rm res}_M&\colon\mscr{H}_M(X)\ra\mscr{H}(M), & {\rm res}_M(\mf{h})&:=\mf{h}\cap M.
\end{align*}

This map is always \emph{surjective} by \cite[Lemma~6.5]{Bow-cm}. It is also clear that it is a morphism of pocsets, i.e.\ that ${\rm res}_M(\mf{h}^*)={\rm res}_M(\mf{h})^*$ and $\mf{h}\sq\mf{k}\Ra{\rm res}_M(\mf{h})\sq {\rm res}_M(\mf{k})$. On the other hand, ${\rm res}_M$ is \emph{injective} if and only if $M$ is edge-connected \cite[Lemma~4.20]{Fio10a}. 

Note that, even when $M$ is edge-connected, ${\rm res}_M$ does not preserve transversality: halfspaces $\mf{h},\mf{k}\in\mscr{H}_M(X)$ can be transverse even if ${\rm res}_M(\mf{h})$ and ${\rm res}_M(\mf{k})$ are not. In fact, this always happens, unless $M$ is the vertex set of a \emph{convex} subcomplex of $X$.

\item Denote for a moment by $d_M$ the metric on $M$ corresponding to the combinatorial metric on $\square(M)$, the cube complex given by Chepoi--Roller duality. Recall that $d$ is instead the restriction of the combinatorial metric of $X$.

The metrics $d$ and $d_M$ coincide on $M$ if and only if $M$ is edge-connected (in general, we only have $d_M\leq d$). This is an immediate consequence of Item~(2) above, recalling that, if $x,y\in M$, the distance $d_M(x,y)$ is the number of walls in $\mscr{W}(M)$ separating $x$ and $y$, while $d(x,y)$ is the number of hyperplanes in $\mscr{W}(X)$ separating them.

In addition, if $M$ is edge-connected, the dual cube complex $\square(M)$ can be realised as a subcomplex of $X$: it is simply the union of all cubes of $X$ whose $0$--skeleton is contained in $M$. In general, $\square(M)$ is not a convex subcomplex of $X$. Nevertheless, if an edge path is contained in $\square(M)$, then it is a geodesic in $X$ if and only if it is a geodesic in $\square(M)$.
\end{enumerate}
\end{rmk}

We conclude this subsection with some more notation. First, the map ${\rm res}_M\colon\mscr{H}_M(X)\ra\mscr{H}(M)$ introduced in Remark~\ref{edge-connected rmk}(2) has an obvious twin
\[{\rm res}_M\colon\mscr{W}_M(X)\ra\mscr{W}(M),\]
where $\mscr{W}_M(X)\sq\mscr{W}(X)$ is the subset of hyperplanes bounding halfspaces in $\mscr{H}_M(X)$.

We will speak of \emph{$X$--transverse} or \emph{$M$--transverse} hyperplanes/halfspaces depending on the pocset of interest: $\mscr{H}(X)$ or $\mscr{H}(M)$. This will be important in order to avoid confusion, since the maps ${\rm res}_M$ are bijections in our cases of interest and they do not preserve transversality.

For a hyperplane $\mf{w}\in\mscr{W}(X)$, we denote by $C_X(\mf{w})\sq X$ its \emph{$X$--carrier}: this is the convex subcomplex of $X$ spanned by all edges crossing $\mf{w}$. On the other hand, for a wall $\mf{u}\in\mscr{W}(M)$, we denote by $C_M(\mf{u})\sq M$ its \emph{$M$--carrier}: this is the intersection between the $0$--skeleton of $\square(M)$ and the carrier in $\square(M)$ of the hyperplane of $\square(M)$ determined by $\mf{w}$. In other words, a point $x\in M$ lies in $C_M(\mf{u})$ when there exists $y\in M$ such that $\mf{u}$ is the only wall of $M$ separating $x$ and $y$.

Finally, if $\mf{h}$ is a halfspace bounded by a hyperplane/wall $\mf{w}$, it is convenient to write $C_X(\mf{h})$ and $C_M(\mf{h})$ for the intersections $C_X(\mf{w})\cap\mf{h}$ and $C_M(\mf{w})\cap\mf{h}$.

\begin{rmk}\label{different carriers}
For every $\mf{w}\in\mscr{W}_M(X)$, we have $C_M({\rm res}_M(\mf{w}))\sq C_X(\mf{w})\cap M$, but in general this inclusion can be very far from an equality, even at the level of $0$--skeleta. 

For instance, let $X$ be the strip $\R\x[0,1]$ with its standard decomposition into squares, let $\mf{w}$ be the hyperplane corresponding to the $[0,1]$--factor, and let $M$ be the $0$--skeleton of the subcomplex $(\R\x\{0\})\cup(\{0\}\x[0,1])$. Then $C_X(\mf{w})\cap M=M$, while $C_M({\rm res}_M(\mf{w}))=\{(0,0),(0,1)\}$.

Luckily, when $X$ and $M$ have geometric group actions, the distinction between $C_M({\rm res}_M(\mf{w}))$ and $C_X(\mf{w})\cap M$ will not be quite so drastic. See Lemma~\ref{similar carriers 2} below.
\end{rmk}

We record here also the following observation for later use.

\begin{rmk}\label{qc hulls rmk}
Let $A\sq X^{(0)}$ be a subset of a $\CAT$ cube complex. Set $\delta:=\dim X$.
\begin{enumerate}
\item If $m(A,A,X)\sq\mc{N}_R(A)$, then $\hull_X(A)\sq\mc{N}_{2^{\delta}R}(A)$. This is a straightforward consequence of the fact that the median operator is $1$--Lipschitz, together with the fact that hulls are constructed by taking medians a bounded number of times (Lemma~\ref{lem:bounded_iteration}).
\item In particular, $A\sq X$ is quasi-convex (in the sense of Definition~\ref{qc defn}) if and only if $A$ is at finite Hausdorff distance from $\hull_X(A)$.
\end{enumerate}
\end{rmk}

\subsection{Quadrants, grids and quasi-convexity}

Let $X$ be a finite-dimensional $\CAT$ cube complex and let $M\sq X^{(0)}$ be a median subalgebra. 

The goal of this subsection is to rephrase (failure of) quasi-convexity for the subalgebra $M$ in terms of ``grids'' of hyperplanes of $X$. The important result in this regard is Corollary~\ref{non-qc via grids}.

Just like a \emph{convex} subcomplex of $X$ is the complement of the union of \emph{halfspaces} disjoint from it, an (edge-connected) median \emph{subalgebra} of $X$ is the complement of the union of \emph{quadrants} disjoint from it. The next lemma proves a stronger version of this fact.

Recall that the subset $\mscr{H}_M(X)\sq\mscr{H}(X)$ was introduced in Remark~\ref{edge-connected rmk}(2).

\begin{lem}\label{separate C and M}
Let $C\sq X$ be a convex subcomplex with $C\cap\hull_X(M)\neq\emptyset$. 
\begin{enumerate}
\item If $C\cap M=\emptyset$, there exist distinct halfspaces $\mf{h},\mf{k}\in\mscr{H}_M(X)$ with $C\sq\mf{h}\cap\mf{k}$ and $M\sq\mf{h}^*\cup\mf{k}^*$.
\item If, in addition, $M$ is edge-connected, then $\mf{h}$ and $\mf{k}$ are $X$--transverse.
\end{enumerate}
\end{lem}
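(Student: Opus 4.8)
The plan is to reduce everything, at the cost of some bookkeeping, to a statement inside $H:=\hull_X(M)$. Set $D:=C\cap H$; by hypothesis $D\neq\emptyset$, it is convex, $D\cap M=\emptyset$, and $D\subsetneq H$ since $\emptyset\neq M\sq H\setminus D$. I would use three standard facts about the convex subcomplex $H$: (i) the map $\mf w\mapsto\mf w\cap H^{(0)}$ is a bijection from the set of hyperplanes of $X$ crossing $H$ to $\mscr W(H)$, and I write $\wh{\mf u}$ for the hyperplane of $X$ inducing a wall $\mf u$ of $H$; (ii) every wall of $H=\hull_X(M)$ cuts $M$, as otherwise $M$, hence $\hull_X(M)=H$, would lie in a proper halfspace of $H$; (iii) if $\mf w\in\mscr W(X)$ crosses $H$, then $\mf w$ crosses $C$ iff it crosses $D$, the nontrivial direction being the Helly property (Helly number $2$ for convex subsets of a median algebra) applied to a halfspace of $\mf w$, to $C$, and to $H$, which pairwise intersect. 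Granting these, a wall $\mf u_i$ of $H$ with a chosen side $\mf a_i\supseteq D$ lifts to the halfspace $\mf h_i$ of $\wh{\mf u_i}$ containing $\mf a_i$, and then $\mf h_i\cap M=\mf a_i\cap M$ and $\mf h_i^*\cap M=\mf a_i^*\cap M$ are both nonempty by (ii), so $\mf h_i\in\mscr H_M(X)$; by (iii) the hyperplane $\wh{\mf u_i}$ misses $C$, and since $D\sq\mf a_i\sq\mf h_i$ this forces $C\sq\mf h_i$; distinct walls give distinct halfspaces by (i). Hence Part~(1) reduces to: \emph{if $M$ fills $H$ (i.e.\ $\hull_H(M)=H$) and $D\subsetneq H$ is a nonempty convex subcomplex with $D\cap M=\emptyset$, then there are two distinct walls of $H$ with $D$--containing sides $\mf a_1,\mf a_2$ such that $M\cap\mf a_1\cap\mf a_2=\emptyset$.}

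The hard part is exactly this last statement, and it is where the median-subalgebra hypothesis on $M$ is indispensable (Example~\ref{ex:mapping_torus} shows the analogue is false for arbitrary connected subcomplexes). The mechanism I would exploit is that $M$ cannot ``wrap around'' $D$ in three essentially different directions: using the gate projection onto $D$, attach to each $a\in M$ a hyperplane of $H$ separating $a$ from $D$ and adjacent to $D$ (the last one on the geodesic from $a$ to its gate), together with its $D$--side $\mf a_a\supseteq D$; if three points $a_1,a_2,a_3\in M$ had pairwise ``opposite'' such sides (i.e.\ $\mf a_{a_i}^*$ pairwise disjoint), then $m(a_1,a_2,a_3)$ would lie on the $D$--side of each $\wh{\mf u}$ adjacent to $D$ in those directions, and --- with the minimality built into the choice --- would be forced into $D$, contradicting $D\cap M=\emptyset$. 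On the other hand $\hull_H(M)=H\supsetneq D$, so $M$ must reach $D$ from at least two sides (otherwise $\hull_H(M)$ would avoid $D$). Combining, $M$ lies in the union of exactly two ``sectors away from $D$'', bounded by two distinct walls $\mf u_1,\mf u_2$ with the required properties. The main technical obstacle is making ``essentially different directions'' and ``sector'' precise; I expect this to require a careful analysis of the hyperplanes of $H$ adjacent to $D$ and of the shadow $\mf g_D(M)\sq D$ (which one checks fills $D$), since convexity of $D$ combined with the subalgebra property of $M$ is exactly what bounds the number of sectors by two.

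Part~(2) should then follow quickly from the output of Part~(1): halfspaces $\mf h,\mf k\in\mscr H_M(X)$ with $\mf h\neq\mf k$, $C\sq\mf h\cap\mf k$ and $M\cap\mf h\cap\mf k=\emptyset$. Among the non-transverse configurations of the pair $\{\mf h,\mf k\}$: $\mf h\sq\mf k^*$ is impossible because $\mf h\cap\mf k\supseteq C\neq\emptyset$; $\mf h\sq\mf k$ would give $M\cap\mf h\sq M\cap\mf h\cap\mf k=\emptyset$, contradicting $\mf h\in\mscr H_M(X)$, and symmetrically $\mf k\sq\mf h$ is excluded. The only remaining non-transverse case is $\mf h^*\cap\mf k^*=\emptyset$. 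If this held, then $M=(M\cap\mf h^*)\sqcup(M\cap\mf k^*)$ with both pieces nonempty, and no edge of $X$ could join them, since any such edge would have to cross the two distinct hyperplanes bounding $\mf h$ and $\mf k$ at once; this contradicts edge-connectedness of $M$. Therefore $\mf h$ and $\mf k$ are $X$--transverse, which is Part~(2).
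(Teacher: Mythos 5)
Your reduction to $H=\hull_X(M)$ (facts (i)--(iii)) is fine, and your Part~(2) is correct and essentially the paper's argument, run by contradiction on a single edge rather than on an edge-path. The problem is Part~(1): the statement you isolate as ``the hard part'' --- that $M$ is covered by the complements of two $D$--containing sides of walls of $H$ --- is exactly the content of the lemma, and you do not prove it. Your mechanism has two unjustified steps. First, the claim that three points $a_1,a_2,a_3\in M$ with pairwise disjoint ``outer sides'' force $m(a_1,a_2,a_3)\in D$ does not follow from the minimality of your chosen walls: all that choice gives is that $m$ lies on the $D$--side of those three particular walls, and a wall separating $m$ from $D$ merely separates at least two of the $a_i$ from $D$ --- no contradiction yet; some genuine further use of the subalgebra property and of $\hull_X(M)=H$ is needed and is not supplied. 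Second, even granting that claim, ``at most two essentially different directions'' does not yield two single halfspaces $\mf{h},\mf{k}$ with $M\sq\mf{h}^*\cup\mf{k}^*$: points of $M$ approaching $D$ ``from the same side'' may be assigned different walls adjacent to $D$ (your assignment is not even canonical), and collapsing each class to one wall is an additional argument you do not indicate. You flag this yourself (``the main technical obstacle\dots''), so as it stands the core of Part~(1) is a plan, not a proof.

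For comparison, the paper's proof avoids gates, adjacency to $D$ and ``sectors'' altogether. After the same reduction to the hull, it considers the family $\s_C=\{\mf{h}\cap M \mid \mf{h}\in\mscr{H}(X),\ C\sq\mf{h}\}$ of halfspaces of the median algebra $M$. This family satisfies the descending chain condition (an infinite strictly descending chain of traces would produce infinitely many hyperplanes separating a fixed point of $M$ from $C$, contradicting finiteness of their distance), so if its elements pairwise intersected they would admit a common point $p\in M$; but $p$ would then lie in every halfspace of $X$ containing $C$, hence in $C$ by convexity, contradicting $C\cap M=\emptyset$. Two disjoint members of $\s_C$ are exactly the required $\mf{h},\mf{k}$. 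If you want to salvage your geometric route, you would at minimum need to prove the ``no three directions'' claim and the sector decomposition; the Helly-type argument above is considerably shorter and is where the median-subalgebra hypothesis actually enters.
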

\begin{proof}
The pocset $\mscr{H}_M(X)\sq\mscr{H}(X)$ is naturally identified with $\mscr{H}(\hull_X(M))$, and a halfspace $\mf{h}\in\mscr{H}_M(X)$ contains $C$ if and only if it contains $C\cap\hull_X(M)$. Thus, up to replacing $X$ with $\hull_X(M)$ and $C$ with $C\cap\hull_X(M)$, we can assume that $\mscr{H}_M(X)=\mscr{H}(X)$.

Now, we prove part~(1). Consider the subset $\s_C\sq\mscr{H}(M)$ defined as follows:
\[\s_C:=\{\mf{h}\cap M \mid \mf{h}\in\mscr{H}(X) \text{ and } C\sq\mf{h}\}.\]

Note that $\s_C$ cannot contain any infinite descending chains $\mf{h}_1\cap M\supsetneq\mf{h}_2\cap M\supsetneq\dots$. Indeed, the halfspaces $\mf{h}_i\in\mscr{H}(X)$ would be pairwise distinct, giving infinitely many elements of $\mscr{H}(p|C)$. This is impossible since $d(p,C)$ is finite.

Also note that the elements of $\s_C\sq\mscr{H}(M)$ cannot pairwise intersect. Otherwise, since $\s_C$ does not contain any infinite descending chains, there would be a point $p\in M$ that lies in all elements of $\s_C$ (this is clear viewing $M$ as the $0$--skeleton of the cube complex $\square(M)$ via Chepoi--Roller duality). By the definition of $\s_C$, we would also have $p\in C$, contradicting the assumption that $C\cap M=\emptyset$. 

In conclusion, there exist two disjoint elements of $\s_C$. This means that there exist halfspaces $\mf{h},\mf{k}\in\mscr{H}(X)=\mscr{H}_M(X)$ such that $C\sq\mf{h}\cap\mf{k}$ and $M\cap\mf{h}\cap\mf{k}=\emptyset$. This proves part~(1).

Regarding part~(2), recall that $\mf{h}$ and $\mf{k}$ intersect $M$, as they lie in $\mscr{H}_M(X)$. Since $M\cap\mf{h}\cap\mf{k}=\emptyset$, the intersections $\mf{h}\cap\mf{k}^*$ and $\mf{h}^*\cap\mf{k}$ must intersect $M$, hence they are nonempty. The intersection $\mf{h}\cap\mf{k}$ is also nonempty, as it contains $C$. 

Finally, if $M$ is edge-connected, it contains the vertex set of an edge-path in $X$ joining a point of $M\cap\mf{h}\cap\mf{k}^*$ to a point of $M\cap\mf{h}^*\cap\mf{k}$. Since this path cannot intersect $\mf{h}\cap\mf{k}$, which is disjoint from $M$, it must intersect $\mf{h}^*\cap\mf{k}^*$. This proves that $\mf{h}^*\cap\mf{k}^*$ is nonempty, hence $\mf{h}$ and $\mf{k}$ are $X$--transverse.
\end{proof}

The next lemma shows that, if a convex subcomplex and a median subalgebra of $X$ are far from each other, then they are separated by a large grid of hyperplanes of $X$. This configuration is depicted in Figure~\ref{img2}.

\begin{lem}\label{get n-grid}
Let $M$ be edge-connected. Let $C\sq X$ be a convex subcomplex with $C\cap\hull_X(M)\neq\emptyset$. Set $\delta:=\dim X$. If $d(C,M)>(2^\delta+1) \delta n$ for some $n\geq 0$, then there exist halfspaces $\mf{h}_0\supsetneq\dots\supsetneq\mf{h}_n$ and $\mf{k}_0\supsetneq\dots\supsetneq\mf{k}_n$ in $\mscr{H}_M(X)$ such that each $\mf{h}_i$ is $X$--transverse to every $\mf{k}_j$, and $C\sq\mf{h}_n\cap\mf{k}_n$ and $M\sq\mf{h}_0^*\cup\mf{k}_0^*$.
\end{lem}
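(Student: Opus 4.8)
The plan is to reduce to the case $\hull_X(M)=X$ and then argue by induction on $n$, building the grid one level at a time, each level consuming a controlled amount $R:=(2^{\delta}+1)\delta$ of the gap $d(C,M)$.

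\emph{Reductions and base case.} Exactly as at the start of the proof of Lemma~\ref{separate C and M}, we may replace $X$ by $\hull_X(M)$ and $C$ by $C\cap\hull_X(M)$: this does not increase $\dim X$, does not decrease $d(C,M)$, keeps $C\cap\hull_X(M)$ nonempty, and preserves edge-connectedness of $M$; moreover afterwards every halfspace of $X$ lies in $\mscr{H}_M(X)$, so that constraint becomes vacuous. The statement for $n=0$ is then precisely Lemma~\ref{separate C and M}, taking $\mf{h}_0:=\mf{h}$, $\mf{k}_0:=\mf{k}$.

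\emph{Inductive step.} Assume the statement for $n-1$ and suppose $d(C,M)>Rn$. The combinatorial $R$--neighbourhood $C^{+}:=\mc{N}_R(C)$ of the convex subcomplex $C$ is again a convex subcomplex, it is disjoint from $M$, and $d(C^{+},M)=d(C,M)-R>R(n-1)$. Applying the inductive hypothesis to $C^{+}$ produces strictly nested halfspaces $\mf{h}_0\supsetneq\cdots\supsetneq\mf{h}_{n-1}$ and $\mf{k}_0\supsetneq\cdots\supsetneq\mf{k}_{n-1}$, with each $\mf{h}_i$ being $X$--transverse to each $\mf{k}_j$, with $C^{+}\sq\mf{h}_{n-1}\cap\mf{k}_{n-1}$ and $M\sq\mf{h}_0^*\cup\mf{k}_0^*$. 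Since $\mc{N}_R(C)\sq\mf{h}_{n-1}\cap\mf{k}_{n-1}$, the subcomplex $C$ sits at distance $>R$ from the bounding hyperplanes of $\mf{h}_{n-1}$ and of $\mf{k}_{n-1}$; this ``$R$--thick collar'' around $C$ inside the quadrant $\mf{h}_{n-1}\cap\mf{k}_{n-1}$ provides exactly the room in which to insert one more, innermost level. One applies Lemma~\ref{separate C and M} (inside $\mf{h}_{n-1}\cap\mf{k}_{n-1}$, or directly in $X$) to obtain an $X$--transverse pair $\mf{h}_n,\mf{k}_n$ with $C\sq\mf{h}_n\cap\mf{k}_n$, $\mf{h}_n\subsetneq\mf{h}_{n-1}$, $\mf{k}_n\subsetneq\mf{k}_{n-1}$, chosen using the collar so that the bounding hyperplanes of $\mf{h}_n,\mf{k}_n$ stay far inside the quadrant; then $\mf{h}_n$ is $X$--transverse to every $\mf{k}_j$ and $\mf{k}_n$ to every $\mf{h}_i$, the four quadrants in each case being witnessed by $C$, by the collar on either side of the relevant walls, and by a point of $M$ coming from $M\sq\mf{h}_0^*\cup\mf{k}_0^*$. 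The condition $M\sq\mf{h}_0^*\cup\mf{k}_0^*$ is untouched by the insertion, so the result is a grid of size $n+1$, closing the induction.

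\emph{Main obstacle.} The heart of the argument is the insertion of the innermost level: one must arrange $\mf{h}_n,\mf{k}_n$ so that \emph{all} of the $(n+1)^2$ transversalities $\mf{h}_i$ versus $\mf{k}_j$ hold at once — in particular the ``long-range'' ones pairing the new innermost $\mf{h}_n$ with the outermost $\mf{k}_0$ — while keeping the nestings $\mf{h}_{n-1}\supsetneq\mf{h}_n$, $\mf{k}_{n-1}\supsetneq\mf{k}_n$ strict and $C\sq\mf{h}_n\cap\mf{k}_n$. This requires selecting the separating pair from among the hyperplanes crossing the collar with some care (the hyperplanes crossing the collar being pairwise nested or $X$--transverse, one plays off a Dilworth/dimension count against the ``disjoint quadrant'' produced by Lemma~\ref{separate C and M}). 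It is here that edge-connectedness of $M$ is indispensable, as it is the only hypothesis that upgrades the conclusion of Lemma~\ref{separate C and M} from mere disjointness from $M$ to genuine transversality (Lemma~\ref{separate C and M}(2)); and it is here that the collar width $R=(2^{\delta}+1)\delta$ is dictated, the factor $2^{\delta}$ coming from Remark~\ref{qc hulls rmk}(1) and the factor $\delta$ from the $\delta$--step construction of convex hulls (Lemma~\ref{lem:bounded_iteration}(1)).
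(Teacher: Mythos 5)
There are two genuine gaps. First, your inductive step rests on the claim that the combinatorial $R$--neighbourhood $\mc{N}_R(C)$ of a convex subcomplex is again convex; this is false in $\CAT$ cube complexes. Already in the standard square tiling of $\R^2$, the $1$--neighbourhood of a vertex is an $\ell_1$--ball (a ``diamond'') and is not median-convex, so it is not a convex subcomplex. This is exactly why the paper does not induct on $n$ but instead works in one shot with $C':=\hull_X(\mc{N}_{\delta n}(C))$, using Remark~\ref{qc hulls rmk}(1) (via Lemma~\ref{lem:bounded_iteration}) to bound $C'\sq\mc{N}_{(2^\delta+1)\delta n}(C)$ — that is where the factor $2^\delta$ actually enters. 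In your write-up the factor $2^\delta$ is claimed to ``come from'' that remark, yet your construction never takes a hull, so within your own argument the constant plays no role; this is a symptom of the missing hull step, and without it the inductive hypothesis cannot legitimately be applied to $C^{+}$.

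Second, the heart of the inductive step — producing $\mf{h}_n,\mf{k}_n\in\mscr{H}_M(X)$ with $C\sq\mf{h}_n\cap\mf{k}_n$, $\mf{h}_n\subsetneq\mf{h}_{n-1}$ and $\mf{k}_n\subsetneq\mf{k}_{n-1}$ — is not actually carried out. Lemma~\ref{separate C and M} applied to $C$ only returns \emph{some} $X$--transverse pair disjoint from $M$; it gives no control whatsoever on the position of that pair relative to the previously constructed halfspaces, so ``chosen using the collar so that the bounding hyperplanes stay far inside the quadrant'' is not something that lemma provides, and the promised Dilworth/dimension count is left unspecified precisely at the point you yourself flag as the main obstacle. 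The workable repair is essentially the paper's argument: after one application of Lemma~\ref{separate C and M} to the fattened hull $C'$, the nested families are extracted directly as Dilworth chains inside $\mscr{H}(\mf{h}^*|C)$ and $\mscr{H}(\mf{k}^*|C)$ (using $d(C,\mf{h}^*),d(C,\mf{k}^*)>\delta n$); one then checks these halfspaces lie in $\mscr{H}_M(X)$ because their hyperplanes separate a point of $M\cap\mf{h}^*$ (resp.\ $M\cap\mf{k}^*$) from a point of $C\cap\hull_X(M)$ and hence cross $\hull_X(M)$ — a verification your proposal also omits — and all the mutual transversalities follow at once from $M\cap\mf{h}_0\cap\mf{k}_0=\emptyset$ together with the edge-connectedness argument of Lemma~\ref{separate C and M}(2), with the witnessing points supplied by $M$, not by the collar. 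Once these corrections are made, the layer-by-layer induction becomes an unnecessary detour: the single-application argument already yields the full grid.
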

\begin{proof}
Set $C':=\hull_X(\mc{N}_{\delta n}(C))$. Since the median operator is $1$--Lipschitz in each coordinate, we have $m(\mc{N}_{\delta n}(C),\mc{N}_{ \delta n}(C),X)\sq\mc{N}_{2 \delta n}(C)$, hence Remark~\ref{qc hulls rmk} guarantees that $C'\sq \mc{N}_{(2^\delta+1)\delta n}(C)$. It follows that $C'\cap M=\emptyset$.

By Lemma~\ref{separate C and M}, there exist $X$--transverse halfspaces $\mf{h},\mf{k}\in\mscr{H}_M(X)$ such that $C'\sq\mf{h}\cap\mf{k}$ and $M\sq\mf{h}^*\cup\mf{k}^*$. Since $\mc{N}_{\delta n}(C)\sq C'$, we have $d(C,\mf{h}^*)> \delta n$ and $d(C,\mf{k}^*)>\delta n$. 

By Dilworth's lemma, the sets $\mscr{H}(\mf{h}^*|C)$ and $\mscr{H}(\mf{k}^*|C)$ each contain a chain of halfspaces of length $n+1$.
Also note that $\mscr{H}(\mf{h}^*|C)$ and $\mscr{H}(\mf{k}^*|C)$ are contained in $\mscr{H}_M(X)$, since $\mf{h},\mf{k}\in\mscr{H}_M(X)$ and $C\cap\hull_X(M)\neq\emptyset$. Finally, each $\mf{j}\in\mscr{H}(\mf{h}^*|C)$ is $X$--transverse to each $\mf{j}'\in\mscr{H}(\mf{k}^*|C)$: the proof of this fact is identical to that of Lemma~\ref{separate C and M}(2). This concludes the proof of the lemma.
\end{proof}

\begin{cor}\label{non-qc via grids}
Let $M$ be edge-connected. Then $M$ fails to be quasi-convex if and only if, for every $n\geq 0$, there exist halfspaces $\mf{h}_0\supsetneq\dots\supsetneq\mf{h}_n$ and $\mf{k}_0\supsetneq\dots\supsetneq\mf{k}_n$ in $\mscr{H}_M(X)$ such that each $\mf{h}_i$ is $X$--transverse to every $\mf{k}_j$, and $M\sq\mf{h}_0^*\cup\mf{k}_0^*$.
\end{cor}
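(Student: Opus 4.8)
The plan is to prove Corollary~\ref{non-qc via grids} as a two-way implication, deriving one direction from Lemma~\ref{get n-grid} and the other from Remark~\ref{qc hulls rmk} together with a compactness/pigeonhole argument on hulls.

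\textbf{The ``only if'' direction.} Suppose $M$ is not quasi-convex. By Remark~\ref{qc hulls rmk}(2), this means $M$ is \emph{not} at finite Hausdorff distance from $\hull_X(M)$; since $M\sq\hull_X(M)$ always, there must exist points of $\hull_X(M)$ arbitrarily far from $M$. Fix $n\geq 0$, set $\delta:=\dim X$, and pick a vertex $p\in\hull_X(M)$ with $d(p,M)>(2^{\delta}+1)\delta n$. Take $C:=\{p\}$, which is a (trivially) convex subcomplex with $C\cap\hull_X(M)=\{p\}\neq\emptyset$. Now Lemma~\ref{get n-grid} applies directly and yields halfspaces $\mf{h}_0\supsetneq\dots\supsetneq\mf{h}_n$ and $\mf{k}_0\supsetneq\dots\supsetneq\mf{k}_n$ in $\mscr{H}_M(X)$, pairwise $X$--transverse across the two chains, with $C\sq\mf{h}_n\cap\mf{k}_n$ and $M\sq\mf{h}_0^*\cup\mf{k}_0^*$. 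We simply discard the first conclusion (we no longer need to remember where $p$ sits) and keep the grid plus $M\sq\mf{h}_0^*\cup\mf{k}_0^*$, which is exactly what the statement demands.

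\textbf{The ``if'' direction.} Conversely, suppose that for every $n$ we have such a grid $\{\mf{h}_i\}_{i=0}^n$, $\{\mf{k}_j\}_{j=0}^n$ with $M\sq\mf{h}_0^*\cup\mf{k}_0^*$. The key point is that the innermost intersection $\mf{h}_n\cap\mf{k}_n$ is nonempty (it contains a ``corner'' vertex, by transversality and nestedness), and any vertex $q\in\mf{h}_n\cap\mf{k}_n$ is separated from $M$ by all $2(n+1)$ hyperplanes bounding the $\mf{h}_i$ and $\mf{k}_j$: indeed each point of $M$ lies in $\mf{h}_0^*$ or in $\mf{k}_0^*$, hence in $\mf{h}_i^*$ for all $i$ or in $\mf{k}_j^*$ for all $j$, so it is separated from $q\in\mf{h}_n\cap\mf{k}_n$ by at least $n+1$ hyperplanes. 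Thus $d(q,M)\geq n+1$. On the other hand, $q$ lies in $\hull_X(M)$: every halfspace in $\mscr{H}_M(X)$ containing $\mf{h}_n$ or $\mf{k}_n$ intersects $M$ and — one checks — $q$ satisfies the consistency condition to lie in the dual cube complex of $\hull_X(M)$; more simply, take any $a\in M\cap\mf{h}_n^*$ and $b\in M\cap\mf{k}_n^*$ (these exist since $\mf{h}_n,\mf{k}_n\in\mscr{H}_M(X)$, and in fact one can arrange $a\in\mf{h}_n^*\cap\mf{k}_n$, $b\in\mf{h}_n\cap\mf{k}_n^*$ using transversality), and observe that the median $m(a,b,q)$ together with iterated medians recovers $q$, so $q\in\hull_X(\{a,b\})\sq\hull_X(M)$. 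Since $n$ was arbitrary, $\hull_X(M)$ contains points at unbounded distance from $M$, so by Remark~\ref{qc hulls rmk}(2) the subalgebra $M$ is not quasi-convex.

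\textbf{Main obstacle.} The one point requiring genuine care is the claim in the ``if'' direction that the corner vertex $q\in\mf{h}_n\cap\mf{k}_n$ actually lies in $\hull_X(M)$ rather than merely being far from $M$; without this, the grids could be ``escaping'' entirely outside $\hull_X(M)$ and would say nothing about quasi-convexity. The fix is to use the transversality hypothesis to locate witnesses $a,b\in M$ on appropriate sides of $\mf{h}_n$ and $\mf{k}_n$ and express $q$ as an iterated median of points of $M$: since $X$ is finite-dimensional, Lemma~\ref{lem:bounded_iteration} guarantees $\hull_X(\{a,b,\dots\})$ is reached after boundedly many median operations, and a short argument with the hyperplane-separation description of the median (no hyperplane separates $m(x,y,z)$ from two of $x,y,z$) shows $q$ is produced. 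Once this is in hand, everything else is bookkeeping with nested and transverse halfspaces.
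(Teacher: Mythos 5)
Your overall route is the paper's: the ``only if'' direction is exactly Lemma~\ref{get n-grid} applied to the one-vertex convex subcomplex $C=\{p\}$ with $p\in\hull_X(M)$ far from $M$, and the ``if'' direction aims to produce a point of $\hull_X(M)$ deep inside $\mf{h}_n\cap\mf{k}_n$. The gap is in how you place that point in the hull. The claim that an \emph{arbitrary} vertex $q\in\mf{h}_n\cap\mf{k}_n$ lies in $\hull_X(M)$ is false in general (that quadrant may extend far outside the hull), and the justification you give does not establish it: $m(a,b,q)$ is in general a different vertex from $q$, and iterated medians of $a,b,q$ never ``recover'' $q$ unless $q$ already lies in the interval $I(a,b)$. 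As written, you have shown that every vertex of $\mf{h}_n\cap\mf{k}_n$ is far from $M$, but not that $\hull_X(M)$ actually meets $\mf{h}_n\cap\mf{k}_n$, which is the point needed to contradict quasi-convexity.

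The repair is short and uses exactly the ingredients you name. First, points $a\in M\cap\mf{h}_n^*\cap\mf{k}_n$ and $b\in M\cap\mf{h}_n\cap\mf{k}_n^*$ exist, not ``by transversality'' but because $M\cap\mf{k}_n\neq\emptyset$ and any such point lies in $\mf{k}_n\sq\mf{k}_0$, hence by $M\sq\mf{h}_0^*\cup\mf{k}_0^*$ it lies in $\mf{h}_0^*\sq\mf{h}_n^*$; symmetrically for $b$. Then, taking any $q\in\mf{h}_n\cap\mf{k}_n$ (nonempty by $X$--transversality), set $q':=m(a,b,q)$: by the two-out-of-three description of the median, $q'\in\mf{h}_n$ (which contains $b$ and $q$) and $q'\in\mf{k}_n$ (which contains $a$ and $q$), while $q'\in I(a,b)=\hull_X(\{a,b\})\sq\hull_X(M)$. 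Your distance estimate then applies verbatim to $q'$, giving $d(q',M)\geq n+1$, and Remark~\ref{qc hulls rmk}(2) concludes. This is precisely what the paper extracts in one stroke from Helly's lemma applied to $\hull_X(M)$, $\mf{h}_n$ and $\mf{k}_n$, so after this fix your argument coincides with the paper's proof.
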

\begin{proof}
Recall from Remark~\ref{qc hulls rmk} that $M$ is quasi-convex if and only if it is at finite Hausdorff distance from $\hull_X(M)$. Now, if $\hull_X(M)$ contains points arbitrarily far from $M$, then Lem\-ma~\ref{get n-grid} yields the required hyperplane grids. Conversely, if there exist halfspaces $\mf{h}_i,\mf{k}_j$ as in the statement of the corollary, then $\hull_X(M)\cap\mf{h}_n\cap\mf{k}_n\neq\emptyset$ by Helly's lemma and $d(\mf{h}_n\cap\mf{k}_n,M)>n$. Hence $\hull_X(M)$ contains points arbitrarily far from $M$, showing that $M$ is not quasi-convex.
\end{proof}

\begin{figure}
\centering
\begin{minipage}{.5\textwidth}
  \centering
  \includegraphics[scale=.85]{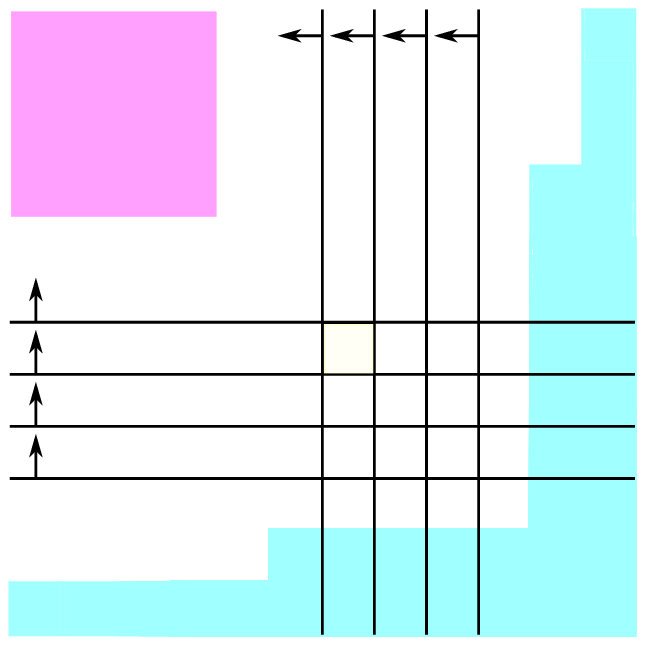}
  \put(-133,125){$C$}
  \put(-20,10){$M$}
  \put(-37,146){$\mf{h}_0$}
  \put(-150,29){$\mf{k}_0$}
  \captionof{figure}{Grid of hyperplanes separating $M$ and $C$ in Lemma~\ref{get n-grid}.}
  \label{img2}
\end{minipage}%
\begin{minipage}{.5\textwidth}
  \centering
  \includegraphics[scale=.85]{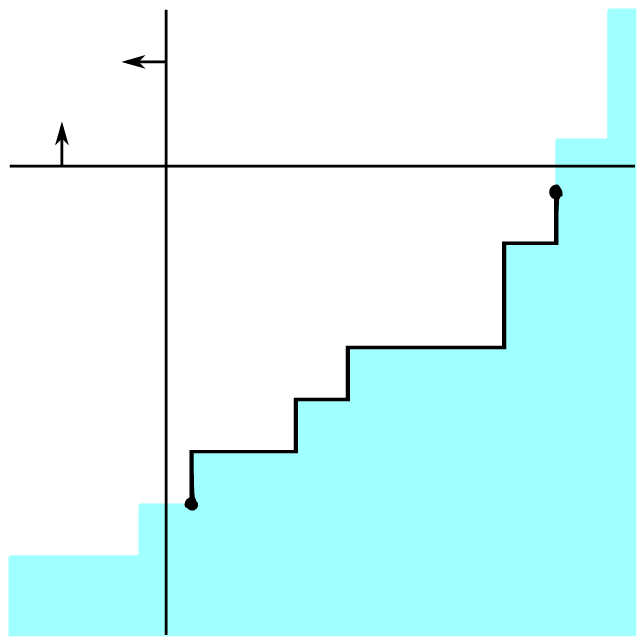}
  \put(-65,75){$\alpha$}
  \put(-30,20){$M$}
  \put(-150,120){$\mf{k}$}
  \put(-124,147){$\mf{h}$}
  \put(-112,24){$q$}
  \put(-15,107){$p$}
  \captionof{figure}{The geodesic $\alpha$ obtained in Construction~\ref{geod construction}.}
  \label{img3}
\end{minipage}
\end{figure}

\subsection{Constructing uniformly non-quasi-convex geodesics}

Let $X$ be a $\CAT$ cube complex with a geometric action $G\acts X$. Let $M\sq X^{(0)}$ be a median subalgebra. We assume that $M$ is edge-connected and that there exists a subgroup $H\leq G$ acting on $M$ with finitely many orbits. 

The following is the main result of this subsection. We will quickly deduce Theorem~\ref{uniformly non-qc ray} from it in the next subsection.

Recall that $\square(M)$ is canonically realised as a (non-convex) subcomplex of $X$, since $M$ is edge-connected (Remark~\ref{edge-connected rmk}(3)). Hyperplanes of $\square(M)$ are precisely intersections with $\square(M)$ of hyperplanes of $X$ (though this is not true of carriers). Also recall that we defined the carrier of a halfspace as the intersection of the halfspace with the carrier of the corresponding wall/hyperplane.

\begin{prop}\label{unqc prop}
There exists a constant $K'\geq 0$ such that the following holds. For all $X$--transverse halfspaces $\mf{h},\mf{k}\in\mscr{H}_M(X)$ with $M\cap\mf{h}\cap\mf{k}=\emptyset$, there exists a geodesic $\alpha\sq\square(M)\sq X$ from a vertex in the $M$--carrier $C_M({\rm res}_M(\mf{k}^*))$ to a vertex in $C_M({\rm res}_M(\mf{h}^*))$ with the following property. For all integers $t\geq s\geq 0$, the set $\hull_X(\alpha|_{[s,t]})$ contains points at distance $>\lfloor\tfrac{t-s}{K'}\rfloor$ from $M$.
\end{prop}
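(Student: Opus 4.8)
The plan is to produce $\alpha$ by following the ``staircase'' along the diagonal of the big hyperplane grid separating $M$ from $\mf h\cap\mf k$, and then to show that this staircase is uniformly non-quasi-convex by exhibiting, for each subsegment $\alpha|_{[s,t]}$, a sub-grid of hyperplanes whose size grows linearly in $t-s$, forcing $\hull_X(\alpha|_{[s,t]})$ to be deep. The starting data is exactly what Lemma~\ref{get n-grid} gives after iterating: since $M\cap\mf h\cap\mf k=\emptyset$ and $M$ is edge-connected, $\hull_X(M)$ meets $\mf h\cap\mf k$ (Helly), and between $M$ and $\mf h\cap\mf k$ one can extract long chains $\mf h^*=\mf h_0^*\subsetneq\dots$ and $\mf k^*=\mf k_0^*\subsetneq\dots$ with all $\mf h_i$ transverse to all $\mf k_j$. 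The point is that $G\acts X$ is geometric and $H\acts M$ cofinite, so these chains are ``uniformly spread out'': I will invoke the cocompactness to bound the number of $M$--walls one crosses between consecutive members of each chain, and to relate the $X$--carrier $C_X(\mf w)\cap M$ to the $M$--carrier $C_M(\mathrm{res}_M(\mf w))$ up to bounded error (this is presumably the content of the forthcoming Lemma~\ref{similar carriers 2} referenced in Remark~\ref{different carriers}). First I would fix vertices $q\in C_M(\mathrm{res}_M(\mf k^*))$ and $p\in C_M(\mathrm{res}_M(\mf h^*))$ and take $\alpha$ to be any geodesic of $\square(M)$ from $q$ to $p$; because $M$ is edge-connected such a geodesic is also a geodesic of $X$ (Remark~\ref{edge-connected rmk}(3)), and $\alpha$ crosses every wall of $M$ separating $p$ from $q$, in particular all the walls $\mathrm{res}_M(\mf h_i),\mathrm{res}_M(\mf k_j)$.

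Next I would set up the linear lower bound. Given $t\ge s\ge 0$, the subsegment $\alpha|_{[s,t]}$ crosses $t-s$ distinct walls of $M$. Using cofiniteness of $H\acts M$, there is a constant $D$ (depending only on $X$, $M$, $H$) such that among any $D$ consecutive walls crossed by a geodesic of $\square(M)$ one finds a pair $\mathfrak u,\mathfrak v$ that are $\mathrm{res}_M$--images of $X$--transverse halfspaces $\mf a\in\mscr H(\mf h^*|\,\cdot\,)$-type and $\mf b\in\mscr H(\mf k^*|\,\cdot\,)$-type, i.e.\ one ``horizontal'' and one ``vertical'' wall of the original grid, with the two endpoints of $\alpha|_{[s,t]}$ landing in the carriers on the shallow side. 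Concretely, I would argue that the walls $\alpha|_{[s,t]}$ crosses split, up to bounded multiplicity, into those coming from the $\mf h_i$--chain, those from the $\mf k_j$--chain, and a bounded-density remainder controlled by cocompactness; hence $\alpha|_{[s,t]}$ witnesses a grid of size at least $\lfloor (t-s)/K'\rfloor$ of pairwise-transverse $X$--halfspaces $\mf h_{i}'\supsetneq\dots$ transverse to $\mf k_{j}'\supsetneq\dots$, all of which separate a point of $\alpha|_{[s,t]}$ from $M$-ish data on both ends. Then Helly's lemma gives a vertex $x\in\hull_X(\alpha|_{[s,t]})$ lying in the innermost cell of this grid, and Dilworth/counting gives $d(x,M)>\lfloor (t-s)/K'\rfloor$, exactly as in the proof of Corollary~\ref{non-qc via grids}. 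Finally, $\alpha$ automatically stays in $\square(M)\subseteq X$, so it stays at distance $0$ from $M$; the ``bounded distance from $M$'' clause of Theorem~\ref{uniformly non-qc ray} will only be needed when $\alpha$ is truncated/repositioned in the next subsection.

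The main obstacle I anticipate is the bookkeeping that converts ``a geodesic of $\square(M)$ crossing many $M$--walls'' into ``a genuine $X$--hyperplane grid of comparable size separating a hull-point from $M$''. The subtlety is precisely the one flagged in Remark~\ref{different carriers} and Remark~\ref{edge-connected rmk}(2): $\mathrm{res}_M$ is a bijection but destroys transversality, so a wall of $M$ crossed by $\alpha$ need not be $M$--transverse to another even though the underlying $X$--hyperplanes are $X$--transverse; and conversely the $X$--carrier of a grid hyperplane can be enormous compared to its $M$--carrier. Both issues must be tamed using the geometric actions $G\acts X$, $H\acts M$: cocompactness bounds how badly carriers can differ (the forthcoming Lemma~\ref{similar carriers 2}), and it bounds the ``density'' of walls that fail to belong to one of the two long transverse chains. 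Once these two bounds are in hand, choosing $K'$ to absorb them, together with $\dim X$ and the Helly/Dilworth constants from Lemma~\ref{get n-grid} and Corollary~\ref{non-qc via grids}, finishes the argument. I would also double-check that the endpoints $q\in C_M(\mathrm{res}_M(\mf k^*))$ and $p\in C_M(\mathrm{res}_M(\mf h^*))$ can be chosen so that $\alpha$ ``uses up'' the full transverse grid rather than cutting a corner — this is where the carrier hypotheses on $q,p$ in the statement are exactly what one needs, since being in $C_M(\mathrm{res}_M(\mf k^*))$ forces $\alpha$ to start on the shallow side of every $\mf k_j$ and symmetrically for $p$.
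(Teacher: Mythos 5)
There is a genuine gap, and it sits exactly where your proposal is vaguest: the choice of the geodesic. You take \emph{any} geodesic of $\square(M)$ between arbitrary points of the two carriers and then assert, with only a general appeal to cofiniteness of $H\acts M$, that among any $D$ consecutive walls it crosses one finds both a ``horizontal'' and a ``vertical'' wall of the grid. That assertion is false for an arbitrary geodesic. First, nothing forces the walls crossed by your $\alpha$ to separate ${\rm res}_M(\mf{h})$ from ${\rm res}_M(\mf{k})$ at all: if, say, $M$ splits off a direction transverse to both $\mf h$ and $\mf k$ (think of a staircase times a line, $M\cong S\times\Z$ inside a product cubulation, with $\mf h,\mf k$ constraining only the staircase factor), then the carriers $C_M({\rm res}_M(\mf h^*))$ and $C_M({\rm res}_M(\mf k^*))$ are themselves unbounded in that extra direction, and a geodesic between badly chosen carrier points can spend arbitrarily long travelling purely in it; the hulls of those subsegments stay in (a bounded neighbourhood of) $M$, killing the uniform estimate. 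Cocompactness does not exclude this. Second, even after fixing the right endpoints, a geodesic can cross a long block of walls disjoint from $\mf k$ followed by a long block of walls transverse to $\mf k$ (``going around the corner of the rectangle''), and your argument gives no mechanism to rule that out either.

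The paper's proof supplies precisely the two devices you are missing. The endpoints are taken on the \emph{bridge} between ${\rm res}_M(\mf h)$ and ${\rm res}_M(\mf k)$, with $q$ the gate of $p$, so that \emph{every} wall crossed by $\alpha$ separates ${\rm res}_M(\mf h)$ from ${\rm res}_M(\mf k)$; this yields the clean partition of crossed halfspaces into $\Om_\parallel$ (disjoint from $\mf k$ in $X$) and $\Om_\perp$ ($X$--transverse to $\mf k$), with no ``junk'' walls to control by density arguments. Then, crucially, $\alpha$ is chosen \emph{minimal} for a total (pre)order on geodesics of $\square(M)$ defined via the distances $d(\alpha(t),\mf k)$ in $X$. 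Minimality, combined with the carrier-comparison Lemma~\ref{similar carriers 2} (the cocompactness input you correctly anticipated), is what bounds the length of every maximal $\parallel$-- and $\perp$--segment by $K$ and, via a swapping argument, gives the nesting $M\cap\mf j_\parallel^*\cap\mf j_\perp=\emptyset$ at each alternation (Lemma~\ref{properties of alpha}). Only from this forced alternating structure does the Dilworth-type Lemma~\ref{combinatorial lemma} extract, inside each subsegment, a grid of size linear in $t-s$, after which your Helly endgame is indeed the paper's. A smaller misframe: the proposition does not start from pre-existing long chains between $M$ and $\mf h\cap\mf k$ (those need not exist when $\mf h\cap\mf k$ is close to $M$); the grid is produced intrinsically from the geodesic, not imported from Lemma~\ref{get n-grid}.
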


Before embarking in the proof of Proposition~\ref{unqc prop}, we need to make the following key observation. It exploits cocompactness of $M$ and $X$ to deduce that $M$--carriers are not too different from intersections of $X$--carriers with $M$ (recall Remark~\ref{different carriers}). We denote by $I(x,y)$ the \emph{interval} in $X$ with endpoints $x$ and $y$, that is, the union of all combinatorial geodesics in $X$ joining $x$ and $y$.

\begin{lem}\label{similar carriers 2}
There exists a constant $K\geq 0$ with the following property. Consider a halfspace $\mf{h}\in\mscr{H}_M(X)$ and a geodesic $\beta\sq\square(M)\sq X$ connecting a point $x\in C_M({\rm res}_M(\mf{h}))$ to a point $y\in M\cap C_X(\mf{h})$. If $d(x,y)>K$, then $I(x,y)\cap C_M({\rm res}_M(\mf{h}))$ contains a vertex other than $x$.
\end{lem}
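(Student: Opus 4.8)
The plan is to derive the statement from the following uniform comparison between $X$--carriers and $M$--carriers, which is the one place where the standing cocompactness hypotheses get used: \emph{there is a constant $K\geq 0$ such that $C_X(\mf{h})\cap M\sq\mc{N}_K(C_M({\rm res}_M(\mf{h})))$ for every $\mf{h}\in\mscr{H}_M(X)$.} Granting this, the lemma follows by a gate-projection argument. Write $\mf{w}$ for the hyperplane bounding $\mf{h}$. The subcomplex $C_X(\mf{h})$ is convex in $X$ and contains $x$ and $y$, so it contains the whole interval $I(x,y)$. On the other hand, since $M$ is edge-connected, $\square(M)$ is isometrically embedded in $X$ for the combinatorial metric (Remark~\ref{edge-connected rmk}(3)), and $C_M({\rm res}_M(\mf{h}))$ is the side lying in $\mf{h}$ of the carrier in $\square(M)$ of the hyperplane ${\rm res}_M(\mf{w})$; being one side of a hyperplane-carrier, it is a convex subcomplex of $\square(M)$, and it contains $x$ by hypothesis. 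Let $\mf{g}\colon\square(M)\ra C_M({\rm res}_M(\mf{h}))$ be the gate-projection. By the standard properties of gates in $\CAT$ cube complexes, $\mf{g}(y)$ is the vertex of $C_M({\rm res}_M(\mf{h}))$ nearest to $y$ and it lies on a combinatorial geodesic of $\square(M)$ from $x$ to $y$, hence --- since the metrics of $X$ and $\square(M)$ agree on $M$ --- on a combinatorial geodesic of $X$ from $x$ to $y$; thus $\mf{g}(y)\in I(x,y)\cap C_M({\rm res}_M(\mf{h}))$. If $\mf{g}(y)$ were equal to $x$, then $d(x,y)=d(y,C_M({\rm res}_M(\mf{h})))\leq K$ by the comparison (applied to $y\in C_X(\mf{h})\cap M$), contradicting $d(x,y)>K$. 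So $\mf{g}(y)$ is a vertex of $I(x,y)\cap C_M({\rm res}_M(\mf{h}))$ other than $x$.

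To prove the comparison I would use a pigeonhole over the $H$--action. Since $M$ is $H$--cofinite, fix representatives $v_1,\dots,v_N\in M$ of its $H$--orbits of vertices; since $X$ is locally finite, each $v_i$ is an endpoint of only finitely many edges of $X$. For every pair $(v_i,e)$ with $e$ an edge of $X$ at $v_i$ whose dual hyperplane lies in $\mscr{W}_M(X)$, fix an edge $\varepsilon(v_i,e)$ of $\square(M)$ dual to that same hyperplane; such an edge exists because every wall of $M$ is dual to an edge of $\square(M)$ and, $M$ being edge-connected, ${\rm res}_M\colon\mscr{W}_M(X)\ra\mscr{W}(M)$ is a bijection (Remark~\ref{edge-connected rmk}(2)). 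Let $K$ be the maximum of the finitely many distances $d(v_i,\varepsilon(v_i,e))$. Now let $\mf{h}\in\mscr{H}_M(X)$ with bounding hyperplane $\mf{w}$, and let $v\in C_X(\mf{h})\cap M$; let $e_v$ be the unique edge of $X$ at $v$ dual to $\mf{w}$. Writing $v=hv_i$ with $h\in H$, the edge $h^{-1}e_v$ is at $v_i$ and dual to $h^{-1}\mf{w}$, which again lies in $\mscr{W}_M(X)$ since $M$ is $H$--invariant; hence $h\cdot\varepsilon(v_i,h^{-1}e_v)$ is an edge of $\square(M)$ dual to $\mf{w}$ at distance $\leq K$ from $v$. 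As $C_X(\mf{w})$ splits as $\mf{w}\times[0,1]$ with $v$ on the $\mf{h}$--side, the endpoint of this edge lying on the $\mf{h}$--side is the one closer to $v$, and it belongs to $C_M({\rm res}_M(\mf{h}))$; therefore $d(v,C_M({\rm res}_M(\mf{h})))\leq K$, as claimed.

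The hard part is the comparison claim itself; the gate argument is then formal. Its subtlety is exactly that ${\rm res}_M$ does not preserve transversality, so $\square(M)$ can sit very non-convexly in $X$ and the $X$--carrier $C_X(\mf{w})\cap M$ can be far larger than the $M$--carrier $C_M({\rm res}_M(\mf{w}))$ (Remark~\ref{different carriers}); indeed, without a geometric action the claim is false, the strip of that remark being a counterexample. The pigeonhole above is precisely where the two hypotheses not needed elsewhere in the subsection are spent: local finiteness of $X$ (only finitely many edges at each vertex of $M$) and $H$--cofiniteness of $M$ (only finitely many $H$--orbit types of the pair $(\mf{w},v)$). The remaining points --- convexity of one side of a hyperplane-carrier inside $\square(M)$ and the compatibility of the combinatorial metrics of $X$ and $\square(M)$ on $M$ --- are routine consequences of edge-connectedness and Remark~\ref{edge-connected rmk}.
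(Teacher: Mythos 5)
Your proof is correct, and while it follows the same overall strategy as the paper --- first establish a uniform constant $K$ such that $C_M({\rm res}_M(\mf{h}))$ is $K$--dense in $C_X(\mf{h})\cap M$ for every $\mf{h}\in\mscr{H}_M(X)$, then upgrade this to a point of the $M$--carrier inside $I(x,y)$ distinct from $x$ --- both steps are implemented differently. For the density claim, the paper applies the cocompact-intersections lemma of the appendix (Lemma~\ref{cocompact intersections}) to $M$ and the carrier $C_X(\mf{h})$, together with the finiteness of $H$--orbits of halfspaces in $\mscr{H}_M(X)$, to get uniformly $K$--dense orbits of $H\cap G(\mf{h})$ on $C_X(\mf{h})\cap M$ and then uses that $C_M({\rm res}_M(\mf{h}))$ is a nonempty invariant subset; your pigeonhole over $H$--orbits of pairs (vertex of $M$, incident edge of $X$ with dual hyperplane in $\mscr{W}_M(X)$) reaches the same conclusion more directly, at the cost of invoking local finiteness of $X$ explicitly, and it makes the subsection independent of Appendix~\ref{appendix}. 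For the upgrade step, the paper takes $z\in C_M({\rm res}_M(\mf{h}))$ with $d(z,y)\leq K$, sets $w=m(x,y,z)$, and checks by hand (via $w'=m(x',y,z')$) that $w$ lies in the $M$--carrier; you instead gate-project $y$ onto $C_M({\rm res}_M(\mf{h}))$ viewed as a convex subcomplex (one side of a hyperplane-carrier) of $\square(M)$, which is cleaner but requires the observations that this set is gate-convex in $\square(M)$ and that the metrics of $X$ and $\square(M)$ agree on $M$ --- both of which you correctly source from edge-connectedness and Remark~\ref{edge-connected rmk}. Both routes are sound; yours is somewhat more self-contained and structural, the paper's median computation is more hands-on but avoids any appeal to gate machinery.
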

\begin{proof}
For every halfspace $\mf{h}\in\mscr{H}_M(X)$, its stabiliser $G(\mf{h})\leq G$ acts cocompactly on $C_X(\mf{h})$. Since $H$ acts cofinitely on $M$, Lemma~\ref{cocompact intersections} shows that the intersection $H\cap G(\mf{h})$ acts cofinitely on the set $M\cap C_X(\mf{h})$. Note that there are only finitely many $H$--orbits of halfspaces in $\mscr{H}_M(X)$, since this set is equivariantly in bijection with $\mscr{H}(M)$. Thus, there exists a constant $K\geq 0$ such that, for every $\mf{h}\in\mscr{H}_M(X)$, all orbits of the action $H\cap G(\mf{h})\acts M\cap C_X(\mf{h})$ are $K$--dense.

The set $M\cap C_X(\mf{h})$ contains the subset $C_M({\rm res}_M(\mf{h}))$, which is $(H\cap G(\mf{h}))$--invariant and non\-empty. Hence, for every $\mf{h}\in\mscr{H}_M(X)$, the subset $C_M({\rm res}_M(\mf{h}))$ is $K$--dense in $M\cap C_X(\mf{h})$.

Now, consider points $x\in C_M({\rm res}_M(\mf{h}))$ and $y\in M\cap C_X(\mf{h})$ with $d(x,y)>K$. By the previous paragraph, there exists a point $z\in C_M({\rm res}_M(\mf{h}))$ with $d(z,y)\leq K$. Setting $w:=m(x,y,z)$, we have $w\in M\cap I(x,y)$ and $w\neq x$. We are left to show that $w$ lies in $C_M({\rm res}_M(\mf{h}))$.

Since $x,z\in C_M({\rm res}_M(\mf{h}))$, there exist $x',z'\in C_M({\rm res}_M(\mf{h}^*))$ with $d(x,x')=d(z,z')=1$. Consider the point $w'=m(x',y,z')$. It is clear that $w'\in M\cap\mf{h}^*$ and that $d(w,w')=1$. This shows that $w\in C_M({\rm res}_M(\mf{h}))$, concluding the proof.
\end{proof}

Now, we begin by describing how to construct the geodesic $\alpha$ appearing in Proposition~\ref{unqc prop}. Then we will show that it satisfies the required properties, which will take a few lemmas. The reader might find Figure~\ref{img3} helpful while working through Construction~\ref{geod construction} and the subsequent Lemma~\ref{properties of alpha}.

\begin{cstr}\label{geod construction}
Consider $X$--transverse halfspaces $\mf{h},\mf{k}\in\mscr{H}_M(X)\sq\mscr{H}(X)$ with $M\cap\mf{h}\cap\mf{k}=\emptyset$. Since ${\rm res}_M(\mf{h})$ and ${\rm res}_M(\mf{k})$ are disjoint halfspaces of $M$, we can consider their \emph{bridge} $\mc{B}\sq\square(M)$. This is the convex subset of $M$ that is the union of all geodesics from $C_M({\rm res}_M(\mf{h}^*))$ to $C_M({\rm res}_M(\mf{k}^*))$; see \cite[Section~2.G]{CFI} or \cite[Section~2.2]{Fio3} for a precise definition and its properties. 

Let $p$ be any point in $\mc{B}\cap C_M({\rm res}_M(\mf{k}^*))$. Let $q\in\mc{B}\cap C_M({\rm res}_M(\mf{h}^*))$ be the unique point such that every hyperplane (of $\square(M)$ or $X$) separating $p$ and $q$ also separates ${\rm res}_M(\mf{h})$ and ${\rm res}_M(\mf{k})$; this point exists by the properties of bridges.

Let $\mc{G}_M(p,q)$ be the set of all geodesics from $p$ to $q$ contained in $\square(M)\sq X$. For $\beta,\beta'\in\mc{G}_M(p,q)$, write $\beta\prec\beta'$ if there exists an integer $0\leq t_0\leq d(p,q)$ such that: 
\begin{align*}
d(\beta(t_0),\mf{k})&<d(\beta'(t_0),\mf{k}), & &\text{and} & d(\beta(t),\mf{k})&=d(\beta'(t),\mf{k}), \text{ for all } 0\leq t< t_0.
\end{align*}
This is a total order on the set of equivalence classes $\mathclose{\mc{G}_M(p,q)}/\mathopen{\sim}$, where we write $\beta\sim\beta'$ if we have $d(\beta(t),\mf{k})=d(\beta'(t),\mf{k})$ for all $0\leq t\leq d(p,q)$. We emphasise that we are interested in distances to $\mf{k}$, not ${\rm res}_M(\mf{k})$.

Since $\mc{G}_M(p,q)$ is finite, there exists a $\prec$--minimal element $\alpha$. This will be our geodesic. We orient $\alpha$ from $p$ to $q$.
\end{cstr}

Observe that, for every halfspace $\mf{j}\in\mscr{H}(p|{\rm res}_M(\mf{h}))\sq\mscr{H}(X)$, we have:
\begin{align*} 
\mf{j}\cap\mf{k}^*&\supset {\rm res}_M(\mf{h})\neq\emptyset, & \mf{j}^*\cap\mf{k}^*&\ni p, & \mf{j}^*\cap\mf{k}&\supset{\rm res}_M(\mf{k})\neq\emptyset,
\end{align*} 
where the third equation is due to the fact that walls of $M$ crossing the bridge $\mc{B}$ must either separate ${\rm res}_M(\mf{h})$ and ${\rm res}_M(\mf{k})$, or be $M$--transverse to both. The fourth intersection $\mf{j}\cap\mf{k}$ can be empty or not, and this gives rise to a partition:
\[\mscr{H}(p|{\rm res}_M(\mf{h}))=\Om_{\perp}\sqcup\Om_{\parallel}.\]
More precisely, for every $\mf{j}\in\Om_{\parallel}$, we have $\mf{j}\cap\mf{k}=\emptyset$ in $X$. Instead, each $\mf{j}\in\Om_{\perp}$ is $X$--transverse to $\mf{k}$ (although we have ${\rm res}_M(\mf{j})\cap{\rm res}_M(\mf{k})=\emptyset$ by the properties of bridges). Note that $\mf{h}\in\Om_{\perp}$.

We say that a segment $\beta\sq\alpha$ is a \emph{$\parallel$--segment} (resp.\ a \emph{$\perp$--segment}) if all halfspaces entered by $\beta$ lie in $\Om_{\parallel}$ (resp.\ in $\Om_{\perp}$). The next lemma collects the key properties of the geodesic $\alpha$. 

\begin{lem}\label{properties of alpha}
The oriented geodesic $\alpha$ obtained in Construction~\ref{geod construction} satisfies the following.
\begin{enumerate}
\item If some $\mf{j}_{\parallel}\in\Om_{\parallel}$ is entered by $\alpha$ before some $\mf{j}_{\perp}\in\Om_{\perp}$, then $\mf{j}_{\parallel}$ is $X$--transverse to $\mf{j}_{\perp}$.
\item All $\parallel$--segments and $\perp$--segments of $\alpha$ have length $\leq K$, with $K$ as in Lemma~\ref{similar carriers 2}.
\item If $\alpha$ contains a $\parallel$--segment $\beta_{\parallel}$ immediately followed by a $\perp$--segment $\beta_{\perp}$, and if $\mf{j}_{\parallel}$ and $\mf{j}_{\perp}$ are any halfspaces entered, respectively, by $\beta_{\parallel}$ and $\beta_{\perp}$, then $M\cap\mf{j}_{\parallel}^*\cap\mf{j}_{\perp}=\emptyset$.
\end{enumerate}
\end{lem}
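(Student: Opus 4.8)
The plan is to analyze the geodesic $\alpha$ produced by Construction~\ref{geod construction} through the lens of its $\prec$-minimality, exploiting that $\alpha$ is a geodesic in $\square(M)\sq X$ and thus, for every vertex $\alpha(t)$, enters a new halfspace of $X$. Throughout, I will use the partition $\mscr{H}(p|{\rm res}_M(\mf{h}))=\Om_{\perp}\sqcup\Om_{\parallel}$, and the orientation of $\alpha$ from $p$ to $q$. A recurring point is that every halfspace $\mf{j}\in\mscr{H}(p|q)$ lies in $\mscr{H}(p|{\rm res}_M(\mf{h}))$ by the defining property of $q$ (every wall separating $p$ and $q$ also separates ${\rm res}_M(\mf{h})$ and ${\rm res}_M(\mf{k})$), so the entire geodesic $\alpha$ only crosses walls in $\Om_{\perp}\sqcup\Om_{\parallel}$, and the notions of $\parallel$-segment and $\perp$-segment are exhaustive up to interleaving.

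\textbf{Part (1).} Suppose $\mf{j}_{\parallel}\in\Om_{\parallel}$ is entered by $\alpha$ strictly before $\mf{j}_{\perp}\in\Om_{\perp}$. I would argue by contradiction: if the two halfspaces were not $X$-transverse, then (since both are crossed by the geodesic $\alpha$ in the order $\mf{j}_{\parallel}$ then $\mf{j}_{\perp}$, and both belong to $\mscr{H}(p|q)$) they must be nested with $\mf{j}_{\perp}\subsetneq\mf{j}_{\parallel}$ — this is the only nesting compatible with a geodesic crossing $\mf{j}_\parallel$ first and both orientations pointing "away from $p$". Now recall that $\mf{j}_{\perp}$ is $X$-transverse to $\mf{k}$ while $\mf{j}_{\parallel}\cap\mf{k}=\emptyset$ in $X$. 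From $\mf{j}_\perp\subsetneq\mf{j}_\parallel$ we get $\mf{j}_\perp\cap\mf{k}\subsetneq\mf{j}_\parallel\cap\mf{k}=\emptyset$, contradicting $X$-transversality of $\mf{j}_\perp$ and $\mf{k}$. (I should double-check the direction of nesting using the halfspace inclusions recorded just before the lemma: $\mf{j}^*\cap\mf{k}^*\ni p$ for all $\mf{j}\in\mscr{H}(p|{\rm res}_M(\mf{h}))$, so $p\notin\mf{j}$, and the geodesic runs from $p$ into $\mf{j}$; the one that is entered later is the smaller one when nested.) This is the cleanest of the three parts.

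\textbf{Part (3).} Let $\beta_\parallel$ be a $\parallel$-segment immediately followed by a $\perp$-segment $\beta_\perp$; let $x$ be the common endpoint (end of $\beta_\parallel$, start of $\beta_\perp$). Pick $\mf{j}_\parallel\in\Om_\parallel$ entered by $\beta_\parallel$ and $\mf{j}_\perp\in\Om_\perp$ entered by $\beta_\perp$. I want $M\cap\mf{j}_\parallel^*\cap\mf{j}_\perp=\emptyset$. The point $x$ lies in $M$, in $\mf{j}_\parallel$ (entered earlier along $\alpha$) — wait, I need $x\in\mf{j}_\parallel^*$? No: $\mf{j}_\parallel$ is entered by $\beta_\parallel$ which precedes $x$, so $x\in\mf{j}_\parallel$; and $\mf{j}_\perp$ is entered by $\beta_\perp$, after $x$, so $x\in\mf{j}_\perp^*$. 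So I should instead exploit minimality: suppose there were a point $y\in M\cap\mf{j}_\parallel^*\cap\mf{j}_\perp$. Since $\mf{j}_\parallel\cap\mf{k}=\emptyset$, i.e. $\mf{k}\subseteq\mf{j}_\parallel^*$, and since $\mf{j}_\perp$ is $X$-transverse to $\mf{k}$, the region $\mf{j}_\parallel^*\cap\mf{j}_\perp$ meets both $\mf{k}$ and $\mf{k}^*$. The idea is then to modify $\alpha$ near $x$: reroute so as to cross $\mf{j}_\perp$ (or rather one of the $\perp$-halfspaces) \emph{before} crossing $\mf{j}_\parallel$, using $y$ to witness that such a rerouting stays inside $M=\square(M)^{(0)}$; this rerouting strictly decreases the $\prec$-value because $\perp$-halfspaces are the ones that "get closer to $\mf{k}$ late" — more precisely, crossing $\mf{j}_\perp$ earlier keeps the distance-to-$\mf{k}$ profile the same up to the relevant time and then makes it strictly smaller, contradicting $\prec$-minimality. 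I expect this to require care: I must produce an \emph{actual} geodesic in $\square(M)$ realizing the swap, which is where edge-connectedness of $M$ and the median-algebra structure (Helly / the bridge $\mc{B}$) are used — one constructs the rerouted path as a concatenation of geodesics through suitable medians $m(\cdot,y,\cdot)$.

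\textbf{Part (2).} This is the one I expect to be the main obstacle, and it should be deduced from Part (3) together with Lemma~\ref{similar carriers 2}. Consider a maximal $\parallel$-segment $\beta_\parallel$ of $\alpha$ of length $>K$. If it is not the terminal segment, it is immediately followed by a $\perp$-segment (by maximality and exhaustiveness), so by Part (3) every $\mf{j}_\parallel$ entered by $\beta_\parallel$ and every $\mf{j}_\perp$ entered right after satisfy $M\cap\mf{j}_\parallel^*\cap\mf{j}_\perp=\emptyset$. I then want to set this up so that $\beta_\parallel$ connects a point of some $M$-carrier $C_M({\rm res}_M(\mf{j}))$ to a point of the $X$-carrier $C_X(\mf{j})$ — with $\mf{j}$ the first halfspace of $\Om_\perp$ crossed after $\beta_\parallel$, or its complement — at distance $>K$, so Lemma~\ref{similar carriers 2} forces a vertex of the $M$-carrier strictly inside the interval, which in turn forces $\alpha$ to cross a $\perp$-halfspace too early, again contradicting $\prec$-minimality (or contradicting that $\beta_\parallel$ was a pure $\parallel$-segment). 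The symmetric argument handles $\perp$-segments (using $\mf{k}$ in place of $\mf{h}$, reversing orientation), and the terminal/initial segments are handled by their proximity to $C_M({\rm res}_M(\mf{h}^*))$ and $C_M({\rm res}_M(\mf{k}^*))$ respectively, again via Lemma~\ref{similar carriers 2}. The delicate point throughout is bookkeeping the orientations and the precise halfspace one feeds into Lemma~\ref{similar carriers 2}; I would organize the write-up by first proving (1), then (3), then (2), and isolating a small combinatorial sublemma about "if a geodesic in $\square(M)$ from $p$ to $q$ can be rerouted to cross a $\perp$-halfspace earlier while staying in $\square(M)$, it is not $\prec$-minimal."
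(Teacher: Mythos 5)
Your overall architecture is the paper's: exploit $\prec$--minimality of $\alpha$ together with Lemma~\ref{similar carriers 2}, proving (1) directly and (3) by a swap argument. Part (1) is fine (it is the contrapositive of the paper's one-line argument: $\mf{j}_{\parallel}^*\cap\mf{j}_{\perp}\supseteq\mf{k}\cap\mf{j}_{\perp}\neq\emptyset$, and the geodesic supplies the other three corners). Part (3) uses the right mechanism --- a point $y\in M\cap\mf{j}_{\parallel}^*\cap\mf{j}_{\perp}$ makes ${\rm res}_M(\mf{j}_{\parallel})$ and ${\rm res}_M(\mf{j}_{\perp})$ $M$--transverse, and swapping the two crossings produces a $\prec$--smaller geodesic --- but the step you flag as ``requiring care'' is resolved in the paper not by routing through medians of $y$ (a concatenation of geodesics through $m(\cdot,y,\cdot)$ need not be a geodesic from $p$ to $q$), but by a reduction: replace $\mf{j}_{\parallel},\mf{j}_{\perp}$ by an $M$--transverse pair whose walls are crossed adjacent to the junction vertex $x$, reorder crossings within each segment (a $\sim$--equivalence), and then perform a single square move in $\square(M)$. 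Some such reduction is genuinely needed for your sketch of (3) to close.

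The genuine gap is in part (2), precisely where you expect trouble. To apply Lemma~\ref{similar carriers 2} to a maximal $\parallel$--segment $\beta_{\parallel}$ (say non-terminal, with $\mf{w}$ the $\perp$--wall crossed immediately after it), you must know that the \emph{far} endpoint of $\beta_{\parallel}$ lies in $M\cap C_X(\mf{w})$, equivalently that $\beta_{\parallel}\sq C_X(\mf{w})$; this is a hypothesis of the lemma, not a conclusion. Part (3) cannot give it: the statement $M\cap\mf{j}_{\parallel}^*\cap\mf{j}_{\perp}=\emptyset$ only records a nesting of restricted halfspaces of $M$ and says nothing about $X$--carriers. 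What does give it is part (1): every wall crossed by $\beta_{\parallel}$ is $X$--transverse to $\mf{w}$, and since $C_X(\mf{w})$ is convex and contains the near endpoint, leaving $C_X(\mf{w})$ would force $\beta_{\parallel}$ to cross a wall disjoint from $C_X(\mf{w})$, hence not transverse to $\mf{w}$ --- a contradiction. (The analogous containment for $\perp$--segments uses the $\parallel$--wall crossed just before; for the initial and terminal segments one uses $\mf{k}$ and $\mf{h}$ instead, and there the contradiction comes not from minimality but from the bridge property --- every wall crossed by $\alpha$ has ${\rm res}_M$ disjoint from ${\rm res}_M(\mf{k})$ and containing ${\rm res}_M(\mf{h})$ --- which rules out a second point of the relevant $M$--carrier in the interval.) Finally, ``forces $\alpha$ to cross a $\perp$--halfspace too early'' must be implemented: take the carrier point $z$ produced by Lemma~\ref{similar carriers 2}, its neighbour $z'\in M$ across $\mf{w}$, and reroute through $z,z'$ inside the interval to obtain an element of $\mc{G}_M(p,q)$ that is $\prec$--smaller. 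As written, your part (2) invokes the wrong ingredient and omits both the carrier containment and the explicit reroute, so the application of Lemma~\ref{similar carriers 2} is unjustified; once these are supplied (as in the paper), the argument goes through.
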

\begin{proof}
Property~(1) is almost immediate. For every $\mf{j}_{\parallel}\in\Om_{\parallel}$ and $\mf{j}_{\perp}\in\Om_{\perp}$, the intersection $\mf{j}_{\parallel}^*\cap\mf{j}_{\perp}$ is nonempty, as it contains $\mf{k}\cap\mf{j}_{\perp}\neq\emptyset$. Thus, if $\alpha$ enters $\mf{j}_{\parallel}$ before $\mf{j}_{\perp}$, these two halfspaces must be $X$--transverse.

We now prove Property~(2), beginning with some preliminary remarks. 

In Construction~\ref{geod construction}, we introduced an equivalence relation $\sim$ on $\mc{G}_M(p,q)$ and a total order $\prec$ on its set of equivalence classes. What matters for these relations is the function $t\mapsto d(\beta(t),\mf{k})$, where $\beta\in\mc{G}_M(p,q)$. In turn, this function is completely determined by the order in which $\beta$ enters elements of $\Om_{\parallel}$ and $\Om_{\perp}$: entering some $\mf{j}\in\Om_{\perp}$ does not change the value of $d(\beta(t),\mf{k})$, since $\mf{j}$ is $X$--transverse to $\mf{k}$; on the other hand, entering some $\mf{j}\in\Om_{\parallel}$ increases $d(\beta(t),\mf{k})$ by $1$, since $\mf{j}\cap\mf{k}=\emptyset$.

If $\beta\in\mc{G}_M(p,q)$ enters halfspaces $\mf{j}_1,\mf{j}_2$ consecutively and if $\mf{j}_1,\mf{j}_2$ are $M$--transverse, then there exists another geodesic $\beta'\in\mc{G}_M(p,q)$ only differing from $\beta$ in the fact that it enters $\mf{j}_2$ before $\mf{j}_1$. If $\mf{j}_1,\mf{j}_2$ both lie in $\Om_{\parallel}$, or both lie in $\Om_{\perp}$, then $\beta'\sim\beta$. Instead, if $\mf{j}_1\in\Om_{\parallel}$ and $\mf{j}_2\in\Om_{\perp}$, then $\beta'\prec\beta$.

With these observations in hand, we complete the proof of Property~(2) by the following two claims. Let $\alpha_{\parallel}\sq\alpha$ and $\alpha_{\perp}\sq\alpha$ be, respectively, a maximal $\parallel$--segment and a maximal $\perp$--segment. Let $K$ be the constant provided by Lemma~\ref{similar carriers 2}.

\smallskip
{\bf Claim~1.} \emph{The segment $\alpha_{\perp}$ has length $\leq K$.}

\smallskip
\emph{Proof of Claim~1.}
First, suppose that $\alpha_{\perp}$ is an initial segment of $\alpha$. Since all elements of $\Om_{\perp}$ are $X$--transverse to $\mf{k}$, we have $\alpha_{\perp}\sq C_X(\mf{k}^*)$. Every halfspace $\mf{j}$ entered by $\alpha_{\perp}$ satisfies ${\rm res}_M(\mf{j})\cap{\rm res}_M(\mf{k})=\emptyset$, since $\alpha$ joins the points $p$ and $q$ in the bridge $\mc{B}$. Thus, every point of $\hull_X(\alpha_{\perp})$ (other than $p$) lies in the difference $C_X(\mf{k}^*)-C_M({\rm res}_M(\mf{k}^*))$. Lemma~\ref{similar carriers 2} then shows that $\alpha_{\perp}$ has length $\leq K$.

Suppose now instead that $\alpha_{\perp}$ is not an initial segment of $\alpha$. Let $\mf{w}\in\mscr{W}(X)$ be the last hyperplane crossed by $\alpha$ before the start of $\alpha_{\perp}$; let $e\sq\alpha$ be the edge crossing $\mf{w}$. The hyperplane $\mf{w}$ bounds an element of $\Om_{\parallel}$, by maximality of $\alpha_{\perp}$. So, by part~(1), $\mf{w}$ is $X$--transverse to every hyperplane crossed by $\alpha_{\perp}$ and hence $\alpha_{\perp}\sq C_X(\mf{w})$. If $\alpha_{\perp}$ had length $>K$, Lemma~\ref{similar carriers 2} would imply the existence of a point $x\in\hull_X(\alpha_{\perp})\cap C_M({\rm res}_M(\mf{w}))$ other than the initial vertex of $\alpha_{\perp}$. Let $x'\in M$ be the point with $\mscr{W}(x|x')=\{\mf{w}\}$. Then, replacing the segment $e\cup\alpha_{\perp}\sq\alpha$ with a geodesic in $\square(M)\sq X$ passing through $x'$ and $x$, we would find an element $\alpha'\in\mc{G}_M(p,q)$ with $\alpha'\prec\alpha$, a contradiction.
\hfill$\blacksquare$ 

\smallskip
{\bf Claim~2.} \emph{The segment $\alpha_{\parallel}$ has length $\leq K$.}

\smallskip
\emph{Proof of Claim~2.}
This is entirely analogous to the previous proof.

First, suppose that $\alpha_{\parallel}$ is a terminal segment of $\alpha$. Note that all elements of $\Om_{\parallel}$ are $X$--transverse to $\mf{h}$ (by part~(1), since $\mf{h}\in\Om_{\perp}$).
So we have $\alpha_{\parallel}\sq C_X(\mf{h}^*)$. Again, by the properties of bridges, every halfspace $\mf{j}$ entered by $\alpha_{\parallel}$ satisfies ${\rm res}_M(\mf{j}^*)\cap{\rm res}_M(\mf{h})=\emptyset$. As in Claim~1, every point of $\hull_X(\alpha_{\parallel})$ (other than $q$) lies in the difference $C_X(\mf{h}^*)-C_M({\rm res}_M(\mf{h}^*))$ and Lemma~\ref{similar carriers 2} shows that $\alpha_{\parallel}$ has length $\leq K$.

Suppose instead that $\alpha_{\parallel}$ is not a terminal segment of $\alpha$. Let $\mf{w}\in\mscr{W}(X)$ be the first hyperplane crossed by $\alpha$ after the end of $\alpha_{\parallel}$; let $e\sq\alpha$ be the edge crossing $\mf{w}$. The hyperplane $\mf{w}$ bounds an element of $\Om_{\perp}$, so, by part~(1), it is $X$--transverse to every hyperplane crossed by $\alpha_{\parallel}$ and $\alpha_{\parallel}\sq C_X(\mf{w})$. If $\alpha_{\parallel}$ had length $>K$, Lemma~\ref{similar carriers 2} would imply the existence of a point $x\in\hull_X(\alpha_{\parallel})\cap C_M({\rm res}_M(\mf{w}))$ other than the terminal endpoint of $\alpha_{\parallel}$. If $x'\in M$ is the point with $\mscr{W}(x|x')=\{\mf{w}\}$, we can form a new geodesic $\alpha'\in\mc{G}_M(p,q)$ by replacing the segment $e\cup\alpha_{\parallel}\sq\alpha$ with a geodesic in $\square(M)\sq X$ that follows $\alpha_{\parallel}$ up to $x$, then crosses $\mf{w}$ to reach $x'$ and finally moves to the vertex in $e-\alpha_{\parallel}$. As before, we have $\alpha'\prec\alpha$, a contradiction.
\hfill$\blacksquare$ 

\smallskip
Finally, we prove Property~(3). Consider $\beta_{\parallel},\beta_{\perp}$ and $\mf{j}_{\parallel},\mf{j}_{\perp}$ as in the statement. Let $x$ be the point where $\beta_{\parallel}$ and $\beta_{\perp}$ meet. Suppose for the sake of contradiction that $M\cap\mf{j}_{\parallel}^*\cap\mf{j}_{\perp}\neq\emptyset$. Then $\mf{j}_{\parallel}$ and $\mf{j}_{\perp}$ are $M$--transverse (since $\mf{j}_{\parallel}$ is entered before $\mf{j}_{\perp}$ by a geodesic contained in $\square(M)$).

Without loss of generality, suppose that $x\in C_M({\rm res}_M(\mf{j}_{\parallel}))\cap C_M({\rm res}_M(\mf{j}_{\perp}^*))$; otherwise it suffices to replace $\mf{j}_{\parallel}$ and $\mf{j}_{\perp}$ by a different pair of $M$--transverse halfspaces entered by $\beta_{\parallel},\beta_{\perp}$, corresponding to walls in $\mscr{W}(M)$ that are closer to $x$. This does not affect the assumption that $\mf{j}_{\parallel}$ and $\mf{j}_{\perp}$ are $M$--transverse.

Up to changing the order in which $\beta_{\parallel}$ and $\beta_{\perp}$ enter the respective halfspaces (which yields $\alpha'\sim\alpha$), we can also assume that $\mf{j}_{\parallel}$ is the last halfspace entered before $x$ and that $\mf{j}_{\perp}$ is the first entered after it. Now, since $\mf{j}_{\parallel}$ and $\mf{j}_{\perp}$ are $M$--transverse, we can swap the order in which these halfspaces are entered to produce $\alpha''\prec\alpha$, a contradiction.
\end{proof}

We only need one last simple combinatorial lemma before proving Proposition~\ref{unqc prop}.

\begin{lem}\label{combinatorial lemma}
Suppose some geodesic in $X$ enters halfspaces $\mf{h}_1,\dots,\mf{h}_N$ in this order (not necessarily consecutively), and also halfspaces $\mf{k}_1,\dots,\mf{k}_N$ in this order. Consider $n\geq 0$. If $N>2n\dim X$, there exist an index $1\leq k\leq N$ and indices $1\leq i_1<\dots<i_n<k$ and $k<j_1<\dots<j_n\leq N$ such that:
\begin{align*}
\mf{h}_{i_1}&\supset\mf{h}_{i_2}\supset\dots\supset\mf{h}_{i_n}\supset\mf{h}_k, & \mf{k}_k\supset\mf{k}_{j_1}&\supset\mf{k}_{j_2}\supset\dots\supset\mf{k}_{j_n}.
\end{align*}
\end{lem}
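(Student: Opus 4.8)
The plan is to prove this by a purely combinatorial argument, using two standard facts about geodesics in $\CAT$ cube complexes. First, a combinatorial geodesic of $X$ crosses each hyperplane at most once, so the halfspaces $\mf{h}_1,\dots,\mf{h}_N$ are pairwise distinct, and likewise the $\mf{k}_j$ are pairwise distinct. Second, if a geodesic enters $\mf{h}_a$ and then (strictly later) enters $\mf{h}_b$, then either $\mf{h}_b\subsetneq\mf{h}_a$ or $\mf{h}_a$ and $\mf{h}_b$ are transverse: the initial vertex lies in $\mf{h}_a^*\cap\mf{h}_b^*$, the vertex just before the edge dual to the hyperplane bounding $\mf{h}_b$ lies in $\mf{h}_a\cap\mf{h}_b^*$, and the final vertex lies in $\mf{h}_a\cap\mf{h}_b$; hence the only quadrant that can be empty is $\mf{h}_a^*\cap\mf{h}_b$, whose emptiness says exactly $\mf{h}_b\subseteq\mf{h}_a$, and the inclusion is strict since the halfspaces are distinct. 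In particular, for all $a<b$ the pair $(\mf{h}_a,\mf{h}_b)$ is either nested-decreasing or transverse, and the same holds for the $\mf{k}$'s.

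Next I would introduce two ``height'' functions on the index set $\{1,\dots,N\}$. For $1\le a\le N$, let $\varphi(a)$ be the maximal length $m$ of a chain $i_1<i_2<\dots<i_m=a$ with $\mf{h}_{i_1}\supsetneq\mf{h}_{i_2}\supsetneq\dots\supsetneq\mf{h}_{i_m}$, and let $\psi(a)$ be the maximal length $m$ of a chain $a=j_1<j_2<\dots<j_m$ with $\mf{k}_{j_1}\supsetneq\mf{k}_{j_2}\supsetneq\dots\supsetneq\mf{k}_{j_m}$ (both are $\ge 1$). The claim then reduces to producing a single index $k$ with $\varphi(k)\ge n+1$ and $\psi(k)\ge n+1$: the first inequality yields indices $i_1<\dots<i_n<k$ with $\mf{h}_{i_1}\supset\dots\supset\mf{h}_{i_n}\supset\mf{h}_k$ (take the last $n+1$ terms of a longest chain ending at $k$), and the second yields $k<j_1<\dots<j_n$ with $\mf{k}_k\supset\mf{k}_{j_1}\supset\dots\supset\mf{k}_{j_n}$.

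The core of the argument is a Mirsky-type level-set estimate. For fixed $v\ge 1$, the level set $A_v:=\{a : \varphi(a)=v\}$ cannot contain indices $a<b$ with $\mf{h}_b\subsetneq\mf{h}_a$, since appending $b$ to a longest decreasing chain ending at $a$ would give $\varphi(b)\ge v+1$; therefore, by the dichotomy above, $A_v$ indexes a set of pairwise transverse halfspaces. Since $X$ has dimension $\delta:=\dim X$, any family of pairwise transverse hyperplanes has at most $\delta$ members, so $\#A_v\le\delta$, whence $\#\{a : \varphi(a)\le n\}=\sum_{v=1}^n\#A_v\le n\delta$. Symmetrically $\#\{a : \psi(a)\le n\}\le n\delta$. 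As $N>2n\delta$, the two ``bad'' sets cannot cover $\{1,\dots,N\}$, so some index $k$ satisfies $\varphi(k)\ge n+1$ and $\psi(k)\ge n+1$, which is what we wanted; the case $n=0$ is trivial.

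I do not expect a genuine obstacle here. The only external inputs are the two basic geodesic/hyperplane facts and the bound on pairwise transverse hyperplanes in a finite-dimensional cube complex, all standard and available from the cube-complex references already cited in the paper. The points requiring a little care are purely bookkeeping: matching the strict containments $\supsetneq$ produced by the chains with the (non-strict) $\supset$ in the statement — immediate since distinct halfspaces are involved — and arranging the definitions of $\varphi$ and $\psi$ so that the shared index $k$ legitimately serves simultaneously as the bottom of the $\mf{h}$-chain and the top of the $\mf{k}$-chain, which is exactly how they are set up above.
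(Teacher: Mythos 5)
Your proof is correct and follows essentially the same route as the paper's: the paper also defines two chain-length ("height") functions on the index set, observes that each level set consists of pairwise transverse halfspaces and hence has size at most $\dim X$, and concludes by counting that some index lies outside both bad sets. The only differences are cosmetic (your $\varphi,\psi$ include the index itself, so the threshold is $n+1$ rather than $n$), and you make explicit the nested-or-transverse dichotomy for halfspaces entered along a geodesic, which the paper leaves implicit.
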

\begin{proof}
Define functions $f,g\colon\{1,\dots,N\}\ra\{0,\dots,N\}$ as follows. The integer $f(k)$ is the longest length of a chain of $\mf{h}_i$ all strictly containing $\mf{h}_k$, while $g(k)$ is the longest length of a chain of $\mf{k}_i$ all strictly contained in $\mf{k}_k$. We need an index $k$ so that $f(k)\geq n$ and $g(k)\geq n$ hold simultaneously.

For an integer $i$, consider the halfspaces $\mf{h}_j$ with $f(j)=i$. It is clear that they must be pairwise transverse. Hence $\#f^{-1}(i)\leq\dim X$ and, similarly, $\#g^{-1}(i)\leq\dim X$ for every $1\leq i\leq N$. It follows that $\#f^{-1}([0,n-1])\leq n\dim X$ and $\#g^{-1}([0,n-1])\leq n\dim X$, which implies the lemma.
\end{proof}

The next result immediately implies Proposition~\ref{unqc prop}.

\begin{cor}
Let $\alpha$ be the geodesic obtained in Construction~\ref{geod construction}. Consider $n\geq 0$. For all integers $0\leq s\leq t\leq d(p,q)$ with $t-s\geq 2K(2n \dim X +2)$, the segment $\alpha|_{[s,t]}$ enters halfspaces $\mf{h}_0,\dots,\mf{h}_n\in\Om_{\perp}$ and $\mf{k}_0,\dots,\mf{k}_n\in\Om_{\parallel}$ such that, for all $i,j$, we have $M\cap\mf{h}_i\cap\mf{k}_j^*=\emptyset$ and $\mf{h}_i$ is $X$--transverse to $\mf{k}_j$. In particular, $\hull_X(\alpha|_{[s,t]})$ contains points at distance $>n$ from $M$.
\end{cor}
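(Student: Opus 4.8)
The plan is to combine the three structural properties of $\alpha$ from Lemma~\ref{properties of alpha} with the pigeonhole statement of Lemma~\ref{combinatorial lemma} to extract a large grid of $X$--transverse, nested halfspaces entered by $\alpha|_{[s,t]}$, and then run the standard nesting estimate (as in Corollary~\ref{non-qc via grids}) to locate a point of $\hull_X(\alpha|_{[s,t]})$ far from $M$.

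\textbf{Set-up.} Write $\gamma:=\alpha|_{[s,t]}$. Every halfspace entered by $\gamma$ lies in $\Om_{\perp}\sqcup\Om_{\parallel}$ (observed right after Construction~\ref{geod construction}), so each edge of $\gamma$ gets a colour in $\{\perp,\parallel\}$ and the maximal monochromatic sub-segments of $\gamma$ alternate in colour; by Lemma~\ref{properties of alpha}(2) each has length $\leq K$. Since removing the first maximal monochromatic segment of $\gamma$ only shrinks $\hull_X(\gamma)$ and preserves all relevant properties, we may assume $\gamma$ begins with a $\parallel$--segment; as $\mathrm{length}(\gamma)\geq 2K(2n\dim X+2)$, it then consists of at least $4n\dim X+3$ maximal monochromatic segments, listed in order as $P_1,Q_1,P_2,Q_2,\dots$ with $P_\ell$ a $\parallel$--segment and $Q_\ell$ a $\perp$--segment. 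In particular there are at least $N:=2n\dim X+1$ segments of each type. For $1\leq\ell\leq N$ pick a halfspace $\mf{p}_\ell\in\Om_{\parallel}$ entered by $\gamma$ within $P_\ell$ and a halfspace $\mf{q}_\ell\in\Om_{\perp}$ entered within $Q_\ell$. These are pairwise distinct, and $\gamma$ enters $\mf{p}_1,\dots,\mf{p}_N$ in this order, $\mf{q}_1,\dots,\mf{q}_N$ in this order, with $\mf{p}_\ell$ entered before $\mf{q}_\ell$ and $\mf{q}_\ell$ before $\mf{p}_{\ell+1}$.

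\textbf{Building the grid.} Apply Lemma~\ref{combinatorial lemma} to $\gamma$, with $\mf{p}_1,\dots,\mf{p}_N$ playing the role of $\mf{h}_1,\dots,\mf{h}_N$ and $\mf{q}_1,\dots,\mf{q}_N$ the role of $\mf{k}_1,\dots,\mf{k}_N$; since $N>2n\dim X$, this yields an index $k$ and indices $i_1<\dots<i_n<k<j_1<\dots<j_n$ with $\mf{p}_{i_1}\supsetneq\dots\supsetneq\mf{p}_{i_n}\supsetneq\mf{p}_k$ and $\mf{q}_k\supsetneq\mf{q}_{j_1}\supsetneq\dots\supsetneq\mf{q}_{j_n}$. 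Put $\mf{h}_0:=\mf{q}_k$, $\mf{h}_a:=\mf{q}_{j_a}$ and $\mf{k}_{a-1}:=\mf{p}_{i_a}$ for $1\leq a\leq n$, and $\mf{k}_n:=\mf{p}_k$; then $\mf{h}_0\supsetneq\dots\supsetneq\mf{h}_n$ in $\Om_{\perp}$ and $\mf{k}_0\supsetneq\dots\supsetneq\mf{k}_n$ in $\Om_{\parallel}$, all entered by $\gamma$. Each $\mf{k}_a$ is some $\mf{p}_\ell$ with $\ell\leq k$, and each $\mf{h}_b$ is some $\mf{q}_m$ with $m\geq k$; since $P_\ell$ (for $\ell\leq k$) occurs no later than $P_k$, which precedes $Q_k$, which occurs no later than $Q_m$ (for $m\geq k$), every $\mf{k}_a$ is entered strictly before every $\mf{h}_b$, so $\mf{k}_a$ and $\mf{h}_b$ are $X$--transverse by Lemma~\ref{properties of alpha}(1). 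Moreover $\mf{k}_n=\mf{p}_k$ is entered within $P_k$ and $\mf{h}_0=\mf{q}_k$ within $Q_k$, the $\perp$--segment immediately following $P_k$, so Lemma~\ref{properties of alpha}(3) gives $M\cap\mf{k}_n^*\cap\mf{h}_0=\emptyset$; as $\mf{h}_i\sq\mf{h}_0$ and $\mf{k}_j^*\sq\mf{k}_n^*$ for all $i,j$, this yields $M\cap\mf{h}_i\cap\mf{k}_j^*=\emptyset$ for all $i,j$, proving the first assertion.

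\textbf{The distance estimate.} Since $\mf{k}_0$ is entered before $\mf{h}_n$ and both are crossed by $\gamma$, the initial vertex $u$ of $\gamma$ lies in $\mf{h}_n^*\cap\mf{k}_0^*$ and its terminal vertex $v$ in $\mf{h}_n\cap\mf{k}_0$; as $\mf{h}_n$ and $\mf{k}_0$ are $X$--transverse, some vertex $z$ of the interval $I(u,v)\sq\hull_X(\gamma)$ lies in $\mf{h}_n\cap\mf{k}_0^*$. Then $z\in\mf{h}_i$ for all $i$ (since $\mf{h}_n\sq\mf{h}_i$) and $z\in\mf{k}_j^*$ for all $j$ (since $\mf{k}_0^*\sq\mf{k}_j^*$). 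Given $w\in M$: if $w\in\mf{h}_i^*$ for every $i$, the $n+1$ hyperplanes bounding $\mf{h}_0,\dots,\mf{h}_n$ all separate $z$ from $w$; otherwise $w\in\mf{h}_{i_0}$ for some $i_0$, and then $M\cap\mf{h}_{i_0}\cap\mf{k}_j^*=\emptyset$ forces $w\in\mf{k}_j$ for all $j$, so the $n+1$ hyperplanes bounding $\mf{k}_0,\dots,\mf{k}_n$ all separate $z$ from $w$. In either case $d(z,w)\geq n+1$, hence $d(z,M)>n$.

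\textbf{Main obstacle.} The delicate point is the middle step: one must route the two linear orders produced by Lemma~\ref{combinatorial lemma} (position among the $\mf{p}_\ell$ and position among the $\mf{q}_\ell$) through the interleaving $P_1,Q_1,P_2,\dots$ of monochromatic segments so that all selected $\parallel$--halfspaces genuinely precede all selected $\perp$--halfspaces along $\gamma$ (to invoke Lemma~\ref{properties of alpha}(1)) while keeping the innermost pair $\mf{k}_n,\mf{h}_0$ inside two consecutive segments (to invoke Lemma~\ref{properties of alpha}(3)); forcing $\gamma$ to begin with a $\parallel$--segment is exactly what makes both fit. Everything else is routine.
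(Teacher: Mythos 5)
Your proof is correct, and it follows the same skeleton as the paper's argument: decompose $\alpha|_{[s,t]}$ into alternating $\parallel$-- and $\perp$--segments of length $\leq K$ via Lemma~\ref{properties of alpha}(2), pick one halfspace per segment, feed the two ordered families into the pigeonhole Lemma~\ref{combinatorial lemma} to get nested chains around a pivot index $k$, and then use Lemma~\ref{properties of alpha}(3) at the pivot together with Lemma~\ref{properties of alpha}(1) for transversality. The one genuine deviation is where you apply Lemma~\ref{combinatorial lemma}: you apply it in $X$ to the halfspaces $\mf{p}_\ell,\mf{q}_\ell$ themselves, so your chains are nested in $X$, whereas the paper applies it in $\square(M)$ to the restrictions ${\rm res}_M(\mf{p}_\ell),{\rm res}_M(\mf{q}_\ell)$, so its chains are nested only after intersecting with $M$. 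This changes the endgame in two ways, both to your advantage in terms of economy: the $X$--nesting lets you propagate the single relation $M\cap\mf{k}_n^*\cap\mf{h}_0=\emptyset$ from the pivot to all pairs by a one-line inclusion, and it lets you produce the far point as a median $m(u,v,w)$ in the interval $I(u,v)$ rather than invoking Helly's lemma for a non-nested family as the paper does (your terse justification ``some vertex of $I(u,v)$ lies in $\mf{h}_n\cap\mf{k}_0^*$'' is exactly this median argument and is correct). The paper's route through ${\rm res}_M$ is closer in spirit to how these chains are used elsewhere (and would allow $\dim\square(M)$ in place of $\dim X$ in the count, a harmless improvement given how the statement is phrased), but for the corollary as stated the two arguments are equally valid; your version additionally yields the slightly stronger conclusion that the $\mf{h}_i$ and the $\mf{k}_j$ can be taken nested in $X$.
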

\begin{proof}
After an initial segment of length $\leq K$, the geodesic $\alpha|_{[s,t]}$ contains a $\parallel$--segment $\beta_1$ followed by a $\perp$--segment $\gamma_1$, and so on up to a $\parallel$--segment $\beta_N$ and a $\perp$--segment $\gamma_N$, for some $N\geq 0$. Each $\beta_i$ and $\gamma_i$ has length $\leq K$ by Lemma~\ref{properties of alpha}(2) and the sum of their lengths is $\geq t-s-2K$. Thus $N\geq\lfloor\tfrac{t-s-2K}{2K}\rfloor>2n\dim X$.

Let $\mf{h}_i\in\Om_{\parallel}$ and $\mf{k}_i\in\Om_{\perp}$ be arbitrary halfspaces entered by $\beta_i$ and $\g_i$, respectively. By Lemma~\ref{properties of alpha}(3), we have $M\cap\mf{h}_i^*\cap\mf{k}_i=\emptyset$, hence ${\rm res}_M(\mf{k}_i)\sq {\rm res}_M(\mf{h}_i)$. Lemma~\ref{combinatorial lemma} (applied to the cube complex $\square(M)$) ensures the existence of a chain:
\[ {\rm res}_M(\mf{k}_{j_1})\subset\dots\subset {\rm res}_M(\mf{k}_{j_n}) \subset {\rm res}_M(\mf{k}_k)\subset {\rm res}_M(\mf{h}_k) \subset {\rm res}_M(\mf{h}_{i_n})\subset\dots\subset{\rm res}_M(\mf{h}_{i_1}).\]
It follows that $M\cap\mf{h}_{i_a}^*\cap\mf{k}_{j_b}=\emptyset$ for all indices $a,b$, while Lemma~\ref{properties of alpha}(1) guarantees that $\mf{h}_{i_a}$ and $\mf{k}_{j_b}$ are $X$--transverse. These are the required halfspaces.

Finally, by Helly's lemma, there exists a point $z\in\hull_X(\alpha|_{[s,t]})$ lying in all $\mf{h}_{i_a}^*$ and all $\mf{k}_{j_b}$ (as well as $\mf{h}_k^*$ and $\mf{k}_k$). We clearly have $d(z,M)\geq n+1$, concluding the proof.
\end{proof}

\subsection{Conclusion}

We finally prove Theorem~\ref{uniformly non-qc ray} by combining Proposition~\ref{unqc prop} and Corollary~\ref{non-qc via grids}.

\begin{proof}[Proof of Theorem~\ref{uniformly non-qc ray}]
Since $H$ acts cofinitely on the subalgebra $M$, \cite[Lemma~4.12]{Fio10a} shows that $H$ is finitely generated. Thus, we can argue as in Remark~\ref{edge-connected rmk}(1) and thicken $M$ to an edge-connected subalgebra $M\sq M'\sq X^{(0)}$ that is still $H$--invariant and $H$--cofinite. 

Since $M'$ is at finite Hausdorff distance from $M$, it is not quasi-convex. Hence Corollary~\ref{non-qc via grids} guarantees that, for every $n\geq 0$, there exist $X$--transverse halfspaces $\mf{h}_n,\mf{k}_n\in\mscr{H}_{M'}(X)$ with $d({\rm res}_{M'}(\mf{h}_n),{\rm res}_{M'}(\mf{k}_n))>n$.

Proposition~\ref{unqc prop} yields geodesics $\alpha_n\sq\square(M')$ from $C_{M'}({\rm res}_{M'}(\mf{h}_n^*))$ to $C_{M'}({\rm res}_M(\mf{k}_n^*))$ with the property that, for all $t\geq s\geq 0$, the set $\hull_X(\alpha_n|_{[s,t]})$ contains points at distance $>\lfloor\tfrac{t-s}{K'}\rfloor$ from $M'$. Note that the length of the $\alpha_n$ diverges with $n$, since $d({\rm res}_{M'}(\mf{h}_n),{\rm res}_{M'}(\mf{k}_n))>n$.

Exploiting $H$--cocompactness of $M'$, we can assume that the initial vertices of the $\alpha_n$ all lie in a given finite subset of $M'$. Passing to a subsequence, the $\alpha_n$ converge to a combinatorial ray $r\sq\square(M')$. Every segment of $r$ is a segment of some $\alpha_n$ for large $n$, so it is still true that $\hull_X(r|_{[s,t]})$ contains points at distance $>\lfloor\tfrac{t-s}{K'}\rfloor$ from $M'$.

Finally, since $r\sq\square(M')$ and $M'$ is at finite Hausdorff distance from $M$, the ray $r$ stays at bounded distance from $M$. This concludes the proof.
\end{proof}

\section{The standard coarse median structure of a RACG}\label{std cms RACG sect}

Subsections~\ref{notation subsect2} and~\ref{main argument subsect} are devoted to the proof of Theorem~\ref{RACGs intro}(2), which is Theorem~\ref{std RACG thm} below. Then, in Subsection~\ref{loose subsect}, we deduce Corollary~\ref{loose cor intro}.

\subsection{Notation and a preliminary lemma}\label{notation subsect2}

Let $W_{\G}$ be a right-angled Coxeter group. 

In this case, the graph-product complex $\mc{D}$ defined in Subsection~\ref{graph prod subsect} is the cubical subdivision of a simpler $\CAT$ cube complex, which is usually known as the \emph{Davis complex} and which we denote by $\mc{D}_{\G}$. The $1$--skeleton of $\mc{D}_{\G}$ is naturally identified with the Cayley graph of $W_{\G}$ with respect to the generating set $\G^{(0)}$. Thus, every edge and every hyperplane of $\mc{D}_{\G}$ is labelled by a vertex of $\G$.

Let $[\mu_{\G}]\in\mc{CM}_{\square}(W_{\G})$ be the coarse median structure induced on $W_{\G}$ by $\mc{D}_{\G}$. We refer to $[\mu_{\G}]$ as the \emph{standard} coarse median structure on $W_{\G}$.

We say that a graph $\Delta$ is \emph{irreducible} if it is not a join of two proper subgraphs; equivalently, $W_{\Delta}$ is not a direct product of proper subgroups. For an (induced) subgraph $\Delta\sq\G$, we write: 
\[\Delta^{\perp}:=\{v\in\G^{(0)} \mid \Delta\sq\lk(v) \}.\]

The following is fairly classical, but we were not able to find a proof in the literature.

\begin{lem}\label{Morse path}
Let $\Delta\sq\G$ be an irreducible induced subgraph such that $\Delta^{\perp}$ spans a (possibly empty) clique. Let $\alpha\colon [0,+\infty)\ra\mc{D}_{\Delta}$ be an infinite edge path in $\mc{D}_{\Delta}\sq\mc{D}_{\G}$. If we have
\[K:=\sup_{g\in W_{\Delta},\ v\in\Delta} \diam\alpha^{-1}(g\mc{D}_{\Delta-\{v\}})<+\infty,\]
then $\alpha$ is a Morse quasi-geodesic in $\mc{D}_{\G}$.
\end{lem}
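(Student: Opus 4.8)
The hypothesis says $\alpha$ is a quasi-geodesic in $\mc{D}_\Delta$ that ``spends bounded time'' near cosets of proper standard parabolics $W_{\Delta-\{v\}}$. The natural strategy is to verify the contracting/Morse property via a local-to-global argument: it suffices to show that $\alpha$ is uniformly Morse on finite subpaths, i.e.\ that any $(\lambda,\epsilon)$-quasi-geodesic $\beta$ in $\mc{D}_\G$ with endpoints on $\alpha$ stays uniformly close to $\alpha$. I would first record that the hypothesis forces $\alpha$ to be a quasi-geodesic in $\mc{D}_\Delta$: if $\alpha$ backtracked a lot, it would spend a long time in a single coset of some $W_{\Delta-\{v\}}$ (this uses that the Davis complex of an irreducible RACG is not a product, so a long geodesic crosses many hyperplanes of distinct labels, and a long backtrack is confined to a wall-carrier, hence to a coset of a proper parabolic — combinatorial Morse-type reasoning in $\mc{D}_\Delta$).

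\textbf{Key step: Morse in $\mc{D}_\Delta$ $\Rightarrow$ Morse in $\mc{D}_\G$.} The main point is to understand how geodesics of $\mc{D}_\G$ project to $\mc{D}_\Delta$. Since $W_\Delta \le W_\G$ is a standard parabolic, $\mc{D}_\Delta$ sits as a convex subcomplex of $\mc{D}_\G$, and there is a $W_\Delta$-equivariant combinatorial (median/gate) projection $\pi\colon \mc{D}_\G \to \mc{D}_\Delta$. A hyperplane $\mf w$ of $\mc{D}_\G$ has nontrivial image under $\pi$ (i.e.\ separates two points of $\mc{D}_\Delta$) exactly when its label lies in $\Delta$ AND $\mf w$ is not ``parallel into'' $\Delta^\perp$; more precisely, I'd want the standard fact that the hyperplanes of $\mc{D}_\G$ crossing $\mc{D}_\Delta$ are permuted by $W_\Delta$ with finitely many orbits, and the ``product region'' structure: the carrier of any hyperplane crossing $\mc{D}_\Delta$ splits off a bounded piece (because $\Delta^\perp$ is a clique, so $W_{\Delta^\perp}$ is finite). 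The upshot I want is: \emph{$\pi$ is coarsely Lipschitz, and a geodesic of $\mc{D}_\G$ with endpoints in $\mc{D}_\Delta$ has image under $\pi$ at bounded Hausdorff distance from a geodesic of $\mc{D}_\Delta$ with the same endpoints} — this is automatic since $\mc{D}_\Delta$ is convex, so $\pi$ is the nearest-point projection and geodesics between points of a convex subcomplex stay in it.

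\textbf{Assembling the argument.} Given a quasi-geodesic $\beta$ in $\mc{D}_\G$ from $\alpha(s)$ to $\alpha(t)$, project: $\pi\circ\beta$ is a quasi-geodesic in $\mc{D}_\Delta$ (coarsely Lipschitz image of a quasi-geodesic, with endpoints $\alpha(s),\alpha(t)$; one needs a uniform lower bound on progress, which comes from the finitely-many-orbits statement plus irreducibility, ensuring $\pi$ doesn't collapse long geodesics — here is where the hypothesis on $\Delta^\perp$ being a clique is essential, otherwise $\pi$ could kill a whole $\R$-direction). Since $\alpha$ is Morse in $\mc{D}_\Delta$, the quasi-geodesic $\pi\circ\beta$ is uniformly close to $\alpha|_{[s,t]}$. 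It remains to lift this: a point $p$ on $\beta$ satisfies $d_{\mc{D}_\G}(p,\pi(p))$ bounded? No — that's false in general. Instead I use that $\beta$ itself, having endpoints in the convex set $\mc{D}_\Delta$, must stay uniformly close to $\mc{D}_\Delta$ because $\beta$ is a quasi-geodesic and $\mc{D}_\Delta$ is convex (hence quasi-convex; a quasi-geodesic with endpoints in a convex subcomplex of a CAT(0) cube complex stays in a bounded neighborhood of it, with bound depending only on the QI constants and $\dim\mc{D}_\G$). So $p$ is within $D$ of $\mc{D}_\Delta$, $d(p,\pi(p))\le D'$, and then $d(p,\alpha|_{[s,t]}) \le D' + (\text{Morse gauge of }\alpha\text{ in }\mc{D}_\Delta)$. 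This gives the Morse property of $\alpha$ in $\mc{D}_\G$.

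\textbf{Main obstacle.} The delicate point is proving $\alpha$ is Morse \emph{inside} $\mc{D}_\Delta$ from the bounded-backtracking hypothesis — essentially an internal Morse lemma for RACGs phrased via proper parabolic cosets — together with the claim that $\pi\colon\mc{D}_\G\to\mc{D}_\Delta$ does not collapse long $\mc{D}_\G$-geodesics into bounded sets (the ``no lost direction'' claim), which is exactly where irreducibility of $\Delta$ and cliqueness of $\Delta^\perp$ must be used. I would expect to handle the first via a hyperplane-counting argument (a long geodesic in $\mc{D}_\Delta$ that fails to make progress is trapped in few hyperplane-carriers, hence near a coset of a proper standard parabolic — contradicting $K<\infty$ once it's long enough), and the second via the product decomposition of hyperplane carriers, invoking Lemma~\ref{cc properties}(3) / the structure of parabolics to control the finitely many orbits.
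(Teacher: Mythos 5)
Your argument breaks at the lifting step, and the break is not repairable as stated. The claim ``a quasi-geodesic with endpoints in a convex subcomplex of a $\CAT$ cube complex stays in a bounded neighbourhood of it'' is false: in the standard square tiling of $\R^2$ the $x$--axis is a convex subcomplex, yet the paths running along three sides of an $L\x L$ square are $(3,0)$--quasi-geodesics (in the combinatorial metric) with endpoints on the axis that travel distance $L$ away from it. Combinatorial convexity controls geodesics only; strong quasi-convexity of a subset is exactly the Morse-type property you are trying to prove, so it cannot be invoked for free. The same issue undermines the claim that $\pi\circ\beta$ is a quasi-geodesic: the gate projection to $\mc{D}_\Delta$ is $1$--Lipschitz but can stall for arbitrarily long stretches (project a vertical segment of $\R^2$ to the $x$--axis), so images of quasi-geodesics need not be quasi-geodesics. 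Your preliminary step is also not established: backtracking subpaths are not confined to hyperplane carriers, and in a non-hyperbolic $W_\Delta$ a geodesic (no backtracking at all) can fail to be Morse, so ``bounded backtracking $\Rightarrow$ Morse in $\mc{D}_\Delta$'' is not a proof; that internal statement is essentially the lemma itself for $\G=\Delta$.

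More structurally, no argument of the shape ``$\alpha$ is Morse in $\mc{D}_\Delta$, and $\mc{D}_\Delta\sq\mc{D}_\G$ is convex, hence $\alpha$ is Morse in $\mc{D}_\G$'' can succeed, because that implication is false under the standing hypotheses: let $\Delta$ be a pentagon $x_1,\dots,x_5$ and let $\G$ be obtained by adding two non-adjacent vertices $y,z$ joined to $x_1,x_2,x_3,x_4$ but not to $x_5$. Then $\Delta$ is irreducible, $\Delta^{\perp}=\emptyset$ is a clique, and $W_\Delta$ is hyperbolic, so a geodesic ray $\alpha$ inside $\mc{D}_{\Delta-\{x_5\}}\sq\mc{D}_\Delta$ is Morse in $\mc{D}_\Delta$; but $\alpha$ lies in one factor of the convex product subcomplex $\mc{D}_{\Delta-\{x_5\}}\x\mc{D}_{\{y,z\}}\sq\mc{D}_\G$, whose second factor is a line, so $\alpha$ is not Morse in $\mc{D}_\G$. (This $\alpha$ of course violates $K<+\infty$; the point is that your reduction never uses that hypothesis again after the internal step, whereas it must enter precisely in the passage from $\mc{D}_\Delta$ to $\mc{D}_\G$, since $\mc{D}_\Delta$ itself need not be strongly quasi-convex in $\mc{D}_\G$.) The paper's proof works directly in $\mc{D}_\G$: the hypothesis $K<+\infty$ produces, in every window of length $K+1$, a ``good'' (never re-crossed) edge of each label of $\Delta$; irreducibility, via connectedness of the complement graph of $\Delta$, then yields a chain of hyperplanes crossed by $\alpha$ at linearly spaced times such that consecutive ones are separated by hyperplanes of every label of $\Delta$, whence any hyperplane transverse to two of them is labelled in $\Delta^{\perp}$ and the clique condition makes them pairwise $\#\Delta^{\perp}$--separated; the Morse property then follows from the Charney--Sultan criterion for contracting geodesics applied to a geodesic through the gate-projections to these carriers.
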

\begin{proof}
For every (oriented) edge $e\sq\alpha$, denote by $\g(e)\in\Delta$ its label, by $\mf{w}(e)\in\mscr{W}(\mc{D}_{\G})$ the hyperplane it crosses, and by $\mf{h}(e)\in\mscr{H}(\mc{D}_{\G})$ the halfspace it enters. We say that $e$ is \emph{good} if $\alpha(0)$ lies in $\mf{h}(e)^*$ and the unbounded connected component of $\alpha- e$ is entirely contained in $\mf{h}(e)$.

Furthermore, for $n\geq 0$, let $e_n$ be the edge connecting $\alpha(n)$ and $\alpha(n+1)$.

\smallskip
{\bf Claim~1.} \emph{For every vertex $w\in\Delta$ and every sub-path $\alpha_0\sq\alpha$ of length $>K$, there exists a good edge $e\sq\alpha_0$ with $\g(e)=w$.}

\smallskip
\emph{Proof of Claim~1.}
Recall that the hyperplanes of $\mc{D}_{\Delta}$ labelled by $w$ are pairwise disjoint, and that the connected components of the complement of their union are precisely the translates $g\mc{D}_{\Delta-\{w\}}$ with $g\in W_{\Delta}$ (with some shreds of cubes attached).

Let $[m,n]\sq [0,+\infty)$ be a maximal interval such that the vertices $\alpha(m)$ and $\alpha(n)$ lie in the same translate of $\mc{D}_{\Delta-\{w\}}$. Let $g\mc{D}_{\Delta-\{w\}}$ be this translate, where $g\in W_{\Delta}$. Then $\g(e_n)=w$ and $\g(t)\in\mf{h}(e_n)$ for all $t\geq n+1$. If $m\neq 0$, we also have $\g(e_{m-1})=w$ and $\mf{w}(e_{m-1})\neq\mf{w}(e_n)$; otherwise, $\alpha(m-1)$ and $\alpha(n+1)$ would lie in some other translate $g'\mc{D}_{\Delta-\{w\}}$, contradicting maximality of $[m,n]$. This shows that either $m=0$ or $\alpha(0)\in\mf{h}(e_m)^*\sq\mf{h}(e_n)^*$, hence $e_n$ is always a good edge.

Now, let $\alpha(t)$ be the initial vertex of $\alpha_0$ and let $[m,n]$ be an interval containing $t$ that is maximal in the above sense. By definition of $K$, such an interval exists and we have $n-m\leq K$. Hence $n\leq t+K$ and $e_n$ is contained in $\alpha_0$. This is the required good edge, proving the claim.
\hfill$\blacksquare$

\smallskip
Now, fix a vertex $v\in\Delta$. Since $\Delta$ is irreducible, its complement graph $\Delta^c$ (where two vertices are adjacent if and only if they are not adjacent in $\Delta$) is connected. Let $D$ be its diameter. Choose a sequence of integers $(n_k)_{k\geq 0}$ such that each edge $e_{n_k}$ is good, with $\g(e_{n_k})=v$ and:
\[(2D-1)(K+1)<n_{k+1}-n_k\leq 2D(K+1).\] 
This is possible by Claim~1. Set $N:=\#\Delta^{\perp}$.

Recall that two hyperplanes of a cube complex are said to be \emph{$L$--separated}, for some $L\geq 0$, if they are disjoint and at most $L$ hyperplanes are transverse to both.

\smallskip
{\bf Claim~2.} \emph{For $k\neq k'$, the hyperplanes $\mf{w}(e_{n_k})$ and $\mf{w}(e_{n_{k'}})$ are $N$--separated in $\mc{D}_{\G}$.}

\smallskip
\emph{Proof of Claim~2.}
We have $\mf{h}(e_{n_0})\supseteq\mf{h}(e_{n_1})\supseteq\dots$ by construction. For every $w\in\Delta$ and $k\geq 0$, it is standard to show that the hyperplanes $\mf{w}(e_{n_k})$ and $\mf{w}(e_{n_{k+1}})$ are separated by a hyperplane labelled by $w$ (using Claim~1 and the inequality $n_{k+1}-n_k>(2D-1)(K+1)$, where $D=\diam\Delta^c$). 

Hence, if some $\mf{u}\in\mscr{W}(\mc{D}_{\G})$ is transverse to $\mf{w}(e_{n_k})$ and $\mf{w}(e_{n_{k+1}})$, then $\mf{u}$ is transverse to hyperplanes labelled by all vertices of $\Delta$, hence $\mf{u}$ must be labelled by a vertex of $\Delta^{\perp}$. Since $\Delta^{\perp}$ is a clique, it follows that at most $N$ hyperplanes are transverse to $\mf{w}(e_{n_k})$ and $\mf{w}(e_{n_{k+1}})$, proving the claim.
\hfill$\blacksquare$

\smallskip
For each $k\geq 0$, let $p_k$ be the gate-projection of $\alpha(0)$ to the carrier of $\mf{w}(e_{n_k})$. Let $\beta\sq\mc{D}_{\Delta}$ be a geodesic ray obtained by concatenating geodesics from $\alpha(0)$ to $p_0$ and from each $p_i$ to $p_{i+1}$.

\smallskip
{\bf Claim~3.} \emph{We have $d(p_k,\alpha(n_k))\leq N+2D(K+1)$ for all $k\geq 0$.}

\smallskip
\emph{Proof of Claim~3.}
Denote by $C_k$ the carrier of $\mf{w}(e_{n_k})$ and by $P_k$ the gate-projection of the halfspace $\mf{h}(e_{n_{k-1}})^*$ to $C_k$. Both $p_k$ and $\alpha(n_k)$ lie in $C_k$, with $p_k$ in fact lying in $P_k$. Since $\mf{w}(e_{n_k})$ and $\mf{w}(e_{n_{k-1}})$ are $N$--separated, the convex subcomplex $P_k$ is crossed by at most $N$ hyperplanes, and so it has diameter $\leq N$. By the properties of gate-projections, we have:
\[d(\alpha(n_k),P_k)\leq d(\alpha(n_k),\mf{h}(e_{n_{k-1}})^*)\leq d(\alpha(n_k),\alpha(n_{k-1}))\leq 2D(K+1). \]
Recalling that $p_k\in P_k$ and $\diam(P_k)\leq N$, we obtain the claim.
\hfill$\blacksquare$

\smallskip
Finally, Claim~3 shows that the distances $d(p_k,p_{k+1})$ are uniformly bounded, so Theorem~4.2 and Theorem~2.14 in \cite{Charney-Sultan} imply that $\beta$ is Morse. It is clear that $\alpha$ and $\beta$ are at finite Hausdorff distance from each other, so it follows that $\alpha$ is a Morse quasi-geodesic. 
\end{proof}

\subsection{The main argument}\label{main argument subsect}

Lemma~\ref{Morse path} allows us to translate Theorem~\ref{uniformly non-qc ray} into the following practical result, which will quickly yield Theorem~\ref{RACGs intro}(2).
 
\begin{prop}\label{cc or commutes}
Let $W_{\G}\acts X$ be a cocompact cubulation. Suppose that there exists an irreducible subgraph $\Delta\sq\G$ such that $W_{\Delta}$ is median-cocompact in $X$. Suppose further that, for every $x\in\Delta$, the subgroup $W_{\Delta-\{x\}}$ is convex-cocompact in $X$. Then:
\begin{enumerate}
\item either $W_{\Delta}$ is itself convex-cocompact in $X$,
\item or the centraliser $Z_{W_{\G}}(W_{\Delta})=W_{\Delta^{\perp}}$ is infinite.
\end{enumerate}
\end{prop}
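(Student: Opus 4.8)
The strategy is a proof by contrapositive: assume that $W_{\Delta}$ is not convex-cocompact in $X$ and that $Z_{W_{\G}}(W_{\Delta}) = W_{\Delta^\perp}$ is finite, and derive a contradiction. Since $W_\Delta$ is median-cocompact in $X$, Proposition~\ref{prop:cc=qc} tells us that $W_\Delta$ is $[\mu_X]$-quasi-submedian; and $W_\Delta$ acts cofinitely on some $W_\Delta$-invariant median subalgebra $M \sq X^{(0)}$. By the assumption that $W_\Delta$ is not convex-cocompact, $M$ fails to be quasi-convex (using Proposition~\ref{prop:cc=qc} again, together with Remark~\ref{qc hulls rmk}). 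This puts us exactly in the setting of Theorem~\ref{uniformly non-qc ray}, which produces a combinatorial ray $r \colon [0,+\infty) \ra X$ that stays at bounded distance from $M$, but such that $\hull_X(r|_{[s,t]})$ contains points at distance $\geq \lfloor \tfrac{t-s}{K} \rfloor$ from $M$ for all $t \geq s \geq 0$.

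The key tension I want to exploit is this: the ray $r$ lives within bounded distance of $M$, which is a cofinite $W_\Delta$-set, so it tracks a quasi-geodesic in $W_\Delta$ (with the Davis-complex metric on $\mc{D}_\Delta$, transported via the Milnor--Schwarz quasi-isometry between $W_\Delta$-orbits in $X$ and $\mc{D}_\Delta$). The plan is to use the hypothesis that each $W_{\Delta - \{x\}}$ is convex-cocompact in $X$ to argue that $r$ cannot spend unboundedly long near any coset $g\mc{D}_{\Delta - \{x\}}$: if it did, then a long subsegment of $r$ would sit within bounded distance of a $W_{\Delta - \{x\}}$-orbit (after translating by $g$), and since $W_{\Delta - \{x\}}$ is convex-cocompact, its convex hull in $X$ would also stay within bounded distance of that orbit --- contradicting the ``uniformly non-quasi-convex'' conclusion of Theorem~\ref{uniformly non-qc ray}, which forces $\hull_X$ of long subsegments to escape $M$ (hence the $W_{\Delta - \{x\}}$-orbit) by an amount linear in the length. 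So the quasi-geodesic $\alpha$ in $\mc{D}_\Delta$ shadowed by $r$ satisfies $\sup_{g \in W_\Delta,\, v \in \Delta} \diam \alpha^{-1}(g\mc{D}_{\Delta - \{v\}}) < +\infty$.

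Now I invoke Lemma~\ref{Morse path}: since $\Delta$ is irreducible and $\Delta^\perp$ spans a clique --- this last point is exactly where $W_{\Delta^\perp}$ finite gets used, as $\Delta^\perp$ finite together with $\Delta^\perp$ being a sub-join-factor of $\G$ forces it to be a clique (any non-edge in $\Delta^\perp$ would generate an infinite dihedral group inside the centraliser $Z_{W_\G}(W_\Delta) = W_{\Delta^\perp}$) --- the bounded-backtracking condition implies $\alpha$ is a Morse quasi-geodesic in $\mc{D}_\G$. Transporting back through the quasi-isometry, $r$ is then a Morse quasi-geodesic in $X$. But a Morse quasi-geodesic ray has convex hull at bounded Hausdorff distance from itself (Morse quasi-geodesics behave like geodesics in hyperbolic spaces: their convex hulls are thin). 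Since $r$ stays within bounded distance of $M$, this would make $\hull_X(r|_{[s,t]})$ stay within bounded distance of $M$ uniformly in $t - s$ --- directly contradicting conclusion~(2) of Theorem~\ref{uniformly non-qc ray}. This contradiction completes the proof.

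\textbf{Main obstacle.} The delicate step is the middle paragraph: carefully setting up the quasi-isometry between a $W_\Delta$-orbit in $X$ and $\mc{D}_\Delta$, and then translating ``a long subsegment of $r$ stays near $g\mc{D}_{\Delta-\{x\}}$'' into ``a long subsegment of $r$ stays near a $W_{\Delta-\{x\}}$-orbit in $X$'' --- one must check that the convex-cocompactness of $W_{\Delta-\{x\}}$ in $X$ really does bound the convex hull of such a subsegment, which requires knowing that the subsegment's endpoints (and hence a coarsely dense subset of it) lie uniformly close to the orbit, not just that the subsegment wanders near it. There is also a minor point to nail down: that a Morse quasi-geodesic in a (not necessarily hyperbolic) CAT(0) cube complex has convex hull at bounded Hausdorff distance from itself; this follows from the Morse property (stability of quasi-geodesics) applied to the geodesics between pairs of points on the quasi-geodesic, together with Lemma~\ref{lem:bounded_iteration} to control hulls via iterated medians. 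I expect these to be routine but they carry the technical weight of the argument.
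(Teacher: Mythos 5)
Your proposal is correct and follows essentially the same route as the paper's proof: extract the uniformly non-quasi-convex ray from Theorem~\ref{uniformly non-qc ray}, use convex-cocompactness of the subgroups $W_{\Delta-\{x\}}$ (via Proposition~\ref{prop:cc=qc} and Remark~\ref{qc hulls rmk}) to bound the time the ray spends near cosets $g\mc{D}_{\Delta-\{x\}}$, transfer to $\mc{D}_{\Delta}$ by the Milnor--Schwarz quasi-isometry, apply Lemma~\ref{Morse path}, and contradict the escape of hulls; the paper phrases the endgame as a dichotomy (if $\Delta^{\perp}$ were a clique the ray would be Morse, hence $\Delta^{\perp}$ contains a non-edge giving an infinite $W_{\Delta^{\perp}}$), which is just the contrapositive of your formulation. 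The two technical points you flag are handled in the paper exactly as you anticipate, so there is no gap.
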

\begin{proof}
Let $M\sq X^{(0)}$ be a median subalgebra on which $W_{\Delta}$ acts cofinitely. Suppose that $W_{\Delta}$ is not convex-cocompact in $X$, that is, that $M$ is not quasi-convex in $X$ (Proposition~\ref{prop:cc=qc}).

Theorem~\ref{uniformly non-qc ray} gives us a ray $r\sq X$ at bounded distance from $M$, and a constant $K$ such that, for every segment $\s\sq r$ of length $\ell$, the set $\hull_X(\s)$ contains points at distance $\geq\lfloor\tfrac{\ell}{K}\rfloor$ from $M$. 

Fix a basepoint $p\in M$ and, for every $x\in\Delta$, denote $\mc{O}_x:=W_{\Delta-\{x\}}\cdot p$. Since $W_{\Delta-\{x\}}$ is convex-cocompact in $X$, Proposition~\ref{prop:cc=qc} guarantees that there exists a constant $R\geq 0$ such that $m(\mc{O}_x,\mc{O}_x,X)\sq\mc{N}_R(\mc{O}_x)$ for every $x\in\Delta$. Setting $\delta:=\dim X$, Remark~\ref{qc hulls rmk} implies that $\hull_X(\mc{O}_x)\sq\mc{N}_{2^{\delta}R}(\mc{O}_x)\sq\mc{N}_{2^{\delta}R}(M)$, hence $\hull_X(g\mc{O}_x)\sq\mc{N}_{2^{\delta}R}(M)$ for every $g\in W_{\Delta}$.

It follows that, for every constant $C\geq 0$, there exists a constant $C'\geq 0$ such that the intersection between $r$ and the $C$--neighbourhood of any $g\mc{O}_x$ with $g\in W_{\Delta}$ has diameter at most $C'$.

Now, by the Milnor--Schwarz lemma, there exists a $W_{\G}$--equivariant quasi-isometry $q\colon\mc{D}_{\G}\ra X$ with $q(\mc{D}_{\Delta})\sq M$. Let $\alpha\sq\mc{D}_{\Delta}$ be a quasi-geodesic edge path such that $d_{\rm Haus}(q(\alpha),r)<+\infty$. By the previous paragraph and the fact that $\alpha$ is a quasi-geodesic, we see that $\alpha$ satisfies the hypothesis of Lemma~\ref{Morse path}. 

If $\Delta^{\perp}$ were a clique, then Lemma~\ref{Morse path} would show that $\alpha$, and hence $r$, is Morse. However, this would imply that $d_{\rm Haus}(r,\hull_X(r))<+\infty$, contradicting the fact that $\hull_X(r)$ contains points arbitrarily far from $M$.

Thus, there must exist vertices $x,y\in\Delta^{\perp}$ that are not connected by an edge. The subgroup $\langle x,y\rangle$ is infinite and it is contained in $Z_{W_{\G}}(W_{\Delta})=W_{\Delta^{\perp}}$, proving the proposition.
\end{proof}

We are finally ready to prove Theorem~\ref{RACGs intro}(2), which is the following result.

\begin{thm}\label{std RACG thm}
Let $W_{\G}\acts X$ be a cocompact cubulation where, for all $x,y\in\G$, the subgroup $\langle x,y\rangle$ is convex-cocompact. Then $W_{\G}\acts X$ induces the standard coarse median structure on $W_{\G}$.
\end{thm}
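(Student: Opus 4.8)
The plan is to apply Theorem~\ref{coarse median vs hyperplanes}: it suffices to show that every $W_{\G}$--stabiliser of a hyperplane of $\mc{D}_{\G}$ is convex-cocompact in $X$. Recall that hyperplanes of $\mc{D}_{\G}$ are labelled by vertices of $\G$, and the stabiliser of a hyperplane labelled by $v$ is a conjugate of $W_{\St(v)} = \langle v\rangle\times W_{\lk(v)}$ (more precisely, of $Z_{W_{\G}}(v)$; compare the graph-product discussion in Subsection~\ref{graph prod subsect} and Remark~\ref{commensurable to centraliser}, specialized to order--$2$ vertex groups). So the goal reduces to showing that each $W_{\St(v)}$ is convex-cocompact in $X$, given the hypothesis that every two-generator subgroup $\langle x,y\rangle$ is convex-cocompact.

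\textbf{Main steps.} First I would set up an induction on the structure of $W_{\lk(v)}$, or more directly on subgraphs $\Lambda\sq\G$, aiming to prove: for every induced subgraph $\Lambda\sq\G$, the subgroup $W_{\Lambda}$ is convex-cocompact in $X$. The base cases are $\#\Lambda^{(0)}\leq 2$, handled by hypothesis (the singleton case follows since $\langle x,x\rangle$ is trivially covered, or one observes $\langle x\rangle$ is finite hence elliptic). For the inductive step, there are two cases according to whether $\Lambda$ is irreducible or decomposes as a join. If $\Lambda = \Lambda_1 * \Lambda_2$ is a nontrivial join, then $W_{\Lambda} = W_{\Lambda_1}\times W_{\Lambda_2}$, and by induction both factors are convex-cocompact in $X$; since $W_{\Lambda}$ is finitely generated and $X$ is finite-dimensional, Lemma~\ref{cc products} shows $W_{\Lambda}$ is convex-cocompact. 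If instead $\Lambda$ is irreducible, I would invoke Proposition~\ref{cc or commutes}: its hypotheses are that $W_{\Lambda}$ is median-cocompact in $X$ (true by Proposition~\ref{centralisers are median-cocompact}, since $W_{\Lambda}$ is a finite-index subgroup of... no --- rather, one needs $W_{\Lambda}$ median-cocompact directly; this holds because $W_{\Lambda}$ is a parabolic subgroup and, as a centraliser of $W_{\Lambda^{\perp}}$ up to finite index combined with the fact that it is quasi-convex in the standard structure, one deduces median-cocompactness --- more carefully, the cleanest route is that $W_{\Lambda}$ is median-cocompact because it is $[\mu_X]$--quasi-submedian, which follows once we know its convex-cocompact pieces; alternatively, $W_{\Lambda}$ is the centraliser of $W_{\Lambda^\perp}$ only when $\Lambda^\perp$ generates the full commuting complement, so a safer formulation: $W_{\Lambda}\leq Z_{W_{\G}}(W_{\Lambda^{\perp}})$ has finite index among the parabolics with that centraliser, hence is median-cocompact by Proposition~\ref{centralisers are median-cocompact} applied to the finitely generated subgroup $W_{\Lambda^\perp}$, at least up to passing to finite index, using Lemma~\ref{cc properties}(2)) and that $W_{\Lambda-\{x\}}$ is convex-cocompact for each $x\in\Lambda$ (true by the inductive hypothesis, since $\Lambda - \{x\}$ is a proper induced subgraph). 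The conclusion of Proposition~\ref{cc or commutes} is that either $W_{\Lambda}$ is convex-cocompact in $X$ --- which is what we want --- or $Z_{W_{\G}}(W_{\Lambda}) = W_{\Lambda^{\perp}}$ is infinite. In the latter case, pick non-adjacent $x,y\in\Lambda^{\perp}$; then $W_{\Lambda}\times\langle x,y\rangle \cong W_{\Lambda}\times D_\infty$ embeds, and I would argue that $W_{\Lambda}$ must in fact be convex-cocompact anyway: applying Lemma~\ref{lem:cc_products} (with $H = \langle xy\rangle\cong\Z$ a convex-cocompact infinite-order element --- here using the hypothesis on $\langle x,y\rangle$ to get that a high power of $xy$ is convex-cocompact, via Lemma~\ref{cc properties}) to the product $\langle (xy)^n\rangle \times W_{\Lambda}$ inside $X$, provided $W_{\Lambda}$ is generated by convex-cocompact elements. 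The generators of $W_{\Lambda}$ being the $v\in\Lambda$, each $\langle v\rangle$ is finite hence trivially "convex-cocompact" (elliptic), so Lemma~\ref{lem:cc_products} applies and yields convex-cocompactness of $W_{\Lambda}$.

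\textbf{The main obstacle.} The delicate point is verifying the median-cocompactness hypothesis of Proposition~\ref{cc or commutes} for irreducible $W_{\Lambda}$, and correctly handling the alternative where $W_{\Lambda^\perp}$ is infinite. For the first, the honest statement is that any parabolic subgroup $W_{\Lambda}$ of $W_{\G}$ is $[\mu_{\G}]$--quasi-convex (standard for the Davis complex), but we need quasi-submedianity with respect to $[\mu_X]$, which we do not yet know equals $[\mu_{\G}]$ --- that is the theorem. The resolution is to use Proposition~\ref{centralisers are median-cocompact}: $W_{\Lambda}$ is commensurable to (a finite-index overgroup inside) $Z_{W_{\G}}(W_{\Lambda^{\perp}})$ only in favourable cases, so instead one should run the induction so that median-cocompactness is established en route, or observe that a parabolic $W_{\Lambda}$ is always the centraliser of $W_{\Lambda^\perp}$ precisely when $\Lambda$ is a maximal "completion", and reduce the general case to that one by noting $W_{\Lambda}$ sits inside $W_{\ov\Lambda}$ where $\ov\Lambda$ is the completion, with $W_{\ov\Lambda} = W_{\Lambda}\times W_{\Lambda^\perp}$ and $W_{\Lambda^\perp}$ a clique group (finite) in the irreducible-completion case, so $W_{\Lambda}$ has finite index in $W_{\ov\Lambda} = Z_{W_{\G}}(W_{\Lambda^\perp})$ and median-cocompactness follows from Proposition~\ref{centralisers are median-cocompact} plus Lemma~\ref{cc properties}(2) / Proposition~\ref{prop:cc=qc}(2). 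Getting this bookkeeping exactly right --- in particular the interplay between $\Lambda$, its completion $\ov\Lambda = \Lambda\cup\Lambda^{\perp}$, and when $\Lambda^\perp$ is a clique so that Proposition~\ref{cc or commutes} and the fallback via Lemma~\ref{lem:cc_products} both apply --- is where the real care is needed; everything else is assembling the cited lemmas.
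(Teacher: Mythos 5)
Your overall skeleton (reduce via Theorem~\ref{coarse median vs hyperplanes} to convex-cocompactness of hyperplane stabilisers, induct, treat joins with Lemma~\ref{cc products} and irreducible pieces with Proposition~\ref{cc or commutes}) matches the paper, but the two steps you flag as delicate are genuinely broken as written. First, the median-cocompactness hypothesis of Proposition~\ref{cc or commutes}: your proposed fix via $Z_{W_{\G}}(W_{\Lambda^{\perp}})$ does not work. When $\Lambda^{\perp}$ is a clique, that centraliser is $W_{\Lambda^{\perp}}\x W_{(\Lambda^{\perp})^{\perp}}$, and $(\Lambda^{\perp})^{\perp}$ is in general strictly larger than $\Lambda$ (and if $\Lambda^{\perp}=\emptyset$ the centraliser is all of $W_{\G}$), so $W_{\Lambda}$ need not be commensurable to the centraliser of any finitely generated subgroup, and Proposition~\ref{centralisers are median-cocompact} gives nothing for it. The paper avoids this by inducting on $\#\G^{(0)}$ with the \emph{full theorem} as the inductive hypothesis, and by only ever needing median-cocompactness for $\Delta\sq\lk(v)$: since $W_{\lk(v)}$ has index $2$ in $Z_{W_{\G}}(v)$ it is median-cocompact in $X$; Chepoi--Roller duality turns the invariant subalgebra into an abstract cocompact cubulation $Y$ of $W_{\lk(v)}$, the hypothesis on two-generator subgroups passes from $X$ to $Y$, the inductive hypothesis says $Y$ carries the standard structure, and hence every parabolic $W_{\Delta}$ with $\Delta\sq\lk(v)$ is median-cocompact in $X$. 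Your induction, whose statement is only ``$W_{\Lambda}$ is convex-cocompact in $X$'', cannot produce this input.

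Second, your fallback when $W_{\Lambda^{\perp}}$ is infinite misapplies Lemma~\ref{lem:cc_products}: that lemma is stated for a cocompact cubulation of the product group itself, so to use it you need a convex subcomplex on which $\langle(xy)^n\rangle\x W_{\Lambda}$ acts cocompactly — which is essentially the convex-cocompactness you are trying to establish. The product one can legitimately feed into the lemma is the centraliser $Z_{W_{\G}}(xy)=\langle xy\rangle\x W_{\lk(x)\cap\lk(y)}$, which is convex-cocompact by Lemma~\ref{cc properties}(4) after passing to a non-transverse power of $xy$; but then the lemma only yields convex-cocompactness of $W_{\lk(x)\cap\lk(y)}$, not of its parabolic subgroup $W_{\Lambda}$. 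Descending from $W_{\lk(x)\cap\lk(y)}$ to $W_{\Lambda}$ requires knowing which coarse median structure $X$ induces on $W_{\lk(x)\cap\lk(y)}$, and this is exactly where the paper applies its inductive hypothesis a second time: $\lk(z)\cap\lk(z')$ is a proper subgraph, so the action of $W_{\lk(z)\cap\lk(z')}$ on its convex core $C\sq X$ induces the standard structure, whence $W_{\Delta_0}$ is convex-cocompact in $C$ and thus in $X$, giving the contradiction. Without this two-fold use of the theorem on smaller graphs, both the verification of the hypotheses of Proposition~\ref{cc or commutes} and the exploitation of its second alternative remain genuine gaps in your argument.
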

\begin{proof}
We prove the statement by induction on $\#\G^{(0)}$. The base case where $\G$ is a singleton is obvious. We now discuss the inductive step.

Hyperplane-stabilisers for the action $W_{\G}\acts\mc{D}_{\G}$ are subgroups of the form $W_{\lk(v)}$ with $v\in\G$. In view of Theorem~\ref{coarse median vs hyperplanes}, it suffices to show that these subgroups are all convex-cocompact in $X$. 

Fix a vertex $v\in\G$. 

\smallskip
{\bf Claim~1.} \emph{For every $\Delta\sq\lk(v)$, the subgroup $W_{\Delta}$ is median-cocompact in $X$.}

\smallskip
\emph{Proof of Claim~1.}
Since $W_{\lk(v)}$ has index $2$ in the centraliser of $v$, Proposition~\ref{centralisers are median-cocompact} guarantees that it is median-cocompact in $X$. Let $M\sq X$ be a median subalgebra on which $W_{\lk(v)}$ acts cofinitely. By Chepoi--Roller duality, there exists a cocompact cubulation $W_{\lk(v)}\acts Y$ such that $Y^{(0)}$ is equivariantly isomorphic to $M$ as a median algebra. For every $x,y\in\lk(v)$, the fact that $\langle x,y\rangle$ is convex-cocompact in $X$ implies that it is also convex-cocompact in $Y$.

Since $\lk(v)$ is a proper subgraph of $\G$, the inductive hypothesis implies that $Y$ induces the standard coarse median structure on $W_{\lk(v)}$. Using Proposition~\ref{prop:cc=qc}, this implies that, for every $\Delta\sq\lk(v)$, the subgroup $W_{\Delta}$ is median-cocompact in $Y$, and hence in $X$.
\hfill$\blacksquare$

\smallskip
Now, suppose for the sake of contradiction that $W_{\lk(v)}$ is not convex-cocompact in $X$. Let $\Delta_0\subset\lk(v)$ be a minimal subgraph such that $W_{\Delta_0}$ is not convex-cocompact in $X$. Note that $\Delta_0$ exists and has at least $3$ vertices, by our assumptions.

By Claim~1, $W_{\Delta_0}$ is median-cocompact in $X$ and, by minimality of $\Delta_0$, all subgroups $W_{\Delta_0-\{x\}}$ with $x\in\Delta_0$ are convex-cocompact in $X$. Minimality also implies that $\Delta_0$ is irreducible, because of Lemma~\ref{cc products}. Thus, Proposition~\ref{cc or commutes} guarantees that $W_{\Delta_0^{\perp}}$ is infinite, that is, there exist $z,z'\in\G$ such that $\langle z,z'\rangle\simeq D_{\infty}$ and $\Delta_0\sq\lk(z)\cap\lk(z')$.

\smallskip
{\bf Claim~2.} \emph{The subgroup $W_{\lk(z)\cap\lk(z')}$ is convex-cocompact in $X$.}

\smallskip
\emph{Proof of Claim~2.}
The proof of this fact is almost identical to that of Lemma~\ref{lem:cc_vertices}.

We have $Z_{W_{\G}}(zz')=\langle zz'\rangle\x W_{\lk(z)\cap\lk(z')}$. The subgroup $\langle zz'\rangle\simeq\Z$ is convex-cocompact in $X$ by our assumptions, since it has finite index in $\langle z,z'\rangle$. Choosing $n\geq 1$ such that $(zz')^n$ acts non-transversely on $X$, Lemma~\ref{cc properties}(4) implies that $Z_{W_{\G}}((zz')^n)$ is convex-cocompact in $X$. Note that $Z_{W_{\G}}((zz')^n)=Z_{W_{\G}}(zz')$. In addition, the index--$2$ subgroup of $W_{\lk(z)\cap\lk(z')}$ consisting of words of even length is generated by elements $uu'$ with $u,u'\in\lk(z)\cap\lk(z')$, which all generate infinite cyclic subgroups that are convex-cocompact in $X$, by assumption. Finally, Lemma~\ref{lem:cc_products} implies that $W_{\lk(z)\cap\lk(z')}$ is convex-cocompact in $X$.
\hfill$\blacksquare$

\smallskip
Now, let $C\sq X$ be a convex subcomplex on which $W_{\lk(z)\cap\lk(z')}$ acts cocompactly. Again, since $\lk(z)\cap\lk(z')$ is a proper subgraph of $\G$, the inductive hypothesis implies that $W_{\lk(z)\cap\lk(z')}$ inherits the standard coarse median structure from its action on $C$. Using Proposition~\ref{prop:cc=qc}, it follows that $W_{\Delta_0}$ is convex-cocompact in $C$, and hence in $X$. This is the required contradiction.
\end{proof}

Now that the proof of Theorem~\ref{RACGs intro} is complete, it is interesting to discuss why the Conjecture from the Introduction is harder.

\begin{rmk}
    Say that we have an ``exotic'' cocompact cubulation $W_{\G}\acts X$, i.e.\ one that does \emph{not} induce the coarse median structure of the Davis complex. Suppose that we are interested in showing that a different cubulation $W_{\G}\acts Y$ induces the same coarse median structure, under the assumption that the two cubulations give the same coarse median structure on all abelian subgroups. 

    The only tool we have for this at the moment is Theorem~\ref{coarse median vs hyperplanes}, which would require us to show that the hyperplane-stabilisers of the action $W_{\G}\acts X$ are convex-cocompact with respect to the cocompact cubulation $W_{\G}\acts Y$.

    In the ``standard'' case, we were able to exploit the fact that hyperplane-stabilisers of the Davis complex are centralisers, and the fact that centralisers are universally median-cocompact (Proposition~\ref{centralisers are median-cocompact}). While still far from convex-cocompactness, median-cocompactness gives us powerful information that puts Theorem~\ref{uniformly non-qc ray} in motion (recall the cautionary Example~\ref{ex:mapping_torus} without this assumption).

    For exotic cubulations there is no guarantee that hyperplane-stabilisers will behave as nicely. We can follow this blueprint only if we can find cubulations with ``good'' hyperplane-stabilisers representing our exotic coarse median structures. Here ``good'' would need to mean ``universally median-cocompact'' (i.e.\ median-cocompact in every cocompact cubulation of $W_{\G}$), or significantly new ideas would be required.
\end{rmk}

\subsection{Loose and bonded squares}\label{loose subsect}

In this subsection, we prove Corollary~\ref{loose cor intro} by combining Theorem~\ref{RACGs intro}(2) with the cubical flat torus theorem \cite{WW} and our study of cubical coarse medians on products of dihedrals (Proposition~\ref{cms on D^n}). Rather than with the \emph{bonded squares} mentioned in the introduction, it will be convenient to work with their opposite: \emph{loose squares} (Definition~\ref{defn:loose_squares} below).

Let $\G$ be a finite simplicial graph. For simplicity, we say that a \emph{square} is an induced $4$--cycle $\Delta\sq\G$. A \emph{hyperoctahedron} is an induced subgraph $\Lambda\sq\G$ whose opposite graph is a union of pairwise disjoint edges (in other words, $W_{\Lambda}\simeq D_{\infty}^n$ for some $n\geq 0$).

It is worth remarking that maximal virtually-abelian subgroups of the right-angled Coxeter group $W_{\G}$ are not always \emph{highest} in the sense of Subsection~\ref{subsec:cc}. As a consequence, they might not be convex-cocompact in all cocompact cubulations of $W_{\G}$. Instead, if $\Lambda\sq\G$ is a maximal hyperoctahedron, then the virtually abelian subgroup $W_{\Lambda}\simeq D_{\infty}^n$ is necessarily highest in $W_{\G}$.

The following equivalent conditions characterise loose squares $\Delta\sq\G$.

\begin{lem}\label{lem:loose_equivalence}
For a square $\Delta\sq\G$, the following conditions are equivalent.
\begin{enumerate}
\setlength\itemsep{.1cm}
\item For every maximal induced subgraph $\Lambda\sq\G$ such that $W_{\Lambda}$ is virtually abelian, either $\Delta\sq\Lambda$ or $W_{\Delta\cap\Lambda}$ is finite.
\item For every maximal hyperoctahedron $\Lambda\sq\G$, either $\Delta\sq\Lambda$ or $W_{\Delta\cap\Lambda}$ is finite.
\item For every square $\Delta'\sq\G$, either $W_{\Delta\cup\Delta'}$ is virtually abelian or $W_{\Delta\cap\Delta'}$ is finite.
\item There does not exist a square $\Delta'\sq\G$ such that $\Delta\cap\Delta'$ has exactly $3$ vertices.
\end{enumerate}
\end{lem}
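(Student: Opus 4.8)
The plan is to prove the chain of equivalences $(1)\Rightarrow(2)\Rightarrow(3)\Rightarrow(4)\Rightarrow(1)$, using the structural fact that a subgraph $\Lambda\sq\G$ gives a virtually abelian $W_{\Lambda}$ precisely when $\Lambda$ is a join of subgraphs, each of which is either a single vertex or a graph with no edges at all; equivalently, $\Lambda$ is contained in a maximal hyperoctahedron joined to a maximal clique. The key elementary observations I would isolate at the outset: (a) $W_{\Delta\cap\Lambda}$ is finite iff $\Delta\cap\Lambda$ is a clique iff $\Delta\cap\Lambda$ has at most $2$ vertices, since $\Delta$ is a $4$-cycle and its only cliques are its edges and vertices; (b) for a square $\Delta$ and a subgraph $\Lambda$, ``$\Delta\sq\Lambda$ or $\#(\Delta\cap\Lambda)\le 2$'' is the same as ``$\Delta\cap\Lambda$ does not have exactly $3$ vertices'', because a $3$-vertex induced subgraph of a $4$-cycle is always a path $P_3$, hence not all of $\Delta$.

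For $(1)\Rightarrow(2)$: every maximal hyperoctahedron $\Lambda$ has $W_\Lambda$ virtually abelian, but it need not be a \emph{maximal} such subgraph --- however, it is contained in one, namely $\Lambda * (\Lambda^\perp\cap\text{clique})$; I would check that enlarging $\Lambda$ to a maximal virtually-abelian $\Lambda'$ cannot destroy the conclusion, using that $\Delta\cap\Lambda' \supseteq \Delta\cap\Lambda$ and that if $\#(\Delta\cap\Lambda')\ge 3$ then $\Delta\sq\Lambda'$ forces (via (a) and a short clique/hyperoctahedron argument) $\Delta\sq\Lambda$ after all, since $\Delta$ contains non-adjacent vertices and so cannot sit inside the clique part. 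For $(2)\Rightarrow(3)$: given a square $\Delta'$, if $W_{\Delta\cup\Delta'}$ is not virtually abelian I want $\#(\Delta\cap\Delta')\le 2$. Embed $\Delta'$ in a maximal hyperoctahedron $\Lambda$ (every square is itself a hyperoctahedron, $W_\Delta\simeq D_\infty^2$, so this is possible). Then $\Delta\cap\Delta'\sq\Delta\cap\Lambda$; by (2), either $\Delta\sq\Lambda$, which I'll argue forces $W_{\Delta\cup\Delta'}$ virtually abelian (both live in the single maximal hyperoctahedron $\Lambda$, whose $W$ is $D_\infty^n$, abelian-by-finite), a contradiction; or $W_{\Delta\cap\Lambda}$ is finite, hence $W_{\Delta\cap\Delta'}$ is finite. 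For $(3)\Rightarrow(4)$: if some square $\Delta'$ had $\#(\Delta\cap\Delta')=3$, then $W_{\Delta\cap\Delta'}$ is infinite (a $P_3$ gives $W\simeq D_\infty\times(\Z/2)$ or similar — in any case infinite), so by $(3)$ contrapositive $W_{\Delta\cup\Delta'}$ is virtually abelian; but $\Delta\cup\Delta'$ on $5$ vertices containing two distinct induced $4$-cycles sharing a $P_3$ is readily checked not to be a join, contradiction. Finally $(4)\Rightarrow(1)$: given a maximal virtually-abelian $\Lambda$ with $\Delta\not\sq\Lambda$, I must show $\#(\Delta\cap\Lambda)\le 2$; if instead $\#(\Delta\cap\Lambda)=3$ (it cannot be $4$ since $\Delta\not\sq\Lambda$), then $\Delta\cap\Lambda$ is a $P_3$; I would extend this $P_3$ inside $\Lambda$ to a square $\Delta'\sq\Lambda$ --- possible because a maximal virtually-abelian $\Lambda$ containing a $P_3$ must contain a $4$-cycle through it (the two endpoints of the $P_3$ are non-adjacent, each joined to the midpoint; maximality provides a fourth vertex non-adjacent to the midpoint and adjacent to both endpoints, or one shows directly the relevant $D_\infty\times D_\infty$ sits inside) --- and this $\Delta'$ meets $\Delta$ in exactly those $3$ vertices, contradicting $(4)$.

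I expect the main obstacle to be the two directions that require \emph{constructing} a square from partial data inside a maximal virtually-abelian subgraph (the step in $(4)\Rightarrow(1)$ extending a $P_3$ to a $4$-cycle, and verifying in $(1)\Rightarrow(2)$/$(2)\Rightarrow(3)$ that maximality of $\Lambda$ interacts correctly with the clique part $\Lambda^\perp$). The clean way around this is to reduce everything to the combinatorics of hyperoctahedra: a maximal virtually-abelian subgraph is canonically $\Lambda_{\mathrm{hyp}} * c$ with $\Lambda_{\mathrm{hyp}}$ a maximal hyperoctahedron and $c$ a clique, and since $\Delta$ has two pairs of non-adjacent vertices, $\Delta\cap c$ has at most $1$ vertex, so $\#(\Delta\cap\Lambda)\ge 3$ forces $\#(\Delta\cap\Lambda_{\mathrm{hyp}})\ge 2$ with at least one non-adjacent pair, and then one leverages that inside a hyperoctahedron every pair of non-adjacent vertices together with any other non-adjacent pair spans a square. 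I would write the $P_3$-to-square extension as a standalone sublemma about hyperoctahedra, which keeps the main proof to a sequence of short implications. Everything else is the bookkeeping encoded in observations (a) and (b) above, which I would state explicitly once and then invoke.
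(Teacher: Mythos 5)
Your overall plan (the cycle $(1)\Rightarrow(2)\Rightarrow(3)\Rightarrow(4)\Rightarrow(1)$, with $(1)\Rightarrow(2)$ by enlarging a maximal hyperoctahedron to a maximal virtually-abelian subgraph and $(2)\Rightarrow(3)$ by embedding $\Delta'$ in a maximal hyperoctahedron) is the same as the paper's, and those two steps are essentially correct. The genuine gap is in $(4)\Rightarrow(1)$, and it starts with your preliminary observation (a), which is false: a two-vertex subset of the square can be the diagonal pair $\{a,c\}$, which is not a clique and generates an infinite $D_{\infty}$. So ``$\#(\Delta\cap\Lambda)\le 2$'' is not equivalent to ``$W_{\Delta\cap\Lambda}$ finite'', and your argument never treats the case where $\Delta\cap\Lambda$ is exactly a diagonal pair --- one of the configurations that $(4)\Rightarrow(1)$ must exclude. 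Worse, in the three-vertex case your key sublemma (a maximal virtually-abelian $\Lambda$ containing a path $a$--$b$--$c$ contains a square through it) is false: let $\G$ be the square $a,b,c,d$ plus one vertex $e$ adjacent to $a,b,c$; then $\Lambda=\{a,b,c,e\}$ (a $K_4$ minus the edge $ac$) is a maximal subgraph with $W_\Lambda$ virtually abelian, contains the path $a$--$b$--$c$, but contains no induced $4$--cycle at all, and maximality provides no vertex of $\Lambda$ non-adjacent to the midpoint $b$. (In this $\G$ condition (4) does fail, but the witnessing square is $\{a,e,c,d\}$, which neither lies in $\Lambda$ nor contains $b$, so your construction cannot produce it.) Your fallback --- a square through $\{a,c\}$ and another non-adjacent pair of the hyperoctahedron part of $\Lambda$ --- does not help either, since such a square meets $\Delta$ in only two vertices and hence gives no contradiction with $(4)$. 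The paper's argument for $(4)\Rightarrow(1)$ is of a different nature: with $a,c\in\Lambda$ and $b\notin\Lambda$, it applies $(4)$ to the \emph{potential} squares $\{a,b,c,v\}$ and $\{a,v,c,d\}$ for $v\in\Lambda-\{a,c\}$ (these need not lie in $\Lambda$), concludes that every such $v$ equals $d$ or commutes with both $b$ and $d$, and then contradicts the \emph{maximality} of $\Lambda$, because $\Delta\cup\Lambda$ would be virtually abelian while $b\notin\Lambda$. This also covers the diagonal-pair case uniformly.

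Two further slips you should fix. Your opening ``structural fact'' is misstated: $W_\Lambda$ is virtually abelian iff $\Lambda$ is a join of single vertices and non-adjacent \emph{pairs}; edgeless parts of size $\ge 3$ are not allowed, since three pairwise non-adjacent vertices generate $\Z/2\Z\ast\Z/2\Z\ast\Z/2\Z$ (and the hyperoctahedron factor of a maximal virtually-abelian subgraph need not be a maximal hyperoctahedron of $\G$). Relatedly, in $(3)\Rightarrow(4)$ the graph $\Delta\cup\Delta'$ \emph{is} a join, namely of the diagonal $\{a,c\}$ with $\{b,d,x\}$; the correct verification, as in the paper, is that $W_{\Delta\cup\Delta'}=\langle a,c\rangle\times\langle b,d,x\rangle$ where the second factor is an infinite free product ($\Z/2\Z\ast\Z/2\Z\ast\Z/2\Z$ or $\Z/2\Z\ast(\Z/2\Z)^2$), hence not virtually abelian. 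As written, the decisive fact in this step is justified by a false claim, and under your own characterisation of virtual abelianness the graph $\{a,c\}\ast\{b,d,x\}$ with $d,x$ non-adjacent would even be classified as virtually abelian, which would break the step; with the corrected criterion it is a one-line computation.
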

\begin{proof}
We begin with $(1)\Ra(2)$. If $\Lambda\sq\G$ is a maximal hyperoctahedron and $\Lambda'\sq\G$ is a maximal subgraph such that $W_{\Lambda'}$ is virtually abelian and $\Lambda\sq\Lambda'$, then $W_{\Lambda'}=W_{\Lambda}\x(\Z/2\Z)^m$ for some $m\geq 0$. By~(1), either $\Delta\sq\Lambda'$ or $W_{\Delta\cap\Lambda'}$ is finite, which yields the analogous statement for $\Lambda$.

Let us prove $(2)\Ra (3)$, or rather $\neg (3)\Ra\neg(2)$. Let $\Delta'\sq\G$ be a square such that $W_{\Delta\cup\Delta'}$ is not virtually abelian and $W_{\Delta\cap\Delta'}$ is infinite. Choose a maximal hyperoctahedron $\Lambda\supseteq\Delta'$. Then $\Delta\not\sq\Lambda$, since $W_{\Delta\cup\Delta'}$ is not virtually abelian, unlike $W_{\Lambda}$. In addition, $W_{\Delta\cap\Lambda}$ is infinite, since it contains $W_{\Delta\cap\Delta'}$. Thus, $\Lambda$ is a hyperoctahedron witnessing $\neg(2)$.

Regarding the implication $(3)\Ra(4)$, cyclically label by $a,b,c,d$ the vertices of $\Delta$. If there existed a square $\Delta'$ with vertices $a,b,c,x$ and $x\neq d$, the subgroup $W_{\Delta\cap\Delta'}$ would be infinite. Moreover, we would have $W_{\Delta\cup\Delta'}=\langle a,c\rangle\x\langle b,d,x\rangle$, with $\langle b,d,x\rangle$ isomorphic to either $\Z/2\Z\ast\Z/2\Z\ast\Z/2\Z$ or $\Z/2\Z\ast(\Z/2\Z)^2$. In particular, $W_{\Delta\cup\Delta'}$ would not be virtually abelian, contradicting $(3)$.

Finally, we prove $(4)\Ra(1)$. Suppose that $\Lambda\sq\G$ is a maximal induced subgraph such that $W_{\Lambda}$ virtually abelian. Suppose further that $W_{\Delta\cap\Lambda}$ is a proper, infinite subgroup of $W_{\Delta}$. Cyclically labelling the vertices of $\Delta$ by $a,b,c,d$, we can assume that $a,c\in\Lambda$ and $b\not\in\Lambda$. Moreover, we have a splitting $W_{\Lambda}=\langle a,c\rangle \x D_{\infty}^n \x (\Z/2\Z)^m$ with $n+m\geq 1$, by maximality of $\Lambda$.

For a vertex $v\in\Lambda-\{a,c\}$, neither $\{a,b,c,v\}$ nor $\{a,v,c,d\}$ can be the vertex set of square distinct from $\Delta$, because of (4). This implies that either $v=d$ or $v$ commutes with both $b$ and $d$. It follows that $W_{\Delta\cup\Lambda}$ is virtually abelian, which contradicts maximality of $\Lambda$, since $b\not\in\Lambda$. 

This completes the proof of the lemma.
\end{proof}

\begin{defn}\label{defn:loose_squares}
A square $\Delta\sq\G$ is \emph{loose} if it satisfies the equivalent conditions in Lemma~\ref{lem:loose_equivalence}. A square is \emph{bonded} if it is not loose.
\end{defn}

This is equivalent to the definition given in the Introduction: a square $\Delta\sq\G$ is bonded if there exists another square $\Delta'\sq\G$ such that the intersection $\Delta\cap\Delta'$ has exactly $3$ vertices.

Corollary~\ref{loose cor intro} claims that $W_{\G}$ satisfies coarse cubical rigidity when the intersection pattern of squares in $\G$ is sufficiently intricate, namely when every square is bonded. We are finally ready to prove this statement.

\begin{proof}[Proof of Corollary~\ref{loose cor intro}]
Suppose $\G$ has no loose squares. Let $W_{\G}\acts Y$ be a cocompact cubulation. 

Let $\Lambda\sq\G$ be a maximal hyperoctahedron. Thus, $W_{\Lambda}\simeq D_{\infty}^n$ for some $n\geq 1$ and $W_{\Lambda}$ is a highest virtually abelian subgroup of $W_{\G}$, as defined in Subsection~\ref{subsec:cc}. By Lemma~\ref{cc properties}(3), $W_{\Lambda}$ is convex-cocompact in $Y$. By Proposition~\ref{cms on D^n}, either $W_{\Lambda}$ inherits the standard coarse median structure from $Y$ (i.e.\ that of the Davis complex for $W_{\Lambda}$), or there exists a square $\Delta=\{x_1,x_2,x_3,x_4\}\sq\Lambda$ such that $W_{\Delta}$ is convex-cocompact in $Y$, but the subgroups $\langle x_1,x_3\rangle\simeq\langle x_2,x_4\rangle\simeq D_{\infty}$ are not.

However, since the square $\Delta$ cannot be loose, Lemma~\ref{lem:loose_equivalence}(2) guarantees the existence of a maximal hyperoctahedron $\Lambda'\sq\G$ such that $W_{\Delta}\cap W_{\Lambda'}$ is a proper, infinite subgroup of $W_{\Delta}$. By Lemma~\ref{cc properties}(3), the subgroup $W_{\Lambda'}$ is convex-cocompact in $Y$ and, by Lemma~\ref{cc properties}(1), so is the intersection $W_{\Delta}\cap W_{\Lambda'}$. Now, this intersection is commensurable to either $\langle x_1,x_3\rangle$ or $\langle x_2,x_4\rangle$, showing that at least one of them is convex-cocompact in $Y$.

Combined with the previous paragraph, this proves that, for every maximal hyperoctahedron $\Lambda\sq\G$, the action $W_{\G}\acts Y$ induces the standard coarse median structure on the subgroup $W_{\Lambda}$. If $x,y\in\G$ are vertices with $\langle x,y\rangle\simeq D_{\infty}$, we certainly have $\{x,y\}\sq\Lambda$ for some maximal hyperoctahedron $\Lambda$, so the subgroup $\langle x,y\rangle$ is convex-cocompact in $Y$. 

Now, Theorem~\ref{std RACG thm} implies that $Y$ induces the standard coarse median structure on $W_{\G}$, proving the corollary.
\end{proof}

We conclude the section by giving an example of a right-angled Coxeter group $W_{\G}$ that fails to satisfy coarse cubical rigidity. Of course, we have seen that $W_{\G}=D_{\infty}^n$ is such a group for $n\geq 2$, and it is also easy to come up with examples splitting over finite subgroups. 

Instead, the group we are about to exhibit is one-ended and directly irreducible. 

\begin{ex}\label{exotic example}
Consider the graph $\Gamma$ from Figure~\ref{fig:example_defining_graph}. Let $\Gamma_1$ be the subgraph spanned by the vertices $\{a, b, c, d\}$, let $\Gamma_2$ be the subgraph spanned by the vertices $\{a, e, f, c\}$, and let $\Gamma_3$ spanned by the vertices $\{a, c\}$.
We have the following amalgamated product decomposition $W_\Gamma = W_{\Gamma_1} \ast_{W_{\Gamma_3}} W_{\Gamma_2}$.

Let $X_1$ be the usual square tiling of $\mathbb{E}^2$. See Figure~\ref{fig:virtual_Z2_action}. Each vertex $x \in \Gamma_1$ acts by reflection about the line $l_x$. Note that the subcomplex $l$ is preserved by $W_{\Gamma_3}$.

Let $C$ be a $2$-cube with opposite pairs of vertices identified, and let $X_2$ be the universal cover of $C$ (see Figure~\ref{fig:virtual_free_action}). We define an action of $W_{\Gamma_2}$ on $X_2$ by letting $e$ and $f$ act by reflections about the lines $l_e$ and $l_f$ respectively. Let $a$ (resp. $c$) act as a reflection 
about the line perpendicular to $l_e$ (resp. $l_f$) and through the vertex $v_a$ (resp. $v_c$). Note that there is an infinite subcomplex $l'$ (shown in blue) that is preserved by $W_{\Gamma_3}$.

Let $T$ be the Bass-Serre tree associated to $W_\Gamma = W_{\Gamma_1} \ast_{W_{\Gamma_3}} W_{\Gamma_2}$. We define a blowup $X$ of $T$. We blowup each vertex $v$ of $T$ corresponding to a coset of $W_{\Gamma_1}$ (resp.\ $W_{\Gamma_2}$) to a copy $X_v$ of the complex $X_1$ (resp.\ $X_2$). For each edge $[u,v]$ of $T$ corresponding to a coset $gW_{\Gamma_3}$, we glue $gl \subset X_u$ to $gl' \subset X_v$ where, up to relabelling, we are assuming that $X_u$ is a copy of $X_1$ and $X_v$ is a copy of $X_2$. Even though we are glueing over non-convex subcomplexes, it is not hard to see that the cube complex $X$ is $\CAT$. The result is a cocompact cubulation $W_{\Gamma}\acts X$ that is not strongly cellular. 

The induced coarse median structure on $W_{\G}$ is not the standard one, since $\langle a,c\rangle$ is not convex-cocompact in $X$, as is evident from Figure~\ref{fig:virtual_Z2_action}.
\end{ex}

\begin{figure}
    \centering
    \includegraphics[scale=.35]{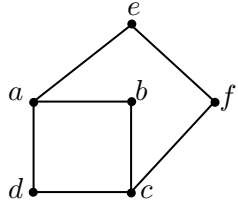}
    \put(-80,0){$d$}
    \put(-80,37){$a$}
    \put(-32,37){$b$}
    \put(-30,0){$c$}
    \put(-35,69){$e$}
    \put(0,35){$f$}
    \caption{The square $abcd$ is loose.}
    \label{fig:example_defining_graph}
\end{figure}

\begin{figure}
\centering
\begin{minipage}{.5\textwidth}
    \centering
    \includegraphics[scale=0.95]{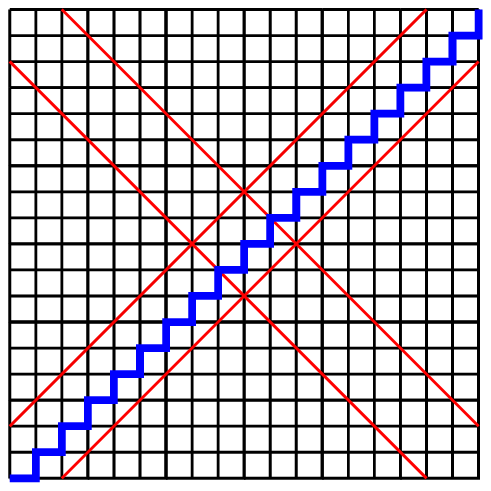}
    \put(-142,15){$l_b$}
    \put(-120,-8){$l_d$}
    \put(-20,-8){$l_a$}
    \put(3,15){$l_c$}
    \put(-140, -5){$l$}
    \caption{Action of $W_{\Gamma_1}$ on $X_1$.}
    \label{fig:virtual_Z2_action}
\end{minipage}%
\begin{minipage}{.5\textwidth}
    \centering
    \includegraphics[scale=1.15]{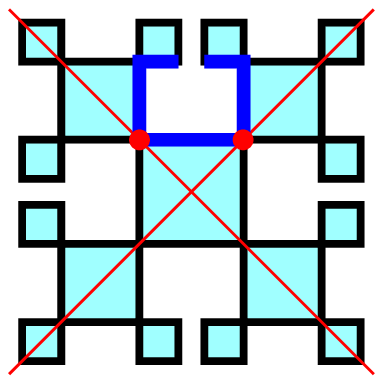}
    \put(-92,69){$v_a$}
    \put(-41,69){$v_c$}
    \put(-122,-12){$l_f$}
    \put(-5,-12){$l_e$}
    \put(-63,85){$l'$}
    \caption{Action of $W_{\Gamma_2}$ on $X_2$.}
    \label{fig:virtual_free_action}
\end{minipage}
\end{figure}

The previous example suggests that it should always be possible to exploit loose squares in $\G$ to produce cocompact cubulations of $W_{\G}$ inducing a coarse median structure other than that of the Davis complex. If this is indeed the case, then Corollary~\ref{loose cor intro} is sharp.

\section{General Coxeter groups}\label{Coxeter sect}

In this section, we prove Theorem~\ref{autom intro}(2).

Let $(W,S)$ be a general Coxeter system. A subgroup of $W$ is \emph{special} if it is generated by a subset of $S$, and it is \emph{parabolic} if it is conjugate to a special subgroup. An element of $W$ is an \emph{involution} if it has order $2$, and it is a \emph{reflection} if it is conjugate to an element of $S$.

Involutions in Coxeter groups were fully classified by Richardson \cite{Richardson}. We record his results in the following lemma. For a more recent account, the reader can also consult \cite[$\S$27.2--27.4]{Kane}.

\begin{lem}\label{involutions}
\begin{enumerate}
\item[]
\item Let $P$ be a finite irreducible Coxeter group. The centre $Z(P)$ is nontrivial if and only if $Z(P)\simeq\Z/2\Z$. In this case, $Z(P)=\langle w_P\rangle$ for an element $w_P\in P$ that is the longest element of $P$ with respect to any Coxeter generating set. 
\item Let $W$ be a general Coxeter group. Every involution in $W$ is the longest element $w_P$ of a parabolic subgroup $P\leq W$ of the form $P=P_1\x\dots\x P_k$, where each $P_i$ is a finite irreducible Coxeter group with nontrivial centre. In particular, we have $w_P=w_{P_1}\cdot\ldots\cdot w_{P_k}$. 
\end{enumerate}
\end{lem}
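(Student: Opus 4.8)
The idea is to reduce both parts to two classical facts: that finite subgroups of a Coxeter group lie in finite parabolic subgroups, and Steinberg's description of pointwise stabilisers in the reflection representation.

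\emph{Part (1).}
Let $P$ be a finite irreducible Coxeter group, acting on its reflection representation $V=\mathrm{span}\{\alpha_s\}$, equipped with a $P$--invariant inner product (available since $P$ is finite). Let $z\in Z(P)$. For every reflection $s_{\alpha}\in P$, the relation $zs_{\alpha}=s_{\alpha}z$ forces $z$ to preserve the $(-1)$--eigenline $\R\alpha$ of $s_{\alpha}$; being orthogonal, $z\alpha=\pm\alpha$. Since the roots span $V$, the element $z$ is simultaneously diagonalisable with eigenvalues in $\{\pm1\}$, so $z^2=\id$ and $V$ is the orthogonal sum of the $P$--invariant subspaces $\ker(z-\id)$ and $\ker(z+\id)$. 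Irreducibility of $V$ forces one of these to vanish, i.e.\ $z=\id$ or $z=-\id_V$. Hence $Z(P)$ is trivial or equals $\langle -\id_V\rangle\simeq\Z/2\Z$. When $-\id_V\in P$, it sends every root to its negative, hence carries the positive system $\Phi^+$ determined by any simple system onto $-\Phi^+=\Phi^-$; since the longest element is the unique element of $P$ with this property, $-\id_V$ is the longest element of $P$ with respect to \emph{any} Coxeter generating set, and $Z(P)=\langle w_P\rangle$.

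\emph{Part (2).}
Let $w\in W$ be an involution. Up to replacing $w$ by a conjugate (harmless, as the conclusion is conjugation--invariant), we may assume that the minimal finite parabolic subgroup containing $w$ is a standard parabolic $W_I$, $I\sq S$; such a finite parabolic exists because $\langle w\rangle$ is finite, hence fixes a point of the $\CAT$ Davis complex, whose point stabilisers are finite parabolic subgroups. Write $I=I_1\sqcup\dots\sqcup I_k$ for the connected components of the Coxeter diagram, so $W_I=W_{I_1}\x\dots\x W_{I_k}$ and $w=w^{(1)}\cdots w^{(k)}$ with $w^{(j)}\in W_{I_j}$. Working inside $Q:=W_{I_j}$ acting on $V_j:=\mathrm{span}\{\alpha_s:s\in I_j\}$: by Steinberg's theorem the smallest parabolic of $Q$ containing $w^{(j)}$ is the pointwise stabiliser $Q_{V_j^{w^{(j)}}}$ of its fixed subspace (any parabolic $P'\ni w^{(j)}$ equals $Q_{V_j^{P'}}$ with $V_j^{P'}\sq V_j^{w^{(j)}}$). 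The product $\prod_j Q_{V_j^{w^{(j)}}}$ is again a finite parabolic subgroup of $W$, it contains $w$, and it is contained in $W_I$; minimality of $W_I$ thus gives $Q_{V_j^{w^{(j)}}}=W_{I_j}$ for all $j$. Since $W_{I_j}$ is irreducible and nontrivial, $V_j^{W_{I_j}}=0$, hence $V_j^{w^{(j)}}=0$; as $w^{(j)}$ is an involution, this forces $w^{(j)}=-\id_{V_j}$. By part (1) this means $w^{(j)}=w_{W_{I_j}}$ and $Z(W_{I_j})\simeq\Z/2\Z$. Finally, the longest element of a direct product is the product of the longest elements of the factors, so $w=w^{(1)}\cdots w^{(k)}=w_{W_I}$, and $P:=W_I=W_{I_1}\x\dots\x W_{I_k}$ is the desired decomposition.

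\textbf{Main obstacle.}
There is no single hard step; the work is in assembling the classical inputs and being careful with the bookkeeping. The two points needing most attention are: (a) checking that $-\id_V$, once known to lie in $P$, really is the longest element with respect to \emph{every} Coxeter generating set, not just a preferred one (this is what makes $w_P$ well-defined in the statement); and (b) verifying that a product of parabolic subgroups of the direct factors $W_{I_j}$ is again a parabolic subgroup of the ambient group $W$ --- a parabolic of $W_{I_j}$ is $W_{I_j}$--conjugate, hence $W$--conjugate, to a standard parabolic --- which is exactly what licenses the minimality argument in part (2).
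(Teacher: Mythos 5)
Your proof is correct. Note, however, that the paper does not actually prove this lemma: it records it as Richardson's classification of involutions, with a citation to Richardson and to Kane's book, so there is no in-paper argument to compare against step by step. Your proposal essentially reconstructs that classical proof in a self-contained way: part (1) via the observation that a central element preserves every root line and acts on it by $\pm1$, together with irreducibility of the reflection representation, and part (2) by placing $\langle w\rangle$ in a minimal finite parabolic (existence via a fixed point in the Davis complex, or equivalently the Bourbaki/Tits fact that finite subgroups lie in finite parabolics), splitting along the irreducible components, and invoking Steinberg's theorem that the smallest parabolic containing an element is the pointwise stabiliser of its fixed subspace. The bookkeeping points you flag (product of parabolics of the factors is parabolic in $W$, and the minimality argument) are handled correctly. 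What the citation buys the paper is brevity; what your version buys is that it makes explicit exactly which properties of $w_P$ are used downstream (centrality in $P$, and $w_P$ avoiding proper parabolics of $P$, as in the paper's subsequent remark and the lemma on involution centralisers).

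One caveat, which is really an imprecision in the statement rather than a gap in your argument: the clause ``longest element with respect to \emph{any} Coxeter generating set'' is only justified by your root-system argument for generating sets arising as simple systems of the fixed reflection representation. For a genuinely arbitrary Coxeter generating set of the abstract group the claim can fail: the dihedral group of order $12$ carries both the irreducible structure $I_2(6)$, whose longest element is the central rotation $-\id$, and the reducible structure $A_1\x A_2$, whose longest element is not central. The conjugation-invariant content you do establish --- that when $Z(P)$ is nontrivial it equals $\{1,-\id_V\}$ and $w_P$ is this canonical central involution, hence the longest element for every simple system --- is exactly what the paper uses, so no harm is done; you could simply add a sentence restricting ``any Coxeter generating set'' to those realising the given (irreducible, respectively product-of-irreducibles) structure.
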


\begin{rmk}\label{longest element in parabolics}
Let $(W,S)$ be a Coxeter system with $W$ finite. Let $w_o\in W$ be the longest element with respect to $S$. Since $w_o$ is longest, any reduced expression for $w_o$ will involve all elements of $S$, so $w_o$ does not lie in any proper \emph{special} subgroup of $W$. In general, however, $w_o$ can lie in a proper \emph{parabolic} subgroup of $W$. For instance, this happens when $W$ is the dihedral group with $6$ elements.

Things are different if we suppose that $W=P$, where $P$ has the form from Lemma~\ref{involutions}(2). In this case, the longest element $w_P\in P$ is not contained in any proper parabolic subgroup of $P$. Indeed, since $w_P$ is central, it is contained in a proper parabolic subgroup if and only if it is contained in a proper special subgroup of $W$ (keeping a Coxeter generating set fixed).
\end{rmk}

\begin{lem}\label{NR wall-stabilisers}
Let $W$ be a Coxeter group and let $W\acts\X$ denote its Niblo--Reeves cubulation. Stabilisers of hyperplanes of $\X$ are precisely conjugates of centralisers $Z_W(r)$ with $r\in W$ a reflection.
\end{lem}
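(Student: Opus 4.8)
The plan is to unwind the definition of the Niblo--Reeves cube complex $\X$ and to track how $W$ permutes its hyperplanes.

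First I would recall the construction from \cite{Niblo-Reeves}: the complex $\X$ is the Sageev dual cube complex of the wallspace structure on $W$ whose walls are exactly the \emph{walls} of the Coxeter system. Here, for each reflection $r\in W$ (i.e.\ each conjugate of an element of the standard generating set), one has a wall $M_r$, obtained by partitioning $W$ according to which side of the fixed mirror of $r$ a group element lies on. A classical fact about Coxeter groups (see e.g.\ \cite{Kane}) is that $r\mapsto M_r$ is a bijection between the set of reflections of $W$ and the set of walls, and that it is $W$--equivariant once $W$ acts on reflections by conjugation; concretely $g\cdot M_r=M_{grg^{-1}}$. On the other side, Sageev's construction yields a $W$--equivariant bijection between the walls of $W$ and the hyperplanes of $\X$. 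Composing, I obtain a $W$--equivariant bijection $r\mapsto\mf{w}_r$ from reflections to hyperplanes of $\X$, with $g\mf{w}_r=\mf{w}_{grg^{-1}}$ for all $g\in W$.

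Granting this, the lemma is essentially immediate. Fix a reflection $r$. Since $W$ acts on $\X$ cellularly and permutes hyperplanes by the above rule, an element $g\in W$ stabilises $\mf{w}_r$ exactly when $\mf{w}_{grg^{-1}}=\mf{w}_r$; by injectivity of $r\mapsto\mf{w}_r$ this holds exactly when $grg^{-1}=r$, i.e.\ $g\in Z_W(r)$. Thus the $W$--stabiliser of $\mf{w}_r$ is $Z_W(r)$, which also equals $N_W(\langle r\rangle)$ because $\langle r\rangle\simeq\Z/2\Z$ has trivial automorphism group; in particular there is no discrepancy between the setwise and the sidewise stabiliser of $\mf{w}_r$, since $r$ itself lies in $Z_W(r)$ and exchanges the two halfspaces bounded by $\mf{w}_r$. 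Conversely, every hyperplane of $\X$ equals $\mf{w}_r$ for some reflection $r$, so every hyperplane-stabiliser has this form. Finally, since $gZ_W(r)g^{-1}=Z_W(grg^{-1})$ and the reflections of $W$ form a conjugation-invariant set, the family $\{Z_W(r)\mid r\ \text{a reflection}\}$ is exactly the family of conjugates of centralisers of reflections, as claimed.

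I do not expect any serious obstacle here: the content of the proof is the two standard identifications --- reflections $\leftrightarrow$ walls (a Coxeter-theoretic fact) and walls $\leftrightarrow$ hyperplanes (Sageev's construction) --- together with the trivial observation that both identifications are $W$--equivariant. The one point to state carefully is that the bijection $r\mapsto\mf{w}_r$ is injective, so that ``$g$ preserves $\mf{w}_r$'' really does force $grg^{-1}=r$ rather than merely $grg^{-1}$ and $r$ having the same wall.
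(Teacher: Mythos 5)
Your proof is correct and follows essentially the same route as the paper: identify hyperplanes of $\X$ equivariantly with the walls of the Coxeter system, hence with reflections under conjugation, and read off the stabiliser as $Z_W(r)$. The only difference is presentational — where you cite the classical bijection between reflections and walls, the paper derives the key injectivity point directly in the presentation $2$--complex, noting that $r$ is the \emph{unique} element fixing the wall $\mf{w}_r$ pointwise and swapping its sides because $W$ acts freely on the $0$--skeleton.
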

\begin{proof}
Let $\Sigma$ be the presentation $2$--complex of $W$ associated with the standard presentation. For all $s,t\in S$ with $(st)^m=1$, there is a $W$--orbit of $2m$--gons in $\Sigma$ with edges alternately labelled by $s$ and $t$. There are some natural walls in $\Sigma$, uniquely determined by the following property: if $\mf{w}$ is a wall and $C\sq\Sigma$ is a $2$-cell, then $\mf{w}\cap C$ is either empty or a segment joining midpoints of opposite edges of $C$. The Niblo--Reeves cubulation of $W$ is precisely the $\CAT$ cube complex associated with this collection of walls \cite{Niblo-Reeves}.

For each reflection $r\in S$, there is a unique wall $\mf{w}_r\sq\Sigma$ intersecting the edge $[1,r]$ in its midpoint. Every wall in $\Sigma$ is in the $W$--orbit of one of the walls $\mf{w}_r$. Thus, it suffices to show that the $W$--stabiliser of the wall $\mf{w}_r\sq\Sigma$ is precisely the centraliser $Z_W(r)$.

Note that $r$ fixes the wall $\mf{w}_r$ pointwise and swaps its two sides. In fact, $r$ is the only element of $W$ with this property, since $W$ acts freely on the $0$--skeleton of $\Sigma$. Now, if $g\in W$, we have $g\mf{w}_r=\mf{w}_r$ if and only if $grg^{-1}$ again fixes $\mf{w}_r$ pointwise and swaps its two sides. Thus, $g\mf{w}_r=\mf{w}_r$ if and only if $grg^{-1}=r$, i.e.\ $g\in Z_W(r)$ as required.
\end{proof}

\begin{lem}\label{involution centralisers}
Let $W$ be Coxeter group with cocompact Niblo--Reeves cubulation $W\acts\X$. Then, for every involution $\s\in W$, the centraliser $Z_W(\s)$ is convex-compact in $\X$.
\end{lem}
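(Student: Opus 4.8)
The plan is to realise $Z_W(\sigma)$ as a group acting cocompactly on an explicit convex subcomplex of $\X$, namely the common ``cross-section'' of the reflection hyperplanes of the parabolic closure of $\sigma$.

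Using Lemma~\ref{involutions}, I would first write $\sigma=w_P$ with $P=P_1\x\dots\x P_k$ a parabolic subgroup of $W$ and each $P_i$ finite irreducible; up to conjugating $\sigma$ (which does not affect convex-cocompactness of its centraliser) we may assume $P$ is a standard parabolic subgroup. By Remark~\ref{longest element in parabolics}, $\sigma$ lies in no proper parabolic subgroup of $P$; since the parabolic subgroups of $W$ contained in $P$ are exactly the parabolic subgroups of $P$ (see e.g.\ \cite{Kane}), it follows that $P$ is the parabolic closure of $\sigma$. In particular $P$ is finite, and the equivariance of parabolic closures under conjugation gives $Z_W(\sigma)\leq N_W(P)$. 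Let $r_1,\dots,r_n$ be the reflections of $W$ contained in $P$ — equivalently, the reflections of the finite Coxeter group $P$. Since $Z_W(\sigma)$ normalises $P$, it permutes $\{r_1,\dots,r_n\}$ by conjugation, hence permutes the corresponding hyperplanes $\mf w_1,\dots,\mf w_n$ of $\X$ (via the correspondence between reflections and hyperplanes appearing in the proof of Lemma~\ref{NR wall-stabilisers}).

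The geometric input I would use is that $\mf w_1,\dots,\mf w_n$ are pairwise transverse in $\X$: this is because $r_i,r_j$ lie in the common spherical parabolic $P$, and in the Niblo--Reeves cube complex the walls of two distinct reflections cross whenever those reflections lie in a common spherical parabolic subgroup \cite{Niblo-Reeves,Caprace-Muhlherr05}. Consequently the carriers $C_\X(\mf w_i)$ are convex subcomplexes of $\X$ that pairwise intersect, so by Helly's lemma $C:=\bigcap_{i=1}^nC_\X(\mf w_i)$ is a nonempty convex subcomplex, and $C$ is $Z_W(\sigma)$--invariant by the previous paragraph. It remains to see that $Z_W(\sigma)$ acts on $C$ cocompactly. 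For each $i$ the stabiliser $\mathrm{Stab}_W(\mf w_i)$ equals $Z_W(r_i)$ (Lemma~\ref{NR wall-stabilisers}) and acts cocompactly on the carrier $C_\X(\mf w_i)$, as is standard in cocompact cubulations \cite{CS}. Since $\bigcap_{i=1}^nZ_W(r_i)=Z_W(P)$, a standard argument iterating Lemma~\ref{cocompact intersections} shows that $Z_W(P)$ acts cocompactly on $C$; and since $\sigma\in P$ we have $Z_W(P)\leq Z_W(\sigma)$, so $Z_W(\sigma)$ also acts cocompactly on $C$. Hence $Z_W(\sigma)$ is convex-cocompact in $\X$.

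The delicate point, and the reason the argument is arranged as above, is that one cannot simply claim $Z_W(\sigma)=N_W(P)$: conjugation can twist the direct factors of $P$ and thereby move $\sigma=w_P$ within the centre $Z(P)\cong(\Z/2)^k$, so only the inclusions $Z_W(P)\leq Z_W(\sigma)\leq N_W(P)$ are available. It is also worth noting that, unlike in the strongly cellular setting of Lemma~\ref{universally cc}, the $\ell_2$--fixed set of $\sigma$ in $\X$ need not be a subcomplex, since the Niblo--Reeves action is not strongly cellular — this is exactly why one passes to the cross-section $C$ of the reflection hyperplanes of $P$ rather than trying to use $\Fix(\sigma)$.
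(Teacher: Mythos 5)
Your proof is correct, and its first half coincides with the paper's: both use Richardson's classification (Lemma~\ref{involutions}), Remark~\ref{longest element in parabolics} to get $Z_W(\sigma)\leq N_W(P)$ (the paper argues directly with $P\cap gPg^{-1}$ and \cite[Lemma~5.3.6]{Davis}; your phrasing via parabolic closures has the same content), and Lemma~\ref{NR wall-stabilisers} identifying centralisers of reflections with hyperplane-stabilisers. Where you genuinely diverge is the endgame. The paper notes that $Z_W(P)=\bigcap_i Z_W(r_i)$ over a finite generating set of reflections is convex-cocompact by Lemma~\ref{cc properties}(1), that $Z_W(P)$ has finite index in $N_W(P)$ (because $P$ is finite) and hence in $Z_W(\sigma)$, and concludes by commensurability invariance, Lemma~\ref{cc properties}(2). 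You instead exhibit an explicit $Z_W(\sigma)$--invariant convex subcomplex: the intersection of the carriers of the walls of \emph{all} reflections of $P$, nonempty because walls of reflections lying in a common spherical parabolic pairwise cross (so Helly applies), with $Z_W(P)$ acting cocompactly by iterating Lemma~\ref{cocompact intersections}, after which cocompactness of the larger invariant group $Z_W(\sigma)$ is automatic. This buys independence from Lemma~\ref{cc properties}(1)--(2), whose proofs pass through the coarse-median machinery of Proposition~\ref{prop:cc=qc} and \cite{Fio10e}, at the cost of two extra (true and standard) inputs: the wall-crossing criterion for reflections generating a finite subgroup, and cocompactness of hyperplane-stabilisers on their carriers (the latter is also used implicitly by the paper). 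One small remark: your closing ``delicate point'' is a bit of a red herring --- the paper never needs $Z_W(\sigma)=N_W(P)$; precisely because $P$ is finite, $Z_W(P)$ has finite index in $N_W(P)$, so the inclusions $Z_W(P)\leq Z_W(\sigma)\leq N_W(P)$ together with commensurability invariance already suffice, and the normaliser route is not actually blocked.
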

\begin{proof}
In view of Lemma~\ref{involutions}, we have $\s=w_P$ for a finite, parabolic subgroup $P=P_1\x\dots\x P_k$, where each $P_i$ is finite and irreducible.

We begin by showing that the centraliser $Z_W(w_P)$ is contained in the normaliser $N_W(P)$. Indeed, if $g\in W$ commutes with $w_P$, then $w_P\in P\cap gPg^{-1}$. Note that $P\cap gPg^{-1}$ is a parabolic subgroup of $P$ by \cite[Lemma~5.3.6]{Davis}, but it cannot be a \emph{proper} parabolic subgroup because of Remark~\ref{longest element in parabolics}. We conclude that $P\leq gPg^{-1}$ and, since $P$ and $gPg^{-1}$ are finite groups of the same cardinality, we must have $gPg^{-1}=P$. This shows that $g\in N_W(P)$, as required.

Now, we have a chain of inclusions $Z_W(P)\leq Z_W(w_P)\leq N_W(P)$. Since $P$ is finite, $Z_W(P)$ has finite index in $N_W(P)$, hence $Z_W(P)$ has finite index in $Z_W(w_P)$ as well. 

Finally, observe that $Z_W(P)$ is convex-cocompact in $\X$. Indeed, $P$ is generated by finitely many reflections $r_1,\dots,r_k$. Each centraliser $Z_W(r_i)$ is the stabiliser of a hyperplane of $\X$ by Lemma~\ref{NR wall-stabilisers}, hence it is convex-cocompact in $\X$. Thus $Z_W(P)=\bigcap_i Z_W(r_i)$ is convex-cocompact in $\X$ by Lemma~\ref{cc properties}(1). Lemma~\ref{cc properties}(2) implies that $Z_W(w_P)$ is convex-cocompact, completing the proof.
\end{proof}

\begin{proof}[Proof of Theorem~\ref{autom intro}(2)]
Let $W\acts\X$ be the Niblo--Reeves cubulation of $W$, which is cocompact by the assumptions of the theorem. Consider an automorphism $\varphi\in\Aut(W)$ and let $W\acts\X^{\varphi}$ denote the standard action on the Niblo--Reeves cubulation precomposed with $\varphi$. 

We want to show that $\X$ and $\X^{\varphi}$ induce the same coarse median structure on $W$ by invoking Theorem~\ref{coarse median vs hyperplanes}. By Lemma~\ref{NR wall-stabilisers}, up to conjugacy, stabilisers of hyperplanes of $\X^{\varphi}$ are of the form $\varphi^{-1}(Z_W(r))$, with $r\in W$ a reflection. Observing that $\varphi^{-1}(Z_W(r))=Z_W(r')$ for the involution $r':=\varphi^{-1}(r)$, Lemma~\ref{involution centralisers} shows that all these subgroups are convex-cocompact in $\X$, as required for Theorem~\ref{coarse median vs hyperplanes}.
\end{proof}

\appendix

\section{Cocompactness of intersections}\label{appendix}

A collection $\mc{C}$ of subsets of a metric space $X$ is \emph{locally finite} if every ball in $X$ intersects only finitely many elements of $\mc{C}$.

\begin{lem}\label{cocpt vs lf}
Let $G\acts X$ be a proper cocompact action on a metric space. A closed subset $A\sq X$ is acted upon cocompactly by its $G$--stabiliser if and only if the orbit $G\cdot A$ is locally finite.
\end{lem}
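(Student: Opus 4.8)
The statement is a standard fact, and the plan is to prove the two implications separately, using properness and cocompactness of $G \acts X$ throughout. Write $H := \mathrm{Stab}_G(A)$ for the stabiliser of $A$. The key observation linking the two conditions is that, because $G$ acts cocompactly, there is a compact set $K \sq X$ with $G \cdot K = X$, and because $G$ acts properly (and $X$ is, say, a reasonable metric space — here we just need balls to be well-behaved), only finitely many $g \in G$ satisfy $gK \cap K \neq \emptyset$ up to the relevant equivalence.

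\textbf{Direction 1: $H$ acts cocompactly on $A$ $\Rightarrow$ $G\cdot A$ is locally finite.} Suppose $D \sq A$ is a compact set with $H \cdot D = A$. I would fix a ball $B = B(x_0, R)$ and show it meets only finitely many $G$--translates of $A$. If $gA \cap B \neq \emptyset$, pick a point $p \in gA \cap B$; then $g^{-1}p \in A = H\cdot D$, so $g^{-1}p = h d$ for some $h \in H$, $d \in D$. Thus $gh \cdot d = p \in B$, i.e.\ the element $gh \in G$ moves the compact set $D$ into the ball $B$. By properness plus cocompactness of $G \acts X$, the set $\{ f \in G : fD \cap B \neq \emptyset\}$ is finite (this is a routine consequence: cover $\overline{D\cup B}$ by finitely many translates of a fixed compact fundamental domain and use that only finitely many group elements move that domain near itself). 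Hence $gh$ ranges over a finite set $F$, so $gH \in F\cdot H / H$ ranges over finitely many cosets, and therefore $gA = (gh)A$ takes only finitely many values. This shows $B$ meets only finitely many elements of $G\cdot A$, i.e.\ $G\cdot A$ is locally finite.

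\textbf{Direction 2: $G\cdot A$ is locally finite $\Rightarrow$ $H$ acts cocompactly on $A$.} Let $K \sq X$ be compact with $G\cdot K = X$; I want a compact set $D \sq A$ with $H\cdot D = A$. The natural candidate is $D := A \cap G\cdot K'$ for a suitable slightly enlarged compact $K'$ — more precisely, set $D := A \cap \overline{\mc{N}_1(K)}$ or simply work with the closed set $A \cap K$ after enlarging $K$ so that $G\cdot \mathrm{int}(K) = X$. The point is: given $p \in A$, there is $g \in G$ with $g^{-1}p \in K$, so $p \in gK \cap A \neq \emptyset$, hence $gA^{-1}$... rather, $A \cap g^{-1}(\text{something})$; let me instead argue that $p \in gA' $ won't be needed — what we need is to land $p$ in an $H$--translate of a fixed compact subset of $A$. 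Choose $g$ with $g^{-1}p \in K$. Then $gK$ meets $A$. By local finiteness of $G\cdot A$, the set $gK$ (being contained in a fixed ball around any of its points, by compactness) meets only finitely many translates of $A$; in particular the collection $\{ A' \in G\cdot A : A' \cap K \neq \emptyset \}$ is finite, say $\{A, g_1 A, \dots, g_m A\}$ with $g_i \in G$ (and $A$ itself among them if $A \cap K \neq \emptyset$; if not, enlarge $K$). Now $p \in gK$ with $gK \cap A \neq \emptyset$ means $g^{-1}A \cap K \neq \emptyset$, so $g^{-1}A = g_i A$ for some $i$, i.e.\ $g\, g_i \in H$ — wait, $g^{-1}A = g_i A$ gives $g_i^{-1}g^{-1} \in H$, so $g g_i \in H$... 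I should be careful with sides. Set $h := (g g_i)^{-1} = g_i^{-1} g^{-1}$; from $g^{-1}A = g_i A$ we get $A = g\, g_i\, A$, so $g g_i \in H$, i.e.\ $h^{-1} = g g_i \in H$, hence $h \in H$. Then $h^{-1} p = g g_i \cdot p$; hmm, I instead want to write $p$ as an element of $H$ applied to a fixed compact set. We have $g^{-1} p \in K$ and $g^{-1} A = g_i A$, so $(g_i^{-1} g^{-1}) p \in g_i^{-1} A \cap g_i^{-1} K$ — getting complicated. The clean formulation: let $D := \bigcup_{i=0}^m \big( g_i A \cap K \big)$ intersected back appropriately; but the honest compact-in-$A$ set is obtained by noting each $g_i^{-1}(g_i A \cap K) = A \cap g_i^{-1}K$ is a closed subset of $A$ contained in the compact set $g_i^{-1}K$, hence compact. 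Take $D := \bigcup_{i} (A \cap g_i^{-1} K)$ — a finite union of compact sets, so compact, and $D \sq A$. Then for $p \in A$ as above, $g^{-1}p \in K$ and $g^{-1}p \in g^{-1}A = g_i A$, so $g^{-1} p \in g_i A \cap K$, whence $g_i^{-1} g^{-1} p \in A \cap g_i^{-1} K \sq D$; and $g_i^{-1} g^{-1}$ stabilises $A$ (as $g^{-1}A = g_i A \Rightarrow g_i^{-1}g^{-1} A = A$), so $g_i^{-1} g^{-1} \in H$ and $p \in H \cdot D$. Hence $H\cdot D = A$, as desired.

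\textbf{Main obstacle.} The only real content is the auxiliary fact used in both directions: for a proper cocompact action $G \acts X$ and a compact set $C \sq X$, the set $\{ g \in G : gC \cap C \neq \emptyset\}$ is finite, and more generally $\{g : gC_1 \cap C_2 \neq \emptyset\}$ is finite for compact $C_1, C_2$. This is entirely routine — cover $C_1 \cup C_2$ by finitely many $G$--translates of a fixed compact fundamental-domain-like set and invoke properness — but it is the hinge of the argument, so I would state and prove it first as a preliminary observation (it may even be a second lemma in this appendix, e.g.\ \texttt{Lemma~\ref{cocompact intersections}}, which is referenced elsewhere). The rest is bookkeeping with cosets and making sure the sides of the group action are handled consistently.
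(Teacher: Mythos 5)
Your proof is correct. For the forward direction (cocompact stabiliser $\Rightarrow$ locally finite orbit) your argument is essentially the paper's: the paper supposes a ball $B$ meets infinitely many distinct translates $g_nA$, notes the balls $g_n^{-1}B$ all meet $A$, and uses cocompactness of $G_A\acts A$ plus properness to place all $g_n^{-1}$ in a set $G_A\cdot F$ with $F$ finite; your direct version (writing $g^{-1}p=hd$ and concluding $gh$ lies in the finite set $\{f: fD\cap B\neq\emptyset\}$, so $gA=(gh)A$ takes finitely many values) is the same computation without the contradiction framing. Where you genuinely diverge is the backward direction: the paper simply cites \cite[Lemma~2.3]{Hagen-Susse}, after observing that $X$ is a proper metric space, whereas you prove it directly, taking a compact $K$ with $G\cdot K=X$, using local finiteness to list the finitely many translates $g_iA$ meeting $K$, and checking that $D:=\bigcup_i(A\cap g_i^{-1}K)$ is a compact subset of $A$ with $H\cdot D=A$ (the coset bookkeeping $g^{-1}A=g_iA\Rightarrow g_ig_i^{-1}\dots$ that you worry about does come out right: $g_i^{-1}g^{-1}\in H$ and $g_i^{-1}g^{-1}p\in D$). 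This buys a self-contained appendix, and in fact your argument for that direction uses only cocompactness of $G\acts X$, closedness of $A$ and local finiteness — not properness — so it is marginally more general than the citation route; the cost is length, and your exposition of it would need tidying. Two small remarks: the auxiliary finiteness fact $\{f\in G: fC_1\cap C_2\neq\emptyset\}$ finite for compact $C_1,C_2$ follows from properness alone (cocompactness is not needed, under the usual metric definition of a proper action), and it is not the content of Lemma~\ref{cocompact intersections}, which concerns cocompactness of intersections of invariant subsets, so your speculative aside there should be dropped.
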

\begin{proof}
The backward arrow is \cite[Lemma~2.3]{Hagen-Susse}, since $X$ is a proper metric space (as it admits a geometric group action).

For the forward arrow, denote by $G_A\leq G$ the stabiliser of $A$ and assume that the action $G_A\acts A$ is cocompact. Suppose for the sake of contradiction that a ball $B\sq X$ intersects infinitely many pairwise distinct translates $g_nA$ with $g_n\in G$. Then the balls $g_n^{-1}B$ all intersect $A$. Since $G_A\acts A$ is cocompact and $G\acts X$ is proper, the elements $g_n^{-1}$ all lie in a product set $G_A\cdot F$ with $F\sq G$ finite. Hence all $g_n$ lie in $F^{-1}\cdot G_A$, contradicting that there are infinitely many distinct sets $g_nA$.
\end{proof}

\begin{lem}[Cocompact Intersections]\label{cocompact intersections}
Let $G\acts X$ be a proper cocompact action on a metric space. Let $A,B\sq X$ be closed subsets that are invariant and acted upon cocompactly by subgroups $H,K\leq G$, respectively. Then the action $H\cap K\acts A\cap B$ is cocompact (possibly, $A\cap B=\emptyset$).
\end{lem}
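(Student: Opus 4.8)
The statement is a routine-but-useful fact about proper cocompact actions, and the natural strategy is to reduce it to the previous lemma (Lemma~\ref{cocpt vs lf}) characterising cocompactness of a closed invariant subset via local finiteness of its orbit. First I would observe that $A \cap B$ is closed and is clearly invariant under $H \cap K$, so by Lemma~\ref{cocpt vs lf} it suffices to show that the orbit $(H\cap K)\cdot(A\cap B)$ is locally finite in $X$.

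The key step is to control this orbit using the two given orbits $G\cdot A$ and $G\cdot B$. Fix a ball $B_0 \subseteq X$ of radius $R$ centred at some point $x_0$. Each set $g(A\cap B)$ with $g \in H\cap K$ satisfies $g(A\cap B) \subseteq gA \cap gB$, and since $g \in H$ we have $gA = A$ while $g \in K$ gives $gB = B$; so in fact $g(A\cap B) = A\cap B$ for every $g \in H\cap K$. Wait --- that shows the orbit is a single set, which makes local finiteness of the orbit trivial, but then the content is entirely hidden in Lemma~\ref{cocpt vs lf}, which requires the \emph{stabiliser} of $A\cap B$ to act cocompactly, not merely $H\cap K$. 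So the real point is different: I need to show that the full $G$--stabiliser of $A\cap B$ acts cocompactly on it, or, better, to argue directly.

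The clean approach is the following. By Lemma~\ref{cocpt vs lf} applied to $A$ and to $B$, the orbits $G\cdot A$ and $G\cdot B$ are both locally finite. Pick compact fundamental domains $D_A \subseteq A$ for $H\acts A$ and $D_B \subseteq B$ for $K \acts B$, and enlarge one of them: let $L := \{g \in G : gD_A \cap D_B \neq \emptyset\}$, which is \emph{finite} because $D_A, D_B$ are compact and $G\acts X$ is proper and cocompact (properness gives that only finitely many group elements move a fixed compact set to intersect another fixed compact set). Now take any $p \in A \cap B$. Since $p \in A$, there is $h \in H$ with $h^{-1}p \in D_A$; since $p \in B$, there is $k \in K$ with $k^{-1}p \in D_B$. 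Then $(k^{-1}h)(h^{-1}p) = k^{-1}p \in D_B$ while $h^{-1}p \in D_A$, so $k^{-1}h \in L$, i.e. $h = k\ell$ for some $\ell \in L$. Hence $h^{-1}p = \ell^{-1}k^{-1}p \in \ell^{-1}D_B \subseteq \bigcup_{\ell \in L}\ell^{-1}D_B =: D$, a compact set. Moreover $h^{-1}p \in A$ (as $A$ is $H$--invariant) and $h^{-1}p = \ell^{-1}(k^{-1}p) \in \ell^{-1}B$; but I want $h^{-1}p \in B$, which need not hold since $\ell \notin K$ in general. To fix this, note instead that $k^{-1}p \in D_B \subseteq B$ and $k^{-1}p = \ell h^{-1}p$; and $k \in K$, so $p \in k D_B$ with $k^{-1}p$ lying in the compact set $D_B$ \emph{and} in $A \cap B$ after translating: precisely, $k^{-1}p \in D_B \cap k^{-1}A \cap B$. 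The element $k^{-1}p$ lies in $A\cap B$ translated by $k^{-1}$; but since $k^{-1}p$ also equals $\ell h^{-1}p$ with $\ell \in L$ and $h^{-1}p \in A$, we get $k^{-1}p \in \ell A$. So every $p\in A\cap B$ is $H$--translated (via $h^{-1}$) to the compact set $D$, \emph{and} $h^{-1}p \in A$ always, \emph{and} $h^{-1}p \in h^{-1}B = (k\ell)^{-1}B = \ell^{-1}k^{-1}B = \ell^{-1}B$, so $h^{-1}p \in A \cap \bigcup_{\ell\in L}\ell^{-1}B$. This is still not $A\cap B$. The honest conclusion is weaker: I have shown $H\cdot(D)$ covers $A\cap B$ only after intersecting with finitely many translates of $B$.

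\textbf{The cleaner argument, and where the obstacle lies.} The correct route avoids this bookkeeping: by Lemma~\ref{cocpt vs lf}, $G\cdot A$ is locally finite, so in particular only finitely many $G$--translates of $A$ meet any given compact set. Consider the stabiliser $G_{A\cap B}$. I claim $A\cap B$ is $G_{A\cap B}$--cocompact, equivalently (Lemma~\ref{cocpt vs lf}) that $G\cdot(A\cap B)$ is locally finite. But $G\cdot(A\cap B) \subseteq \{gA \cap g'B : g,g'\in G\}$, and a ball $B_0$ meets $gA\cap gB$ only if it meets both $gA$ and $gB$; since $\{gA\}$ and $\{gB\}$ are each locally finite, only finitely many $g$ give $gA \cap B_0 \neq \emptyset$ and similarly for $B$, so only finitely many $g$ give $g(A\cap B) \cap B_0 \neq \emptyset$. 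Hence $G\cdot(A\cap B)$ is locally finite, and Lemma~\ref{cocpt vs lf} gives that the full stabiliser $G_{A\cap B}$ acts cocompactly on $A\cap B$. It remains to upgrade from $G_{A\cap B}$ to $H\cap K$: one checks $H\cap K \leq G_{A\cap B}$ has finite index among the subgroups acting, or more simply repeats the fundamental-domain argument above now knowing $G_{A\cap B}\acts A\cap B$ is cocompact, using that $H$ and $K$ each act cocompactly on the larger sets $A$ and $B$ to show $H\cap K$ already suffices. Concretely: take a compact fundamental domain $E$ for $G_{A\cap B}\acts A\cap B$; for $e \in E$, since $e \in A$ and $H\acts A$ is cocompact, there are finitely many $H$--orbits of points of $A$ meeting a neighbourhood of $E$, and intersecting with the $K$--cocompactness of $B$ one finds $H\cap K$ moves $E$ onto all of $A\cap B$ up to a compact error. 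The main obstacle, then, is exactly this last step --- promoting cocompactness of the stabiliser action to cocompactness of the $H\cap K$--action --- which I expect to require a careful but elementary double-coset / fundamental-domain argument of the kind sketched above; everything else follows formally from Lemma~\ref{cocpt vs lf} and properness of $G\acts X$.
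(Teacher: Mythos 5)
The first half of your ``cleaner argument'' is exactly the paper's proof: local finiteness of $G\cdot A$ and $G\cdot B$ (via Lemma~\ref{cocpt vs lf}) forces local finiteness of $G\cdot(A\cap B)$, since a translate $g(A\cap B)=gA\cap gB$ meeting a fixed ball is determined by the pair $(gA,gB)$, and then Lemma~\ref{cocpt vs lf} gives that the full stabiliser $G_{A\cap B}$ acts cocompactly on $A\cap B$. (Minor point: phrase this in terms of finitely many \emph{distinct translates} rather than ``finitely many $g$'' --- infinitely many group elements can give the same translate --- but this is cosmetic.) The genuine gap is the step you yourself flag as ``the main obstacle'': upgrading from cocompactness of $G_{A\cap B}\acts A\cap B$ to cocompactness of $(H\cap K)\acts A\cap B$. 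Neither your abandoned double-coset computation (which, as you note, only lands points of $A\cap B$ in a compact set via elements of $H$, not of $H\cap K$) nor the closing sketch with the fundamental domain $E$ is an argument: the entire difficulty is to produce elements of $H\cap K$, rather than of $H$ and $K$ separately, moving an arbitrary point of $A\cap B$ into a fixed compact set, and nothing in the proposal explains why this is possible.

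The paper closes this with a short finite-index argument. For every $g\in G_{A\cap B}$ one has $A\cap B=g(A\cap B)\sq gA$, so all $G_{A\cap B}$--translates of $A$ contain the nonempty set $A\cap B$ and hence meet a fixed ball; by local finiteness of $G\cdot A$ there are only finitely many such translates, so the subgroup of $G_{A\cap B}$ preserving $A$ has finite index in $G_{A\cap B}$. Moreover $H$ has finite index in the full stabiliser $G_A$, because $H$ acts cocompactly on $A$ while $G_A$ acts properly on it. Hence a finite-index subgroup of $G_{A\cap B}$ lies in $H$, and symmetrically in $K$, so a finite-index subgroup of $G_{A\cap B}$ is contained in $H\cap K$; being of finite index in a group acting cocompactly on $A\cap B$, it (and a fortiori $H\cap K$) acts cocompactly on $A\cap B$. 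Incidentally, this confirms your guess that $H\cap K$ has finite index in $G_{A\cap B}$, but that is a claim requiring exactly this argument, not something one simply ``checks''.
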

\begin{proof}
Suppose that $A\cap B\neq\emptyset$. We first prove that $A\cap B$ is acted upon cocompactly by its $G$--stabiliser. In view of Lemma~\ref{cocpt vs lf}, it suffices to show that the orbit $G\cdot (A\cap B)$ is locally finite.

Suppose for the sake of contradiction that $g_n(A\cap B)$ are pairwise distinct translates intersecting a ball $L\sq X$. The collections $\{g_nA\}$ and $\{g_nB\}$ also intersect $L$, so they must be finite by Lemma~\ref{cocpt vs lf}. Hence there are only finitely many possible intersections $g_nA\cap g_nB$, contradicting our assumption.

Now, we know that $A\cap B$ is acted upon cocompactly by its $G$--stabiliser $G_{A\cap B}$. A finite-index subgroup of $G_{A\cap B}$ must stabilise $A$, since $A\cap B$ is contained in all $G_{A\cap B}$--translates of $A$, and there are only finitely many such translates of $A$ by local finiteness. Since $H$ acts cocompactly on $A$, it must have finite index in the $G$--stabiliser of $A$, hence a finite-index subgroup of $G_{A\cap B}$ is contained in $H$ (and, similarly, in $K$). In conclusion, a finite-index subgroup of $G_{A\cap B}$ acts cocompactly on $A\cap B$ and is contained in $H\cap K$, which concludes the proof.
\end{proof}

\bibliography{./mybib}
\bibliographystyle{alpha}

\end{document}